\documentclass[reqno]{amsart}
\usepackage{amsmath,amsthm,amscd,amssymb,amsfonts, amsbsy}
\usepackage{latexsym}
\usepackage{color}
\usepackage{enumerate}
\usepackage{pxfonts}
\usepackage{marginnote}
\usepackage{todonotes}

\theoremstyle{plain}
\newtheorem{theorem}[equation]{Theorem}
\newtheorem{lemma}[equation]{Lemma}
\newtheorem{corollary}[equation]{Corollary}
\newtheorem{proposition}[equation]{Proposition}

\theoremstyle{definition}
\newtheorem{definition}[equation]{Definition}

\newtheorem{example}[equation]{Example}

\theoremstyle{remark}
\newtheorem{remark}[equation]{Remark}

\newcommand{\dv}{\operatorname{div}}

\newcommand{\supp}{\operatorname{supp}}
\newcommand{\dist}{\operatorname{dist}}
\newcommand{\diam}{\operatorname{diam}}

\newcommand*{\tran}{^{\mkern-1.5mu\mathsf{T}}}

\numberwithin{equation}{section}

\newcommand{\bR}{\mathbb{R}}

\providecommand{\set}[1]{\{#1\}}
\providecommand{\Set}[1]{\left\{#1\right\}}

\providecommand{\abs}[1]{\lvert#1\rvert}
\providecommand{\Abs}[1]{\left\lvert#1\right\rvert}

\providecommand{\norm}[1]{\lVert#1\rVert}
\providecommand{\Norm}[1]{\left\lVert#1\right\rVert}

\renewcommand{\vec}[1]{\boldsymbol{#1}}

\DeclareMathOperator*{\esssup}{ess\,sup}
\DeclareMathOperator*{\essinf}{ess\,inf}

\begin{document}
\title[The Neumann Green function]
{The Neumann Green function and scale invariant regularity estimates for elliptic equations with Neumann data in Lipschitz domains}

\author[S. Kim]{Seick Kim}
\address[S. Kim]{Department of Mathematics, Yonsei University, 50 Yonsei-ro, Seodaemun-gu, Seoul 03722, Republic of Korea}
\email{kimseick@yonsei.ac.kr}
\thanks{S. Kim is partially supported by National Research Foundation of Korea (NRF) No. NRF-2022R1A2C1003322.}

\author[G. Sakellaris]{Georgios Sakellaris}
\address[G. Sakellaris]{Aristotle University of Thessaloniki,
School of Mathematics, 541 24 Thessaloniki, Greece}
\email{gsakell@math.auth.gr}

\subjclass[2010]{Primary 35J08, 35J25; Secondary 35B45, 35D30}

\keywords{Green function, Neumann function, Neumann boundary condition, Scale invariant estimates}

\begin{abstract}
We construct the Neumann Green function and establish scale invariant regularity estimates for solutions to the Neumann problem for the elliptic operator $Lu=-\dv(\mathbf A \nabla u + \vec b u) + \vec c \cdot \nabla u + du$ in a Lipschitz domain $\Omega$.
We assume that $\mathbf A$ is elliptic and bounded, that the lower order coefficients belong to scale invariant Lebesgue spaces, and that either $d\geq\dv\vec b$ in $\Omega$ and $\vec b\cdot\nu\geq 0$ on $\partial\Omega$ in the sense of distributions, or the analogous condition for $\vec c$ holds.
We develop the $L^2$ theory, construct the Neumann Green function and show estimates in the respective optimal spaces, and show local and global pointwise estimates for solutions.
The main novelty is that our estimates are scale invariant, since our constants depend on the lower order coefficients only via their norms, and on the Lipschitz domain only via its Lipschitz character. Moreover, our pointwise estimates are shown in the optimal scale invariant setting for the inhomogeneous terms and the Neumann data.
\end{abstract}
\maketitle

\section{Introduction}

We investigate the Neumann Green function and regularity estimates for solutions to the Neumann problem for the second-order elliptic operator
\begin{equation*}		
Lu=-\dv(\mathbf A \nabla u + \vec b u) + \vec c \cdot \nabla u + du
\end{equation*}
defined in a Lipschitz domain $\Omega \subset \bR^n$ with $n \ge 3$.
We assume that $\mathbf A=(a^{ij})$ is an $n \times n$ matrix valued function defined in $\Omega$ satisfying the uniform ellipticity and boundedness condition
\begin{equation}					\label{ellipticity}
\lambda \abs{\xi}^2 \le \mathbf A(x) \xi \cdot \xi, \quad \forall x \in \Omega,\; \forall \xi \in \bR^n, \quad\text{and}\quad \norm{\mathbf A}_{L^\infty(\Omega)} \le \Lambda,
\end{equation}
where $\lambda$ and $\Lambda$ are positive constants.
We assume that  $\vec b=(b^1,\ldots, b^n)$, $\vec c=(c^1,\ldots, c^n)$ belong to $L^n(\Omega)$ and $d \in L^{n/2}(\Omega)$.

The first goal of this article is to fully develop the $L^2$ theory for solutions to the Neumann problem for the operator $L$, also including inhomogeneous terms. Since we are interested in existence and uniqueness (up to a subspace of dimension 1), a special condition guaranteeing uniqueness of solutions to the Neumann problem should be imposed (see \eqref{eq1122wed} and \eqref{eq1123wed}). Under such a condition, we show existence, uniqueness, and scale invariant estimates for solutions and subsolutions in a scale invariant Sobolev space (see Propositions~\ref{SolvabilityDelta=0}, \ref{prop2132}, as well as Proposition~\ref{EstimateForDelta>0}). This is done by first identifying a condition that differentiates between having one-dimensional and zero-dimensional kernels to our problems (see Proposition~\ref{kernelChar}).

The second goal is the construction of the Green function $G(x,y)$ for the operator $L$ with Neumann data, which we call the Neumann Green function, as well as the scale invariant estimate $\norm{G(\cdot,y)}_{L^{\frac{n}{n-2},\infty}(\Omega)}+\norm{\nabla G(\cdot,y)}_{L^{\frac{n}{n-1},\infty}(\Omega)}\leq C$.
One of the novelties of this article in this setting is the observation that $G(\cdot, y)\in L^{\frac{n-1}{n-2},\infty}(\partial\Omega)$, in a scale invariant way (see Section~\ref{sec5}). Then, these estimates for the Neumann Green function allow us to obtain local and global pointwise estimates for subsolutions and solutions, in the optimal setting for the Neumann data and the inhomogeneous terms of the equation, for which we consider scale invariant Lorentz spaces.

The main feature of our results is the consideration of the optimal scale invariant setting for them to hold, as well as the optimal dependence of the constants on the given parameters.
That is, our constants in the estimates will depend on the lower order coefficients only via their norms, and on the Lipschitz domain only via its Lipschitz character (Definition~\ref{LipDom}), which is among the main novelties of this article.

The natural condition we will consider that guarantees existence and uniqueness up to a subspace is that the pair $(\vec b, d)$ satisfies the inequality
\begin{equation}				\label{eq1122wed}
\int_\Omega \vec b \cdot \nabla \phi + d\phi \ge 0,\quad\text{for every }\phi \in C^\infty_c(\bR^n)\text{ with }\phi \ge 0,
\end{equation}
or the adjoint operator satisfies the corresponding property; that is, the adjoint condition for \eqref{eq1122wed} is 
\begin{equation}				\label{eq1123wed}
\int_\Omega \vec c \cdot \nabla \phi + d\phi \ge 0,\quad\text{for every }\phi \in C^\infty_c(\bR^n)\text{ with }\phi \ge 0.
\end{equation}
The condition~\eqref{eq1122wed} is analogous to the condition $d \ge \dv \vec b$, which we assumed for the Dirichlet problem in \cite{KS19}.
However, unlike \cite{KS19}, here we do not assume that $\phi$ vanishes on the boundary $\partial\Omega$ and thus the condition \eqref{eq1122wed} formally becomes
\[
\int_\Omega (d-\dv \vec b)\phi + \int_{\partial \Omega} (\vec b \cdot \nu)\phi \ge 0.
\]
Therefore, the condition \eqref{eq1122wed} can be interpreted as $d \ge \dv \vec b$ in $\Omega$ and $\vec b \cdot \nu \ge 0$ on $\partial \Omega$ and \eqref{eq1123wed} translates to $d \ge \dv \vec c$ and $\vec c \cdot \nu \ge 0$.
We highlight that the inequality $d\geq\dv\vec b$ (in the sense of distributions) is not enough to guarantee the results in this article, and counterexamples are constructed in the Appendix. 

Heuristically, the condition \eqref{eq1122wed} is related to boundedness for the Neumann problem by the following reasoning.
For the Dirichlet problem, the condition $d\geq \dv \vec b$ translates to the fact that the constant function $u = 1$ is a supersolution, that is, $L1 \geq 0$.
In the case of the Neumann problem, taking into account the conormal boundary condition, the constant function $u=1$ is a supersolution when $d\geq\dv \vec b$ in $\Omega$ and also $\vec b \cdot \nu\geq 0$ on $\partial\Omega$.

We remark that, to the best of our knowledge, this is the first instance that the scale invariant $L^2$ theory is developed, and scale invariant pointwise estimates are shown in an optimal setting for the inhomogeneous terms and the Neumann data.
Unlike the Green function for the Dirichlet problem, for which there exists a rich literature, results on the Neumann Green function are not as common; 
see \cite{CK13a, CK13b, DiB10, LU68} and references therein.
The Neumann problem for equations with lower order terms is treated in \cite{DV09}, but the estimates there are not as optimal as ours.

\section{Preliminaries}
\subsection{Definitions}
Throughout the article, we will assume that $n \ge 3$.
If $\Omega\subset \bR^n$ is a domain, we denote by $W_1^p(\Omega)$ the Sobolev space of functions $u\in L^p(\Omega)$ such that their weak derivatives belong to $L^p(\Omega)$, with norm
\[
\norm{u}_{W_1^p(\Omega)}=\norm{u}_{L^p(\Omega)}+\norm{\nabla u}_{L^p(\Omega)}.
\]
With this norm, $W_1^2(\Omega)$ becomes a Hilbert space.
We will also use the space $Y_1^p(\Omega)$ for $1 \le p<n$, which is the completion of $C_c^{\infty}(\bR^n)$ under the norm
\[
\norm{u}_{Y_1^p(\Omega)}=\norm{u}_{L^{\frac{pn}{n-p}}(\Omega)}+\norm{\nabla u}_{L^p(\Omega)}.
\]
In the case when $\Omega$ is a Lipschitz domain (see Definition~\ref{LipDom}) with $\abs{\Omega}<+\infty$, the Sobolev inequality implies that $W_1^p(\Omega)=Y_1^p(\Omega)$ for $1 \le p <n$ as sets.

The Lorentz space $L^{p,q}(\Omega)$ consists of all measurable functions $f$ in $\Omega$ with $\norm{f}_{L^{p,q}(\Omega)}<\infty$, where 
\[
\norm{f}_{L^{p,q}(\Omega)}=\left\{\begin{array}{c l}
\displaystyle\left(\int_0^\infty \left(t^{\frac{1}{p}} f^*(t) \right)^q \frac{dt}{t} \right)^{\frac{1}{q}} &\text{ if }\;q<\infty,\\
\displaystyle\sup_{t>0} t^{\frac{1}{p}} f^*(t) & \text{ if }\;q=\infty,
\end{array}\right.
\]
and $f^*$ is the decreasing rearrangement of $f$. See \cite[\S 1.4.2]{Grafakos} for more details.
We denote by $W_1^{p,q}(\Omega)$ the set of functions $u\in L^{p,q}(\Omega)$ with $\nabla u\in L^{p,q}(\Omega)$.
We also define $Y_1^{p,q}(\Omega)$, for $1\le p <n$, as the set of functions $u \in L^{\frac{pn}{n-p},q}$ with $\nabla u \in L^{p,q}(\Omega)$, with the corresponding norm.

We say that an open, bounded and connected set $\Omega\subset \bR^n$ is a Lipschitz domain, if the part of $\Omega$ close to the boundary $\partial\Omega$ can be expressed as the part above graphs of Lipschitz functions.
In order to quantify our results, we use the following definition from \cite{KS11} (see also \cite[p. 189]{Ste70}).

\begin{definition}	\label{LipDom}
Let $\Omega\subset \bR^n$ be open and connected.
We say that $\Omega$ is a Lipschitz domain with character $(M,N)$ for $M\ge 0$ and $N \in \mathbb N$, if there exists $r=r_0>0$ and $q_i \in\partial\Omega$ for $i\in \set{1, \ldots, N}$, such that
\[
\partial\Omega\subset \bigcup_{i=1}^NB_{r}(q_i).
\]
Moreover, for each $i \in \set{1,\ldots,N}$ there exists a Lipschitz function $\psi_i:\bR^{n-1} \to \bR$ with $\psi_i(0)=0$ and $\norm{D\psi_i}_{\infty}\leq M$ such that, after rotation and translation, $q_i=0$ and
\[
B_{10(M+1)r}(q_i)\cap\Omega=B_{10(M+1)r}(q_i)\cap \set{(x',x_n)\in\bR^n: x'\in\bR^{n-1},\;x_n>\psi_i(x')}.
\]
\end{definition}

It is straightforward to see that dilations do not change the character of a Lipschitz domain, thus making them a good candidate for a scale invariant theory.

\begin{remark}			\label{r_0Bounds}	
If $\Omega$ is a bounded Lipschitz domain with $\abs{\Omega}=1$, then $r_0$ in the above definition satisfies $c_1\leq r_0\leq c_2$ and $c_1\leq \diam\Omega\leq c_2$ for some positive numbers $c_1$ and $c_2$ depending only on $n$ and the Lipschitz character of $\Omega$.
See \cite[Lemmas 2.2 and 2.5]{Sak19} for the details.
\end{remark}

We now define solutions to the Neumann problem.
The conormal derivative of $L$ on the boundary $\partial \Omega$ is given by
\begin{equation*}		
(\mathbf A \nabla u + \vec b u)\cdot \nu,
\end{equation*}
where $\nu$ denotes the unit outer normal vector on $\partial\Omega$.
We will allow our functions to satisfy the Neumann condition only on some part of the boundary.
Suppose that $\Omega$ is a Lipschitz domain and $\Gamma \subset \partial\Omega$.
Let $f\in L^{\frac{2n}{n+2}}(\Omega)$, $\vec F=(F^1,\ldots, F^n) \in L^2(\Omega)$ and $g\in L^{2-\frac{2}{n}}(\Gamma)$.
We shall say that $u\in W_1^2(\Omega)$ is a solution to the Neumann problem
\begin{equation}		\label{eq2312fri}
\left\{\begin{array}{c l}
-\dv(\mathbf A \nabla u+\vec bu)+\vec c \cdot \nabla u+du = f-\dv \vec F &\text{ in }\;\Omega,\\
(\mathbf A \nabla u + \vec b u)\cdot \nu = g+ \vec F \cdot \nu & \text{ on }\;\Gamma,
\end{array}\right.
\end{equation}
if for any $\phi\in C_c^{\infty}(\bR^n \setminus (\partial\Omega \setminus \Gamma))$, we have
\begin{equation}			\label{eq2133sat}
\int_\Omega \mathbf A\nabla u \cdot \nabla \phi+\vec b u \cdot \nabla\phi+\vec c \phi \cdot \nabla u+du\phi = \int_\Omega f\phi+\vec F \cdot \nabla\phi +\int_\Gamma g\phi.
\end{equation}
We say that $u\in W_1^2(\Omega)$ is a subsolution of the Neumann problem \eqref{eq2312fri} if for any nonnegative function $\phi\in C_c^{\infty}(\bR^n)$ with $\phi\equiv 0$
we have
\begin{equation}			\label{eq2134sat}
\int_\Omega \mathbf A\nabla u \cdot \nabla \phi+\vec b u\cdot \nabla\phi+\vec c \phi \cdot \nabla u+du\phi \le \int_\Omega f\phi+\vec F \cdot \nabla\phi +\int_\Gamma g\phi.
\end{equation}
We say that $u\in W_1^2(\Omega)$ is a supersolution if the the inequality above is reversed.

\subsection{Special Lipschitz domains and the reflection method}
To show estimates close to the boundary, we use the geometry of Lipschitz domains, and extend the solutions by reflecting.
For this, we consider special Lipschitz domains.
For $r>0$, let  $B_r' =B_{r}'(0) \subset \bR^{n-1}$ be the $n-1$ dimensional ball of radius $r$ centered at $0$.
Let $\psi: B_r' \to \bR$ be a Lipschitz function with $\psi(0)=0$ and $\norm{\nabla \psi}_\infty \le M$.
We define
\begin{equation}	\label{eq:SpecialLipschitz}
\Omega^{+}_r=\Omega^{+}_r(0;\psi):=\set{(x',x_n)\in \bR^n: \abs{x'}<r,\;\psi(x')<x_n<\psi(x')+(M+1)r}.
\end{equation}
It is straightforward to check that special Lipschitz domains are Lipschitz domains (according to Definition~\ref{LipDom}), with Lipschitz character $(M,N)$, such that $N$ only depends on $n$ and $M$.
Then, consider the function $\vec \Psi:\Omega^{+}_r \to\bR^n$, defined by
\begin{equation}\label{eq:Psi}
\vec \Psi(x',x_n)=(x',2\psi(x')-x_n),
\end{equation}
which maps $\Omega^{+}_r$ onto its reflection
\[
\Omega^{-}_r=\Omega^{-}_r(0; \psi):=\set{(x',x_n)\in \bR^n: \abs{x'}<r,\;\psi(x')-(M+1)r<x_n<\psi(x')}.
\]
Note that $\vec\Psi$ is invertible, with $\abs{\det D\vec \Psi}=1$.
Let
\[
\partial_b \Omega^{+}_r=\partial_b \Omega^{+}_r(0; \psi):=\set{(x, \psi(x')): x'\in B_r'}
\]
denote the bottom part of the boundary of $\Omega^{+}_r$ and define the domain
\begin{equation}	\label{eq:OmegaR}
\begin{aligned}
\Omega_r&=\Omega_r(0;\psi)=\Omega^{+}_r(0; \psi) \cup \partial_b \Omega^{+}_r(0; \psi) \cup \Omega^{-}_r(0; \psi)\\
&=\set{(x',x_n)\in\bR^n: \abs{x'}<r,\; \abs{x_n-\psi(x')} < (M+1)r}.
\end{aligned}
\end{equation}
Note that $\vec \Psi$ given by the formula \eqref{eq:Psi} actually maps $\Omega_r$ onto $\Omega_r$ and $\vec \Psi^{-1}=\vec \Psi$.
	
Now, suppose for a Lipschitz function $\phi$ on $\Omega_r^+$, we have
\begin{equation}\label{eq:1st}
\int_{\Omega^{+}_r}\mathbf A\nabla u \cdot \nabla\phi+\vec b u \cdot \nabla\phi+\vec c \phi \cdot \nabla u+du\phi\leq\int_{\Omega^{+}_r}f\phi+\vec F\cdot \nabla\phi
\end{equation}
Then, by a change of variables, we have (note that $\abs{\det D\vec\Psi^{-1}}=1$)
\begin{equation}\label{eq:2nd}
\int_{\Omega^{-}_r} \mathbf{A}'\nabla u'\cdot \nabla\phi'+\vec{b}' u'\cdot \nabla\phi'+\vec{c}'\phi' \cdot \nabla u'+ d' u' \phi' \leq\int_{\Omega^{-}_r} f'\phi'+\vec{F}'\cdot \nabla\phi',
\end{equation}
where we set
\begin{equation}\label{eq:Reflections}
\begin{gathered}
u'(y)=u(\vec\Psi^{-1}(y)),\quad \phi'(y)=\phi(\vec\Psi^{-1}(y)),\\
\mathbf{A}'(y)=D\vec\Psi(\vec\Psi^{-1}(y))\,\mathbf A(\vec \Psi^{-1}(y))\,D\vec\Psi(\vec\Psi^{-1}(y))\tran,\\
\vec{b}'(y)= D\vec\Psi(\vec\Psi^{-1}(y))\, \vec b(\vec\Psi^{-1}(y)),\quad \vec{c}'(y)= D\vec\Psi(\vec\Psi^{-1}(y)) \,\vec c(\vec\Psi^{-1}(y)),\\
d'(y)=d(\vec\Psi^{-1}(y)),\quad
f'(y)=f(\vec\Psi^{-1}(y)),\quad \vec{F}'(y)= D\vec\Psi(\vec\Psi^{-1}(y))\,\vec F(\vec\Psi^{-1}(y)).
\end{gathered}
\end{equation}
We note that $\mathbf{A}'$ satisfies \eqref{ellipticity} with $\lambda$ and $\Lambda$ replaced by $c \lambda$ and $\Lambda/c$, where $c>0$ is a positive constant depending only on $n$ and $M$.
Also, we have
\[
\norm{\vec{b}'}_{L^n(\Omega^{-}_r)}\leq C \norm{\vec b}_{L^n(\Omega^{+}_r)},\quad \norm{\vec{c}'}_{L^n(\Omega^{-}_r)}\leq C\norm{\vec c}_{L^n(\Omega^{+}_r)},\quad 
\norm{d'}_{L^{n/2}(\Omega^{-}_r)}\leq C \norm{d}_{L^{n/2}(\Omega^{+}_r)},
\]
where $C$ only depends on $n$ and $M$.
Hence, we are led to the following lemma.
	
\begin{lemma}\label{Extension}
Let $\Omega_r^{+}$ be a special Lipschitz domain as in \eqref{eq:SpecialLipschitz}.
Assume $\mathbf A=(a^{ij})$ satisfy the uniform ellipticity and boundedness condition \eqref{ellipticity}, $\vec b$, $\vec c \in L^n(\Omega_r^+)$, $d \in L^{n/2}(\Omega_r^+)$.
Suppose $u\in W_1^2(\Omega^{+}_r)$ is a subsolution of
\[
\left\{\begin{array}{c l}
-\dv(\mathbf A \nabla u+\vec bu)+\vec c \cdot \nabla u+du = f-\dv \vec F &\text{ in }\;\Omega_r^+,\\
(\mathbf A \nabla u + \vec b u)\cdot \nu =  \vec F \cdot \nu & \text{ on }\;\partial_b\Omega_r^+,
\end{array}\right.
\]
where $f\in L^{\frac{2n}{n+2}}(\Omega_r^+)$ and $\vec F\in L^2(\Omega_r^+)$.
Let $u'=u\circ\vec\Psi^{-1}$, where $\vec\Psi$ is defined in \eqref{eq:Psi}, and define $\mathbf{A}'$, $\vec{b}'$, $\vec{c}'$, $d'$, $f'$, $\vec{F}'$ as in \eqref{eq:Reflections}.
Let us set $\tilde{u}=u$ in $\Omega_r^+$, $\tilde{u}=u'$ in $\Omega_r^-$, and similarly for $\tilde{\mathbf A}$, $\tilde{\vec b}$, $\tilde{\vec c}$, $\tilde{d}$, $\tilde{f}$, and $\tilde{\vec F}$.
Then, we have $\tilde{u}\in W_1^2(\Omega_r)$, where $\Omega_r$ is as in \eqref{eq:OmegaR}, and it is a subsolution of
\[
-\dv(\tilde{\mathbf A}\nabla\tilde{u}+\tilde{\vec b}\tilde{u})+\tilde{\vec c}\cdot\nabla\tilde{u}+\tilde{d}\tilde{u}=\tilde{f}-\dv\tilde{\vec F}\quad\text{in }\;\Omega_r.
\]
\end{lemma}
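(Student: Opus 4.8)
The plan is to verify the reflection identity \eqref{eq:2nd} carefully and then glue the two pieces together across the flat bottom boundary $\partial_b\Omega_r^+$, using that the conormal boundary condition is exactly what makes the glued function a weak subsolution with no distributional mass concentrated on the interface.

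First I would record the change of variables. Fix a nonnegative test function $\Phi\in C_c^\infty(\bR^n)$ supported in $\Omega_r$, and split $\int_{\Omega_r}=\int_{\Omega_r^+}+\int_{\Omega_r^-}$ (the interface has measure zero). On $\Omega_r^-$, substitute $x=\vec\Psi^{-1}(y)$; since $\vec\Psi$ is its own inverse with $\abs{\det D\vec\Psi}=1$, the chain rule gives $\nabla_y(\Phi\circ\vec\Psi^{-1})(y)=D\vec\Psi(\vec\Psi^{-1}(y))\tran\,(\nabla\Phi)(\vec\Psi^{-1}(y))$, and pulling $\tilde u,\tilde{\mathbf A},\ldots$ through the definitions in \eqref{eq:Reflections} converts $\int_{\Omega_r^-}$ of the reflected bilinear form against $\Phi$ into $\int_{\Omega_r^+}$ of the original bilinear form against $\phi:=\Phi\circ\vec\Psi^{-1}$ — and likewise for the right-hand side with $\tilde f,\tilde{\vec F}$. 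This is the content already sketched in \eqref{eq:1st}--\eqref{eq:2nd}, read from right to left; I would just make sure the matrix conjugation for $\mathbf A'$ and the $D\vec\Psi$ factors for $\vec b',\vec c',\vec F'$ are consistent with how gradients transform, and note that $\Phi\ge0$ iff $\phi\ge0$.

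Next comes the gluing. The restriction of $\Phi$ to $\Omega_r^+$ is a nonnegative function in $W_1^2(\Omega_r^+)$, smooth up to $\partial_b\Omega_r^+$, but it need not vanish there, so the subsolution hypothesis on $u$ in $\Omega_r^+$ must be used in the form \eqref{eq:1st} that includes the conormal condition $(\mathbf A\nabla u+\vec b u)\cdot\nu=\vec F\cdot\nu$ on $\partial_b\Omega_r^+$ — i.e.\ exactly the weak formulation \eqref{eq2134sat} with $\Gamma=\partial_b\Omega_r^+$, $f$, $\vec F$ and $g=0$, tested against $\phi=\Phi|_{\Omega_r^+}$, which is admissible since $\phi\in C_c^\infty(\bR^n\setminus(\partial\Omega_r^+\setminus\partial_b\Omega_r^+))$ after a harmless density argument. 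Similarly, $\Phi\circ\vec\Psi^{-1}$ restricted to $\Omega_r^-$ — wait, more precisely: the subsolution inequality \eqref{eq:1st} for $u$ tested against $\phi_1:=\Phi|_{\Omega_r^+}$ gives one half; applying the change of variables to that very inequality, but now with test function $\phi_2$ on $\Omega_r^+$ chosen so that $\phi_2\circ\vec\Psi^{-1}=\Phi|_{\Omega_r^-}$, i.e.\ $\phi_2=\Phi\circ\vec\Psi|_{\Omega_r^+}=\Phi\circ\vec\Psi^{-1}|_{\Omega_r^+}$, yields the $\Omega_r^-$ half in the form \eqref{eq:2nd}. Adding the two inequalities produces
\[
\int_{\Omega_r}\tilde{\mathbf A}\nabla\tilde u\cdot\nabla\Phi+\tilde{\vec b}\tilde u\cdot\nabla\Phi+\tilde{\vec c}\,\Phi\cdot\nabla\tilde u+\tilde d\,\tilde u\,\Phi\le\int_{\Omega_r}\tilde f\,\Phi+\tilde{\vec F}\cdot\nabla\Phi,
\]
which is precisely the assertion. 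The only genuinely nontrivial point is to confirm $\tilde u\in W_1^2(\Omega_r)$: this follows because $u\in W_1^2(\Omega_r^+)$ has a well-defined trace on $\partial_b\Omega_r^+$, the reflected function $u'\in W_1^2(\Omega_r^-)$ has the matching trace there (as $\vec\Psi$ fixes $\partial_b\Omega_r^+$ pointwise), and a function which is $W_1^2$ on each side of a Lipschitz interface with matching traces lies in $W_1^2$ across it, with $\nabla\tilde u$ given piecewise and no singular part on the interface.

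The main obstacle is bookkeeping rather than conceptual: getting the $D\vec\Psi$ conjugations in \eqref{eq:Reflections} to line up so that the pulled-back bilinear form is literally the original one (in particular that $\vec b$ transforms like a gradient-paired vector while $\vec c$ pairs with $\nabla u$, and that $\vec F\cdot\nabla\phi$ and $\vec F\cdot\nu$ transform compatibly), and making the admissibility of the test functions rigorous — i.e.\ that $\Phi|_{\Omega_r^+}$ and $\Phi\circ\vec\Psi^{-1}|_{\Omega_r^+}$, though only Lipschitz and not vanishing on $\partial_b\Omega_r^+$, can legitimately be used in \eqref{eq2134sat}. Both are routine: the first by the explicit form $D\vec\Psi=\begin{pmatrix}I_{n-1}&0\\ 2(\nabla\psi)\tran&-1\end{pmatrix}$ and a direct computation, the second by approximating in $W_1^2$ by functions in $C_c^\infty(\bR^n\setminus(\partial\Omega_r^+\setminus\partial_b\Omega_r^+))$ and using that every term in \eqref{eq2134sat} is continuous in this norm given the integrability of $\vec b,\vec c,d,f,\vec F$.
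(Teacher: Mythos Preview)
Your proposal is correct and follows essentially the same approach as the paper's proof: split the integral over $\Omega_r$ into the $\Omega_r^+$ and $\Omega_r^-$ pieces, use the change of variables identity (the equivalence of \eqref{eq:1st} and \eqref{eq:2nd}) to convert the $\Omega_r^-$ integral into an $\Omega_r^+$ integral against the reflected Lipschitz test function, and add the two subsolution inequalities. You supply more detail than the paper --- in particular the trace-matching argument for $\tilde u\in W_1^2(\Omega_r)$ (which the paper just cites) and the density argument for Lipschitz test functions --- but the strategy is identical.
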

\begin{proof}
The fact that $\tilde{u}\in W_1^2(\Omega_r)$ is shown in \cite[p. 135]{EG15}.
Let $\phi$ be any Lipschitz function satisfying $\phi \ge 0$ and $\supp \phi \in \Omega_r$.
Consider the integral
\[
\int_{\Omega_r}\tilde{\mathbf A}\nabla \tilde{u} \cdot \nabla\phi+\tilde{\vec b} \tilde{u} \cdot \nabla\phi+\tilde{\vec c} \phi \cdot \nabla\tilde{u}+\tilde{d}\tilde{u}\phi
\]
and write it as a sum of two integrals on $\Omega_r^+$ and $\Omega_r^-$.
The proof is complete from the calculations showing the equivalence of \eqref{eq:1st} and \eqref{eq:2nd}.
\end{proof}
	
\begin{lemma}\label{divForExtension}
Let $\Omega_r^+$ be a special Lipschitz domain.
For $\vec b$ and $d\in L^1(\Omega_r^+)$, consider the extensions $\tilde{\vec b}$ and $\tilde{d}$ as in Lemma~\ref{Extension}.
If $(\vec b,d)$ satisfies
\[
\int_{\Omega_r^+} \vec b\cdot\nabla\phi+d\phi\geq 0,\quad\forall  \phi\in C_c^{\infty}(\Omega_r),
\]
then $\tilde{d}\geq\dv\tilde{\vec b}$ in $\Omega_r$.
\end{lemma}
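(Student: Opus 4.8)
The plan is to unwind the distributional inequality $\tilde d\ge\dv\tilde{\vec b}$ in $\Omega_r$ into the statement that
\[
\int_{\Omega_r}\tilde{\vec b}\cdot\nabla\phi+\tilde d\,\phi\ge 0\qquad\text{for every nonnegative }\phi\in C_c^\infty(\Omega_r)
\]
(here, as in \eqref{eq1122wed}, the test functions in the hypothesis are understood to be nonnegative), and then to transport the contribution of $\Omega_r^-$ back to $\Omega_r^+$ via the reflection $\vec\Psi$. First I would note that, since $D\vec\Psi$ is bounded on $\Omega_r^+$ and $\abs{\det D\vec\Psi}=1$, the transformation rules \eqref{eq:Reflections} give $\tilde{\vec b}\in L^1(\Omega_r)$ and $\tilde d\in L^1(\Omega_r)$, so all integrals below are finite. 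Splitting over the two halves (the interface $\partial_b\Omega_r^+$ is Lebesgue-null), the part over $\Omega_r^+$ is exactly the integral appearing in the hypothesis applied to $\phi$; the remaining task is to rewrite the part over $\Omega_r^-$.

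For the $\Omega_r^-$-part I would change variables $y=\vec\Psi(x)$ with $x\in\Omega_r^+$, using that $\vec\Psi$ is an involution carrying $\Omega_r^-$ onto $\Omega_r^+$, with $\vec\Psi^{-1}=\vec\Psi$, $\abs{\det D\vec\Psi(x)}=1$, and, by differentiating $\vec\Psi\circ\vec\Psi=\mathrm{id}$, $D\vec\Psi(\vec\Psi(x))=D\vec\Psi(x)^{-1}$ for a.e.\ $x$. Setting $\eta:=\phi\circ\vec\Psi$, so that $\nabla\eta(x)=D\vec\Psi(x)\tran\,(\nabla\phi)(\vec\Psi(x))$, and inserting $\tilde{\vec b}(y)=D\vec\Psi(x)\,\vec b(x)$ and $\tilde d(y)=d(x)$ (which are the formulas \eqref{eq:Reflections} with $x=\vec\Psi^{-1}(y)$), a one-line computation gives
\[
\tilde{\vec b}(y)\cdot\nabla\phi(y)+\tilde d(y)\,\phi(y)=\vec b(x)\tran D\vec\Psi(x)\tran(D\vec\Psi(x)\tran)^{-1}\nabla\eta(x)+d(x)\,\eta(x)=\vec b(x)\cdot\nabla\eta(x)+d(x)\,\eta(x),
\]
and since $dy=dx$ this yields $\int_{\Omega_r^-}\tilde{\vec b}\cdot\nabla\phi+\tilde d\,\phi=\int_{\Omega_r^+}\vec b\cdot\nabla\eta+d\,\eta$. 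Adding the $\Omega_r^+$-part and writing $\Phi:=\phi+\phi\circ\vec\Psi$ (a function on $\Omega_r$), one obtains
\[
\int_{\Omega_r}\tilde{\vec b}\cdot\nabla\phi+\tilde d\,\phi=\int_{\Omega_r^+}\vec b\cdot\nabla\Phi+d\,\Phi .
\]
This is essentially the drift-and-potential part of the change of variables already performed between \eqref{eq:1st} and \eqref{eq:2nd}: pushforward by a unit-Jacobian bi-Lipschitz map sends the distribution $d-\dv\vec b$, with $\vec b$ transformed as a vector field and $d$ as a function, to $\tilde d-\dv\tilde{\vec b}$.

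It remains to see that the right-hand side is nonnegative. The function $\Phi$ is nonnegative (both $\phi$ and $\phi\circ\vec\Psi$ are), Lipschitz, and $\supp\Phi\subset\supp\phi\cup\vec\Psi(\supp\phi)$, which is a compact subset of $\Omega_r$ because $\vec\Psi$ is Lipschitz and maps $\Omega_r$ into itself. The only point needing care — and the only step that is not a bare computation — is that $\Phi$ is merely Lipschitz, since $\psi$ is only Lipschitz, so it cannot be plugged directly into the hypothesis. I would remove this by a standard mollification: with $\Phi_\varepsilon=\Phi*\rho_\varepsilon$ one has $\Phi_\varepsilon\in C_c^\infty(\Omega_r)$ and $\Phi_\varepsilon\ge 0$ for $\varepsilon$ small, $\Phi_\varepsilon\to\Phi$ uniformly, and $\nabla\Phi_\varepsilon\to\nabla\Phi$ a.e.\ with $\norm{\nabla\Phi_\varepsilon}_{L^\infty(\Omega_r)}\le\operatorname{Lip}(\Phi)$; since $\vec b,d\in L^1(\Omega_r^+)$, dominated convergence and the hypothesis give $\int_{\Omega_r^+}\vec b\cdot\nabla\Phi+d\,\Phi=\lim_{\varepsilon\to0}\int_{\Omega_r^+}\vec b\cdot\nabla\Phi_\varepsilon+d\,\Phi_\varepsilon\ge 0$. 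Combining the two displays completes the argument.
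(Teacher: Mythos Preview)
Your proof is correct and follows essentially the same approach as the paper: split the integral over $\Omega_r^\pm$, push the $\Omega_r^-$ piece back to $\Omega_r^+$ via the change of variables $y=\vec\Psi(x)$, and invoke the hypothesis on the resulting Lipschitz test function by mollification. The only cosmetic difference is the order of operations: the paper first extends the hypothesis to nonnegative Lipschitz test functions with compact support in $\Omega_r$ (by approximation) and then applies it separately to $\phi$ and $\phi\circ\vec\Psi$, whereas you combine them into $\Phi=\phi+\phi\circ\vec\Psi$ first and mollify at the end.
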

\begin{proof}
Take any Lipschitz function $\phi$ satisfying $\phi \ge 0$ and $\supp \phi \in \Omega_r$.
Then, an approximation argument shows that
\[
\int_{\Omega_r^+} \vec b\cdot\nabla\phi+d\phi\geq 0.
\]
Let $\vec \Psi$ be as in \eqref{eq:Psi} and consider the reflections $\vec{b}'$ and $d'$ as in \eqref{eq:Reflections}.
Then, as in \eqref{eq:2nd}), by a change of variables, we have
\[
\int_{\Omega^{-}_r}\vec{b}'\cdot\nabla\phi+d'\phi=\int_{\Omega_r^+}\vec{b}\cdot \nabla \phi'+d \phi',
\]
where we set $\phi'=\phi\circ \vec \Psi$.
By the previous observation, we see that the above integral is nonnegative. 
Therefore, we have
\[
\int_{\Omega_r} \tilde{\vec b}\cdot\nabla\phi+\tilde{d}\phi=\int_{\Omega_r^+} \vec b \cdot\nabla\phi+d \phi+\int_{\Omega_r^-} \vec{b}'\cdot\nabla\phi+d'\phi \ge 0.\qedhere
\]
\end{proof}

\begin{remark}
Note that the condition $d\geq\dv \vec b$ in $\Omega_r^+$ alone is not enough to ensure that $\tilde{d}\geq\dv\tilde{\vec b}$ in $\Omega_r$, and the stronger condition~\eqref{eq1122wed} has to be imposed.
This is clear by considering the case when $\vec b=\vec e_n$ and $d=0$ in the upper half space.
\end{remark}
	
We also have the following estimate close to boundary.
\begin{lemma}\label{ReflectionNorm}
Let $\Omega_R^{+}=\Omega_R^{+}(0;\psi)$ be a special Lipschitz domain.
For $u\in L^1(\Omega^{+}_R)$ set $\tilde{u}=u$ in $\Omega_R^+$ and $\tilde{u}=u'$ in $\Omega_R^-$ with $u'$ as defined in \eqref{eq:Reflections}.
For $r<\frac{R}{3(M+1)}$, we have 
\[
\fint_{B_r} \abs{\tilde{u}} \leq C\fint_{B_{3(M+1)r}\cap\Omega_R^+} \abs{u},
\]
where $B_r=B_r(0)$ and $C$ is a constant depending only on $M$.
\end{lemma}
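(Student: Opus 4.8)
The plan is to reduce the inequality over $B_r$ to an inequality over $B_{3(M+1)r} \cap \Omega_R^+$ by splitting $B_r$ into its intersections with $\Omega_R^+$, with $\Omega_R^-$, and with the graph $\partial_b\Omega_R^+$ (which has measure zero and contributes nothing), and then transporting the integral over $\Omega_R^- \cap B_r$ back to the upper domain via the change of variables $\vec\Psi$. First I would write
\[
\int_{B_r} \abs{\tilde u} = \int_{B_r \cap \Omega_R^+} \abs{u} + \int_{B_r \cap \Omega_R^-} \abs{u'},
\]
which is legitimate provided $B_r = B_r(0) \subset \Omega_R$; this containment is where the hypothesis $r < R/(3(M+1))$ enters, since the half-width of $\Omega_R$ in the vertical direction is $(M+1)R$ and $r$ is certainly smaller than that, while the horizontal radius is $R > r$. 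For the second integral I apply the change of variables $y = \vec\Psi^{-1}(x) = \vec\Psi(x)$ (recall $\vec\Psi^{-1}=\vec\Psi$ and $\abs{\det D\vec\Psi}=1$), so that
\[
\int_{B_r \cap \Omega_R^-} \abs{u'(x)}\,dx = \int_{\vec\Psi(B_r \cap \Omega_R^-)} \abs{u(y)}\,dy \le \int_{\vec\Psi(B_r) \cap \Omega_R^+} \abs{u(y)}\,dy.
\]

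The key geometric step is to control the set $\vec\Psi(B_r)$. Since $\vec\Psi(x',x_n) = (x', 2\psi(x') - x_n)$ and $\psi$ is $M$-Lipschitz with $\psi(0)=0$, for $(x',x_n) \in B_r$ we have $\abs{x'} < r$ and $\abs{x_n} < r$, hence $\abs{2\psi(x') - x_n} \le 2\abs{\psi(x')} + \abs{x_n} \le 2M\abs{x'} + \abs{x_n} < (2M+1)r \le 3(M+1)r$. Therefore $\vec\Psi(B_r) \subset B_{3(M+1)r}(0)$, and consequently
\[
\int_{B_r \cap \Omega_R^-} \abs{u'} \le \int_{B_{3(M+1)r} \cap \Omega_R^+} \abs{u}.
\]
Adding this to the trivial bound $\int_{B_r \cap \Omega_R^+} \abs{u} \le \int_{B_{3(M+1)r} \cap \Omega_R^+} \abs{u}$ gives $\int_{B_r} \abs{\tilde u} \le 2 \int_{B_{3(M+1)r} \cap \Omega_R^+} \abs{u}$.

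Finally I convert this to averages: $\fint_{B_r} \abs{\tilde u} = \abs{B_r}^{-1} \int_{B_r}\abs{\tilde u}$, and since $\abs{B_{3(M+1)r}} = (3(M+1))^n \abs{B_r}$, we obtain
\[
\fint_{B_r} \abs{\tilde u} \le \frac{2}{\abs{B_r}} \int_{B_{3(M+1)r}\cap\Omega_R^+}\abs{u} = 2\,(3(M+1))^n \fint_{B_{3(M+1)r}\cap\Omega_R^+}\abs{u},
\]
so the constant $C = 2(3(M+1))^n$ depends only on $M$ (and the dimension $n$, which is fixed), as claimed. I do not anticipate a serious obstacle here; the only point requiring a little care is verifying that $B_r \subset \Omega_R$ so that the splitting of $\tilde u$ is exhaustive, and checking the elementary inclusion $\vec\Psi(B_r) \subset B_{3(M+1)r}$ via the Lipschitz bound on $\psi$ — both are routine but should be stated explicitly.
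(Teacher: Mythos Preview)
Your argument is correct and follows essentially the same route as the paper: split $B_r$ into its intersections with $\Omega_R^\pm$, use the change of variables $\vec\Psi$ (with unit Jacobian) on the lower piece, and verify the elementary inclusion $\vec\Psi(B_r\cap\Omega_R^-)\subset B_{3(M+1)r}\cap\Omega_R^+$ via the Lipschitz bound on $\psi$. The only slip is cosmetic: in the final display the ``$=$'' should be ``$\le$'', since you are bounding $\abs{B_{3(M+1)r}\cap\Omega_R^+}/\abs{B_r}$ above by $(3(M+1))^n$ rather than computing it exactly.
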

\begin{proof}
We write
\begin{equation*}				
\fint_{B_r} \abs{\tilde{u}}=\frac{C}{\abs{B_r}} \int_{B_r\cap\Omega^{+}_R} \abs{u}+ \frac{C}{\abs{B_r}} \int_{B_r\cap\Omega^{-}_R} \abs{u'}.
\end{equation*}
Let $\vec \Psi$ be as in \eqref{eq:Psi}.
Recall that $\vec \Psi(0)=0$, $\abs{\det D\vec\Psi}=1$, and $\vec \Psi^{-1}=\vec \Psi$.
By the change of variable $y=\vec \Psi(x)$, we have
\[
\int_{B_r\cap\Omega^{-}_R}|u'(y)|\,dy=\int_{\vec\Psi (B_r\cap\Omega^{-}_R)}|u(x)|\,dx.
\]
Note that if $x=(x',x_n)\in B_r\cap\Omega^{-}_R$, then we have
\begin{align*}
\abs{\vec\Psi(x)} &\leq \abs{\vec \Psi(x)-x}+\abs{x}=\abs{2\psi(x')-2x_n}+\abs{x}\\
&\leq 2\abs{\psi(x')-\psi(0)}+2\abs{x_n}+\abs{x} \leq 2M\abs{x'}+3\abs{x}.
\end{align*}
Hence, we have $\vec\Psi(B_r\cap\Omega^{-}_R)\subset B_{3(M+1)r}\cap\Omega_R^+$, and the estimate follows.
\end{proof}	

\subsection{Sobolev-Poincar\'e inequalities}
The following  lemma is well known.
\begin{lemma}\label{Poincare}
Let $\Omega$ be a bounded Lipschitz domain.
Then there exists $C_0>0$, depending only on $n$ and the Lipschitz character of $\Omega$, such that for every $u\in W_1^2(\Omega)$, we have
\[
\norm{u-\bar u}_{L^{\frac{2n}{n-2}}(\Omega)}\le  C_0 \norm{\nabla u}_{L^2(\Omega)},\quad\text{where }\; \bar u=\fint_\Omega u.
\]
\end{lemma}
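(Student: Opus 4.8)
The plan is to establish the Sobolev--Poincar\'e inequality of Lemma~\ref{Poincare} by combining the unweighted Sobolev inequality on $\bR^n$ with a bounded extension operator for Lipschitz domains, and then quotienting out constants. First I would invoke the classical fact that a bounded Lipschitz domain $\Omega$ with character $(M,N)$ admits a bounded linear extension operator $E\colon W_1^2(\Omega)\to W_1^2(\bR^n)$ whose operator norm depends only on $n$ and the Lipschitz character; this is the Calder\'on--Stein extension theorem (cf. \cite[p.~189]{Ste70}, which is already cited in the excerpt), and its norm bounds can be tracked in terms of $(M,N)$ and the radius $r_0$ from Definition~\ref{LipDom}. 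Since $W_1^2(\Omega)=Y_1^2(\Omega)$ as sets with comparable norms when $\abs{\Omega}<\infty$, one gets for $v=Eu$ the Sobolev embedding $\norm{v}_{L^{2n/(n-2)}(\bR^n)}\le C_n\norm{\nabla v}_{L^2(\bR^n)}$, hence $\norm{u}_{L^{2n/(n-2)}(\Omega)}\le C\bigl(\norm{u}_{L^2(\Omega)}+\norm{\nabla u}_{L^2(\Omega)}\bigr)$ with $C=C(n,M,N,r_0)$.

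Next I would remove the dependence on $\norm{u}_{L^2(\Omega)}$ by applying the inequality to $u-\bar u$ and running a standard compactness (Rellich--Kondrachov) contradiction argument, or more cleanly by using the Poincar\'e inequality $\norm{u-\bar u}_{L^2(\Omega)}\le C\norm{\nabla u}_{L^2(\Omega)}$ on connected bounded Lipschitz domains. Substituting this into the previous display, and noting $\nabla(u-\bar u)=\nabla u$, yields
\[
\norm{u-\bar u}_{L^{2n/(n-2)}(\Omega)}\le C\bigl(\norm{u-\bar u}_{L^2(\Omega)}+\norm{\nabla u}_{L^2(\Omega)}\bigr)\le C_0\norm{\nabla u}_{L^2(\Omega)},
\]
which is exactly the claimed estimate. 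The dependence of $C_0$ only on $n$ and the Lipschitz character is inherited from the extension operator and the $L^2$ Poincar\'e inequality.

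Alternatively, and perhaps more self-containedly in the spirit of this paper, I would prove the inequality directly via the reflection machinery already set up in the excerpt: cover $\partial\Omega$ by the balls $B_{r_0}(q_i)$, on each of which $\Omega$ looks like a special Lipschitz domain $\Omega_{r_0}^+(q_i;\psi_i)$; reflect $u$ across each graph using $\vec\Psi$ to obtain $\tilde u\in W_1^2(\Omega_{r_0}(q_i))$ with $\norm{\tilde u}_{W_1^2(\Omega_{r_0})}\le C(M)\norm{u}_{W_1^2(\Omega_{r_0}^+)}$ (this is the content of Lemma~\ref{Extension} and Lemma~\ref{ReflectionNorm} at the level of norms), apply the interior Sobolev inequality on the now-interior balls together with a partition of unity subordinate to $\{B_{r_0}(q_i)\}\cup\{$an interior ball$\}$, and patch. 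By Remark~\ref{r_0Bounds}, rescaling to $\abs{\Omega}=1$ makes $r_0$ comparable to constants depending only on $n$ and the character, so all constants are controlled as required, and the passage to $u-\bar u$ plus $L^2$ Poincar\'e finishes as before.

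The main obstacle is the quantitative bookkeeping: verifying that the constant in the Sobolev--Poincar\'e inequality truly depends only on $n$ and the Lipschitz character $(M,N)$ and not on finer geometric features of $\Omega$. In the extension-operator route this means citing (or reproving) a version of the Stein extension theorem with operator-norm bounds made explicit in terms of $(M,N)$ and $r_0$; in the reflection route it means carefully controlling the number of overlaps in the covering, the partition-of-unity gradient bounds (which scale like $1/r_0$), and the reflection norm constants from Lemma~\ref{ReflectionNorm}, then using Remark~\ref{r_0Bounds} to normalize. Since the lemma is stated as ``well known,'' I expect the author's proof to simply cite the Sobolev inequality and a standard Poincar\'e-type result; I would present the extension-operator argument as the cleanest path while noting that the reflection approach keeps everything internal to the paper.
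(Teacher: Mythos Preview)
The paper gives no proof of this lemma; it simply labels it as ``well known'' and moves on. Your proposed argument via the Stein extension operator plus the $L^2$ Poincar\'e inequality is correct and is the standard way to obtain the result with constants depending only on $n$ and the Lipschitz character, and your alternative reflection-and-patch route is also viable; either would serve as a legitimate proof where the paper offers none.
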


The following lemma gives the exact dependence of the constants in \cite[Lemma 8.1]{DV09} on the given data, which we prove here for the completeness.

\begin{lemma}			\label{AmpleZero}
Let $\Omega$ be a bounded Lipschitz domain, and $u\in W_1^2(\Omega)$. Suppose $u= 0$ in $E\subset\Omega$, with $\abs{E}\ge \delta \abs{\Omega}$ for some $\delta>0$.
Then,
\[
\norm{u}_{L^{\frac{2n}{n-2}}(\Omega)}\le  C_0 \norm{\nabla u}_{L^2(\Omega)},
\]
where $C_0$ depends only on $n$, $\delta$, and the Lipschitz character of $\Omega$.
\end{lemma}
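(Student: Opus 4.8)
The plan is to reduce to the Poincar\'e inequality of Lemma~\ref{Poincare} by showing that the average $\bar u=\fint_\Omega u$ is itself controlled by $\norm{\nabla u}_{L^2(\Omega)}$, exploiting the fact that $u$ vanishes on a set $E$ of comparable measure. Write $p=\frac{2n}{n-2}$, so that $\frac1p=\frac{n-2}{2n}$.

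First, I would apply Lemma~\ref{Poincare} to get $\norm{u-\bar u}_{L^{p}(\Omega)}\le C_0\norm{\nabla u}_{L^2(\Omega)}$, with $C_0$ depending only on $n$ and the Lipschitz character of $\Omega$. Next, since $u\equiv 0$ on $E$, the constant function $\bar u$ coincides with $\bar u-u$ on $E$, hence
\[
\abs{\bar u}\,\abs{E}^{1/p}=\norm{\bar u}_{L^p(E)}=\norm{u-\bar u}_{L^p(E)}\le\norm{u-\bar u}_{L^p(\Omega)}\le C_0\norm{\nabla u}_{L^2(\Omega)}.
\]
Using $\abs{E}\ge\delta\abs{\Omega}$, this yields
\[
\norm{\bar u}_{L^p(\Omega)}=\abs{\bar u}\,\abs{\Omega}^{1/p}\le\delta^{-1/p}\,\abs{\bar u}\,\abs{E}^{1/p}\le\delta^{-(n-2)/(2n)}C_0\norm{\nabla u}_{L^2(\Omega)}.
\]

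Finally, I would combine the two estimates by the triangle inequality:
\[
\norm{u}_{L^p(\Omega)}\le\norm{u-\bar u}_{L^p(\Omega)}+\norm{\bar u}_{L^p(\Omega)}\le\bigl(1+\delta^{-(n-2)/(2n)}\bigr)C_0\norm{\nabla u}_{L^2(\Omega)},
\]
which is the claimed inequality with the new constant depending only on $n$, $\delta$, and the Lipschitz character of $\Omega$. There is no serious obstacle here; the only point requiring a little care is bookkeeping the dependence on $\delta$, namely that it enters only through the harmless factor $\delta^{-(n-2)/(2n)}$ while the Poincar\'e constant $C_0$ of Lemma~\ref{Poincare} is itself independent of $\delta$. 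One should also note that $W_1^2(\Omega)=Y_1^2(\Omega)$ on a bounded Lipschitz domain, so all the $L^p$ norms involved are finite and the manipulations above are legitimate.
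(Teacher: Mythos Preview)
Your proof is correct and follows the same overall strategy as the paper: apply the Sobolev--Poincar\'e inequality of Lemma~\ref{Poincare}, control the average $\bar u$ using the vanishing on $E$, and combine via the triangle inequality. The only difference is in how $\bar u$ is controlled. The paper estimates $\abs{\bar u}$ by integrating $\abs{u}$ over $\Omega\setminus E$ and applying H\"older, which yields $\abs{\Omega}^{(n-2)/(2n)}\abs{\bar u}\le (1-\delta)^{(n+2)/(2n)}\norm{u}_{L^{2n/(n-2)}(\Omega)}$; this leads to an inequality of the form $\norm{u}_p\le C\norm{\nabla u}_2+(1-\delta)^{(n+2)/(2n)}\norm{u}_p$, which is then rearranged by absorbing the last term. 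Your argument instead bounds $\abs{\bar u}$ directly by $\norm{u-\bar u}_{L^p(E)}$, which is already dominated by $C_0\norm{\nabla u}_2$; this avoids the absorption step entirely and gives the explicit constant $(1+\delta^{-(n-2)/(2n)})C_0$. Both routes are short and elementary; yours is marginally more direct.
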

\begin{proof}
If $\bar u$ is the average of $u$ in $\Omega$, we estimate
\[
\abs{\bar u} \leq \frac{1}{\abs{\Omega}}\int_\Omega \abs{u}=\frac{1}{\abs{\Omega}} \int_{\Omega \setminus E} \abs{u} \leq \frac{\abs{\Omega \setminus E}^{\frac{n+2}{2n}}}{\abs{\Omega}}\left(\int_\Omega \abs{u}^{\frac{2n}
{n-2}}\right)^{\frac{n-2}{2n}}=\frac{\abs{\Omega \setminus E}^{\frac{n+2}{2n}}}{\abs{\Omega}}\, \norm{u}_{\frac{2n}{n-2}}.
\]
Hence, combining with Lemma~\ref{Poincare}, we obtain
\begin{align*}
\norm{u}_{\frac{2n}{n-2}}&\leq \norm{u-\bar u}_{\frac{2n}{n-2}}+\norm{\bar u}_{\frac{2n}{n-2}} \leq C \norm{\nabla u}_2+\abs{\Omega}^{\frac{n-2}{2n}} \abs{\bar u} \leq C\|\nabla u\|_2+\left(\frac{\abs{\Omega \setminus E}}{\abs{\Omega}}\right)^{\frac{n+2}{2n}} \norm{u}_{\frac{2n}{n-2}}\\
&\leq C \norm{\nabla u}_2+(1-\delta)^{\frac{n+2}{2n}} \norm{u}_{\frac{2n}{n-2}},
\end{align*}
and the proof follows from rearranging the terms.
\end{proof}

\subsection{Trace inequalities}
The next lemma is well known.
We present the proof here for completeness.

\begin{lemma}			\label{trace_lemma}
Let $\Omega$ be a bounded Lipschitz domain.
If $u\in W_1^p(\Omega)$ with $1\le p<n$, then the trace of $u$ on $\partial\Omega$ belongs to $L^{\frac{p(n-1)}{n-p}}(\partial\Omega)$ and we have
\[
\norm{u}_{L^{\frac{p(n-1)}{n-p}}(\partial\Omega)}\leq C \norm{u}_{Y_1^p(\Omega)},
\]
where $C$ depends only on $n$, $p$, and the Lipschitz character of $\Omega$.
\end{lemma}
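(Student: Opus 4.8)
The plan is to reduce the trace inequality to the corresponding statement on special Lipschitz domains via a partition of unity, and on each such piece to prove the estimate directly by a calculus computation. First I would recall that since $\Omega$ is a bounded Lipschitz domain with $\abs\Omega<\infty$, we have $W_1^p(\Omega)=Y_1^p(\Omega)$ as sets with comparable norms, so it suffices to bound $\norm{u}_{L^{p(n-1)/(n-p)}(\partial\Omega)}$ by $C\norm{u}_{Y_1^p(\Omega)}$. Using the covering $\partial\Omega\subset\bigcup_{i=1}^N B_r(q_i)$ from Definition~\ref{LipDom}, choose a smooth partition of unity $\{\eta_i\}$ subordinate to $\{B_{2r}(q_i)\}$ with $\sum\eta_i=1$ on a neighborhood of $\partial\Omega$ and $\norm{\nabla\eta_i}_\infty\le C/r$; then $u=\sum_i \eta_i u$ on $\partial\Omega$, and since there are only $N$ terms (with $N$ controlled by the Lipschitz character), it is enough to estimate each $\norm{\eta_i u}_{L^{p(n-1)/(n-p)}(\partial\Omega\cap B_{2r}(q_i))}$. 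After rotating and translating so that $q_i=0$ and $\partial\Omega$ near $q_i$ is the graph $x_n=\psi_i(x')$ with $\norm{D\psi_i}_\infty\le M$, the function $v_i:=\eta_i u$ is supported in $B_{2r}(0)$ and lies in $W_1^p$ of the region above the graph.

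The core estimate is then: for $v\in W_1^p$ supported in $\abs{x'}<2r$ above the Lipschitz graph $x_n=\psi(x')$, one has $\norm{v}_{L^{p(n-1)/(n-p)}(\text{graph})}\le C\bignorm{v}_{L^{pn/(n-p)}}+C\bignorm{\nabla v}_{L^p}$. To prove this I would use the flattening change of variables $(x',x_n)\mapsto(x',x_n-\psi(x'))$, which is bi-Lipschitz with Jacobian $1$ and sends the region above the graph to the region $\{x_n>0\}$, turning the graph into a flat piece of $\{x_n=0\}$ and distorting gradients only by a factor controlled by $M$; so it suffices to treat $v\in W_1^p(\bR^n_+)$ with compact support. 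For such $v$, by density I may assume $v$ is smooth, and then write, using the fundamental theorem of calculus in the $x_n$ direction, that for $x'$ in the support
\[
\abs{v(x',0)}^{s}\le s\int_0^\infty \abs{v(x',t)}^{s-1}\abs{\partial_n v(x',t)}\,dt,
\]
with $s:=\frac{p(n-1)}{n-p}$ chosen so that Hölder's inequality in $t$ (with exponents matching the target integrability) closes. Integrating in $x'\in\bR^{n-1}$ and applying Hölder in $x'$ with exponents $\frac{n}{n-p}$ and its conjugate $\frac{n}{p}$, the first factor becomes a power of $\norm{v}_{L^{pn/(n-p)}(\bR^n_+)}$ (note $(s-1)\cdot\frac{n}{n-p}=\frac{pn}{n-p}$ after the arithmetic) and the second a power of $\norm{\partial_n v}_{L^p(\bR^n_+)}$; raising to the power $1/s$ and using $ab\le a^q+b^{q'}$ if needed gives exactly the claimed bound with constant depending only on $n$, $p$, and $M$.

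The main obstacle, and the only place real care is needed, is bookkeeping the exponents: one must verify that the trace exponent $s=\frac{p(n-1)}{n-p}$ is precisely the value for which the one-dimensional Hölder step and the $(n-1)$-dimensional Hölder step are simultaneously compatible with the right-hand side norms $L^{pn/(n-p)}$ (volume) and $L^p$ (gradient) — this is the classical Gagliardo trace computation, and it works out exactly because $p<n$. A secondary point is that the constants produced at each stage — the norm equivalence $W_1^p\simeq Y_1^p$ (which via the Sobolev inequality depends on $n$, $p$, and the Lipschitz character), the bounds $\norm{\nabla\eta_i}_\infty\le C/r$, the number $N$ of charts, and the distortion of the flattening map — all depend only on $n$, $p$, and the Lipschitz character $(M,N)$ of $\Omega$, so the final constant has the asserted dependence; here Remark~\ref{r_0Bounds} (after rescaling to $\abs\Omega=1$) is what keeps the $r$-dependent factors under control. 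Summing the $N$ local contributions then yields the global estimate, completing the proof.
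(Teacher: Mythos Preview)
Your plan is essentially the same Gagliardo trace computation the paper carries out, with only cosmetic differences in organization. The paper also rescales to $\abs{\Omega}=1$ and works chart by chart, but it does \emph{not} flatten and does \emph{not} use a partition of unity: it applies the fundamental theorem of calculus to $u$ itself directly in the region above the graph $x_n=\psi_i(x')$ (with an additional averaging in $x_n$ over $(0,r_0)$ to produce a volume term), sums over the $N$ charts to obtain the $W_1^1\to L^1(\partial\Omega)$ trace estimate
\[
\int_{\partial\Omega}\abs{u}\le C\int_\Omega\abs{u}+C\int_\Omega\abs{\nabla u},
\]
and only then applies this to $\abs{u}^{p(n-1)/(n-p)}$ followed by H\"older and Young. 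So the paper isolates the $p=1$ step and bootstraps, while you go straight for the general exponent; both are standard variants of the same argument, and both keep the constant dependent only on $n$, $p$, and the Lipschitz character.

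There is, however, a slip in your exponent bookkeeping. After integrating
\[
\abs{v(x',0)}^{s}\le s\int_0^\infty \abs{v(x',t)}^{s-1}\abs{\partial_n v(x',t)}\,dt
\]
over $x'\in\bR^{n-1}$, the correct H\"older step is in the \emph{full} variable $(x',t)\in\bR^n_+$ with exponents $p'=p/(p-1)$ and $p$, not in $x'$ alone with exponents $n/(n-p)$ and $n/p$. With the correct exponents one gets $(s-1)p'=\dfrac{n(p-1)}{n-p}\cdot\dfrac{p}{p-1}=\dfrac{np}{n-p}$, which is exactly the Sobolev exponent needed on the right. Your asserted identity $(s-1)\cdot\dfrac{n}{n-p}=\dfrac{pn}{n-p}$ is false for generic $p$. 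Once this is corrected your argument goes through, and the rest of your discussion of how Remark~\ref{r_0Bounds} controls the $r$-dependent constants is fine.
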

\begin{proof}
We may assume that $\abs{\Omega}=1$ since the inequality we want to establish  is scale invariant.
Also, we will first assume that $u$ is Lipschitz continuous.
Let $\psi_i$ be as in Definition~\ref{LipDom}.
For $x'\in B'_{2r_0}:=B_{2r_0}\cap \set{x_n=0}$ and $x_n\in(0,r_0)$, we have
\begin{align*}
\abs{u(x',\psi_i(x'))}&=\Abs{u(x',\psi_i(x')+x_n)-\int_{\psi_i(x')}^{\psi_i(x')+x_n}D_n u(x',t)\,dt}\\
&\leq \abs{u(x',\psi_i(x')+x_n)}+\int_{\psi_i(x')}^{\psi_i(x')+r_0}\abs{D_n u(x',t)}\,dt.
\end{align*}
Integrating the above with respect to $x_n$ on $(0, r_0)$ and dividing by $r_0$, we obtain
\[
\abs{u(x',\psi_i(x'))} \leq \frac{1}{r_0}\int_{\psi_i(x')}^{\psi_i(x')+r_0} \abs{u(x',x_n)}\,dx_n+\int_{\psi_i(x')}^{\psi_i(x')+r_0} \abs{D_n u(x',t)}\,dt,
\]
and, then integrating with respect to $x'$ on $B'_{2r_0}$, we have
\begin{multline*}
\int_{B'_{2r_0}} \abs{u(x',\psi_i(x'))}\,dx'
\leq\frac{1}{r_0}\int_{B'_{2r_0}}\int_{\psi_i(x')}^{\psi_i(x')+r_0} \abs{u(x',x_n)}\,dx_n dx' \\
+\int_{B'_{2r_0}}\int_{\psi_i(x')}^{\psi_i(x')+r_0} \abs{D_n u(x',t)}\,dt dx'
\le \frac{1}{r_0}\int_{\Omega} \abs{u}+\int_{\Omega} \abs{\nabla u}.
\end{multline*}
Therefore, we have
\[
\int_{\partial\Omega\cap B_{2r_0(q_i)}} \abs{u}\ \le  \frac{\sqrt{M^2+1}}{r_0} \int_{\Omega} \abs{u}+ \sqrt{M^2+1} \int_{\Omega} \abs{\nabla u}.
\]
Adding the above inequalities for $i=1,\ldots, N$, we obtain
\begin{equation}\label{eq:forSmooth}
\int_{\partial\Omega} \abs{u} \le C\int_{\Omega} \abs{u}+C\int_{\Omega} \abs{\nabla u},
\end{equation}
where $C$ depends only on $n$ and the Lipschitz character of $\Omega$; see Remark~\ref{r_0Bounds}.

Next, for $u\in W_1^p(\Omega)$, let $\set{\psi_m} \in C^{1}(\bR^n)$ be such that $\psi_m\to u$ in $W_1^p(\Omega)$.
By \eqref{eq:forSmooth} applied to $\abs{\psi_m}^{\frac{p(n-1)}{n-p}}$, we have
\[
\int_{\partial\Omega} \abs{\psi_m}^{\frac{p(n-1)}{n-p}} \le C\int_{\Omega} \abs{\psi_m}^{\frac{p(n-1)}{n-p}}+C\int_{\Omega}\tfrac{p(n-1)}{n-p}\, \abs{\psi_m}^{\frac{p(n-1)}{n-p}-1}\, \abs{\nabla\psi_m}.
\]
Then, by H\"older's inequality and Young's inequality, we obtain (recall  $\abs{\Omega}=1$)
\[
\norm{\psi_m}_{L^{\frac{p(n-1)}{n-p}}(\partial\Omega)}\leq C\norm{\psi_m}_{L^{\frac{np}{n-p}}(\Omega)}+C\norm{\nabla\psi_m}_{L^p(\Omega)},
\]
where $C$ depends on $n$, $p$ and the Lipschitz character of $\Omega$.
The proof is complete by letting $m\to\infty$.
\end{proof}

The next lemma is a variant of the previous lemma.
\begin{lemma}			\label{intpl_lemma}
Let $\Omega$ be a Lipschitz domain.
If $u\in W_1^{p,\infty}(\Omega)$ with  $1<p<n$, then the trace of $u$ on $\partial\Omega$ belongs to $L^{\frac{p(n-1)}{n-p},\infty}(\partial\Omega)$ and
\[
\norm{u}_{L^{\frac{p(n-1)}{n-p},\infty}(\partial\Omega)}\leq C\norm{u}_{Y_1^{p,\infty}(\Omega)},
\]
where $C$ depends only on $n$, $p$, and the Lipschitz character of $\Omega$.
\end{lemma}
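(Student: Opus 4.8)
The plan is to deduce the estimate by real interpolation (the $K$-method) from two strong-type trace bounds furnished by Lemma~\ref{trace_lemma}. Fix $p_0,p_1$ with $1<p_0<p<p_1<n$, chosen to depend only on $n$ and $p$ (for instance $p_0=\tfrac{1+p}{2}$ and $p_1=\tfrac{p+n}{2}$), and put $q_i=\tfrac{p_i(n-1)}{n-p_i}$ and $q=\tfrac{p(n-1)}{n-p}$. Let $\theta\in(0,1)$ be determined by $\tfrac1p=\tfrac{1-\theta}{p_0}+\tfrac{\theta}{p_1}$; using $\tfrac1{q_i}=\tfrac1{n-1}\bigl(\tfrac{n}{p_i}-1\bigr)$ one checks that then also $\tfrac1q=\tfrac{1-\theta}{q_0}+\tfrac{\theta}{q_1}$. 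Since $\Omega$ is a bounded Lipschitz domain, $W_1^{p_i}(\Omega)=Y_1^{p_i}(\Omega)$, so Lemma~\ref{trace_lemma} tells us that the trace operator $T\colon u\mapsto u|_{\partial\Omega}$ is bounded $W_1^{p_i}(\Omega)\to L^{q_i}(\partial\Omega)$ for $i=0,1$, with norm depending only on $n$, $p$, and the Lipschitz character of $\Omega$.

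Applying the functor $(\,\cdot\,,\,\cdot\,)_{\theta,\infty}$, the operator $T$ is bounded from $(W_1^{p_0}(\Omega),W_1^{p_1}(\Omega))_{\theta,\infty}$ to $(L^{q_0}(\partial\Omega),L^{q_1}(\partial\Omega))_{\theta,\infty}$. The target space is $L^{q,\infty}(\partial\Omega)$ with equivalent norms, by the standard description of real interpolation spaces between Lebesgue spaces (see \cite{Grafakos}). For the domain space, a universal extension operator $\mathcal E\colon W_1^r(\Omega)\to W_1^r(\bR^n)$ (a single operator bounded on $W_1^r$ for all $r\ge 1$, cf.\ \cite{Ste70}) exhibits $W_1^r(\Omega)$ as a retract of $W_1^r(\bR^n)$ uniformly in $r$, so $(W_1^{p_0}(\Omega),W_1^{p_1}(\Omega))_{\theta,\infty}$ is identified, with equivalent norms, with the restriction to $\Omega$ of $(W_1^{p_0}(\bR^n),W_1^{p_1}(\bR^n))_{\theta,\infty}=W_1^{p,\infty}(\bR^n)$ — the last identity being classical (via the Riesz-transform projection realizing $W_1^r(\bR^n)$ as a complemented subspace of $L^r\times L^r(\bR^n;\bR^n)$, with projection norm bounded uniformly for $r\in[p_0,p_1]$). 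Thus $(W_1^{p_0}(\Omega),W_1^{p_1}(\Omega))_{\theta,\infty}=W_1^{p,\infty}(\Omega)$ with equivalent norms, the constants depending only on $n$, $p$, and the Lipschitz character of $\Omega$.

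Combining these identifications yields $\norm{Tu}_{L^{q,\infty}(\partial\Omega)}\le C\norm{u}_{W_1^{p,\infty}(\Omega)}$, and since $\abs{\Omega}<\infty$ we have $\norm{u}_{L^{p,\infty}(\Omega)}\le C\norm{u}_{L^{pn/(n-p),\infty}(\Omega)}$, hence $\norm{u}_{W_1^{p,\infty}(\Omega)}\le C\norm{u}_{Y_1^{p,\infty}(\Omega)}$, which gives the claimed bound. The only step that is not purely formal is the identification of $(W_1^{p_0}(\Omega),W_1^{p_1}(\Omega))_{\theta,\infty}$: one needs that Sobolev spaces over a bounded Lipschitz domain real-interpolate to Lorentz--Sobolev spaces with constants controlled by the Lipschitz character, and it is precisely here that the strict inequalities $1<p<n$ are used — they keep $p_0,p_1$ strictly inside $(1,n)$, where the extension and singular-integral constants entering this identification remain bounded.
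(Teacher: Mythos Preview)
Your proof is correct and follows essentially the same route as the paper's: real interpolation of the trace operator between two strong-type bounds from Lemma~\ref{trace_lemma}, together with the identification $(W_1^{p_0}(\Omega),W_1^{p_1}(\Omega))_{\theta,\infty}=W_1^{p,\infty}(\Omega)$ and the standard identification of the target as $L^{q,\infty}(\partial\Omega)$. The only cosmetic difference is in how the Sobolev interpolation space is pinned down --- the paper invokes the DeVore--Scherer theorem \cite{DS79} and the reiteration theorem from \cite{BS88} directly on $\Omega$, whereas you pass through Stein's universal extension to $\bR^n$ and a Riesz-transform/Bessel-potential argument there; both routes are standard and yield the same conclusion with constants depending only on $n$, $p$, and the Lipschitz character.
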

\begin{proof}
Since the estimate is scale-invariant, so we may assume that $\abs{\Omega}=1$.
Then since $\norm{u}_{W_1^{p,\infty}(\Omega)}\leq \norm{u}_{Y_1^{p,\infty}(\Omega)}$, it is enough to show that
\[
\norm{u}_{L^{\frac{p(n-1)}{n-p},\infty}(\partial\Omega)}\leq C\norm{u}_{W_1^{p,\infty}(\Omega)}.
\]
By \cite[Theorem 2]{DS79} and \cite[Theorem 1.9, p. 300]{BS88}, we have
\[
W_1^{p,\infty}(\Omega)=\bigl(W_1^1(\Omega),W_1^\infty(\Omega)\bigr)_{1-1/p,\infty}\quad\text{and}\quad
W_1^{p,1}(\Omega)=\bigl(W^1_1(\Omega),W^\infty_1(\Omega)\bigr)_{1-1/p,1}.
\]
Also, by \cite[Corollary V.5.13]{BS88}), we have
\[
W_1^p(\Omega)=\bigl(W_1^1(\Omega),W^\infty_1(\Omega)\bigr)_{1-1/p, p}.
\]
Fix $\varepsilon>0$ so small that $p-\varepsilon>1$ and $p+\varepsilon<n$.
Then, choose $\theta\in(0,1)$ such that
\[
\frac{1-\theta}{p-\varepsilon}+\frac{\theta}{p+\varepsilon}=\frac{1}{p}.
\]
By the reiteration theorem (see \cite[Theorem 2.4, p. 311]{BS88}), we have
\[
\left(W_1^{p-\varepsilon}(\Omega),W_1^{p+\varepsilon}(\Omega)\right)_{\theta,\infty}=\left(W_1^1(\Omega),W_1^\infty(\Omega)\right)_{\theta',\infty}=W_1^{p,\infty}(\Omega),
\]
where
\[
\theta'=(1-\theta)\left(1-\frac{1}{p-\varepsilon}\right)+\theta\left(1-\frac{1}{p+\varepsilon}\right)=1-\frac{1-\theta}{p-\varepsilon}-\frac{\theta}{p+\varepsilon}=1-\frac{1}{p}.
\]
Similarly, for $1<q_0<q_1<\infty$, we have
\[
\bigl(L^{q_0}(\partial\Omega), L^{q_1}(\partial\Omega)\bigr)_{\theta,\infty}=\left(L^1(\partial\Omega),L^\infty(\partial\Omega)\right)_{\theta',\infty}=L^{q_\theta,\infty}(\partial\Omega),
\]
where
\[
\frac{1}{q_\theta}=\frac{1-\theta}{q_0}+\frac{\theta}{q_1}.
\]
Let $T$ be the trace operator. By Lemma~\ref{trace_lemma} we have
 \[
\norm{Tu}_{L^{\frac{p(n-1)}{n-p}}(\partial\Omega)}\le  C\norm{u}_{W_1^p(\Omega)}.
\]
Then, by \cite[Theorem V.1.12]{BS88}, the trace operator maps
\[
T: \bigl(W_1^{p-\varepsilon}(\Omega),W_1^{p+\varepsilon}(\Omega)\bigr)_{\theta,\infty} \to \bigl(L^{\frac{(p-\varepsilon)(n-1)}{n-(p-\varepsilon)}}(\partial\Omega),L^{\frac{(p+\varepsilon)(n-1)}{n-(p+\varepsilon)}}(\partial\Omega)\bigr)_{\theta,\infty}
\]
and thus,
\[
T:W_1^{p,\infty}(\Omega)\to L^{q,\infty}(\partial\Omega),
\]
where
\[
\frac{1}{q}=(1-\theta)\,\frac{n-p+\varepsilon}{(p-\varepsilon)(n-1)}+\theta\,\frac{n-p-\varepsilon}{(p+\varepsilon)(n-1)}=\frac{n-p}{(n-1)p}
\]
and $\norm{T}$ depends only on $n$, $p$, and the Lipschitz character of $\Omega$.
\end{proof}

\subsection{The splitting lemmas}
The estimates we show in this article depend on the lower order coefficients only via  norms.
This is straightforward to do if some smallness for the lower order coefficients is involved.
To pass to larger norms, we consider a splitting $\set{u_j}_{j=1}^N$ of $u$ such that $\nabla u_j$ are supported in sets where the lower order coefficients have small norms.
This is the context of the following lemma, which first appeared in \cite{BM76} and was later extended to a more general setting in \cite{Mou19}.

\begin{lemma}		\label{Split}
Let $\Omega\subset \bR^n$ be an open set, $h \in L^n(\Omega)$,  and $u\in Y_1^2(\Omega)$.
For any $\varepsilon>0$, there exist mutually disjoint measurable sets $\Omega_i\subset \Omega$ and functions $u_i \in Y_1^2(\Omega)$ for $i=1,\ldots, N$ with the following properties.
\begin{enumerate}
\item $\norm{h}_{L^n(\Omega_i)}=\varepsilon\,$ for $\,i=1,\dots, N-1\,$ and $\norm{h}_{L^n(\Omega_N)}\leq \varepsilon$,
\item $\set{x \in \Omega : \nabla u_i(x)\neq 0}\subset\Omega_i$,
\item $\nabla u=\nabla u_i$ in $\Omega_i$,
\item $\abs{u_i} \leq \abs{u}$,
\item $u u_i\geq 0$,
\item $u=\sum_{i=1}^Nu_i$,
\item $u_i\nabla u=\sum_{j=1}^iu_i\nabla u_j$,
\item $u\nabla u_i=\sum_{j=i}^N u_j\nabla u_i$,
\end{enumerate}
and $N$ has an upper bound $N \le 1+\left(\norm{h}_n/\varepsilon\right)^n$.
\end{lemma}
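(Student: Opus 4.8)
The plan is to build the decomposition by truncating $u$ at a finite decreasing sequence of levels of $\abs{u}$, where the levels are chosen so that each resulting ``band'' carries exactly $\varepsilon^n$ of the mass of the measure $\mu:=\abs{h}^n\,dx$. Since $\mu$ is a finite measure that is absolutely continuous with respect to Lebesgue measure, it is non-atomic. Write $M_0:=\mu(\Omega)=\norm{h}_{L^n(\Omega)}^n$; if $M_0=0$ one simply takes $N=1$, $\Omega_1=\Omega$, $u_1=u$, so assume $M_0>0$ and let $N$ be the least positive integer with $N\varepsilon^n\ge M_0$, so that $(N-1)\varepsilon^n<M_0\le N\varepsilon^n$ and hence $N\le 1+M_0/\varepsilon^n=1+(\norm{h}_n/\varepsilon)^n$. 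Set $v:=\abs{u}$, and recall $v\in Y_1^2(\Omega)$ with $\nabla v=\operatorname{sgn}(u)\,\nabla u$ a.e.

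Next I would choose the levels: $s_0:=\norm{u}_{L^\infty(\Omega)}\in(0,\infty]$ and, for $i=1,\dots,N$, $s_i:=\inf\set{t\ge 0:\mu(\set{v>t})\le i\varepsilon^n}$. Then $s_0\ge s_1\ge\cdots\ge s_{N-1}\ge s_N=0$ and $\mu(\set{v>s_i})\le i\varepsilon^n\le\mu(\set{v\ge s_i})$ for $1\le i\le N-1$. Using the non-atomicity of $\mu$ (Sierpi\'nski's theorem, applied inside each level set $\set{v=s_i}$), one can pick an increasing chain of measurable sets $\emptyset=P_0\subset P_1\subset\cdots\subset P_{N-1}\subset P_N=\Omega$ with $\set{v>s_i}\subseteq P_i\subseteq\set{v\ge s_i}$ and $\mu(P_i)=i\varepsilon^n$ for $1\le i\le N-1$. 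Define $\Omega_i:=P_i\setminus P_{i-1}$ and $u_i:=T_{s_{i-1}}u-T_{s_i}u$, where $T_s w:=\max(-s,\min(s,w))$ (with $T_\infty=\mathrm{id}$). Then $u_i\in Y_1^2(\Omega)$ (truncations are contractions fixing $0$), $\abs{u_i}=\min(v,s_{i-1})-\min(v,s_i)$ has the same sign as $u$, and $\nabla u_i=\nabla u\,\mathbf 1_{\set{s_i\le v<s_{i-1}}}$ a.e.

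It then remains to verify (a)--(h). Property (a) is immediate from $\mu(P_i)=i\varepsilon^n$ together with $\mu(\Omega_N)=M_0-(N-1)\varepsilon^n\le\varepsilon^n$. For (b) and (c) one uses that $\nabla v=0$ a.e.\ on every level set $\set{v=c}$, so that $\set{\nabla u_i\ne 0}\subseteq\set{s_i<v<s_{i-1}}\subseteq\Omega_i$ up to a null set, while on $\Omega_i$ one has $s_i\le v\le s_{i-1}$, whence $\nabla u_i=\nabla u$ there. Properties (d) and (e) are read off the formula for $\abs{u_i}$, and (f) follows from the telescoping sum $\sum_{i=1}^N u_i=T_{s_0}u-T_{s_N}u=u-0=u$, using $s_0=\norm{u}_{L^\infty(\Omega)}$ and $s_N=0$. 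For (g) and (h) one notes that, by telescoping the band functions, $\sum_{j\le i}\nabla u_j=\nabla u$ on $\set{s_i\le v<s_0}$ and $\sum_{j\ge i}u_j=u$ on $\set{v<s_{i-1}}$; combining these with $\nabla u_i=\nabla u\,\mathbf 1_{\set{s_i\le v<s_{i-1}}}$, with $u_i=0$ on $\set{v<s_i}$, and with $\nabla u=0$ a.e.\ on the level set $\set{v=s_0}$, yields the two identities.

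The main obstacle is that the distribution function $t\mapsto\mu(\set{v>t})$ need not be continuous: a level set $\set{v=s_i}$ may carry positive $\mu$-measure, so the chain $\set{P_i}$ cannot in general be taken as the sets $\set{v>s_i}$, and one must split such a level set between consecutive bands using the non-atomicity of $\mu$. This turns out to be harmless for (b)--(c) precisely because $\nabla u=0$ a.e.\ on every level set, so the way a level set is distributed among the $\Omega_i$ does not affect the gradient identities. A second, minor point is that $s_0$ must be taken to be $\norm{u}_{L^\infty(\Omega)}$ (which may be $+\infty$) rather than the $\mu$-essential supremum of $v$, so that the telescoping in (f) recovers $u$ itself rather than merely a truncation of it.
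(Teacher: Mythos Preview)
Your argument is correct. Note, however, that the paper does not actually prove Lemma~\ref{Split}: it is stated with a reference to \cite{BM76} and \cite{Mou19}, and only the mean-zero variant Lemma~\ref{SplitMeanZero} is proved in the text. Your construction via the truncations $u_i=T_{s_{i-1}}u-T_{s_i}u$ at levels $s_i$ chosen so that each band carries mass $\varepsilon^n$ of $\mu=\abs{h}^n\,dx$ is the standard one, and you correctly isolate the only genuine subtlety: a level set $\set{\abs{u}=s_i}$ may carry positive $\mu$-mass, so the $P_i$ cannot simply be the superlevel sets. Your fix---splitting such a level set among consecutive $P_i$ via Sierpi\'nski's theorem, which is harmless for (b)--(c) and (g)--(h) because $\nabla u=0$ a.e.\ on every level set---is clean and correct.

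It is worth contrasting this with the device the paper uses in the proof of Lemma~\ref{SplitMeanZero}. There the authors build the sets $\Omega(s,t)$ by intersecting the level bands with $\set{\nabla u\neq 0}$ from the outset. Since $\set{u=c}\cap\set{\nabla u\neq 0}$ is Lebesgue-null for every $c$, the map $s\mapsto \int_{\Omega(s,t)}\abs{h}^n$ is then continuous, and the levels $s_i$ are produced directly by the intermediate value theorem, with no need for an auxiliary nested chain or a non-atomicity argument. Both routes rest on the same fact (gradients vanish on level sets); the paper bakes it into the definition of the $\Omega_i$, while you use it after the fact to verify the gradient identities. Your approach has the minor advantage of working for any non-atomic $\mu$, at the cost of having to check that the chain $P_0\subset\cdots\subset P_N$ can be made increasing when several consecutive $s_i$ coincide.
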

We will also need the following lemma, which splits a function $u$ with integral zero to functions $u_i$ as in Lemma~\ref{Split} above such that all $u_i$ have integral zero.

\begin{lemma}\label{SplitMeanZero}
Let $\Omega\subset \bR^n$ be an open set, $h \in L^n(\Omega)$,  and $u\in Y_1^2(\Omega) \cap L^1(\Omega)$ satisfying $\int_\Omega u=0$.
For any $\varepsilon>0$, there exist mutually disjoint measurable sets $\Omega_i\subset \Omega$ and functions $u_i \in Y_1^2(\Omega)\cap L^1(\Omega)$ for $i=1,\ldots, N$ satisfying all the properties in Lemma~\ref{Split} and additionally $\int_\Omega u_i=0$.
\end{lemma}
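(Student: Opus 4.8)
The plan is to follow the construction behind Lemma~\ref{Split}, replacing the symmetric truncations $\max\{\min\{u,t\},-t\}$ used there — which need not have zero mean once $u$ is not symmetrically distributed — by \emph{mass-balanced} truncations indexed by a single scalar parameter, and then to meter out $\norm{h}_{L^n}$ along that parameter. If $u\equiv 0$ the statement is trivial ($N=1$, $u_1=0$), and if $\norm{h}_{L^n(\Omega)}\le\varepsilon$ one may take $N=1$, $\Omega_1=\{\nabla u\ne 0\}$, $u_1=u$; so from now on assume $u\not\equiv 0$ and $\norm{h}_{L^n(\Omega)}>\varepsilon$, and put $m:=\int_\Omega u^+=\int_\Omega u^->0$, the equality holding because $\int_\Omega u=0$.

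The first step is to build the balanced truncations. For $a,b\ge 0$ set $P(a):=\int_\Omega\min\{u^+,a\}$ and $M(b):=\int_\Omega\min\{u^-,b\}$; both are continuous, nondecreasing, vanish at $0$ and increase to $m$. Let $\Phi,\Psi:[0,m]\to[0,+\infty]$ be their generalized inverses, so $P(\Phi(\tau))=M(\Psi(\tau))=\tau$ and $\Phi(m)=\Psi(m)=+\infty$, and set
\[
V_\tau:=\min\bigl\{\max\{u,-\Psi(\tau)\},\,\Phi(\tau)\bigr\},
\]
the truncation of $u$ to the (generally asymmetric) interval $[-\Psi(\tau),\Phi(\tau)]$. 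Then $V_\tau\in Y_1^2(\Omega)\cap L^1(\Omega)$ with $\nabla V_\tau=\nabla u\cdot\chi_{\{-\Psi(\tau)<u<\Phi(\tau)\}}$, $V_0=0$, $V_m=u$, $|V_\tau|\le|u|$, $u\,V_\tau\ge 0$, and — the point of the balancing — $\int_\Omega V_\tau=P(\Phi(\tau))-M(\Psi(\tau))=0$ for every $\tau$.

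The second step is to choose the cuts. For $\sigma\in[0,m]$ let $E_\sigma:=\{u\ge\Phi(\sigma)\}\cup\{u\le-\Psi(\sigma)\}$ and $R(\sigma):=\norm{h}_{L^n(E_\sigma)}$, which is nonincreasing with $R(0)=\norm{h}_{L^n(\Omega)}>\varepsilon$ and $R(m)=0$. Put $N:=\lceil(\norm{h}_{L^n(\Omega)}/\varepsilon)^n\rceil$, so $N\le 1+(\norm{h}_{L^n(\Omega)}/\varepsilon)^n$, choose $m=\sigma_0>\sigma_1>\dots>\sigma_{N-1}>\sigma_N=0$ with $R(\sigma_k)=k^{1/n}\varepsilon$ for $1\le k\le N-1$ (feasible since $k^{1/n}\varepsilon<R(0)$ for those $k$), and finally set
\[
u_k:=V_{\sigma_{k-1}}-V_{\sigma_k},\qquad \Omega_k:=\{\Phi(\sigma_k)\le u<\Phi(\sigma_{k-1})\}\cup\{-\Psi(\sigma_{k-1})<u\le-\Psi(\sigma_k)\}
\]
for $k=1,\dots,N$. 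Now $\nabla u_k=\nabla u\cdot\chi_{\Omega_k}$, the $\Omega_k$ are disjoint with $\bigcup_{j\le k}\Omega_j=E_{\sigma_k}$, and $\sum_k u_k=V_{\sigma_0}-V_{\sigma_N}=u$. Properties (2)--(8) are then verified exactly as in the proof of Lemma~\ref{Split} (for (7) one uses that $u_k\equiv0$ on $\Omega\setminus E_{\sigma_k}$, where both $V_{\sigma_{k-1}}$ and $V_{\sigma_k}$ equal $u$; for (8) that $\sum_{j\ge k}u_j=V_{\sigma_{k-1}}$ equals $u$ on $\Omega_k$); the points specific to this version are property (1), which follows from $\norm{h}_{L^n(\Omega_k)}^n=R(\sigma_k)^n-R(\sigma_{k-1})^n=\varepsilon^n$ for $k<N$ and $\norm{h}_{L^n(\Omega_N)}^n=\norm{h}_{L^n(\Omega)}^n-(N-1)\varepsilon^n\le\varepsilon^n$, and the new assertion $\int_\Omega u_k=\int_\Omega V_{\sigma_{k-1}}-\int_\Omega V_{\sigma_k}=0$.

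The only genuinely new ingredient over Lemma~\ref{Split} is the balanced truncation $V_\tau$; after that one is essentially repeating the computations behind Lemma~\ref{Split}. The step that needs care, and which I expect to be the main obstacle, is the metering: $P$ and $M$ may have plateaus — equivalently the distribution of $u$ may have atoms — so $R$ can jump over some of the target values $k^{1/n}\varepsilon$. Each such jump set, however, is a level set of $u$, on which $\nabla u=0$ a.e.; since moreover $\lvert h\rvert^n\,dx$ is a non-atomic measure, these level sets can be subdivided and distributed among the $\Omega_k$ so as to hit $\norm{h}_{L^n(\Omega_k)}=\varepsilon$ exactly, and this alters neither $u_k$ nor $\nabla u_k$, so properties (1)--(8) and the bound on $N$ survive. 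An alternative is to split off the (at most countably many) positive-measure level sets of $u$ before starting.
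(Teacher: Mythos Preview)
Your proof is correct and follows essentially the same strategy as the paper's: both construct mass-balanced truncations of $u$ and then meter out $\norm{h}_{L^n}$ along a one-parameter family. The differences are cosmetic: the paper parametrizes by the positive cut level $s$ and recovers the matching negative level $k_s$ via the explicit continuous, strictly increasing function $g(k)=\int_{\{u<k\}}(k-u)$, whereas you parametrize by the common mass $\tau$ and use generalized inverses $\Phi,\Psi$; and the paper avoids your atom issue by intersecting with $\{\nabla u\neq 0\}$ from the outset (so that $s\mapsto h(s,t)$ is automatically continuous), which is cleaner than your post-hoc subdivision of level sets but amounts to the same thing.
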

\begin{proof}
Set $m=\essinf_\Omega u$ and $M=\esssup_\Omega u$.
Since $\int_\Omega u=0$, unless $u\equiv 0$, we have $m < 0$ and $M > 0$.
Consider the function $g: (m,0] \to \bR$ defined by
\[
g(k)=\int_{\set{u<k}}(k-u).
\]
We note that $g \ge 0$, $g(0)=\int_\Omega u^{-}$, and $\lim_{k \to m} g(k)=0$.
Moreover, $g$ is strictly increasing and continuous on $(m,0]$.
Indeed, for $m<k_1<k_2\leq 0$, we have
\begin{align*}
g(k_2)-g(k_1)&=\int_{\set{u<k_2}}(k_2-u)-\int_{\set{u<k_1}}(k_1-u)\\
&=\int_{\set{u\leq k_1}}(k_2-u)+\int_{\set{k_1<u<k_2}}(k_2-u)-\int_{\set{u\leq k_1}}(k_1-u)\\
&=(k_2-k_1)\,\Abs{\set{u\leq k_1}}+\int_{\set{k_1<u<k_2}}(k_2-u).
\end{align*}
We shall set $g(m)=0$.
Next, we claim that for $0 \le  l<M$, there exists a unique $k_l\in(m,0]$ such that
\begin{equation}\label{eq:sl}
\int_{\set{u>l}}(u-l)=\int_{\set{u<k_l}}(k_l-u)=g(k_l).
\end{equation}
Indeed, since
\[
g(m)=0<\int_{\set{u>l}}(u-l) \le \int_{\set{u>l}}u \le \int_{\set{u\geq 0}}u=\int_\Omega u^{+}=\int_\Omega u^{-}=g(0),
\]
there exists a unique $k_l\in(m,0]$ satisfying \eqref{eq:sl}.
We set $k_M=m$ so that $g(k_M)=0$.
Notice that $0 \ge k_{l_1} > k_{l_2} \ge m$ if $0\le l_1 < l_2 \le M$.
For $0\leq s<t \le M$, we set
\[
\Omega(s,t)=\set{ s<u \leq t, \;\nabla u \neq 0} \cup \set{k_t\leq u<k_s,\; \nabla u \neq 0},
\]
and define
\[
h(s,t)=\int_{\Omega(s,t)} \abs{h}^n.
\]
Also, for $0 \le s<M$, we define
\[
u_{s,M}=\left\{\begin{array}{c l} u-k_s, & u\leq k_s \\ 0, & k_s<u\leq s \\u-s, & u>s \end{array}\right.
\]
and for $0 \le s <t < M$, we set
\[
u_{s,t}=u_{s,M}-u_{t,M}.
\]
Observe that \eqref{eq:sl} implies
\[
\int_\Omega u_{s,M}=\int_{\set{u>s}}(u-s)+\int_{\set{u<k_s}}(u-k_s)=0.
\]
Since $u_{s,t}=u_{s,M}-u_{t,M}$, we see that $\int_\Omega u_{s,t}=0$ for any $s$, $t$ satisfying $0 \le s <t  \le M$.
Note that we actually have
\[
u_{s,t}=\left\{\begin{array}{c l} k_t-k_s, & u\leq k_t\\ u-k_s, & k_t<u\leq k_s \\ 0, & k_s<u\leq s \\u-s, & s<u\leq t \\t-s, & u>t\end{array}\right..
\]
As in \cite[Lemma 2.31]{Mou19}, we see that $s \mapsto h(s,t)$ is continuous for any fixed $t$.

Now, we set $s_0=M$ and let $N$ be the smallest integer satisfying $\norm{h}_{L^n(\Omega(0,s_0))}^n<N\varepsilon^n$.
If $N=1$, then we set $s_1=0$, $\Omega_1=\Omega(s_1,s_0)=\Omega(0,M)$, and stop.
If $N \ge 2$, then we have $h(0,s_0)=\norm{h}_{L^n(\Omega(0,s_0))}^n\geq (N-1) \varepsilon^n\ge \varepsilon^n$, and thus continuity of $h(\cdot,s_0)$ implies that there exists $s_1 \in [0, s_0)$ such that $h(s_1,s_0)=\varepsilon^n$.
Set $\Omega_1=\Omega(s_1,s_0)=\Omega(s_1,M)$.
If $N=2$, then we set $s_2=0$, $\Omega_2=\Omega(s_2,s_1)=\Omega(0,s_1)$, and stop.
If $N \ge 3$, then
\[
h(0,s_1)=\int_{\Omega(0,s_1)} \abs{h}^n=\int_{\Omega(0,s_0)} \abs{h}^n-\int_{\Omega(s_1,s_0)} \abs{h}^n\geq (N-2)\varepsilon^n \ge \varepsilon^n,
\]
and thus there exists $s_2\in [0,s_1)$ such that $h(s_1,s_2)=\varepsilon^n$.

Inductively, we construct a sequence $M=s_0>s_1> \cdots > s_{N-1}>s_N=0$ such that $h(s_i,s_{i-1})=\varepsilon^n$ for $i=1,\ldots, N-1$, and $h(s_N,s_{N-1})\le \varepsilon^n$. Set $\Omega_i=\Omega(s_i,s_{i-1})$ for $i=1,\dots N$, and $u_i=u_{s_i,s_{i-1}}$. Then $\nabla u_i$ is supported in $\Omega_i$, and the rest of the relations are straightforward to verify.
\end{proof}

\subsection{The main estimate}
The following lemma treats the main estimate that we will use in the proof of our main results. 
In the case of the Dirichlet problem, a similar result is contained in \cite{Mou19}.

\begin{lemma}\label{Main}
Let $\Omega\subset \bR^n$ be a bounded Lipschitz domain, with $\abs{\Omega}\leq 1$.
Let $\mathbf A=(a^{ij})$ satisfy the uniform ellipticity and boundedness condition \eqref{ellipticity}, $\vec b$, $\vec c \in L^n(\Omega)$, $d \in L^{n/2}(\Omega)$.
Assume that either $(\vec b, d)$ satisfies \eqref{eq1122wed} or $(\vec c,d)$ satisfies \eqref{eq1123wed}.
\begin{enumerate}
\item 
For $f\in L^{\frac{2n}{n+2}}(\Omega)$, $\vec F\in L^2(\Omega)$, and $g\in L^{2-\frac{2}{n}}(\partial\Omega)$, let $u\in Y_1^2(\Omega)$ be a subsolution to the problem
\begin{equation}				\label{eq1523tue}
\left\{\begin{array}{c l}
-\dv(\mathbf A \nabla u+\vec bu)+\vec c \cdot \nabla u+du = f-\dv \vec F &\text{ in }\;\Omega,\\
(\mathbf A \nabla u + \vec b u)\cdot \nu = g+ \vec F \cdot \nu & \text{ on }\;\partial\Omega,
\end{array}\right.
\end{equation}
Consider the splitting $\set{u_i}_{i=1}^N$ of $u^{+}$ corresponding to $h=\abs{\vec b-\vec c}\in L^n(\Omega)$ and $\varepsilon \in(0,\frac18 \lambda]$ as in Lemma~\ref{Split}.
Suppose, for some numbers $a_i\geq 0$ and some constant $C_0$ with $C_0 \varepsilon \le \frac{\lambda}{8}$, we have
\begin{equation}		\label{eq:relation}
\norm{u_i-a_i}_{L^{\frac{2n}{n-2}}(\Omega)}\leq C_0 \norm{\nabla u_i}_\Omega,\quad i=1,\ldots, N.
\end{equation}
Then, we have
\begin{equation}		\label{eq:MainEstimate}
\int_\Omega \Abs{\nabla u^{+}}^2\leq C\left( a^2+ \norm{f^{+}}_{L^{\frac{2n}{n+2}}(\Omega)}^2+\norm{g^{+}}_{L^{2-\frac{2}{n}}(\partial\Omega)}^2+\norm{\vec F}_{L^2(\Omega)}^2\right),
\end{equation}
where $a=\sum_{i=1}^N a_i$, and the constant $C$ depends on $n$, $\lambda$, $\norm{\vec b-\vec c}_n$, $C_0$, $\varepsilon$, and the Lipschitz character of $\Omega$.
\item
Let $u\in Y_1^2(\Omega)$ be a solution to the same Neumann problem  as above and consider the splitting $\set{u_i}_{i=1}^N$ of $u$ corresponding to $h=\abs{\vec b-\vec c}\in L^n(\Omega)$ and $\varepsilon \in(0,\frac18 \lambda)$.
Assume also that \eqref{eq:relation} holds for some numbers $a_i\geq 0$ and $C_0$ with $C_0\varepsilon <\frac{\lambda}{8}$.
Then, we have
\[
\int_\Omega \Abs{\nabla u}^2\leq C\left( a^2+ \norm{f}_{L^{\frac{2n}{n+2}}(\Omega)}^2+\norm{g}_{L^{2-\frac{2}{n}}(\partial\Omega)}^2+\norm{\vec F}_{L^2(\Omega)}^2\right),
\]
where $a=\sum_{i=1}^N a_i$ and $C$ depends on $n$, $\lambda$, $\norm{\vec b-\vec c}_n$, $C_0$, $\varepsilon$, and the Lipschitz character of $\Omega$.
\end{enumerate}
\end{lemma}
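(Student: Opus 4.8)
The plan is to prove part (1) first, then derive part (2) from it by applying part (1) to both $u^+$ and $u^-$ (the latter by passing to the adjoint equation, which swaps the roles of $\vec b$ and $\vec c$ and hence keeps the splitting of $h=\abs{\vec b-\vec c}$ unchanged). For part (1), the natural test function is $\phi=u^+$ itself, but $u^+$ need not be admissible, so instead I would use the pieces of the splitting: since $\nabla u^+=\sum_i\nabla u_i$, each $u_i$ is a valid test function by approximation, and the relation $u^+\nabla u_i=\sum_{j\ge i}u_j\nabla u_i$ together with $u_i\nabla u^+=\sum_{j\le i}u_i\nabla u_j$ lets us reconstitute the Dirichlet form. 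The key point is that, in the bilinear form $\int \mathbf A\nabla u^+\cdot\nabla u^+$, the first-order terms $\int(\vec bu^+-u^+\vec c)\cdot\nabla u^+$ — note only the \emph{difference} $\vec b-\vec c$ appears once we assemble things using the sign condition \eqref{eq1122wed} — can be decomposed so that the ``dangerous'' cross-terms $u_j\nabla u_i$ are each supported where $\norm{h}_{L^n(\Omega_i)}\le\varepsilon$, and then Hölder plus Sobolev on the relevant $\Omega_i$ gives a bound by $\varepsilon\,\norm{\nabla u_i}_{L^2}\norm{u_j-a_j}_{L^{2n/(n-2)}}$, which by \eqref{eq:relation} is absorbed into $C_0\varepsilon\,\norm{\nabla u_i}_2\norm{\nabla u_j}_2$; summing in a triangular fashion and using $C_0\varepsilon\le\lambda/8$ lets us hide these against $\lambda\int\abs{\nabla u^+}^2$.

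Concretely the steps are: (i) testing the subsolution inequality \eqref{eq2134sat} against the admissible nonnegative test functions built from the $u_i$, and using the sign hypothesis \eqref{eq1122wed} (or \eqref{eq1123wed}) to throw away the zeroth-order contribution $\int(d-\dv\vec b)(u^+)^2+\int_{\partial\Omega}(\vec b\cdot\nu)(u^+)^2\ge 0$ — formally this is $\int \vec b\cdot\nabla\big((u^+)^2/2\big)+d(u^+)^2/2\ge 0$ applied with $\phi=(u^+)^2/2$, justified by truncation/approximation; (ii) obtaining $\lambda\int\abs{\nabla u^+}^2\le \int\mathbf A\nabla u^+\cdot\nabla u^+\le \big|\text{first-order terms}\big|+\big|\text{RHS terms}\big|$; (iii) estimating the first-order terms as above, writing $\int(\vec b-\vec c)u^+\cdot\nabla u^+=\sum_{i,j}\int(\vec b-\vec c)u_{\max(i,j)}\cdot\nabla u_{\min(i,j)}\mathbf 1_{\Omega_{\min(i,j)}}$ and bounding each summand by $\varepsilon\norm{\nabla u_{\min}}_2\norm{u_{\max}-a_{\max}}_{2n/(n-2)}+\varepsilon\,a_{\max}\norm{\nabla u_{\min}}_2\cdot$(measure factor), the constant-shift terms producing the $a^2$ contribution after Cauchy–Schwarz; (iv) estimating the right-hand side terms $\int f^+u^+$, $\int\vec F\cdot\nabla u^+$, $\int_{\partial\Omega}g^+u^+$ by Hölder, the Sobolev inequality $\norm{u^+}_{2n/(n-2)}\lesssim\norm{u^+-\bar a}_{2n/(n-2)}+|\bar a|\lesssim C_0\norm{\nabla u^+}_2+a$ (using \eqref{eq:relation} summed over $i$) and the trace inequality Lemma~\ref{trace_lemma}, again splitting off an $a^2$ term; (v) collecting everything, absorbing all $\norm{\nabla u^+}_2^2$ terms with sufficiently small coefficient into the left side, which is possible precisely because $C_0\varepsilon\le\lambda/8$ controls the first-order contribution and Young's inequality with a small parameter controls the rest.

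The main obstacle I anticipate is the bookkeeping in step (iii): making the triangular decomposition of the first-order term precise, checking that each cross-term genuinely lives on an $\Omega_i$ with small $\norm{h}_{L^n(\Omega_i)}$, and then summing the resulting $N\times N$ array of terms $\sum_{i\le j}\varepsilon\norm{\nabla u_i}_2\norm{\nabla u_j}_2$ to something comparable to $\varepsilon\big(\sum_i\norm{\nabla u_i}_2\big)^2$ — but since the $\Omega_i$ are disjoint and $\nabla u=\nabla u_i$ on $\Omega_i$, we have $\sum_i\norm{\nabla u_i}_2^2=\int\abs{\nabla u^+}^2$, and a Cauchy–Schwarz in the index $i$ keeps the loss from the $N$-fold sum under control only if one is careful to use the ordering; this is exactly the structure encoded in properties (g)--(h) of Lemma~\ref{Split}, so the estimate closes. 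A secondary subtlety is the rigorous justification of using $u_i$ (Lipschitz-truncations thereof) as test functions in \eqref{eq2134sat} when $u\in Y_1^2$ only, and of the integration-by-parts identity producing the boundary sign term; both are handled by a standard approximation argument, with the constant $a^2$ absorbing the lower-order shifts throughout. For part (2) one applies part (1) to $u$ as a subsolution and to $-u$ using the adjoint problem, noting $\norm{\nabla u}_2^2=\norm{\nabla u^+}_2^2+\norm{\nabla u^-}_2^2$, with the strict inequality $C_0\varepsilon<\lambda/8$ needed only to have room for the two-sided absorption.
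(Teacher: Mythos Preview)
Your overall strategy---test with the pieces $u_i$, use the sign hypothesis to reduce the lower-order terms to a $(\vec b-\vec c)$-contribution, and exploit the smallness of $\norm{\vec b-\vec c}_{L^n(\Omega_j)}$---is correct and matches the paper. But there is a genuine gap in how you close the estimate in step~(iii), and your reduction of part~(b) to part~(a) does not work as stated.

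\textbf{The absorption in (iii) fails.} After the decomposition you obtain (writing $x_i=\norm{\nabla u_i}_2$) a bound of the form
\[
\lambda\sum_i x_i^2 \;\le\; C_0\varepsilon\sum_{j\le i} x_i x_j \;+\; (\text{terms involving }a,\,f^+,\,g^+,\,\vec F),
\]
and you propose to absorb the first sum into the left side. But $\sum_{j\le i}x_ix_j=\tfrac12\bigl[(\sum_i x_i)^2+\sum_i x_i^2\bigr]$, and $(\sum_i x_i)^2$ can be as large as $N\sum_i x_i^2$; the hypothesis $C_0\varepsilon\le\lambda/8$ gives you no control over $N$, so the absorption does not close. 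The ordering encoded in properties~(g)--(h) of Lemma~\ref{Split} is not enough to beat this factor of $N$ in a single global inequality. What the paper does instead is keep the $N$ inequalities separate: testing with each $u_i$ yields
\[
\lambda x_i^2 \le C_0\varepsilon\, x_i\sum_{j\le i}x_j + \varepsilon a\sum_{j\le i}x_j + (\text{good}),
\]
and since $C_0\varepsilon\le\lambda/8$ one can absorb only the diagonal term $C_0\varepsilon x_i^2$, arriving at the triangular recursion $x_i\le \sum_{j<i}x_j + C$. This is solved by induction ($x_i\le 2^{i-1}C$), and the factor $4^N$ goes harmlessly into the \emph{final} constant rather than into the absorption hypothesis. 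This iterative step is the missing idea in your proposal.

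\textbf{Part (b) does not reduce to part (a).} The hypothesis in~(b) is that \eqref{eq:relation} holds for the splitting $\{u_i\}$ of $u$ itself, with given constants $a_i\ge 0$. Applying~(a) to $u$ and $-u$ would require \eqref{eq:relation} for the splittings of $u^+$ and $u^-$, which are entirely different functions with no obvious relation to the given $a_i$. The paper instead reruns the same per-$i$ argument directly on the splitting of $u$: since $u$ is now a solution one may test with $u_i$ (which need not be nonnegative), and the key nonnegativity $uu_i\ge 0$ still holds by property~(e) of Lemma~\ref{Split}, so the sign condition can be invoked exactly as before.
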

\begin{proof}
We first deal with the case when $(\vec b, d)$ satisfies \eqref{eq1122wed}.
Let $u$ be a subsolution to the problem \eqref{eq1523tue} and $\set{u_i}_{i=1}^N$ be the splitting of $u^{+}$.
By properties (d) and (e) in Lemma~\ref{Split}, we have $u_i \ge 0$ and $u u_i \ge 0$ since $u_i=0$ whenever $u^{+}=0$.
Therefore, using $uu_i\ge 0$ as a test function in \eqref{eq1122wed}, we get
\begin{equation}\label{eq:ineq}
\int_\Omega \vec bu \cdot \nabla u_i+duu_i=\int_\Omega \vec b \cdot \nabla(uu_i)+duu_i-\int_\Omega \vec b u_i \cdot \nabla u \ge -\int_\Omega \vec b u_i \cdot \nabla u.
\end{equation}
Also, by using $u_i \ge 0$ as a test function in \eqref{eq2134sat}, we get
\[
\int_\Omega \mathbf A\nabla u \cdot \nabla u_i+\vec b u\cdot \nabla u_i+\vec c u_i  \cdot \nabla u +du u_i \le \int_\Omega f^{+}u_i+\vec F \cdot \nabla u_i +\int_{\partial\Omega} g^{+} u_i.
\]
Hence, we have
\begin{equation}			\label{eq2000sun}
\int_\Omega \mathbf A \nabla u \cdot \nabla u_i-(\vec b-\vec c)u_i \cdot \nabla u \le \int_\Omega f^{+}u_i+\vec F \cdot \nabla u_i +\int_{\partial\Omega} g^{+} u_i.
\end{equation}
By Lemma~\ref{Split}, we have (recall that $u_i=0$ when $u^{+}=0$),
\[
\int_\Omega \mathbf A \nabla u \cdot \nabla u_i = \int_{\Omega_i} \mathbf A \nabla u^{+} \cdot \nabla u_i= \int_{\Omega_i} \mathbf A \nabla u_i \cdot \nabla u_i
\]
and 
\begin{equation}			\label{eq2001sun}
\int_\Omega (\vec b-\vec c)u_i \cdot \nabla u= \int_\Omega  (\vec b-\vec c) u_i \cdot \nabla u^{+} = \sum_{j=1}^i \int_{\Omega_j} (\vec b-\vec c)u_i \cdot \nabla u_j.
\end{equation}
Therefore, we obtain from \eqref{eq2000sun} that
\begin{multline}\label{eq:ToPlug3}
\lambda\int_{\Omega_i} \abs{\nabla u_i}^2 \le \sum_{j=1}^i \int_{\Omega_j} (\vec b-\vec c) u_i \cdot \nabla u_j + \int_\Omega f^{+}u_i+\int_{\partial\Omega} g^{+}u_i+\int_\Omega\vec F\cdot\nabla u_i\\
=: I_i+J_i+K_i+L_i.
\end{multline}
First, we estimate $I_i$.
Note that by Lemma~\ref{Split}, we have
\[
\norm{\vec b-\vec c}_{L^n(\Omega_j)} \le \varepsilon.
\]
This together with H\"older's inequality and \eqref{eq:relation}, we obtain
\begin{equation}			\label{eq1340mon}
\begin{aligned}
\abs{I_i} & \le \Abs{\sum_{j=1}^i \int_{\Omega_j}(u_i-a_i)(\vec b-\vec c)\cdot \nabla u_j}+\Abs{\sum_{j=1}^i \int_{\Omega_i} a_i (\vec b-\vec c)\cdot \nabla u_j}\\
&\le \sum_{j=1}^i \varepsilon \norm{\nabla u_j}_2\,\norm{u_i-a_i}_{\frac{2n}{n-2}}+\sum_{j=1}^i \varepsilon a_i \norm{\nabla u_j}_2\\
& \leq C_0 \varepsilon \norm{\nabla u_i}_2\sum_{j=1}^i \norm{\nabla u_j}_2+\varepsilon a_i \sum_{j=1}^i  \norm{\nabla u_j}_2.
\end{aligned}
\end{equation}
Therefore, we have
\begin{equation}			\label{eq1234mon}
\abs{I_1} \le C_0 \varepsilon \norm{\nabla u_1}_2^2 +\varepsilon a \norm{\nabla u_1}_2 \le \frac{\lambda}{4} \norm{\nabla u_1}_2^2+ \frac{\lambda}{32} a^2.
\end{equation}
and by using Cauchy's inequality, for $i=2,\ldots, N$, we have
\begin{equation}			\label{eq1235mon}
\begin{aligned}
\abs{I_i} &\leq \frac{\lambda}{8} \norm{\nabla u_i}_2^2+\frac{\lambda}{8} \norm{\nabla u_i}_2 \sum_{j=1}^{i-1} \norm{\nabla u_j}_2+\frac{\lambda}{8} a \norm{\nabla u_i}_2 + \frac{\lambda}{8} a \sum_{j=1}^{i-1} \norm{\nabla u_j}_2\\
&\leq \frac{\lambda}{4}\|\nabla u_i\|_2^2+\frac{\lambda}{8}\left(\sum_{j=1}^{i-1}\norm{\nabla u_j}_2\right)^2+\frac{\lambda}{8} a^2.
\end{aligned}
\end{equation}
For $J_i$, we estimate
\begin{align*}
J_i&=\int_\Omega f^{+}(u_i-a_i)+a_i\int_\Omega f^{+}\\
&\le C_0 \norm{ f^{+}}_{\frac{2n}{n+2}}\, \norm{\nabla u_i}_2+a_i \norm{f^{+}}_1 \le \frac{\lambda}{8} \norm{\nabla u_i}_2^2+C\norm{f^{+}}_{\frac{2n}{n+2}}^2+a^2,
\end{align*}
where we used $\abs{\Omega} \leq 1$ and $C$ depends on $\lambda$ and $C_0$.
To estimate $K_i$, first note that the trace inequality for the constant function $1$ shows that (see Lemma~\ref{trace_lemma})
\begin{equation*}	
\abs{\partial\Omega}^{\frac{n-2}{2n-2}}=\norm{1}_{L^{\frac{2n-2}{n-2}}(\partial\Omega)} \leq C \norm{1}_{Y_1^2(\Omega)} = C\norm{1}_{L^{\frac{2n}{n-2}}(\Omega)} \le C,
\end{equation*}
where $C$ depends on $n$ and the Lipschitz character of $\Omega$.
Also, we have
\[
\norm{u_i-a_i}_{L^{\frac{2n-2}{n-2}}(\partial\Omega)} \le C \norm{u_i-a_i}_{Y_1^2(\Omega)} \le C(1+C_0) \norm{\nabla u_i}_{L^2(\Omega)},
\]
where we used \eqref{eq:relation}.
Then, we get
\begin{align*}
K_i&=\int_{\partial\Omega}g^{+}(u_i-a_i)+a_i\int_{\partial\Omega}g^{+}\\
&\leq \norm{g^{+}}_{L^{2-\frac{2}{n}}(\partial\Omega)}\,\norm{u_i-a_i}_{L^{\frac{2n-2}{n-2}}(\partial\Omega)}+a_i \norm{g^{+}}_{L^{2-\frac{2}{n}}(\partial\Omega)}\,\norm{1}_{L^{\frac{2n-2}{n-2}}(\partial\Omega)}\\
&\leq \frac{\lambda}{8} \norm{\nabla u_i}_2^2+C\norm{g^{+}}_{L^{2-\frac{2}{n}}(\partial\Omega)}^2+Ca^2,
\end{align*}
where $C$ depends on $n$, $\lambda$, $C_0$, and the Lipschitz character of $\Omega$.
Finally,
\[
L_i=\int_\Omega \vec F \cdot \nabla u_i \leq \norm{\vec F}_2 \,\norm{\nabla u_i}_2 \leq \frac{\lambda}{4} \norm{\nabla u_i}_2+\frac{1}{\lambda}\norm{\vec F}_2^2.
\]
Gathering the estimates for $I_i$, $J_i$, $K_i$ and $L_i$, plugging in \eqref{eq:ToPlug3} and setting
\[
x_i:=\norm{\nabla u_i}_2,
\]
we obtain
\[
x_1^2 \le \frac{3}{4}x_1^2+C\left(a^2+\norm{f^{+}}_{\frac{2n}{n+2}}^2+\norm{g^{+}}_{L^{2-\frac{2}{n}}(\partial\Omega)}^2+\norm{\vec F}_2^2\right)
\]
and
\[
x_i^2 \leq \frac{3}{4}x_i^2+\frac{1}{8}\bigl(\sum_{j=1}^{i-1} x_j\bigr)^2+C\left(a^2+\norm{f^{+}}_{\frac{2n}{n+2}}^2+\norm{g^{+}}_{L^{2-\frac{2}{n}}(\partial\Omega)}^2+\norm{\vec F}_2^2\right),
\]
for $i=2,\ldots, N$.
Hence, we have
\begin{equation}			\label{eq:x_i}
\begin{aligned}
x_1 &\leq C\left(a+\norm{f^{+}}_{\frac{2n}{n+2}}+\norm{g^{+}}_{L^{2-\frac{2}{n}}(\partial\Omega)}+\norm{\vec F}_2 \right),\\
x_i &\leq \sum_{j=1}^{i-1} x_j+C\left(a+\norm{f^{+}}_{\frac{2n}{n+2}}+\norm{g^{+}}_{L^{2-\frac{2}{n}}(\partial\Omega)}+\norm{\vec F}_2 \right),\quad i=2,\ldots, N,
\end{aligned}
\end{equation}
for some $C>0$ that depends on $n$, $\lambda$, $C_0$, and the Lipschitz character of $\Omega$.
Then, an induction argument shows that
\begin{equation}			\label{eq1303mon}
x_{i} \leq 2^{i-1} C\left(a+\norm{f^{+}}_{\frac{2n}{n+2}}+\norm{g^{+}}_{L^{2-\frac{2}{n}}(\partial\Omega)}+\norm{\vec F}_2\right),\quad i=1,\ldots, N,
\end{equation}
and thus, we get (recall that the supports of $\nabla u_i$ are mutually disjoint)
\begin{equation}			\label{eq2248sun}
\int_\Omega \abs{\nabla u^{+}}^2=\sum_{i=1}^N\int_\Omega \abs{\nabla u_i}^2\leq 4^{N} C\left(a+\norm{f^{+}}_{\frac{2n}{n+2}}+\norm{g^{+}}_{L^{2-\frac{2}{n}}(\partial\Omega)}+\norm{\vec F}_2\right)^2.
\end{equation}
The proof for part (a) is complete using the bound on $N$ from Lemma~\ref{Split}.
	
The proof for part (b) is carried out in the same way.
Note that we have $uu_i \ge 0$ by Lemma~\ref{Split} and thus \eqref{eq:ineq}.
Then, by using $u_i$ as a test function in \eqref{eq2133sat} and applying \eqref{eq:ineq}, we still obtain \eqref{eq2000sun} and \eqref{eq:ToPlug3} (with $f,g$ in the place of $f^+,g^+$).
The rest of proof for part (b) is exactly the same except that in \eqref{eq2248sun} we now have 
\[
\int_\Omega \abs{\nabla u}^2=\sum_{i=1}^N\int_\Omega \abs{\nabla u_i}^2.
\]

Next, we consider the case when $(\vec c, d)$ satisfies \eqref{eq1123wed}.
The proof is essentially the same as that of the other case and requires only a minor modification.
Here, we present the proof for part (a) only since the proof for part (b) is similar.
Let $u$ be a subsolution to the problem \eqref{eq1523tue} and $\set{u_i}_{i=1}^N$ be the splitting of $u^{+}$.
Similar to \eqref{eq:ineq}, we have 
\[
\int_\Omega \vec c u_i \cdot \nabla u+duu_i=\int_\Omega \vec c \cdot \nabla(uu_i)+duu_i-\int_\Omega \vec c u \cdot \nabla u_i\ge -\int_\Omega \vec c u \cdot \nabla u_i,
\]
and thus instead of \eqref{eq2000sun}, we have
\[
\int_\Omega \mathbf A \nabla u \cdot \nabla u_i+(\vec b-\vec c) u \cdot \nabla u_i \le \int_\Omega f^{+}u_i+\vec F \cdot \nabla u_i +\int_{\partial\Omega} g^{+} u_i.
\]
Similar to \eqref{eq2001sun}, by Lemma~\ref{Split} we have 
\[
\int_\Omega (\vec b-\vec c)u\cdot \nabla u_i = \sum_{j=i}^{N} \int_{\Omega_i} (\vec b-\vec c)u_j \cdot \nabla u_i,
\]
and thus instead of \eqref{eq:ToPlug3}, we have
\[
\lambda\int_{\Omega_i} \abs{\nabla u_i}^2 \le \sum_{j=i}^N \int_{\Omega_i} (\vec c-\vec b)u_j\cdot \nabla u_i + \int_\Omega f^{+}u_i+\int_{\partial\Omega}g^{+}u_i+\int_\Omega\vec F\cdot\nabla u_i=:\tilde I_i + J_i + K_i +L_i.
\]
Similar to \eqref{eq1340mon}, we have
\begin{align*}
\abs{\tilde I_i} & \le \Abs{\sum_{j=i}^N \int_{\Omega_i}(u_j-a_j)(\vec b-\vec c)\cdot \nabla u_i}+\Abs{\sum_{j=i}^N \int_{\Omega_i} a_j (\vec b-\vec c)\cdot \nabla u_i}\\
&\le \sum_{j=i}^N \varepsilon \norm{\nabla u_i}_2\,\norm{u_j-a_j}_{\frac{2n}{n-2}}+\sum_{j=i}^N \varepsilon a_j \norm{\nabla u_i}_2\\
& \leq C_0 \varepsilon \norm{\nabla u_i}_2\sum_{j=i}^N \norm{\nabla u_j}_2+\varepsilon a \norm{\nabla u_i}_2.
\end{align*}
Therefore, similar to \eqref{eq1234mon} and \eqref{eq1235mon}, we have
\begin{align*}
\abs{\tilde I_N} &\le  \frac{\lambda}{4} \norm{\nabla u_N}_2^2+ \frac{\lambda}{32} a^2,\\
\Abs{\tilde I_i} &\le \frac{\lambda}{4}\|\nabla u_i\|_2^2+\frac{\lambda}{16}\left(\sum_{j=i+1}^N \norm{\nabla u_j}_2\right)^2+\frac{\lambda}{16} a^2,\quad i=1, \ldots, N-1.
\end{align*}
The estimates for $J_i$, $K_i$, and $L_i$ remain unchanged.
Then by keeping the same notation $x_i=\norm{\nabla u_i}_2$, we obtain, instead of \eqref{eq:x_i}, that
\[
\begin{aligned}
x_N &\leq C\left(a+\norm{f^{+}}_{\frac{2n}{n+2}}+\norm{g^{+}}_{L^{2-\frac{2}{n}}(\partial\Omega)}+\norm{\vec F}_2 \right),\\
x_i &\leq \sum_{j=i+1}^N x_j+C\left(a+\norm{f^{+}}_{\frac{2n}{n+2}}+\norm{g^{+}}_{L^{2-\frac{2}{n}}(\partial\Omega)}+\norm{\vec F}_2 \right),\quad i=1,\ldots, N-1.
\end{aligned}
\]
Then, similar to \eqref{eq1303mon}, we obtain
\[
x_{N-i} \leq 2^i C\left(a+\norm{f^{+}}_{\frac{2n}{n+2}}+\norm{g^{+}}_{L^{2-\frac{2}{n}}(\partial\Omega)}+\norm{\vec F}_2\right),\quad i=0,\ldots, N-1,
\]
rom which the desired conclusion follows again.
\end{proof}

\section{Existence and uniqueness of solutions}
First, let us consider the homogeneous Neumann problem
\begin{equation}				\label{eq0929tue}
\left\{\begin{array}{c l}-\dv(\mathbf A\nabla u+\vec b u)+\vec c\cdot \nabla u+du= 0 & \text{in }\;\Omega,\\
(\mathbf A \nabla u+\vec b u)\cdot \nu=0 & \text{on }\;\partial\Omega.\end{array}\right.
\end{equation}
In what follows, we will show that the solution space for the above problem has at most $1$ dimension.

\begin{lemma}			\label{subsol}
Let $\Omega\subset \bR^n$ be a bounded Lipschitz domain.
Let $\mathbf A=(a^{ij})$ satisfy the uniform ellipticity and boundedness condition \eqref{ellipticity}, $\vec b$, $\vec c \in L^n(\Omega)$, $d \in L^{n/2}(\Omega)$.
Assume that the pair $(\vec c, d)$ satisfies the condition \eqref{eq1123wed}.
If $u$ is a subsolution to the problem \eqref{eq0929tue}, then either $u\leq 0$ a.e. in $\Omega$, or $u>0$ a.e. in $\Omega$.
\end{lemma}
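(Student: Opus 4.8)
The plan is to combine a De Giorgi/Moser-type argument for a non-negative supersolution-from-below with the strong maximum principle, using the $L^2$ machinery built in Lemma~\ref{Main}. More precisely, since $u$ is a subsolution to \eqref{eq0929tue}, the function $u^{+}=\max(u,0)$ should itself behave like a subsolution of a homogeneous problem, so that Lemma~\ref{Main}(a) (applied with $f=g=\vec F=0$ and all $a_i=0$, which is legitimate because the splitting $\{u_i\}$ of $u^{+}$ from Lemma~\ref{Split} satisfies the Poincar\'e--Sobolev relation \eqref{eq:relation} with $a_i=0$ precisely when each $u_i$ vanishes on a set of definite measure) yields $\int_\Omega |\nabla u^{+}|^2 = 0$; wait — this is not quite right, since there is no reason $u_i$ vanishes on a large set. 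The correct route is: first handle the borderline case where $u^{+}$ does not vanish on any set of positive measure (i.e. $u>0$ a.e.), which is one of the two alternatives, and otherwise show $u\le 0$ a.e.

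So I would argue as follows. Suppose $u$ is not $\le 0$ a.e., i.e. $|\{u>0\}|>0$; I must show $u>0$ a.e. Consider, for $k\ge 0$, the truncations $v_k=(u-k)^{+}$, which are again subsolutions of the homogeneous problem (using $v_k\ge 0$ as a test function, noting $\nabla v_k=\nabla u\cdot\mathbf 1_{\{u>k\}}$ and that the conormal term and the $(\vec c,d)$ condition \eqref{eq1123wed} applied to $uv_k\ge 0$ give the needed sign, exactly as in the $I_i$-estimate within the proof of Lemma~\ref{Main}). The key structural point is a Caccioppoli-type inequality: testing with $v_k$ and splitting $v_k=\sum u_i$ via Lemma~\ref{Split} relative to $h=|\vec b-\vec c|$ and some fixed small $\varepsilon\le\tfrac18\lambda$, one obtains, as long as each $u_i$ vanishes on a set of measure $\ge\delta|\Omega|$ (which holds with $\delta$ depending on $|\{u\le k\}|/|\Omega|$ via Lemma~\ref{AmpleZero}), the bound $\int_\Omega|\nabla v_k|^2\le C_0\varepsilon(\cdots)$, forcing $\nabla v_k\equiv 0$ and hence $v_k$ constant; since $v_k$ vanishes on $\{u\le k\}$ which has positive measure, $v_k\equiv 0$, i.e. $u\le k$ a.e. Letting $k$ range shows that $u$ takes only values in a single level, contradicting $|\{u>0\}|>0$ unless $|\{u\le k\}|=0$ for the relevant $k$, which (running the same argument on $-u$-type truncations from below, or arguing that the level set $\{u\le 0\}$ itself cannot have positive measure) yields $u>0$ a.e.

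\textbf{Where the real work is.} The delicate point is the dichotomy itself: I must show the level set $\{u=c\}$ cannot be ``split'' — concretely, that if $|\{u\le 0\}|>0$ and $|\{u>0\}|>0$ then testing the subsolution inequality with $(u-k)^{+}$ for $k$ slightly above $0$ produces, via Lemma~\ref{Main} with $a=0$, a genuinely homogeneous estimate $\int_\Omega|\nabla(u-k)^{+}|^2\le\tfrac34\int_\Omega|\nabla(u-k)^{+}|^2$, hence $\nabla(u-k)^{+}=0$ and then $(u-k)^{+}=0$ since it vanishes on the positive-measure set $\{u\le 0\}$. The subtlety is ensuring the $a_i$ in \eqref{eq:relation} can be taken to be $0$: this is exactly Lemma~\ref{AmpleZero}, because each $u_i$ in the splitting of $(u-k)^{+}$ vanishes wherever $(u-k)^{+}=0$, in particular on $\{u\le 0\}$, a set of measure $\ge\delta|\Omega|$ with $\delta>0$ fixed; thus \eqref{eq:relation} holds with $a_i=0$ and a uniform $C_0$. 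Granting this, Lemma~\ref{Main}(a) gives $\int_\Omega|\nabla(u-k)^{+}|^2\le C\cdot 0=0$. Taking $k\downarrow 0$ gives $u^{+}\le 0$ wherever $u^{+}$ had been positive — a contradiction — unless $|\{u>0\}|=0$ to begin with. Hence either $u\le 0$ a.e., or $|\{u\le 0\}|=0$, i.e.\ $u>0$ a.e. One should be mildly careful that $u^+\in Y_1^2(\Omega)$ so that Lemma~\ref{Main} and the splitting lemmas genuinely apply, and that the test function $uu_i$ used to invoke \eqref{eq1123wed} is admissible (nonnegative, in the right space), both of which follow from $u\in W_1^2(\Omega)=Y_1^2(\Omega)$ on a bounded Lipschitz domain together with standard truncation facts.
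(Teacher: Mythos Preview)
Your core idea --- apply Lemma~\ref{Main}(a) with all $a_i=0$, justified by Lemma~\ref{AmpleZero} --- is correct, but the proof as written has a genuine gap. You need $u-k$ to be a subsolution of the \emph{homogeneous} problem in order to invoke Lemma~\ref{Main} on $(u-k)^{+}$ as you describe. Subtracting a positive constant $k$ produces the extra term $-k\int_\Omega(\vec b\cdot\nabla\phi+d\phi)$ on the right of the weak inequality; this is $\le 0$ precisely when $(\vec b,d)$ satisfies \eqref{eq1122wed}, \emph{not} when $(\vec c,d)$ satisfies \eqref{eq1123wed}, which is the hypothesis here. So under \eqref{eq1123wed} the translate $u-k$ is only a subsolution with inhomogeneous data $f=-kd$, $\vec F=-k\vec b$, and your ``genuinely homogeneous'' estimate is not homogeneous. (You have, in effect, imported the argument of Lemma~\ref{subsol2}, which is for the other hypothesis.)

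The fix is simpler than what you wrote: drop the translation entirely. Suppose $|\{u>0\}|>0$ and $|\{u\le 0\}|>0$. Apply Lemma~\ref{Main}(a) directly to $u$ (not to $u-k$). Each $u_i$ in the splitting of $u^{+}$ vanishes on $\{u\le 0\}$, which has measure $\ge\delta|\Omega|$ with $\delta>0$; Lemma~\ref{AmpleZero} then gives \eqref{eq:relation} with $a_i=0$ and $C_0=C_0(\delta)$, and choosing $\varepsilon=\lambda/(8C_0)$ makes Lemma~\ref{Main}(a) yield $\int_\Omega|\nabla u^{+}|^2=0$. Hence $u^{+}$ is constant and vanishes on a set of positive measure, so $u^{+}\equiv 0$, contradicting $|\{u>0\}|>0$. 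This gives the dichotomy.

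For comparison, the paper's own proof of this lemma does \emph{not} go through Lemma~\ref{Main} at all. It tests the subsolution inequality with the double truncation $u_\varepsilon=\max(\min(u,\varepsilon),0)$, uses \eqref{eq1123wed} on $uu_\varepsilon\ge 0$ to obtain a Caccioppoli-type bound $\|\nabla u_\varepsilon\|_{L^2(\{0<u<\varepsilon\})}\le\lambda^{-1}\varepsilon\|\vec b-\vec c\|_{L^2(\{0<u<\varepsilon\})}$, and then, assuming $u>0$ on a set of positive measure, combines this with a Poincar\'e inequality (for $u_\varepsilon-\varepsilon$, which vanishes where $u$ is bounded below) to force $|\{u\le 0\}|=0$ as $\varepsilon\to 0$. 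Your corrected route via Lemma~\ref{Main} is cleaner once that lemma is available, and is essentially the method the paper reserves for Lemma~\ref{subsol2}; the paper's direct argument here has the advantage of being self-contained and of isolating exactly the estimate \eqref{eq:NormOfGrad}, which is reused later in Proposition~\ref{kernelChar}.
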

\begin{proof}
Suppose that $u>0$ in a set of positive measure.
We use the test function from the proof of \cite[Lemma 2.1]{DV09}.
For $\varepsilon>0$, consider the truncation of $u$ at levels $0$ and $\varepsilon$; that is,
\begin{equation}			\label{eq0926tue}
u_\varepsilon:=\max(\min(u,\varepsilon),0).
\end{equation}
Then, using $u_\varepsilon\geq 0$ as a test function, we obtain
\[
\int_\Omega \mathbf A\nabla u \cdot \nabla u_\varepsilon+\vec b u\cdot \nabla u_\varepsilon+\vec c u_\varepsilon \cdot \nabla u +du u_\varepsilon\leq 0.
\]
Since $uu_\varepsilon\geq0$, using \eqref{eq1123wed}, we have
\[
\int_\Omega\vec c u_\varepsilon\cdot \nabla u+duu_\varepsilon=\int_\Omega \vec c\cdot \nabla(uu_\varepsilon)+duu_\varepsilon-\vec c\nabla u_\varepsilon\cdot u\geq -\int_\Omega\vec c u \cdot \nabla u_\varepsilon.
\]
Using the fact that $\nabla u_\varepsilon$ is supported in the set $\set{0<u<\varepsilon}$ and that $\nabla u = \nabla u_\varepsilon$ there, we derive from the above inequalities that
\[
\lambda\int_{\set{0<u<\varepsilon}} \abs{\nabla u}^2\leq  \int_\Omega \mathbf A \nabla u \cdot \nabla u_\varepsilon \le
-\int_\Omega (\vec b-\vec c) u \cdot \nabla u_\varepsilon \le 
\int_{\set{0<u<\varepsilon}} \abs{\vec b-\vec c}\, \varepsilon\abs{\nabla u}.
\]
So, if we set $N(\varepsilon)=\norm{\vec b-\vec c}_{L^2(\set{0<u<\varepsilon})}$ and $v_\varepsilon=u_\varepsilon-\varepsilon$, we have
\begin{equation}\label{eq:NormOfGrad}
\norm{\nabla v_\varepsilon}_{L^2(\set{0<u<\varepsilon})}\leq \lambda^{-1}\varepsilon N(\varepsilon).
\end{equation}
The rest of the proof is the same argument as in \cite[Lemma 2.1]{DV09}.
If $u\geq\eta>0$ in a set $E$ of positive measure, then $v_\varepsilon=0$ in $E$ whenever $\varepsilon<\eta$ and thus we have
\[
\norm{v_\varepsilon}_{L^2(\Omega) }\leq C_0 \norm{\nabla v_\varepsilon}_{L^2(\Omega)},
\]
for some positive constant $C_0$.
Therefore, since $\abs{v_\varepsilon}> \varepsilon/2$ if $u<\varepsilon/2$, we estimate
\[
\frac{\varepsilon}{2}\, \abs{\set{u<\varepsilon/2}}^{\frac12}\leq\left(\int_{\set{u<\varepsilon/2}} \abs{v_\varepsilon}^2\right)^{\frac12}\leq \norm{v_\varepsilon}_{L^2(\Omega)}\leq C_0 \lambda^{-1}\varepsilon N(\varepsilon).
\]
By dividing by $\varepsilon$ in the above and taking $\varepsilon\to 0$, we see that $\abs{\set{u\leq 0}}=0$.
Therefore, $u>0$ almost everywhere in $\Omega$.
\end{proof}

\begin{lemma}\label{subsol2}
Let $\Omega\subset \bR^n$ be a bounded Lipschitz domain.
Let $\mathbf A=(a^{ij})$ satisfy the uniform ellipticity and boundedness condition \eqref{ellipticity}, $\vec b$, $\vec c \in L^n(\Omega)$, $d \in L^{n/2}(\Omega)$.
Assume that the pair $(\vec b, d)$ satisfies the condition \eqref{eq1122wed}.
Suppose $u\in Y_1^2(\Omega)$ is a subsolution to the problem \eqref{eq0929tue}.
Then, either $u\leq 0$ in $\Omega$ a.e., or $u$ is equal to a positive constant a.e.
\end{lemma}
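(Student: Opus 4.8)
The plan is to combine a Caccioppoli‑type inequality at every level $k\ge 0$ with the truncation device from the proof of Lemma~\ref{subsol}, carried out near the essential supremum of $u$. If $u\le 0$ a.e.\ there is nothing to prove, so I assume $M:=\esssup_\Omega u>0$, a priori allowing $M=+\infty$. For $k\ge 0$ I would use $\phi=(u-k)^{+}\ge 0$ as a test function in the subsolution inequality \eqref{eq2134sat} for \eqref{eq0929tue}; when $u$ is unbounded this is justified by first truncating $(u-k)^{+}$ at a large height and passing to the limit, using $u\in L^{\frac{2n}{n-2}}(\Omega)$, $\vec b,\vec c\in L^n(\Omega)$, $d\in L^{n/2}(\Omega)$, $\nabla u\in L^2(\Omega)$ and $\abs\Omega<\infty$. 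Writing $u=(u-k)^{+}+k$ on $\{u>k\}$, the term $k\bigl(\int_\Omega\vec b\cdot\nabla(u-k)^{+}+d(u-k)^{+}\bigr)$ is nonnegative by \eqref{eq1122wed}, and applying \eqref{eq1122wed} once more with the nonnegative test function $\bigl((u-k)^{+}\bigr)^2$ replaces $-\int_\Omega\vec b(u-k)^{+}\cdot\nabla(u-k)^{+}-\int_\Omega d\bigl((u-k)^{+}\bigr)^2$ by $\int_\Omega\vec b(u-k)^{+}\cdot\nabla(u-k)^{+}$. I expect this to leave
\[
\lambda\int_\Omega\Abs{\nabla(u-k)^{+}}^2\le \int_{\{u>k\}}\abs{\vec b-\vec c}\,(u-k)^{+}\Abs{\nabla(u-k)^{+}},\qquad k\ge 0,
\]
and hence, by H\"older, $\lambda\norm{\nabla(u-k)^{+}}_{L^2(\Omega)}\le\norm{\vec b-\vec c}_{L^n(\{u>k\})}\norm{(u-k)^{+}}_{L^{\frac{2n}{n-2}}(\Omega)}$.

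\emph{Finiteness of $M$ and positivity of $\abs{\{u=M\}}$.} Since $u$ is finite a.e., $\abs{\{u>k\}}\to 0$, so $\norm{\vec b-\vec c}_{L^n(\{u>k\})}\to 0$ and $\abs{\{u\le k\}}\to\abs\Omega$ as $k\to\infty$. For $k$ large, $\abs{\{u\le k\}}\ge\tfrac12\abs\Omega$, so Lemma~\ref{AmpleZero} gives $\norm{(u-k)^{+}}_{L^{\frac{2n}{n-2}}(\Omega)}\le C\norm{\nabla(u-k)^{+}}_{L^2(\Omega)}$, and the level inequality then forces $\nabla(u-k)^{+}=0$ once $C\norm{\vec b-\vec c}_{L^n(\{u>k\})}<\lambda$; since $\Omega$ is connected this makes $(u-k)^{+}$ equal to a constant $c_k\ge 0$. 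If some $c_k>0$ then $u\equiv k+c_k$ is the desired positive constant, and otherwise $u\le k$ a.e., so in particular $M<\infty$. If moreover $\abs{\{u=M\}}=0$, the same reasoning applied to levels $k\uparrow M$ — for which $\{u>k\}\downarrow\{u\ge M\}$ is null, so again $\norm{\vec b-\vec c}_{L^n(\{u>k\})}\to 0$ and $\abs{\{u\le k\}}\to\abs\Omega$ — forces $(u-k)^{+}\equiv c_k$ with $c_k>0$ (as $\abs{\{u>k\}}>0$), whence $u\equiv k+c_k=M$, a contradiction. So from now on $\abs{\{u=M\}}>0$.

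\emph{Identifying $u$.} Set $\delta=\abs{\{u=M\}}/\abs\Omega>0$, and for $0<\varepsilon<M$ let $v_\varepsilon=(u-(M-\varepsilon))^{+}-\varepsilon$, so that $-\varepsilon\le v_\varepsilon\le 0$ a.e., $v_\varepsilon=0$ a.e.\ on $\{u=M\}$, $\abs{v_\varepsilon}\ge\varepsilon/2$ on $\{u\le M-\varepsilon/2\}$, and $\nabla v_\varepsilon=\chi_{\{M-\varepsilon<u<M\}}\nabla u$ a.e. Lemma~\ref{AmpleZero} applied with $E=\{u=M\}$ gives $\norm{v_\varepsilon}_{L^2(\Omega)}\le C_0\norm{\nabla v_\varepsilon}_{L^2(\Omega)}$, while $(u-(M-\varepsilon))^{+}\le\varepsilon$ a.e.\ (because $u\le M$ a.e.) and the level inequality with $k=M-\varepsilon$ yield
\[
\lambda\norm{\nabla v_\varepsilon}_{L^2(\Omega)}^{2}\le\varepsilon\int_{\{M-\varepsilon<u<M\}}\abs{\vec b-\vec c}\Abs{\nabla v_\varepsilon}\le\varepsilon\,\widetilde{N}(\varepsilon)\norm{\nabla v_\varepsilon}_{L^2(\Omega)},
\]
where $\widetilde{N}(\varepsilon)=\norm{\vec b-\vec c}_{L^2(\{M-\varepsilon<u<M\})}\to 0$ as $\varepsilon\to 0$ by dominated convergence (the sets decrease to $\emptyset$ and $\abs{\vec b-\vec c}^2\in L^1(\Omega)$). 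Thus $\norm{\nabla v_\varepsilon}_{L^2(\Omega)}\le\lambda^{-1}\varepsilon\widetilde{N}(\varepsilon)$, and combining with the lower bound $\tfrac{\varepsilon}{2}\abs{\{u\le M-\varepsilon/2\}}^{1/2}\le\norm{v_\varepsilon}_{L^2(\Omega)}$, dividing by $\varepsilon$ and letting $\varepsilon\to 0$ gives $\abs{\{u<M\}}=0$, i.e.\ $u\equiv M>0$ a.e.

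I expect the main difficulty to be the second step: the Poincar\'e inequality of Lemma~\ref{AmpleZero} can only be used in the final step once $\abs{\{u=M\}}>0$ is known with a \emph{fixed} proportion $\delta$, and this positivity has to be squeezed out of the very same level inequality by exploiting that $\norm{\vec b-\vec c}_{L^n(\{u>k\})}$ becomes small for $k$ near $\esssup u$. A secondary, purely technical point is the rigorous use of the possibly unbounded test functions $(u-k)^{+}$ and their squares in the first step, and the passage from $C^\infty_c$ test functions to nonnegative $Y_1^2(\Omega)$ ones in \eqref{eq2134sat} and \eqref{eq1122wed}.
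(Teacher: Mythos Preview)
Your argument is correct and takes a different, more elementary route than the paper. The paper simply observes that for any $s>0$ the function $u-s$ is again a subsolution of \eqref{eq0929tue} (by \eqref{eq1122wed}), chooses $s\in(\essinf_\Omega u^{+},\esssup_\Omega u^{+})$ so that both $\{u>s\}$ and $\{u\le s\}$ have positive measure, and then invokes the splitting machinery of Lemma~\ref{Main} (with $a_i=0$, justified by Lemma~\ref{AmpleZero}) to conclude $\nabla(u-s)^{+}=0$, a contradiction. You bypass Lemma~\ref{Main} entirely: the level-set inequality $\lambda\norm{\nabla(u-k)^{+}}_2\le\norm{\vec b-\vec c}_{L^n(\{u>k\})}\norm{(u-k)^{+}}_{2n/(n-2)}$ is obtained directly by testing with $(u-k)^{+}$ and using \eqref{eq1122wed} twice (once on $(u-k)^{+}$, once on its square), and you then exploit smallness of $\norm{\vec b-\vec c}_{L^n(\{u>k\})}$ for $k$ near $\esssup u$ together with Lemma~\ref{AmpleZero} to force $(u-k)^{+}$ constant, followed by the truncation device from Lemma~\ref{subsol} near the top level. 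The paper's proof is shorter once Lemma~\ref{Main} is available and works at \emph{any} intermediate level $s$; yours is more self-contained but must work near $\esssup u$ in two stages (first to pin down $M<\infty$ and $\abs{\{u=M\}}>0$, then to identify $u\equiv M$). The technical points you flag---extending \eqref{eq1122wed} and \eqref{eq2134sat} from $C_c^\infty$ to the test functions $(u-k)^{+}$ and $\bigl((u-k)^{+}\bigr)^2$ via truncation and density---are routine in a bounded Lipschitz domain and pose no obstacle.
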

\begin{proof}
Suppose that $\abs{\set{u>0}}>0$ and $u$ is not equal to a positive constant, then
\[
m=\essinf_\Omega u^{+}<\esssup_\Omega u^{+}=M.
\]
Let $s\in(m,M)$.
Then, since $s>0$, $v=u-s$ is still a subsolution to the same Neumann problem.
Moreover, the sets $D=\set{v>0}$ and $E=\set{v\leq 0}$ both have positive measure.

Now, we apply part (a) of Lemma~\ref{Main} to $v$.
Let $\set{v_i}_{i=1}^N$ be the splitting of $v^+$.
By property (d) of Lemma~\ref{Split}, we find that $v_i=0$ on $E$ for each $i=1, \ldots, N$.
Therefore, by Lemma~\ref{AmpleZero}, we may take $a_i=0$ in \eqref{eq:relation}, and thus by \eqref{eq:MainEstimate}, we have
\[
\int_\Omega \abs{\nabla v^{+}}^2 = 0,
\]
which implies that $v^{+}$ is a constant.
But, since $v^{+}=0$ in $E$ and $\abs{E}>0$, this implies that $v^{+}\equiv 0$ in $\Omega$, which contradicts the fact that $v^{+}>0$ in $D$.
\end{proof}

Now, we can determine the dimension of the kernel of the Neumann problem.

\begin{proposition}\label{kernel}
Let $\Omega\subset \bR^n$ be a bounded Lipschitz domain.
Let $\mathbf A=(a^{ij})$ satisfy the uniform ellipticity and boundedness condition \eqref{ellipticity}, $\vec b$, $\vec c \in L^n(\Omega)$, $d \in L^{n/2}(\Omega)$.
Assume that the pair $(\vec b, d)$ satisfies the condition \eqref{eq1122wed} or the pair $(\vec c,d)$ satisfies the condition \eqref{eq1123wed}.
Then, any nonzero solution $u\in Y_1^2(\Omega)$ to the problem \eqref{eq0929tue} is either almost everywhere positive, or almost everywhere negative.
In particular, the solution space to the problem \eqref{eq0929tue} is at most one dimensional.
\end{proposition}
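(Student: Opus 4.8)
The plan is to deduce the sign dichotomy directly from Lemmas~\ref{subsol} and \ref{subsol2}, exploiting that a solution of the homogeneous problem is simultaneously a sub- and a supersolution, and then to bound the dimension of the kernel by a simple averaging trick.

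In the case where $(\vec c,d)$ satisfies \eqref{eq1123wed}, I would argue as follows. Let $u\in Y_1^2(\Omega)$ be a nonzero solution to \eqref{eq0929tue}. Since the weak formulation \eqref{eq2133sat} with $f=\vec F=0$ and $g=0$ holds with equality for every $\phi\in C_c^\infty(\bR^n)$, both $u$ and $-u$ are subsolutions of \eqref{eq0929tue}. Applying Lemma~\ref{subsol} to $u$ gives $u\le 0$ a.e.\ or $u>0$ a.e.; applying it to $-u$ gives $u\ge 0$ a.e.\ or $u<0$ a.e. Intersecting these two dichotomies and discarding the overlap $u=0$ a.e.\ (which is excluded since $u\ne 0$), one is left with $u>0$ a.e.\ or $u<0$ a.e. When instead $(\vec b,d)$ satisfies \eqref{eq1122wed}, the same reasoning with Lemma~\ref{subsol2} in place of Lemma~\ref{subsol} shows that a nonzero solution is in fact a nonzero constant, and in particular is a.e.\ positive or a.e.\ negative.

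For the dimension statement, I would take two solutions $u_1,u_2\in Y_1^2(\Omega)$ to \eqref{eq0929tue} with $u_2\ne 0$. Since $\Omega$ is bounded, $Y_1^2(\Omega)\hookrightarrow L^{\frac{2n}{n-2}}(\Omega)\hookrightarrow L^1(\Omega)$, so $\int_\Omega u_1$ and $\int_\Omega u_2$ are finite, and by the dichotomy just proved $u_2$ has a.e.\ constant sign, whence $\int_\Omega u_2\ne 0$. Setting $t_0=\bigl(\int_\Omega u_1\bigr)\big/\bigl(\int_\Omega u_2\bigr)$, the function $w:=u_1-t_0 u_2$ is again a solution to \eqref{eq0929tue} by linearity of \eqref{eq2133sat}, and $\int_\Omega w=0$. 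If $w$ were nonzero, the dichotomy would force $w$ to have a.e.\ constant sign, contradicting $\int_\Omega w=0$; hence $w=0$ a.e., i.e.\ $u_1=t_0 u_2$. Thus any two solutions are linearly dependent, so the solution space of \eqref{eq0929tue} is at most one-dimensional.

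There is no serious obstacle remaining at this stage: essentially all the difficulty has been absorbed into Lemmas~\ref{subsol} and \ref{subsol2} (themselves resting on Lemma~\ref{Main} and Lemma~\ref{AmpleZero}). The only points needing a moment's attention are that $-u$ is a subsolution, that linear combinations of solutions remain solutions with zero data, and that elements of $Y_1^2(\Omega)$ are integrable on the bounded domain $\Omega$; and one must verify that the two one-sided dichotomies for $u$ and $-u$ combine correctly --- the asymmetry between ``$\le$'' and ``$>$'' in the statements of Lemmas~\ref{subsol} and \ref{subsol2} is harmless precisely because $u\ne 0$.
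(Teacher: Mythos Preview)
Your argument is correct and follows essentially the same route as the paper: the sign dichotomy is obtained by applying Lemma~\ref{subsol} (resp.\ Lemma~\ref{subsol2}) to both $u$ and $-u$, and the dimension bound then follows from the sign dichotomy. The paper leaves the ``at most one-dimensional'' conclusion implicit, while you spell it out via the averaging trick with $t_0=\int_\Omega u_1/\int_\Omega u_2$; this is a standard way to make the implication explicit and is fully in line with the paper's intent.
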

\begin{proof}
Suppose $(\vec c,d)$ satisfies the condition \eqref{eq1123wed}.
As in \cite[Lemma 2.1]{DV09}, let $u$ be a nonzero solution to the Neumann problem.
If $u>0$ in a set of positive measure, then Lemma~\ref{subsol} shows that $u>0$ almost everywhere in $\Omega$. On the other hand, if $u\leq 0$, then $-u$ is a nonnegative, nonzero solution to the same problem, so again Lemma~\ref{subsol} shows that $-u>0$ almost everywhere.

If $(\vec b,d)$ satisfis the condition~\ref{eq1122wed}, the proof is similar, using Lemma~\ref{subsol2}.
\end{proof}

Since we will assume that either $(\vec b,d)$ satisfies \eqref{eq1122wed} or $(\vec c,d)$ satisfies \eqref{eq1123wed}, by taking $\phi \in C^\infty_c(\bR^n)$ that equals to $1$ in $\Omega$, we see that $d$ must satisfy
\[
\int_\Omega d \ge 0.
\]
We will show that the Neumann problem has a unique solution in the case when $\int_\Omega d>0$, while in the case when $\int_\Omega d =0$, the existence and uniqueness are covered by \cite{DV09} and there is a one-dimensional kernel of solutions to the homogeneous Neumann problem with zero data.
This is the context of the following proposition.

\begin{proposition}			\label{kernelChar}
Let $\Omega$ be a bounded Lipschitz domain.
Let $\mathbf A=(a^{ij})$ satisfy the uniform ellipticity and boundedness condition \eqref{ellipticity}, $\vec b$, $\vec c \in L^n(\Omega)$, $d \in L^{n/2}(\Omega)$.
Assume that the pair $(\vec b, d)$ satisfies the condition \eqref{eq1122wed} or the pair $(\vec c,d)$ satisfies the condition \eqref{eq1123wed}.
Then the dimension of the solution space to the problem \eqref{eq0929tue} is equal to $1$ if and only if $\int_\Omega d=0$, and it is equal to $0$ if and only if $\int_\Omega d>0$.
\end{proposition}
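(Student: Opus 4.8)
The plan is to combine the two facts already available: by Proposition~\ref{kernel} the solution space $V$ of \eqref{eq0929tue} satisfies $\dim V\le 1$, and, as observed just before the statement, $\int_\Omega d\ge 0$. It therefore suffices to prove the two implications
\[
\int_\Omega d=0\ \Longrightarrow\ \dim V=1,\qquad \int_\Omega d>0\ \Longrightarrow\ \dim V=0,
\]
since then both biconditionals follow formally: if $\dim V=1$ then $\int_\Omega d>0$ is excluded by the second implication while $\int_\Omega d\ge 0$, so $\int_\Omega d=0$; and if $\dim V=0$ then $\int_\Omega d=0$ is excluded by the first implication, so $\int_\Omega d>0$.

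For the first implication, when $\int_\Omega d=0$ the Neumann problem \eqref{eq0929tue} falls within the setting of \cite{DV09}, which yields a one-dimensional solution space; together with Proposition~\ref{kernel} this gives $\dim V=1$. (When $(\vec b,d)$ satisfies \eqref{eq1122wed} one sees this directly by checking that $u\equiv 1$ solves \eqref{eq0929tue}: the linear functional $\Lambda(\phi)=\int_\Omega\vec b\cdot\nabla\phi+d\phi$ is nonnegative on nonnegative $\phi\in C_c^\infty(\bR^n)$ and is represented by a nonnegative Radon measure supported in $\overline\Omega$ of total mass $\Lambda(\chi)=\int_\Omega d=0$ for any cutoff $\chi$ equal to $1$ near $\overline\Omega$; hence $\Lambda\equiv 0$ on $C_c^\infty(\bR^n)$, which is precisely the weak formulation \eqref{eq2133sat} for $u\equiv 1$ with $f=g=\vec F=0$ and $\Gamma=\partial\Omega$.)

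For the second implication, suppose first that $(\vec b,d)$ satisfies \eqref{eq1122wed}, and assume for contradiction that $u\in Y_1^2(\Omega)$ is a nonzero solution of \eqref{eq0929tue}. By Proposition~\ref{kernel} we may take $u>0$ a.e.\ in $\Omega$; since a solution is in particular a subsolution, Lemma~\ref{subsol2} forces $u$ to equal a positive constant $c$. Using $\nabla u=0$ and testing \eqref{eq2133sat} (with $f=g=\vec F=0$) against an arbitrary $\phi\in C_c^\infty(\bR^n)$ gives $c\int_\Omega(\vec b\cdot\nabla\phi+d\phi)=0$; choosing $\phi$ equal to $1$ near $\overline\Omega$ yields $c\int_\Omega d=0$, so $\int_\Omega d=0$, a contradiction. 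If instead $(\vec c,d)$ satisfies \eqref{eq1123wed}, pass to the adjoint operator $L^*u=-\dv(\mathbf A\tran\nabla u+\vec c u)+\vec b\cdot\nabla u+du$, whose conormal derivative is $(\mathbf A\tran\nabla u+\vec c u)\cdot\nu$: for $L^*$ the pair $(\vec c,d)$ plays exactly the role that $(\vec b,d)$ plays for $L$ in \eqref{eq1122wed}. Applying Proposition~\ref{kernel} and Lemma~\ref{subsol2} to $L^*$, the argument just given shows the homogeneous adjoint Neumann problem has no nonzero solution when $\int_\Omega d>0$; and since the solution spaces of the homogeneous Neumann problem and of its adjoint have the same (finite) dimension --- the Fredholm alternative for the $L^2$ Neumann problem, see also \cite{DV09} --- we conclude $\dim V=0$.

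The step requiring the most care is this coupling between \eqref{eq1122wed} and \eqref{eq1123wed}: under \eqref{eq1123wed} the constant function need not belong to $\ker L$, only to $\ker L^*$, so one must appeal to the equality $\dim\ker L=\dim\ker L^*$, i.e.\ to the Fredholm structure of the $L^2$ Neumann problem (equivalently, to the solvability results quoted from \cite{DV09}). Everything else is routine: Proposition~\ref{kernel} reduces a nonzero kernel element to one of definite sign, Lemma~\ref{subsol2} upgrades such an element to a constant, and a nonzero constant is incompatible with $\int_\Omega d>0$ by the one-line test against $\phi\equiv 1$.
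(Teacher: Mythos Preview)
Your proof is correct, but it proceeds in the reverse direction from the paper's, and this buys you a cleaner argument for the implication $\int_\Omega d>0\Rightarrow\dim V=0$. The paper treats the case $(\vec c,d)$ satisfying \eqref{eq1123wed} directly: starting from a nonzero solution $u>0$, it uses the truncations $u_\varepsilon=\min(u,\varepsilon)$ together with the gradient estimate \eqref{eq:NormOfGrad} from the proof of Lemma~\ref{subsol} to show $\int_\Omega d\,(u_\varepsilon/\varepsilon)\to 0$, hence $\int_\Omega d=0$; it then deduces the $(\vec b,d)$ case via the Fredholm alternative. You instead handle $(\vec b,d)$ directly, exploiting that Lemma~\ref{subsol2} already forces a positive solution to be a constant, which yields the contradiction in one line; you then pass to the $(\vec c,d)$ case by Fredholm. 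Your route avoids the $\varepsilon$-argument entirely.

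For the implication $\int_\Omega d=0\Rightarrow\dim V=1$, the two proofs are essentially the same idea expressed differently. Your Radon-measure argument (that the nonnegative functional $\Lambda(\phi)=\int_\Omega\vec b\cdot\nabla\phi+d\phi$ has total mass $\int_\Omega d=0$, hence vanishes identically) is precisely the content of the paper's computation showing $\int_\Omega\vec c\cdot\nabla\psi+d\psi=0$ for all $\psi$, which reduces \eqref{eq0929tue} to \eqref{eq1241tue} and then invokes \cite{DV09}. One small point: your sentence ``the Neumann problem \eqref{eq0929tue} falls within the setting of \cite{DV09}'' skips this reduction for the $(\vec c,d)$ case; it would be cleaner to note that the same measure argument applied to $(\vec c,d)$ gives $\int_\Omega\vec c\cdot\nabla\phi+d\phi\equiv 0$, after which the equation is exactly of the form in \cite{DV09}.
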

\begin{proof}
First, let us consider the case when $(\vec c,d)$ satisfy \eqref{eq1123wed}.
Suppose $\int_\Omega d=0$.
Let $\psi\in C_c^{\infty}(\bR^n)$ and  set $M=\max\psi^{+}$.
By taking $\phi= \psi^{+}$ and $\phi=M-\psi^{+}$, respectively in \eqref{eq1122wed} and using $\int_\Omega d=0$,  we obtain
\[
\int_\Omega\vec c \cdot \nabla\psi^{+}+d\psi^{+}=0.
\]
The same is true with $\psi^{-}$ in place of $\psi^{+}$ and thus we conclude that
\[
\int_\Omega \vec c\cdot \nabla\psi+d\psi=0,\quad \forall \psi\in C_c^{\infty}(\bR^n).
\]
This implies that $u$ is a solution to the problem \eqref{eq0929tue} if and only if it solves the problem
\begin{equation}				\label{eq1241tue}
\left\{\begin{array}{c l}-\dv(\mathbf{A}\nabla u+(\vec b-\vec c)u)=0 & \text{in }\;\Omega\\
(\mathbf{A}\nabla u+(\vec b-\vec c)u)\cdot \nu=0 & \text{on }\;\partial\Omega.\end{array}\right.
\end{equation}
By \cite[Proposition 2.2]{DV09}, we know that the solution space for the above problem has dimension $1$.
Now, suppose the dimension of the solution space to the problem \eqref{eq0929tue} is nonzero, then it follows from Proposition~\ref{kernel} that there exists a solution $u$ with $u>0$ almost everywhere in $\Omega$.
Set $u_\varepsilon = \min(u, \epsilon)$ so that $0< u_\varepsilon \le \varepsilon$.
Using $u-u_\varepsilon\geq 0$ as a test function in \eqref{eq1123wed}, we have
\[
0\leq\int_\Omega\vec c \cdot \nabla(u-u_\varepsilon)+d(u-u_\varepsilon)=-\int_\Omega\vec c \cdot \nabla u_\varepsilon+du_\varepsilon,
\]
where we used
\[
\int_\Omega\vec c\cdot \nabla u+du=0,
\]
which follows from using $1$ as a test function to the problem \eqref{eq0929tue}.
On the other hand, by using $u_\varepsilon\ge 0$, as a test function in \eqref{eq1123wed}, we have
\[
\int_\Omega\vec c \cdot \nabla u_\varepsilon+du_\varepsilon \ge 0,
\]
and thus we find that
\[
\int_\Omega\vec c \cdot \nabla u_\varepsilon+du_\varepsilon = 0.
\]
Set $v_\varepsilon=u_\varepsilon - \varepsilon$.
Note that, as in \eqref{eq:NormOfGrad}, we have
\[
\norm{\nabla v_\varepsilon}_{L^2(\set{0<u<\varepsilon})}\leq \lambda^{-1}\varepsilon \norm{\vec b-\vec c}_{L^2(\set{0<u<\varepsilon})}.
\]
Therefore, we have
\begin{align*}
\Abs{\int_\Omega d \,\frac{u_\varepsilon}{\varepsilon}}=\Abs{\frac{1}{\varepsilon}\int_\Omega d u_\varepsilon}& \leq\frac{1}{\varepsilon}  \int_\Omega \abs{\vec c\cdot \nabla u_\varepsilon}\\
&=\frac{1}{\varepsilon} \int_{\set{0<u<\epsilon}} \abs{\vec c\cdot \nabla v_\varepsilon} \leq \lambda^{-1} \norm{\vec c}_{L^2(\set{0<u<\epsilon})}\, \norm{\vec b-\vec c}_{L^2(\set{0<u<\varepsilon})}.
\end{align*}
Then, by letting $\varepsilon\to 0$ in the above and noting that $u_\varepsilon / \varepsilon \to 1$, we obtain $\int_\Omega d=0$.

Next, we consider the case when $(\vec b,d)$ satisfy Condition~\eqref{eq1122wed}.
In this case, the proof follows from the Fredholm alternative by observing that for the adjoint equation the role of $\vec b$ and $\vec c$ is reversed.
\end{proof}

Having found the dimension of the kernels to the homogeneous problem, we turn to existence of solutions.
As we have observed in the proof of Proposition~\ref{kernelChar}, in the case when $\int_\Omega d=0$, $u$ is a solution of \eqref{eq0929tue} if and only if it is a solution of \eqref{eq1241tue}.
Therefore, the case when $\int_\Omega d=0$ is covered by \cite[Theorem~1.1]{DV09}, which is the following.

\begin{proposition}\label{ExistenceDelta=0}
Let $\Omega$ be a bounded Lipschitz domain.
Let $\mathbf A=(a^{ij})$ satisfy the uniform ellipticity and boundedness condition \eqref{ellipticity}, and $\vec b \in L^n(\Omega)$.
There exists $\widehat{u}\in Y_1^2(\Omega)$ with $\hat{u}>0$ almost everywhere and $\norm{\widehat u}_{L^{\frac{2n}{n-2}}(\Omega)}=1$ such that $u\in Y_1^2(\Omega)$ is a solution to the Neumann problem
\[
\left\{\begin{array}{c l}-\dv(\mathbf A\nabla u+\vec bu)=0 & \text{in }\;\Omega,\\
(\mathbf{A} \nabla u+\vec b u)\cdot \nu=0& \text{on }\;\partial\Omega\end{array}\right.
\]
if and only if $u=c\widehat{u}$ for some constant $c\in\bR$.
Moreover, for any $f\in L^{\frac{2n}{n+2}}(\Omega)$, $\vec F\in L^2(\Omega)$ and $g\in L^{2-\frac{2}{n}}(\partial\Omega)$ satisfying the compatibility condition
\begin{equation}\label{eq:Comp1}
\int_\Omega f+\int_{\partial\Omega} g=0,
\end{equation}
there exists a unique solution $u\in Y_1^2(\Omega)$ to the Neumann problem
\[
\left\{\begin{array}{c l}-\dv(\mathbf A \nabla u+\vec b u)=f-\dv \vec F & \text{in }\;\Omega\\
(\mathbf A \nabla u +\vec b u)\cdot \nu=g+\vec F\cdot \nu & \text{on }\;\partial\Omega\end{array}\right.
\]
with $\int_\Omega u=0$.
\end{proposition}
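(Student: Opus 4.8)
The displayed statement is precisely \cite[Theorem~1.1]{DV09}; the shortest proof is to cite it, but it can also be reassembled from the material above, specialized to $\vec c\equiv 0$ and $d\equiv 0$ (so that \eqref{eq1123wed} holds trivially and Propositions~\ref{kernel}, \ref{kernelChar}, and Lemma~\ref{Main} all apply). The assertion about the kernel is immediate: by Proposition~\ref{kernel} every nonzero solution of the homogeneous problem has a strict sign, and by Proposition~\ref{kernelChar} the solution space is exactly one-dimensional since $\int_\Omega d=0$; choosing a generator of that line, replacing it by its negative if needed, and normalizing in $L^{2n/(n-2)}(\Omega)$ produces $\widehat u>0$ with $\|\widehat u\|_{L^{2n/(n-2)}(\Omega)}=1$.

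For the quantitative part, after a dilation we may assume $|\Omega|\le1$ (this preserves the hypotheses, all norms in play, and the compatibility condition \eqref{eq:Comp1}). Let $u\in Y_1^2(\Omega)$ solve the inhomogeneous problem with $\int_\Omega u=0$. Apply Lemma~\ref{SplitMeanZero} to $u$ with $h=|\vec b|\in L^n(\Omega)$ and $\varepsilon>0$ small enough that $C_0\varepsilon<\lambda/8$, where $C_0$ is the Poincar\'e constant of Lemma~\ref{Poincare}; this writes $u=\sum_{i=1}^N u_i$ with each $u_i$ of zero mean, so Lemma~\ref{Poincare} gives \eqref{eq:relation} with all $a_i=0$. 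Then part~(b) of Lemma~\ref{Main}, applied with $\vec c=0$, $d=0$, $a=0$, yields
\[
\|\nabla u\|_{L^2(\Omega)}\le C\bigl(\|f\|_{L^{2n/(n+2)}(\Omega)}+\|g\|_{L^{2-2/n}(\partial\Omega)}+\|\vec F\|_{L^2(\Omega)}\bigr),
\]
with $C$ depending only on $n$, $\lambda$, $\|\vec b\|_{L^n(\Omega)}$, and the Lipschitz character of $\Omega$. Uniqueness of the mean-zero solution follows at once: the difference of two such solutions solves the homogeneous problem with zero mean, and by Proposition~\ref{kernel} it is of one sign unless it vanishes identically, while a nonzero one-signed function cannot have zero mean.

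For existence, note that the weak formulation \eqref{eq2133sat} sees the data only through the functional $\phi\mapsto\int_\Omega f\phi+\vec F\cdot\nabla\phi+\int_{\partial\Omega}g\phi$, which by \eqref{eq:Comp1} vanishes on constants and hence defines an element $T$ of $V^{*}$, where $V=\{v\in Y_1^2(\Omega):\int_\Omega v=0\}$ is a Hilbert space under $\langle v,w\rangle=\int_\Omega\nabla v\cdot\nabla w$ (equivalent to the $Y_1^2$ norm by Lemma~\ref{Poincare}). By the Riesz representation we may replace the data by the weakly equivalent triple $(0,\vec F,0)$ with $\|\vec F\|_{L^2(\Omega)}=\|T\|_{V^{*}}$, so that the a priori estimate above reads $\|u\|_V\le C\|T\|_{V^{*}}$. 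Now view $\mathcal L_t u=-\dv(\mathbf A\nabla u+t\vec b u)$, $t\in[0,1]$, as bounded operators $V\to V^{*}$ (the lower-order term is bounded by the borderline H\"older pairing $\tfrac1n+\tfrac{n-2}{2n}+\tfrac12=1$); the preceding estimate, applied with $t\vec b$ in place of $\vec b$ and $\|t\vec b\|_{L^n}\le\|\vec b\|_{L^n}$, bounds $\|u\|_V\le C\|\mathcal L_t u\|_{V^{*}}$ uniformly in $t$, while $\mathcal L_0$ is coercive by \eqref{ellipticity} and Lemma~\ref{Poincare}, hence invertible by Lax--Milgram. The method of continuity then gives that $\mathcal L_1$ is onto, which is the asserted existence. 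The main obstacle is not any single computation but setting up the continuity argument correctly: casting the data as a single element of $V^{*}$ via \eqref{eq:Comp1}, checking boundedness of $\mathcal L_t:V\to V^{*}$, and---above all---obtaining the $t$-uniform inverse bound, which is exactly the role of Lemma~\ref{Main}. (An equivalent route replaces the continuity method by the Fredholm alternative: split $\vec b=\vec b_1+\vec b_2$ with $\vec b_1\in L^\infty$ and $\|\vec b_2\|_{L^n}$ small, so that $u\mapsto\vec b u$ is a compact-plus-small perturbation of $\mathcal L_0$, and conclude from injectivity, which was established above.)
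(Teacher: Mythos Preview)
The paper's own proof is a one-line citation of \cite[Theorem~1.1]{DV09}, which you correctly identify at the outset. Your attempt to reassemble a self-contained argument from the surrounding material is largely sound---the a~priori estimate via Lemmas~\ref{SplitMeanZero} and~\ref{Main}(b), the uniqueness argument, and the method-of-continuity existence on $V$ are all correct---but there is a genuine circularity in the very first step.

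You invoke Proposition~\ref{kernelChar} to conclude that the homogeneous kernel is exactly one-dimensional. But look at the paper's proof of Proposition~\ref{kernelChar} in the case $\int_\Omega d=0$: it reduces to the problem \eqref{eq1241tue} and then cites \cite[Proposition~2.2]{DV09} to obtain the one-dimensionality. That is precisely the kernel assertion you are trying to prove here. So your ``reassembly from the material above'' still rests on \cite{DV09} for the existence of $\widehat u$; Proposition~\ref{kernel} only gives you $\dim\ker\le 1$, not equality. Your closing parenthetical sketches the Fredholm alternative on $V$, which yields surjectivity from injectivity, but that still does not produce a nontrivial kernel element on the full space $Y_1^2(\Omega)$.

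The fix is to run Fredholm on $Y_1^2(\Omega)\to(Y_1^2(\Omega))^*$ rather than on $V$. Your own splitting $\vec b=\vec b_1+\vec b_2$ with $\vec b_1\in L^\infty$, combined with Rellich, shows that $u\mapsto(\phi\mapsto\int_\Omega\vec b u\cdot\nabla\phi)$ is compact, so $\mathcal L_1$ is Fredholm of index zero (since $\mathcal L_0$ is, with kernel and cokernel both equal to the constants). The adjoint $\mathcal L_1^*v=-\dv(\mathbf A\tran\nabla v)+\vec b\cdot\nabla v$ has the constants in its kernel trivially, and its kernel is at most one-dimensional by Proposition~\ref{kernel} applied to the adjoint (for which the relevant pair is $(0,0)$, so \eqref{eq1122wed} holds trivially). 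Hence the adjoint kernel is exactly the constants, the range of $\mathcal L_1$ is exactly the annihilator of the constants (matching \eqref{eq:Comp1}), and by index zero $\dim\ker\mathcal L_1=1$, producing $\widehat u$ without any appeal to \cite{DV09}. With this reorganization your argument becomes genuinely self-contained.
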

\begin{proof}
See Theorem~1.1 in \cite{DV09}.
\end{proof}

The Fredholm alternative yields the following result for the adjoint equation.

\begin{proposition}		\label{ExistenceDelta=0Adj}
Let $\Omega$ be a bounded Lipschitz domain.
Let $\mathbf A=(a^{ij})$ satisfy the uniform ellipticity and boundedness condition \eqref{ellipticity}, and $\vec b \in L^n(\Omega)$.
Let $\widehat{u}$ be as in Proposition \ref{ExistenceDelta=0}.
Assume that $f\in L^{\frac{2n}{n+2}}(\Omega)$, $\vec F\in L^2(\Omega)$, $g\in L^{2-\frac{2}{n}}(\partial\Omega)$ satisfy the compatibility condition
\begin{equation}\label{eq:Comp2}
\int_\Omega(f\widehat{u}+\vec F\cdot \nabla\widehat{u})+\int_{\partial\Omega}g\widehat{u}=0.
\end{equation}
Then, there exists a unique solution $u\in Y_1^2(\Omega)$ to the problem
\[
\left\{\begin{array}{c l}-\dv(\mathbf A\tran\nabla u)+\vec b \cdot \nabla u=f-\dv \vec F& \text{in }\;\Omega,\\ \mathbf A\tran \nabla u\cdot \nu=g+\vec F\cdot \nu & \text{on}\,\,\,\partial\Omega,\end{array}\right.
\]
satisfying $\int_\Omega u=0$.
\end{proposition}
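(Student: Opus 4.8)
The plan is to apply the Fredholm alternative directly, using that the operator appearing in Proposition~\ref{ExistenceDelta=0Adj} is the $L^2$-adjoint of the operator in Proposition~\ref{ExistenceDelta=0}. First I would observe that in the case $\int_\Omega d=0$ under consideration in this subsection, the operator $-\dv(\mathbf A\tran\nabla u)+\vec b\cdot\nabla u$ with conormal condition $\mathbf A\tran\nabla u\cdot\nu=g+\vec F\cdot\nu$ is exactly the formal adjoint of $-\dv(\mathbf A\nabla u+\vec bu)=f-\dv\vec F$ with conormal condition $(\mathbf A\nabla u+\vec bu)\cdot\nu=g+\vec F\cdot\nu$: indeed, for $u,v\in Y_1^2(\Omega)$ one has the bilinear form
\[
B(u,v)=\int_\Omega \mathbf A\nabla u\cdot\nabla v+\vec b u\cdot\nabla v,
\]
and the problem in Proposition~\ref{ExistenceDelta=0} corresponds to $B(u,\phi)=\langle f,\phi\rangle+\langle\vec F,\nabla\phi\rangle+\langle g,\phi\rangle_{\partial\Omega}$ for all test $\phi$, while the adjoint problem corresponds to $B(\phi,u)=\langle f,\phi\rangle+\langle\vec F,\nabla\phi\rangle+\langle g,\phi\rangle_{\partial\Omega}$.

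Next I would recast both as operator equations on a Hilbert space. Working on the closed subspace $\cH=\{u\in Y_1^2(\Omega):\int_\Omega u=0\}$ (which, by Lemma~\ref{Poincare}, carries $\norm{\nabla u}_{L^2(\Omega)}$ as an equivalent norm), I would write $B(u,v)=\langle(\mathrm{Id}+\mathcal K)u,v\rangle$ where the leading part $\int_\Omega\mathbf A\nabla u\cdot\nabla v$ is coercive on $\cH$ and hence induces an isomorphism, and the lower-order part $\int_\Omega\vec b u\cdot\nabla v$ induces a compact operator $\mathcal K$ on $\cH$ — compactness coming from the fact that $\vec b\in L^n(\Omega)$ and $u\mapsto \vec b u$ is a compact map from $Y_1^2(\Omega)$ into $L^2(\Omega)$ by the compact Sobolev embedding $Y_1^2\hookrightarrow L^q$ for $q<\frac{2n}{n-2}$ combined with H\"older. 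Then Proposition~\ref{ExistenceDelta=0} tells us that $\ker(\mathrm{Id}+\mathcal K)$ is spanned by (the projection of) $\widehat u$, i.e.\ is one-dimensional, and solvability holds precisely on the annihilator of $\ker(\mathrm{Id}+\mathcal K^*)$. By the classical Fredholm alternative, $\dim\ker(\mathrm{Id}+\mathcal K^*)=\dim\ker(\mathrm{Id}+\mathcal K)=1$, and the adjoint problem of Proposition~\ref{ExistenceDelta=0Adj} is exactly the equation $(\mathrm{Id}+\mathcal K^*)u=\ell$, where the right-hand side functional $\ell(\phi)=\int_\Omega(f\phi+\vec F\cdot\nabla\phi)+\int_{\partial\Omega}g\phi$ is well-defined and bounded on $\cH$ by Lemma~\ref{trace_lemma} and Sobolev/H\"older (for $f\in L^{\frac{2n}{n+2}}$, $\vec F\in L^2$, $g\in L^{2-2/n}(\partial\Omega)$). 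The solvability condition is that $\ell$ annihilates $\ker(\mathrm{Id}+\mathcal K)=\mathrm{span}\,\widehat u$, which is precisely the compatibility condition \eqref{eq:Comp2}; uniqueness within $\cH$ follows since $\ker(\mathrm{Id}+\mathcal K^*)$ contributes a kernel to the adjoint homogeneous problem, but imposing $\int_\Omega u=0$ and — I should check — the homogeneous adjoint solution being the constant (because $d=0$ after the reduction, so $u\equiv\text{const}$ solves the homogeneous adjoint problem) pins down the solution uniquely in $\cH$.

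The main obstacle, and the step needing the most care, is the last point: identifying the one-dimensional kernel of the homogeneous adjoint problem and checking it is transverse to $\cH$. Since $(\vec b,d)$ satisfies \eqref{eq1122wed} with $\int_\Omega d=0$, the argument in the proof of Proposition~\ref{kernelChar} shows $\int_\Omega\vec b\cdot\nabla\psi+d\psi=0$ for all $\psi\in C_c^\infty(\bR^n)$, so the homogeneous adjoint problem $-\dv(\mathbf A\tran\nabla u)+\vec b\cdot\nabla u=0$, $\mathbf A\tran\nabla u\cdot\nu=0$ has $u\equiv 1$ among its solutions; by Proposition~\ref{kernel} applied to the adjoint operator (whose roles of $\vec b,\vec c$ are swapped, so it is governed by \eqref{eq1123wed}-type condition which now holds trivially since the relevant drift annihilates test functions) the kernel is exactly one-dimensional, hence spanned by the constant. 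Since the constant is not in $\cH$ unless it is zero, the affine solution set of the adjoint problem meets $\cH$ in exactly one point, giving existence and uniqueness of $u\in Y_1^2(\Omega)$ with $\int_\Omega u=0$. I would then simply record that all of this is the verbatim content of the Fredholm alternative applied to $\mathrm{Id}+\mathcal K$ on $\cH$, citing Proposition~\ref{ExistenceDelta=0} for the structure of $\ker(\mathrm{Id}+\mathcal K)$ and the compatibility condition \eqref{eq:Comp1}, and deduce \eqref{eq:Comp2} as the dual compatibility condition.
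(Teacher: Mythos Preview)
Your overall strategy---apply the Fredholm alternative to the bilinear form $B(u,v)=\int_\Omega\mathbf A\nabla u\cdot\nabla v+\vec bu\cdot\nabla v$, so that the problem in Proposition~\ref{ExistenceDelta=0Adj} is the adjoint of that in Proposition~\ref{ExistenceDelta=0}---is exactly the paper's approach (the paper just says ``the Fredholm alternative yields the following result for the adjoint equation'' and gives no further detail). Your identification of the adjoint kernel as the constants, and hence the uniqueness under $\int_\Omega u=0$, is also correct.

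However, your implementation on $\cH=\{u\in Y_1^2(\Omega):\int_\Omega u=0\}$ contains a genuine error. You claim $\ker(\mathrm{Id}+\mathcal K)$ on $\cH$ is one-dimensional, spanned by ``the projection of $\widehat u$''. In fact it is \emph{trivial}: if $u\in\cH$ and $B(u,v)=0$ for all $v\in\cH$, then since $B(u,1)=0$ automatically (as $\nabla 1=0$), we get $B(u,v)=0$ for \emph{all} $v\in Y_1^2(\Omega)$; hence $u=c\,\widehat u$ by Proposition~\ref{ExistenceDelta=0}, and $\int_\Omega u=0$ with $\widehat u>0$ forces $c=0$. The projection $\widehat u-\fint_\Omega\widehat u$ is \emph{not} in the kernel, because $B(\text{const},v)=\text{const}\int_\Omega\vec b\cdot\nabla v$ does not vanish for all $v\in\cH$. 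Consequently, on $\cH$ the operator $\mathrm{Id}+\mathcal K$ is an isomorphism and there is \emph{no} Fredholm obstruction there; the compatibility condition \eqref{eq:Comp2} cannot arise the way you describe. It is needed instead to pass from test functions $\phi\in\cH$ back to all $\phi\in Y_1^2(\Omega)$: writing $\widehat u=\widehat u_0+\overline{\widehat u}$ with $\widehat u_0\in\cH$, the identity $0=B(\widehat u,u)=\ell(\widehat u_0)+\overline{\widehat u}\,B(1,u)$ shows $B(1,u)=\ell(1)$ precisely when $\ell(\widehat u)=0$, i.e.\ when \eqref{eq:Comp2} holds.

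The clean fix is to run Fredholm on the full space $Y_1^2(\Omega)$ with a G{\aa}rding shift: then the direct kernel is $\mathrm{span}\,\widehat u$, the adjoint kernel is the constants, the adjoint problem is solvable iff $\ell\perp\widehat u$ (this is exactly \eqref{eq:Comp2}), and the solution is unique modulo constants, hence unique in $\{\int_\Omega u=0\}$.
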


In the case $\int_\Omega d>0$, we have the following result.

\begin{proposition}\label{ExistenceDelta>0}
Let $\Omega$ be a bounded Lipschitz domain.
Let $\mathbf A=(a^{ij})$ satisfy the uniform ellipticity and boundedness condition \eqref{ellipticity}, and $\vec b$, $\vec c \in L^n(\Omega)$, and $d\in L^{\frac{n}{2}}(\Omega)$.
Suppose $(\vec b, d)$ satisfies \eqref{eq1122wed} or $(\vec c,d)$ satisfies \eqref{eq1123wed}.
Assume also that $\int_\Omega d>0$.
Then, for any $f\in L^{\frac{2n}{n+2}}(\Omega)$, $\vec F\in L^2(\Omega)$ and $g\in L^{2-\frac{2}{n}}(\Omega)$, there exists a unique solution $u\in Y_1^2(\Omega)$ to the Neumann problem
\[
\left\{\begin{array}{c l}
-\dv(\mathbf A \nabla u+\vec bu)+\vec c \cdot \nabla u+du = f-\dv \vec F &\text{ in }\;\Omega,\\
(\mathbf A \nabla u + \vec b u)\cdot \nu = g+ \vec F \cdot \nu & \text{ on }\;\partial\Omega.
\end{array}\right.
\]
\end{proposition}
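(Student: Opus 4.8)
The plan is to solve the problem first under a smallness assumption on the lower order coefficients and then remove it by the splitting technique, using the Main estimate (Lemma~\ref{Main}) and the Fredholm alternative. I treat the case where $(\vec c,d)$ satisfies \eqref{eq1123wed}; the case of $(\vec b,d)$ follows by passing to the adjoint operator, for which the roles of $\vec b$ and $\vec c$ are interchanged.

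\emph{Step 1: uniqueness.} Uniqueness is immediate from the results already established: if $u_1,u_2\in Y_1^2(\Omega)$ are two solutions, then $w=u_1-u_2$ solves the homogeneous problem \eqref{eq0929tue}, and by Proposition~\ref{kernelChar} the solution space of \eqref{eq0929tue} is $\{0\}$ since $\int_\Omega d>0$. Hence $u_1=u_2$.

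\emph{Step 2: existence via a bilinear form and Lax--Milgram under smallness.} Define the bilinear form $\mathcal B[u,\phi]=\int_\Omega \mathbf A\nabla u\cdot\nabla\phi+\vec b u\cdot\nabla\phi+\vec c\phi\cdot\nabla u+du\phi$ on $Y_1^2(\Omega)\times Y_1^2(\Omega)$, and the functional $\mathcal F(\phi)=\int_\Omega f\phi+\vec F\cdot\nabla\phi+\int_{\partial\Omega}g\phi$, which is bounded on $Y_1^2(\Omega)$ by Sobolev embedding and the trace inequality (Lemma~\ref{trace_lemma}). When $\norm{\vec b}_n$, $\norm{\vec c}_n$, $\norm{d}_{n/2}$ are sufficiently small (depending on $n,\lambda$ and the Lipschitz character), $\mathcal B$ is coercive on $Y_1^2(\Omega)$: using \eqref{eq1123wed} with the test function $uu$ (more precisely, one must justify $u^2$ as an admissible test function by truncation and approximation, exactly as in the proof of Lemma~\ref{Main}), one gets $\int_\Omega \vec c u\cdot\nabla u+du^2\ge -\int_\Omega \vec b u\cdot\nabla u$, whence $\mathcal B[u,u]\ge \lambda\norm{\nabla u}_2^2-\norm{\vec b-\vec c}_n\norm{u}_{\frac{2n}{n-2}}\norm{\nabla u}_2\ge (\lambda - C_0\norm{\vec b-\vec c}_n)\norm{\nabla u}_2^2$, which together with the Sobolev inequality $\norm{u}_{\frac{2n}{n-2}}\le C_0\norm{\nabla u}_2$ on $Y_1^2(\Omega)$ controls the full $Y_1^2$ norm. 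Lax--Milgram then gives a unique solution. (The condition $\int_\Omega d>0$ is not needed here; smallness already yields coercivity.)

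\emph{Step 3: removing the smallness via splitting and a priori estimates.} For general coefficients, write $L = L_0 + (\text{lower order remainder})$ where $L_0 = -\dv(\mathbf A\nabla\cdot)$, and recast the problem as $u = L_0^{-1}(\text{data} + \text{lower order terms involving }u)$ using Proposition~\ref{ExistenceDelta=0}; this realizes $u\mapsto u + $ compact as the identity plus a compact operator $K$ on $Y_1^2(\Omega)$ (compactness of $K$ follows from the compact Sobolev embeddings, since $\vec b,\vec c\in L^n$, $d\in L^{n/2}$). By the Fredholm alternative, existence for all data is equivalent to injectivity of $I+K$, i.e.\ to uniqueness for the homogeneous problem, which is Step~1. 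However, to obtain the \emph{scale invariant} a priori bound — and thus to know the solution genuinely lies in $Y_1^2(\Omega)$ with norm controlled only through the coefficient norms and the Lipschitz character — I would instead argue directly: apply part (b) of Lemma~\ref{Main} to the solution $u$. Since $\int_\Omega d>0$, the only solution of \eqref{eq0929tue} is $0$, so one expects an estimate of the form $\norm{\nabla u}_2\le C(\norm{f}_{\frac{2n}{n+2}}+\norm{g}_{2-2/n}+\norm{\vec F}_2)$; to feed this into Lemma~\ref{Main} one needs \eqref{eq:relation} with appropriate $a_i$, which requires a Poincaré-type inequality for the pieces $u_i$ — here is where $\int_\Omega d>0$ must be exploited again, e.g.\ via a compactness/contradiction argument showing that $\norm{u}_{\frac{2n}{n-2}}\lesssim\norm{\nabla u}_2 + |\int_\Omega du|^{1/2}$-type control cannot degenerate. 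One then chooses $\varepsilon\sim\lambda$, which makes $N$ finite (bounded by $1+(\norm{\vec b-\vec c}_n/\varepsilon)^n$), and the constants come out depending only on the stated quantities.

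\emph{Main obstacle.} The routine part is Lax--Milgram under smallness; the delicate part is Step~3, specifically obtaining the \emph{scale invariant} a priori estimate. The inequality \eqref{eq:relation} is not available for free when $\int_\Omega d>0$ because there is no reason for the $u_i$ to vanish on a large set (unlike in Lemma~\ref{subsol2}, where $v^+$ vanished on $E$). The key will be to run a Fredholm/compactness argument: if the estimate failed, one could normalize a sequence of solutions $u_k$ with $\norm{\nabla u_k}_2=1$ and vanishing data, extract a weak limit $u_\infty$ solving \eqref{eq0929tue} hence $u_\infty=0$ by Step~1, and derive a contradiction with $\norm{\nabla u_k}_2=1$ via the compact embedding — but making this \emph{uniform} over all coefficients with given norms (the true scale invariant claim) requires care, and is presumably handled by the authors in the propositions referenced in the introduction (Propositions~\ref{prop2132}, \ref{EstimateForDelta>0}).
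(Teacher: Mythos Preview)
Your Step~1 (uniqueness via Proposition~\ref{kernelChar}) together with the Fredholm-alternative part of your Step~3 is essentially the paper's entire proof: the authors simply say the argument is the same as \cite[Theorem~1.1]{DV09} (a Fredholm argument, since the problem is noncoercive), and then invoke Proposition~\ref{kernelChar} to conclude the kernel is trivial when $\int_\Omega d>0$. That is all this proposition asserts---existence and uniqueness, no quantitative bound---so everything you write about scale-invariant estimates, the choice of $a_i$, and the ``main obstacle'' is misplaced here; it belongs to Proposition~\ref{EstimateForDelta>0}, which is proved later and separately.

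Your Step~2, however, contains a genuine error. You claim that under smallness of the lower order coefficients the form $\mathcal B$ is coercive on $Y_1^2(\Omega)$, and you justify this by the chain $\mathcal B[u,u]\ge(\lambda-C_0\norm{\vec b-\vec c}_n)\norm{\nabla u}_2^2$ followed by the ``Sobolev inequality $\norm{u}_{\frac{2n}{n-2}}\le C_0\norm{\nabla u}_2$ on $Y_1^2(\Omega)$''. That Sobolev inequality is false on $Y_1^2(\Omega)$ for a bounded domain: constants lie in $Y_1^2(\Omega)$ (since $W_1^2(\Omega)=Y_1^2(\Omega)$ as sets) and have $\norm{\nabla u}_2=0$ but $\norm{u}_{\frac{2n}{n-2}}\neq 0$. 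Only the Sobolev--Poincar\'e inequality (Lemma~\ref{Poincare}) with mean subtracted is available. Consequently $\mathcal B$ is \emph{not} coercive on $Y_1^2(\Omega)$ under smallness alone, contrary to your parenthetical remark that ``the condition $\int_\Omega d>0$ is not needed here''. This is precisely why the problem is noncoercive and why the paper (following \cite{DV09}) goes through Fredholm rather than Lax--Milgram.
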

\begin{proof}
The proof is similar to the proof of Theorem~1.1 in \cite{DV09}, but in the case when $\int_\Omega d >0$, Proposition~\ref{kernelChar} says that the only $W_1^2(\Omega)$ solution to the homogeneous Neumann problem \eqref{eq0929tue} is $u=0$.
\end{proof}

\section{Estimates for solutions}			\label{sec4}
In this section, we will establish the global estimates for solutions in the Sobolev space $Y_1^2(\Omega)$.
We consider the case when $\int_\Omega d>0$ and $\int_\Omega d=0$ separately.
\subsection{The case when $\boldsymbol{\int_\Omega d=0}$}
In the proof of Proposition~\ref{kernelChar}, we saw that if $(\vec b, d)$ satisfies \eqref{eq1122wed} and $\int_\Omega d=0$, then the problem \eqref{eq0929tue} reduces to \eqref{eq1241tue} and it is enough to consider the reduced operator
\[
L_0 u=-\dv(\mathbf A \nabla u + \vec b u)
\]
under the assumption that $b\in L^n(\Omega)$.
This question has been studied in \cite{DV09}, and estimates for solutions are provided.
However, in \cite{DV09}, the constants do not depend on the parameters in an optimal way and we clarify this here.
Our first estimate concerns subsolutions. 

\begin{proposition}\label{EstimateForDelta=0}
Let $\Omega\subset \bR^n$ be a bounded Lipschitz domain.
Let $\mathbf A=(a^{ij})$ satisfy the uniform ellipticity and boundedness condition \eqref{ellipticity}, and $\vec b \in L^n(\Omega)$.
Assume that $f \in L^{\frac{2n}{n+2}}(\Omega)$, $\vec F\in L^2(\Omega)$, and $g\in L^{2-\frac{2}{n}}(\partial\Omega)$.
Suppose $u\in Y_1^2(\Omega)$ is a subsolution to the problem
\begin{equation}		\label{eq1826tue}
\left\{\begin{array}{c l}-\dv(\mathbf A\nabla u+\vec bu)= f-\dv \vec F & \text{in }\;\Omega\\
(\mathbf A \nabla u + \vec b u)\cdot \nu = g+\vec F\cdot \nu& \text{on }\;\partial\Omega,\end{array}\right.
\end{equation}
or a subsolution to the problem
\begin{equation}		\label{eq1832tue}
\left\{\begin{array}{c l}-\dv(\mathbf A\nabla u)+\vec b \cdot \nabla u = f-\dv \vec F & \text{in }\; \Omega\\
\mathbf A \nabla u \cdot \nu = g+\vec F\cdot \nu& \text{on }\;\partial\Omega.\end{array}\right.
\end{equation}
Then, we have
\begin{equation}			\label{eq:EstDelta=0}
\norm{u^{+}}_{Y_1^2(\Omega) }\leq C \left( \norm{f^{+}}_{L^{\frac{2n}{n+2}}(\Omega)}+\norm{\vec F}_{L^2(\Omega)}+\norm{g^{+}}_{L^{2-\frac{2}{n}}(\partial\Omega)}+\abs{\Omega}^{-\frac{n+2}{2n}}\int_\Omega u^{+} \right),
\end{equation}
where $C$ depends on $n$, $\lambda$, $\norm{\vec b}_n$, and the Lipschitz character of $\Omega$.
\end{proposition}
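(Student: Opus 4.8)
The plan is to reduce the estimate \eqref{eq:EstDelta=0} to an application of Lemma~\ref{Main}(a), which already contains the core of the work. First I would observe that both \eqref{eq1826tue} and \eqref{eq1832tue} are instances of \eqref{eq1523tue}: in the first case $\vec c=0$ and $d=0$, so the pair $(\vec b,d)=(\vec b,0)$ trivially satisfies \eqref{eq1122wed} (the integrand is $\vec b\cdot\nabla\phi$, but since $\Omega$ has no assumed boundary sign here — wait, that is not given). Actually, the cleaner route is to recall from the discussion preceding Proposition~\ref{EstimateForDelta=0} that when $\int_\Omega d=0$ the operator reduces to $L_0u=-\dv(\mathbf A\nabla u+\vec b u)$, and \eqref{eq1832tue} is the adjoint-type form; in either case, after the reduction \eqref{eq1241tue} one is in the setting where $d\equiv 0$ and $\vec c\equiv 0$ (or $\vec b\equiv 0$), so the relevant hypothesis \eqref{eq1122wed} (resp.\ \eqref{eq1123wed}) holds because its left-hand side is $\int_\Omega\vec b\cdot\nabla\phi$ applied to a reduced problem where one may instead use the splitting directly without needing the sign condition — so I would set $h=\abs{\vec b}$ and let $\set{u_i}_{i=1}^N$ be the splitting of $u^+$ from Lemma~\ref{Split} with $\varepsilon\in(0,\tfrac18\lambda]$.

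The next step is to verify the hypothesis \eqref{eq:relation} of Lemma~\ref{Main}. For each $i$, apply the Sobolev--Poincaré inequality of Lemma~\ref{Poincare} to $u_i$: take $a_i=\fint_\Omega u_i$, so that $\norm{u_i-a_i}_{L^{2n/(n-2)}(\Omega)}\le C_0\norm{\nabla u_i}_{L^2(\Omega)}$ with $C_0$ depending only on $n$ and the Lipschitz character. This is exactly \eqref{eq:relation}. I would also need to control $a=\sum_i a_i$ in terms of $\int_\Omega u^+$: since $u_i u\ge 0$ and $\abs{u_i}\le u^+$ with $u=\sum u_i$ by properties (d)--(f) of Lemma~\ref{Split}, we have $0\le a_i=\fint_\Omega u_i\le\fint_\Omega u^+$ and $\sum_i a_i=\fint_\Omega\sum_i u_i=\fint_\Omega u^+=\abs{\Omega}^{-1}\int_\Omega u^+$; so $a=\abs{\Omega}^{-1}\int_\Omega u^+$. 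To make this fit the form $\abs{\Omega}^{-(n+2)/2n}\int_\Omega u^+$ appearing on the right of \eqref{eq:EstDelta=0}, use $\abs{\Omega}\le 1$, which forces $\abs{\Omega}^{-1}\le\abs{\Omega}^{-(n+2)/2n}$. One subtlety: Lemma~\ref{Main} is stated for $\abs{\Omega}\le 1$, which is assumed here only implicitly — I would normalize by scaling, using that the estimate is scale invariant and that the Lipschitz character is dilation invariant (Remark after Definition~\ref{LipDom}); after rescaling so $\abs{\Omega}=1$ the factor $\abs{\Omega}^{-(n+2)/2n}$ becomes $1$ and the whole inequality is scale invariant in the correct powers.

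With \eqref{eq:relation} verified and $C_0\varepsilon\le\lambda/8$ arranged by shrinking $\varepsilon$ if necessary, Lemma~\ref{Main}(a) gives directly
\[
\int_\Omega\abs{\nabla u^+}^2\le C\left(a^2+\norm{f^+}_{L^{2n/(n+2)}(\Omega)}^2+\norm{g^+}_{L^{2-2/n}(\partial\Omega)}^2+\norm{\vec F}_{L^2(\Omega)}^2\right),
\]
which controls $\norm{\nabla u^+}_{L^2(\Omega)}$. To upgrade this to the full $Y_1^2$ norm I would bound $\norm{u^+}_{L^{2n/(n-2)}(\Omega)}$ by $\norm{u^+-\overline{u^+}}_{L^{2n/(n-2)}}+\abs{\Omega}^{(n-2)/2n}\abs{\overline{u^+}}$, apply Lemma~\ref{Poincare} to the first term and estimate the average by $\abs{\Omega}^{-1}\int_\Omega u^+$, then combine with the gradient bound and take square roots (absorbing the $a^2\lesssim(\abs{\Omega}^{-(n+2)/2n}\int_\Omega u^+)^2$ term). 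The main obstacle is bookkeeping rather than conceptual: one must ensure that in the reduced setting the hypotheses of Lemma~\ref{Main} are genuinely met (in particular that $d\equiv 0$ makes \eqref{eq1122wed}/\eqref{eq1123wed} hold vacuously, or otherwise invoke the reduction \eqref{eq1241tue}), and that all the $\abs{\Omega}$-powers track correctly through the scaling normalization so that the final constant depends only on $n$, $\lambda$, $\norm{\vec b}_n$, and the Lipschitz character — not on $\abs{\Omega}$ itself.
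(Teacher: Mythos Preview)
Your approach is correct and matches the paper's: scale to $\abs{\Omega}=1$, then apply Lemma~\ref{Main}(a) with $a_i=\fint_\Omega u_i\ge 0$, $C_0$ the Sobolev--Poincar\'e constant from Lemma~\ref{Poincare}, and $\varepsilon=\lambda/(8C_0)$. The one point you were uncertain about---why the sign hypothesis of Lemma~\ref{Main} holds---is simply that for \eqref{eq1826tue} one has $\vec c=0$, $d=0$, so the pair $(\vec c,d)=(0,0)$ trivially satisfies \eqref{eq1123wed}, while for \eqref{eq1832tue} (written in the form of \eqref{eq1523tue} with $\vec b'=0$, $\vec c'=\vec b$, $d'=0$) the pair $(\vec b',d')=(0,0)$ trivially satisfies \eqref{eq1122wed}; no appeal to the reduction \eqref{eq1241tue} is needed.
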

\begin{proof}
Let $u$ be a subsolution of the problem \eqref{eq1826tue}.
Let $r=\abs{\Omega}^{\frac{1}{n}}$ and $\Omega_r=\frac{1}{r}\Omega$.
For $x\in\Omega_r$, set $u_r(x):=u(rx)$, and etc.
Then, we have $\abs{\Omega_r}=1$ and it is straightforward to see that $u_r$ is a subsolution to the problem
\[
\left\{\begin{array}{c l}-\dv(\mathbf A_r\nabla u_r+r \vec b_r u_r)= r^2 f_r-\dv r\vec F_r, & \text{in }\; \Omega_r\\
(\mathbf A_r \nabla u_r + r \vec b_r)\cdot \nu = r g_r+r\vec F_r \cdot \nu & \text{on }\;\partial\Omega_r\end{array}\right.
\]
Moreover, $\Omega_r$ is still a Lipschitz domain with the same Lipschitz character as $\Omega$.
Therefore, once we establish the estimate in this case, that is,
\[
\norm{u_r^{+}}_{Y_1^2(\Omega_r)} \leq C\left(\norm{r^2 f_r^{+}}_{L^{\frac{2n}{n+2}}(\Omega_r)}+C\norm{r \vec F_r}_{L^2(\Omega_r)}+C\norm{r g_r^{+}}_{L^{2-\frac{2}{n}}(\partial\Omega_r)}+C\int_{\Omega_r}u_r^{+}\right),
\]
then rescaling back to $\Omega$ will yield the estimate we want.
Hence, we may and shall assume that $\abs{\Omega}=1$.
Then, the estimate \eqref{eq:EstDelta=0} follows from Lemma~\ref{Main}, by choosing $a_i=\fint_\Omega  u_i \ge 0$, taking $C_0 \ge 1$ to be the constant in Sobolev-Poincar\'e inequality, and taking $\varepsilon=\frac{\lambda}{8C_0}$.

The proof for case when $u$ is a subsolution of the problem \eqref{eq1832tue} is  parallel.
\end{proof}

As an application, we bound the norm of the function $\hat{u}$ in Proposition~\ref{ExistenceDelta=0}.

\begin{lemma}
Let $\Omega\subset \bR^n$ be a bounded Lipschitz domain.
Let $\mathbf A=(a^{ij})$ satisfy the uniform ellipticity and boundedness condition \eqref{ellipticity}, and $\vec b \in L^n(\Omega)$.
Then, the function $\widehat{u}\in Y_1^2(\Omega)$ in Proposition~\ref{ExistenceDelta=0} satisfies the estimate
\[
\norm{\widehat u}_{Y_1^2(\Omega)}\leq C,
\]
where $C$ depends on $n$, $\lambda$, $\norm{\vec b}_n$, and the Lipschitz character of $\Omega$.
\end{lemma}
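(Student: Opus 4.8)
The plan is to recognize $\widehat u$ as a solution (hence subsolution) of the homogeneous problem, apply the scale-invariant estimate of Proposition~\ref{EstimateForDelta=0} to it, and then close the loop using the normalization $\norm{\widehat u}_{L^{\frac{2n}{n-2}}(\Omega)}=1$.

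First I would note that, by Proposition~\ref{ExistenceDelta=0}, the function $\widehat u$ solves the Neumann problem \eqref{eq1826tue} with $f=0$, $\vec F=0$, $g=0$, so in particular it is a subsolution of \eqref{eq1826tue} with these data. Since $\widehat u>0$ almost everywhere, we have $\widehat u^{+}=\widehat u$, and \eqref{eq:EstDelta=0} yields
\[
\norm{\widehat u}_{Y_1^2(\Omega)}\leq C\,\abs{\Omega}^{-\frac{n+2}{2n}}\int_\Omega \widehat u,
\]
with $C$ depending only on $n$, $\lambda$, $\norm{\vec b}_n$, and the Lipschitz character of $\Omega$.

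It then remains to bound $\int_\Omega \widehat u$. By Hölder's inequality with the conjugate exponents $\frac{2n}{n-2}$ and $\frac{2n}{n+2}$,
\[
\int_\Omega \widehat u\leq \abs{\Omega}^{\frac{n+2}{2n}}\,\norm{\widehat u}_{L^{\frac{2n}{n-2}}(\Omega)}=\abs{\Omega}^{\frac{n+2}{2n}},
\]
where the last equality is precisely the normalization of $\widehat u$ in Proposition~\ref{ExistenceDelta=0}. Combining the two displays gives $\norm{\widehat u}_{Y_1^2(\Omega)}\leq C$ with the stated dependence.

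I do not expect a genuine obstacle here: the analytic work has already been carried out in Proposition~\ref{EstimateForDelta=0}, and the only thing to be careful about is that its estimate is in scale-invariant form, with the right-hand side measured in $L^{\frac{2n}{n+2}}$-, $L^2$-, and boundary norms together with the $\abs{\Omega}$-weighted average of $u^{+}$ — so that the contribution $\abs{\Omega}^{-\frac{n+2}{2n}}\int_\Omega\widehat u$ is controlled exactly by the chosen normalization and the constant retains the desired dependence on the data.
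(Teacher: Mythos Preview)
Your proof is correct and is precisely the argument the paper has in mind: the paper's proof simply says ``Recall that $\widehat{u}>0$ in $\Omega$ and $\norm{\widehat u}_{L^{\frac{2n}{n-2}}(\Omega)}=1$. Therefore, the lemma follows from Proposition~\ref{EstimateForDelta=0},'' and you have spelled out exactly the Hölder step that makes this work.
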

\begin{proof}
Recall that $\widehat{u}>0$ in $\Omega$ and $\norm{\widehat u}_{L^{\frac{2n}{n-2}}(\Omega)}=1$.
Therefore, the lemma follows from Proposition~\ref{EstimateForDelta=0}.
\end{proof}

We now turn to the estimate for solutions.
\begin{proposition}	\label{SolvabilityDelta=0}
Let $\Omega\subset \bR^n$ be a bounded Lipschitz domain.
Assume that $\mathbf A=(a^{ij})$ satisfies the uniform ellipticity and boundedness condition \eqref{ellipticity}, and $\vec b \in L^n(\Omega)$.
Let $f\in L^{\frac{2n}{n+2}}(\Omega)$, $\vec F\in L^2(\Omega)$, $g\in L^{2-\frac{2}{n}}(\partial\Omega)$, and assume that the compatibility condition \eqref{eq:Comp1} holds.
Then there exists a unique solution $u\in Y_1^2(\Omega)$ to the problem \eqref{eq1826tue} satisfying $\int_\Omega u=0$.
Moreover,
\begin{equation}\label{eq:SolveDelta=0}
\norm{u}_{Y_1^2(\Omega)}\leq C \left( \norm{\vec F}_{L^2(\Omega)}+\norm{f}_{L^{\frac{2n}{n+2}}(\Omega)}+ \norm{g}_{L^{2-\frac{2}{n}}(\partial\Omega)}\right),
\end{equation}
where $C$ depends on $n$, $\lambda$, $\norm{\vec b}_n$, and the Lipschitz character of $\Omega$.
\end{proposition}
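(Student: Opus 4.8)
The existence and uniqueness of a solution $u\in Y_1^2(\Omega)$ with $\int_\Omega u=0$ are already contained in Proposition~\ref{ExistenceDelta=0} (recall that $\widehat u>0$ a.e., so $\int_\Omega\widehat u\ne 0$ and therefore $0$ is the only element of the one-dimensional kernel with vanishing mean). Hence the content of the statement is the scale invariant bound \eqref{eq:SolveDelta=0}, and the plan is to derive it from part (b) of Lemma~\ref{Main}, the point being that the normalization $\int_\Omega u=0$ lets us take all the additive constants in \eqref{eq:relation} equal to $0$.

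As in the proof of Proposition~\ref{EstimateForDelta=0}, I would first rescale: with $r=\abs{\Omega}^{1/n}$ and $\Omega_r=\tfrac1r\Omega$, the function $u_r(x)=u(rx)$ solves a Neumann problem of the same type on $\Omega_r$, with drift $r\vec b_r$ of the same $L^n$ norm, right-hand sides $r^2f_r$, $r\vec F_r$ and boundary datum $rg_r$ whose relevant norms carry the correct powers of $r$, still satisfies $\int_{\Omega_r}u_r=0$, and $\Omega_r$ has the same Lipschitz character as $\Omega$; so it suffices to treat the case $\abs{\Omega}=1$.

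Now the operator $L_0 u=-\dv(\mathbf A\nabla u+\vec b u)$ is \eqref{eq1523tue} with $\vec c=0$ and $d=0$, and $(\vec c,d)=(0,0)$ trivially satisfies \eqref{eq1123wed}, so Lemma~\ref{Main}(b) applies. Let $C_0\ge 1$ be the Sobolev--Poincar\'e constant of Lemma~\ref{Poincare}, set $\varepsilon=\lambda/(16C_0)$ (so that $\varepsilon<\tfrac\lambda8$ and $C_0\varepsilon<\tfrac\lambda8$), and take for $\{u_i\}_{i=1}^N$ the splitting of $u$ supplied by Lemma~\ref{SplitMeanZero} for $h=\abs{\vec b}\in L^n(\Omega)$ and this $\varepsilon$; this splitting enjoys all the properties of Lemma~\ref{Split} (which is all the proof of Lemma~\ref{Main}(b) uses) together with $\int_\Omega u_i=0$. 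Choosing $a_i=0$, the requirement \eqref{eq:relation} holds by Lemma~\ref{Poincare} applied to the mean-zero functions $u_i$, and $a=\sum_i a_i=0$. Lemma~\ref{Main}(b) then gives
\[
\int_\Omega\abs{\nabla u}^2\le C\bigl(\norm{f}_{L^{\frac{2n}{n+2}}(\Omega)}^2+\norm{g}_{L^{2-\frac{2}{n}}(\partial\Omega)}^2+\norm{\vec F}_{L^2(\Omega)}^2\bigr),
\]
with $C$ depending only on $n$, $\lambda$, $\norm{\vec b}_n$ and the Lipschitz character of $\Omega$; and since $\fint_\Omega u=0$, Lemma~\ref{Poincare} yields $\norm{u}_{L^{\frac{2n}{n-2}}(\Omega)}\le C_0\norm{\nabla u}_{L^2(\Omega)}$, hence $\norm{u}_{Y_1^2(\Omega)}\le(1+C_0)\norm{\nabla u}_{L^2(\Omega)}$. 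Combining the two displays and rescaling back to $\Omega$ gives \eqref{eq:SolveDelta=0}.

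The one point that requires care — and the reason Lemma~\ref{SplitMeanZero} is needed — is that one cannot simply quote the subsolution estimate of Proposition~\ref{EstimateForDelta=0} applied to $u$ and to $-u$: the defect term $\abs{\Omega}^{-\frac{n+2}{2n}}\int_\Omega u^{\pm}$ appearing there is, via Poincar\'e, only controlled by $C_0\norm{\nabla u}_{L^2(\Omega)}$, and with no smallness assumed on $\norm{\vec b}_n$ this cannot be absorbed into the left-hand side. Returning to Lemma~\ref{Main}(b) with a mean-zero splitting removes this obstruction precisely because it forces every $a_i$ to vanish.
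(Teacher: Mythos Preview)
Your proof is correct and follows essentially the same route as the paper: reduce to $\abs{\Omega}=1$ by scaling, invoke Proposition~\ref{ExistenceDelta=0} for existence, then apply Lemma~\ref{Main}(b) with the mean-zero splitting of Lemma~\ref{SplitMeanZero} for $h=\abs{\vec b}$ so that all $a_i=0$ via the Sobolev--Poincar\'e inequality. Your additional remarks (that $(\vec c,d)=(0,0)$ trivially verifies \eqref{eq1123wed}, and that the subsolution bound of Proposition~\ref{EstimateForDelta=0} applied to $\pm u$ cannot close the estimate without smallness on $\norm{\vec b}_n$) are accurate and clarify why Lemma~\ref{SplitMeanZero} is the right tool here.
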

\begin{proof}
Existence follows from Proposition~\ref{ExistenceDelta=0}.
To show the estimate, as in the proof of Proposition~\ref{EstimateForDelta=0}, we may assume that $\abs{\Omega}=1$.
For $\varepsilon>0$ small enough, depending on $n$, $\lambda$, and the Lipschitz character of $\Omega$, consider the splitting $\set{u_i}_{i=1}^N$ of $u$ subject to $h=\abs{\vec b}$ and $\varepsilon$, which is constructed in Lemma~\ref{SplitMeanZero}.
Since $\int_\Omega u_i =0$, the Sobolev-Poincar\'e inequality implies that there is a constant $C_0$ such that
\[
\norm{u_i}_{L^{\frac{2n}{n-2}}(\Omega)}\leq C_0 \norm{\nabla u_i}_2.
\]
Therefore, we can take $a_i=0$ in part (b) of Lemma~\ref{Main}.
\end{proof}

\begin{remark}
It is interesting to note that the combination of Proposition~\ref{EstimateForDelta=0} for subsolutions with its analogue for supersolutions are not enough to show the estimate for solutions in Proposition~\ref{SolvabilityDelta=0}.
This comes from the fact that the subsolution and the supersolution estimate only ``see" the positive and negative parts of $u$, respectively.
However, Proposition~\ref{SolvabilityDelta=0} relies on the cancellation condition $\int_\Omega u=0$.
\end{remark}

The adjoint equation is treated in a similar way.
\begin{proposition}			\label{prop2132}
Let $\Omega\subset \bR^n$ be a bounded Lipschitz domain.
Assume that $\mathbf A=(a^{ij})$ satisfies the uniform ellipticity and boundedness condition \eqref{ellipticity}, and $\vec b \in L^n(\Omega)$.
Let $f\in L^{\frac{2n}{n+2}}(\Omega)$, $\vec F\in L^2(\Omega)$, $g\in L^{2-\frac{2}{n}}(\partial\Omega)$, and assume that the compatibility condition  \eqref{eq:Comp2} holds.
Then there exists a unique solution $u\in Y_1^2(\Omega)$ to the problem
\[
\left\{\begin{array}{c l}-\dv(\mathbf A\tran\nabla u)+\vec b \cdot \nabla u=f-\dv \vec F& \text{in }\;\Omega,\\ \mathbf A\tran \nabla u\cdot \nu=g+\vec F\cdot \nu & \text{on}\,\,\,\partial\Omega,\end{array}\right.
\]
satisfying $\int_\Omega u=0$.
Moreover, we have
\begin{equation*}		
\norm{u}_{Y_1^2(\Omega)}\leq C \left( \norm{\vec F}_{L^2(\Omega)}+\norm{f}_{L^{\frac{2n}{n+2}}(\Omega)}+\norm{g}_{L^{2-\frac{2}{n}}(\partial\Omega)}\right),
\end{equation*}
where $C$ depends on $n$, $\lambda$, $\norm{\vec b}_n$, and the Lipschitz character of $\Omega$.
\end{proposition}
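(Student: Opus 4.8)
The existence and uniqueness of a solution $u\in Y_1^2(\Omega)$ with $\int_\Omega u=0$ are already contained in Proposition~\ref{ExistenceDelta=0Adj}, so the only content that remains is the estimate, and for that the plan is to run the argument of Proposition~\ref{SolvabilityDelta=0} for the adjoint operator.

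The point is that the adjoint Neumann problem fits the framework of \eqref{eq1523tue}, and hence of Lemma~\ref{Main}, after the substitution $\mathbf A\to\mathbf A\tran$, divergence-form drift $\to\vec 0$, gradient drift $\to\vec b$, and $d\to 0$. Since $\mathbf A\tran\xi\cdot\xi=\mathbf A\xi\cdot\xi$, the matrix $\mathbf A\tran$ satisfies \eqref{ellipticity} with the same $\lambda$ and $\Lambda$, and the pair $(\vec 0,0)$ trivially satisfies \eqref{eq1122wed}; so Lemma~\ref{Main}(b) is available, with the splitting governed by $h=\abs{\vec 0-\vec b}=\abs{\vec b}\in L^n(\Omega)$.

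I would then reduce to $\abs{\Omega}=1$ exactly as in Proposition~\ref{EstimateForDelta=0}: for $r=\abs{\Omega}^{1/n}$ the dilated domain $\Omega_r=r^{-1}\Omega$ has $\abs{\Omega_r}=1$ and the same Lipschitz character, the dilated problem is of the same type, and the dilated gradient drift $r\vec b_r$ has the same $L^n$ norm as $\vec b$, so the estimate for $\Omega_r$ rescales back to the one claimed. With $\abs{\Omega}=1$, let $C_0$ be the constant in Lemma~\ref{Poincare} and fix $\varepsilon$ with $0<\varepsilon<\lambda/(8C_0)$. Apply Lemma~\ref{SplitMeanZero} to the solution $u$ (which has $\int_\Omega u=0$) with $h=\abs{\vec b}$ and this $\varepsilon$, obtaining a splitting $\set{u_i}_{i=1}^N$ with $\int_\Omega u_i=0$; by Lemma~\ref{Poincare} we get $\norm{u_i}_{L^{\frac{2n}{n-2}}(\Omega)}\le C_0\norm{\nabla u_i}_{L^2(\Omega)}$, so \eqref{eq:relation} holds with $a_i=0$. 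Lemma~\ref{Main}(b) now yields $\int_\Omega\abs{\nabla u}^2\le C\bigl(\norm{f}_{L^{\frac{2n}{n+2}}(\Omega)}^2+\norm{g}_{L^{2-\frac2n}(\partial\Omega)}^2+\norm{\vec F}_{L^2(\Omega)}^2\bigr)$, and one further application of Lemma~\ref{Poincare} (using $\int_\Omega u=0$) turns this into the stated bound on $\norm{u}_{Y_1^2(\Omega)}$. There is no real obstacle here: the argument is parallel to Proposition~\ref{SolvabilityDelta=0}, and the only things to watch are that the adjoint problem honestly meets the hypotheses of Lemma~\ref{Main} --- ellipticity of $\mathbf A\tran$, the trivial sign condition for the pair $(\vec 0,0)$, and the identification of $h$ with $\abs{\vec b}$ --- and that the norms scale correctly, so that the final constant $C$ depends on $\vec b$ only through $\norm{\vec b}_n$.
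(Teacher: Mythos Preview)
Your proposal is correct and is precisely the approach the paper intends: the paper's own proof is just the sentence ``The adjoint equation is treated in a similar way,'' and what you wrote is exactly the execution of Proposition~\ref{SolvabilityDelta=0} with the substitutions $\mathbf A\to\mathbf A\tran$, $\vec b\to\vec 0$, $\vec c\to\vec b$, $d\to 0$, so that $(\vec 0,0)$ trivially satisfies \eqref{eq1122wed} and $h=\abs{\vec b}$, followed by Lemma~\ref{SplitMeanZero} and Lemma~\ref{Main}(b) with $a_i=0$.
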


\subsection{The case when $\boldsymbol{\int_\Omega d>0}$}
In this case, we obtain existence, uniqueness, as well as scale invariant estimates that also depend on the value $\int_\Omega d$.
The idea to show these estimates is to use Lemma~\ref{Main}, subtracting suitable constants $a_i$ at each step.
The choice of the constants is motivated by the following lemma.

\begin{lemma}\label{ChoiceOfConstants}
Let $\Omega\subset\bR^n$ be a bounded Lipschitz domain.
Let $\mathbf A=(a^{ij})$ satisfy the uniform ellipticity and boundedness condition \eqref{ellipticity}, $\vec b$, $\vec c \in L^n(\Omega)$, $d \in L^{\frac{n}{2}}(\Omega)$.
Let $f\in L^{\frac{2n}{n+2}}(\Omega)$ and $g\in L^{2-\frac{2}{n}}(\partial\Omega)$.
If $u$ is a subsolution to the Neumann problem
\[
\left\{\begin{array}{c l}
-\dv(\mathbf A\nabla u+\vec b u)+\vec c\cdot\nabla u+du=f & \text{in }\;\Omega,\\
(\mathbf A \vec u+ \vec b u)\cdot \nu= g & \text{on }\;\partial\Omega,
\end{array}\right.
\]
then
\begin{equation}\label{eq:avgEst}
\int_\Omega \left(\vec c \cdot \nabla u^{+} + du^{+} \right) \leq\int_\Omega f^{+}+\int_{\partial\Omega} g^{+}.
\end{equation}
\end{lemma}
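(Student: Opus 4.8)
The plan is to test the subsolution inequality \eqref{eq2134sat} (here with $\vec F=0$ and $\Gamma=\partial\Omega$) against the Lipschitz function
\[
\phi_\varepsilon:=\min\!\bigl(u^{+}/\varepsilon,\,1\bigr),\qquad \varepsilon>0,
\]
which is nonnegative and converges pointwise, as $\varepsilon\to0^{+}$, to the indicator $\mathbf{1}_{\{u>0\}}$; morally, $\phi_\varepsilon$ is the substitute for the test function $1$ that only ``sees'' $u^{+}$. Before plugging it in, one first has to enlarge the admissible test class in \eqref{eq2134sat} from nonnegative $\phi\in C_c^{\infty}(\bR^n)$ to nonnegative $\phi\in W_1^2(\Omega)$; since $\Gamma=\partial\Omega$ there is no support constraint, and this extension is a routine density argument — approximate $\phi$ in $W_1^2(\Omega)$ by nonnegative functions in $C_c^{\infty}(\bR^n)$ (extend, mollify, truncate; mollification preserves the sign) and pass to the limit in each term of \eqref{eq2134sat}, using $\nabla u\in L^2$, $u\in L^{\frac{2n}{n-2}}$, $\vec b,\vec c\in L^n$, $d\in L^{n/2}$, $f\in L^{\frac{2n}{n+2}}$ together with H\"older, and the trace estimate of Lemma~\ref{trace_lemma} paired with $g\in L^{2-\frac2n}(\partial\Omega)$ for the boundary term.

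With $\phi_\varepsilon$ as test function, note $\nabla\phi_\varepsilon=\tfrac1\varepsilon\nabla u\,\mathbf{1}_{\{0<u<\varepsilon\}}$ a.e., so \eqref{eq2134sat} becomes
\[
\frac1\varepsilon\int_{\{0<u<\varepsilon\}}\!\mathbf A\nabla u\cdot\nabla u+\frac1\varepsilon\int_{\{0<u<\varepsilon\}}\!\vec b u\cdot\nabla u+\int_\Omega\vec c\,\phi_\varepsilon\cdot\nabla u+\int_\Omega d u\,\phi_\varepsilon\le\int_\Omega f\,\phi_\varepsilon+\int_{\partial\Omega}g\,\phi_\varepsilon.
\]
The first term on the left is nonnegative by ellipticity, hence can be discarded. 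For the second, $\bigl|\tfrac1\varepsilon\vec b u\cdot\nabla u\,\mathbf{1}_{\{0<u<\varepsilon\}}\bigr|\le\abs{\vec b}\,\abs{\nabla u}\,\mathbf{1}_{\{0<u<\varepsilon\}}$, which is dominated by $\abs{\vec b}\,\abs{\nabla u}\in L^1(\Omega)$ and tends to $0$ a.e. because $\{0<u<\varepsilon\}\downarrow\emptyset$; so this term vanishes as $\varepsilon\to0^{+}$ by dominated convergence. For the remaining terms $\phi_\varepsilon\to\mathbf{1}_{\{u>0\}}$ pointwise with $0\le\phi_\varepsilon\le1$, while $\vec c\cdot\nabla u,\ du,\ f\in L^1(\Omega)$ and $g\in L^1(\partial\Omega)$ (and the trace of $\phi_\varepsilon$ is $\min((\tr u)^{+}/\varepsilon,1)\to\mathbf{1}_{\{\tr u>0\}}$); dominated convergence then gives, using $\nabla u^{+}=\nabla u\,\mathbf{1}_{\{u>0\}}$ and $u^{+}=u\,\mathbf{1}_{\{u>0\}}$,
\[
\int_\Omega\vec c\cdot\nabla u^{+}+\int_\Omega d u^{+}\le\int_{\{u>0\}}f+\int_{\{\tr u>0\}}g\le\int_\Omega f^{+}+\int_{\partial\Omega}g^{+},
\]
which is exactly \eqref{eq:avgEst}.

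The only genuinely delicate step is the first one: justifying that $\phi_\varepsilon$ — equivalently, an arbitrary nonnegative $W_1^2(\Omega)$ function — is admissible in \eqref{eq2134sat}. This hinges on approximating by nonnegative smooth functions without destroying the sign and on the continuity of the trace operator $W_1^2(\Omega)\to L^{2-\frac2n}(\partial\Omega)$ of Lemma~\ref{trace_lemma}, so that the boundary pairing passes to the limit. Everything afterwards is dominated convergence with the scale-invariant integrability exponents recorded above.
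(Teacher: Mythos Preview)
Your proof is correct and is essentially the paper's own argument: your test function $\phi_\varepsilon=\min(u^{+}/\varepsilon,1)$ is exactly $u_\varepsilon/\varepsilon$, where $u_\varepsilon=\max(\min(u,\varepsilon),0)$ is the truncation the paper uses. The only cosmetic difference is in the treatment of the drift term $\tfrac{1}{\varepsilon}\int_{\{0<u<\varepsilon\}}\vec b\,u\cdot\nabla u$: you dispose of it directly by dominated convergence (bounding by $\int_{\{0<u<\varepsilon\}}\abs{\vec b}\,\abs{\nabla u}\to 0$), whereas the paper applies Cauchy's inequality and absorbs $\tfrac{\varepsilon}{4}\norm{\nabla u_\varepsilon}_2^2$ into the ellipticity term for $\varepsilon<4\lambda$ before discarding it---your route is arguably a bit cleaner, but the idea is the same.
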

\begin{proof}
For $\varepsilon>0$, let $u_\varepsilon$ be the truncation of $u$ at levels $0$ and $\varepsilon$ as in \eqref{eq0926tue}.
Using $u_\varepsilon$ as a test function, we obtain
\[
\int_\Omega \mathbf A\nabla u \cdot \nabla u_\varepsilon+\vec b u \cdot \nabla u_\varepsilon+ \vec c u_\varepsilon \cdot \nabla u+du u_\varepsilon\leq\int_\Omega f u_\varepsilon+\int_{\partial\Omega} gu_\varepsilon.
\]
Since $0 \le u_\varepsilon \le \epsilon$ and $\nabla u_\varepsilon$ is supported in the set $\set{0<u<\varepsilon}$, we have
\begin{align*}
\lambda \int_\Omega \abs{\nabla u_\varepsilon}^2+\int_\Omega \vec c u_\varepsilon \cdot \nabla u+du u_\varepsilon &\leq \varepsilon \int_\Omega f^{+}+\varepsilon \int_{\partial\Omega} g^{+}+\varepsilon \norm{\vec b}_{L^2(\set{0<u<\varepsilon})}\, \norm{\nabla u_\varepsilon}_2\\
&\leq \varepsilon\int_\Omega f^{+}+\varepsilon\int_{\partial\Omega} g^{+}+\varepsilon \norm{\vec b}_{L^2(\set{0<u<\varepsilon})}^2+\frac{\varepsilon}{4}\, \norm{\nabla u_\varepsilon}_2^2.
\end{align*}
Therefore, if $\varepsilon < 4\lambda$, dividing the above inequality by $\varepsilon$, we obtain
\begin{align*}
\int_\Omega (\vec c \cdot \nabla u+du) \,\frac{u_\varepsilon}{\varepsilon} &\leq \int_\Omega f^{+}+\int_{\partial\Omega} g^{+}+\norm{\vec b}_{L^2(\set{0<u<\varepsilon})}^2.
\end{align*}
The proof is complete since $u_\varepsilon/\varepsilon \to \chi_{\set{u>0}}$ and $\norm{\vec b}_{L^2(\set{0<u<\varepsilon})}\to 0$ as $\varepsilon \to 0$.
\end{proof}

A Poincar\'e-type estimate is the context of the following lemma.

\begin{lemma}\label{ModifiedPoincare}
Let $\Omega\subset \bR^n$ be a Lipschitz domain with $\abs{\Omega}=1$.
Assume that $\vec c\in L^n(\Omega)$, $d\in L^{\frac{n}{2}}(\Omega)$, and $\int_\Omega d\geq\delta_0>0$.
Then, there exists a constant $C_0>0$ such that for every $u\in W_1^2(\Omega)$, we have
\[
\Norm{u-\frac{\int_\Omega (\vec c \cdot \nabla u+du)}{\int_\Omega d}}_{L^{\frac{2n}{n-2}}(\Omega)} \leq C_0 \norm{\nabla u}_{L^2(\Omega)}.
\]
This constant $C_0$ depends only on $n$, $\norm{\vec c}_n$, $\norm{d}_{n/2}$, $\delta_0$, and the Lipschitz character of $\Omega$.
\end{lemma}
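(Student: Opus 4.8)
The plan is to reduce the estimate to the Sobolev--Poincar\'e inequality (Lemma~\ref{Poincare}) by comparing the quantity
\[
k_u:=\frac{\int_\Omega(\vec c\cdot\nabla u+du)}{\int_\Omega d}
\]
with the average $\bar u=\fint_\Omega u$. Write $C_1$ for the constant of Lemma~\ref{Poincare}, which depends only on $n$ and the Lipschitz character of $\Omega$. Since $\int_\Omega d\ge\delta_0>0$, the number $k_u$ is well defined; for $u\in W_1^2(\Omega)$ the Sobolev embedding gives $u\in L^{\frac{2n}{n-2}}(\Omega)$, and H\"older's inequality (using $n\ge 3$, so that $\tfrac1n+\tfrac12<1$ and $\tfrac2n+\tfrac{n-2}{2n}<1$) shows $\vec c\cdot\nabla u+du\in L^1(\Omega)$, so the numerator is finite. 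By the triangle inequality, $\abs{\Omega}=1$, and Lemma~\ref{Poincare},
\[
\norm{u-k_u}_{L^{\frac{2n}{n-2}}(\Omega)}\le\norm{u-\bar u}_{L^{\frac{2n}{n-2}}(\Omega)}+\abs{\bar u-k_u}\le C_1\norm{\nabla u}_{L^2(\Omega)}+\abs{\bar u-k_u},
\]
so it remains only to bound $\abs{\bar u-k_u}$.

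For this, multiply by $\int_\Omega d$ and use the definition of $k_u$:
\[
\Bigl(\int_\Omega d\Bigr)(\bar u-k_u)=\bar u\int_\Omega d-\int_\Omega(\vec c\cdot\nabla u+du)=-\int_\Omega\vec c\cdot\nabla u-\int_\Omega d\,(u-\bar u),
\]
hence, since $\int_\Omega d\ge\delta_0$,
\[
\abs{\bar u-k_u}\le\frac1{\delta_0}\Bigl(\Bigabs{\int_\Omega\vec c\cdot\nabla u}+\Bigabs{\int_\Omega d\,(u-\bar u)}\Bigr).
\]
For the first term H\"older's inequality gives $\bigabs{\int_\Omega\vec c\cdot\nabla u}\le\norm{\vec c}_{L^n(\Omega)}\norm{\nabla u}_{L^{n/(n-1)}(\Omega)}$, and since $n/(n-1)\le 2$ and $\abs{\Omega}=1$, a further H\"older step yields $\norm{\nabla u}_{L^{n/(n-1)}(\Omega)}\le\norm{\nabla u}_{L^2(\Omega)}$. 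For the second term, H\"older with the conjugate pair $(n/2,\,n/(n-2))$ gives $\bigabs{\int_\Omega d\,(u-\bar u)}\le\norm{d}_{L^{n/2}(\Omega)}\norm{u-\bar u}_{L^{n/(n-2)}(\Omega)}$, and since $n/(n-2)\le 2n/(n-2)$ and $\abs{\Omega}=1$, we get $\norm{u-\bar u}_{L^{n/(n-2)}(\Omega)}\le\norm{u-\bar u}_{L^{2n/(n-2)}(\Omega)}\le C_1\norm{\nabla u}_{L^2(\Omega)}$ by Lemma~\ref{Poincare} again. Combining,
\[
\abs{\bar u-k_u}\le\frac1{\delta_0}\bigl(\norm{\vec c}_{L^n(\Omega)}+C_1\norm{d}_{L^{n/2}(\Omega)}\bigr)\norm{\nabla u}_{L^2(\Omega)},
\]
and substituting into the first display proves the lemma with $C_0=C_1+\delta_0^{-1}\bigl(\norm{\vec c}_{L^n(\Omega)}+C_1\norm{d}_{L^{n/2}(\Omega)}\bigr)$, which has exactly the asserted dependence.

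There is no serious obstacle in this argument: it is the classical Poincar\'e inequality together with two applications of H\"older's inequality and the elementary identity for $(\int_\Omega d)(\bar u-k_u)$. The only points demanding care are the bookkeeping of the conjugate exponents ($n$ and $n/(n-1)$ for $\vec c\cdot\nabla u$; $n/2$ and $n/(n-2)$ for $d(u-\bar u)$, with $n\ge 3$ ensuring these are admissible) and the systematic use of the normalization $\abs{\Omega}=1$, which is what allows us to discard the powers of $\abs{\Omega}$ that would otherwise appear and to keep the final constant free of any volume factor.
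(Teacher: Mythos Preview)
Your proof is correct and follows essentially the same approach as the paper: both reduce to the Sobolev--Poincar\'e inequality via the identity $(\int_\Omega d)(\bar u-k_u)=-\int_\Omega\vec c\cdot\nabla u-\int_\Omega d(u-\bar u)$ and then bound the two terms by H\"older. The only cosmetic difference is the choice of H\"older exponents (the paper pairs $\vec c$ with $L^2$ and $d$ with $L^{2n/(n+2)}$ directly, whereas you pair $\vec c$ with $L^n$ and $d$ with $L^{n/2}$ and then embed using $\abs{\Omega}=1$), which makes no substantive difference.
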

\begin{proof}
Note that
\begin{align*}
\Abs{\frac{\int_\Omega(\vec c \cdot \nabla u+du)}{\int_\Omega d}-\fint_\Omega u} &=\frac{1}{\int_\Omega d}\,\Abs{\int_\Omega \vec c \cdot \nabla u+d\left(u-\fint_\Omega u\right)dx}\\
&\leq \frac{1}{\delta_0} \left(\norm{\vec c}_{L^2(\Omega)} \norm{\nabla u}_{L^2(\Omega)} +\norm{d}_{L^{\frac{2n}{n+2}}(\Omega)}\Norm{u-\fint_\Omega u}_{L^{\frac{2n}{n-2}}(\Omega)}\right).
\end{align*}
Also, we have
\[
\Norm{u-\frac{\int_\Omega(\vec c \cdot \nabla u+du)}{\int_\Omega d}}_{L^{\frac{2n}{n-2}}(\Omega)} \le
\Norm{u-\fint_\Omega u}_{L^{\frac{2n}{n-2}}(\Omega)}+\Norm{\fint_\Omega u-\frac{\int_\Omega(\vec c \cdot \nabla u+du)}{\int_\Omega d}}_{L^{\frac{2n}{n-2}}(\Omega)}.
\]
The proof is complete by combining these estimates, applying the Sobolev-Poincar\'e inequality and H\"older's inequality, and using the assumption that $\abs{\Omega}=1$.
\end{proof}

We now turn to the following estimate for and solutions and subsolutions.

\begin{proposition}\label{EstimateForDelta>0}
Let $\Omega\subset\bR^n$ be a Lipschitz domain.
Let $\mathbf A=(a^{ij})$ satisfy the uniform ellipticity and boundedness condition \eqref{ellipticity}, $\vec b$, $\vec c \in L^n(\Omega)$, $d \in L^{n/2}(\Omega)$.
Assume that the pair $(\vec b, d)$ satisfies the condition \eqref{eq1122wed} or the pair $(\vec c,d)$ satisfies the condition \eqref{eq1123wed}.
Assume that  $\abs{\Omega}^{\frac{2}{n}-1}\int_\Omega d \geq \delta_0>0$.
Let $f\in L^{\frac{2n}{n+2}}(\Omega)$, $\vec F\in L^2(\Omega)$, $g\in L^{2-\frac{2}{n}}(\partial\Omega)$.
\begin{enumerate}
\item 
If $u\in Y_1^2(\Omega)$ is a subsolution to the problem
\begin{equation}				\label{eq1540fri}
\left\{\begin{array}{c l}
-\dv(\mathbf A \nabla u+\vec bu)+\vec c \cdot \nabla u+du = f-\dv \vec F &\text{ in }\;\Omega,\\
(\mathbf A \nabla u + \vec b u)\cdot \nu = g+ \vec F \cdot \nu & \text{ on }\;\partial\Omega,
\end{array}\right.
\end{equation}
then there exists $C>0$, depending on $n$, $\lambda$, $\norm{\vec b}_n$, $\norm{\vec c}_n$, $\norm{d}_{n/2}$, $\delta_0$, and the Lipschitz character of $\Omega$, such that
\begin{equation}			\label{eq1235wed}
\norm{u^{+}}_{Y_1^2(\Omega)}\leq C\left( \norm{f^{+}}_{L^{\frac{2n}{n+2}}(\Omega)}+\norm{\vec F}_{L^2(\Omega)}+\norm{g^{+}}_{L^{2-\frac{2}{n}}(\partial\Omega)}\right).
\end{equation}
\item
There exists a unique solution $u\in Y_1^2(\Omega)$ to the problem \eqref{eq1540fri} and it satisfies
\begin{equation}			\label{eq1608fri}
\norm{u}_{Y_1^2(\Omega)}\le C\left( \norm{f}_{L^{\frac{2n}{n+2}}(\Omega)}+\norm{\vec F}_{L^2(\Omega)}+\norm{g}_{L^{2-\frac{2}{n}}(\partial\Omega)}\right),
\end{equation}
where  $C$ depends on $n$, $\lambda$, $\norm{\vec b}_n$, $\norm{\vec c}_n$, $\norm{d}_{n/2}$, $\delta_0$, and the Lipschitz character of $\Omega$.
\end{enumerate}
\end{proposition}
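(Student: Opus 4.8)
\emph{Plan.} I would obtain part~(b) from part~(a). Existence and uniqueness of $u\in Y_1^2(\Omega)$ is exactly Proposition~\ref{ExistenceDelta>0}, since the hypothesis $\abs{\Omega}^{\frac2n-1}\int_\Omega d\ge\delta_0$ forces $\int_\Omega d>0$. For the estimate \eqref{eq1608fri}, observe that $u$ is a subsolution of \eqref{eq1540fri} and that $-u$ is a subsolution of the problem \eqref{eq1540fri} with $f,\vec F,g$ replaced by $-f,-\vec F,-g$ (the sign condition is imposed on the coefficients, hence unaffected). Applying the subsolution bound \eqref{eq1235wed} to $u$ and to $-u$, and using $(-u)^+=u^-$, $(-f)^+=f^-$, etc., gives $\norm{u^+}_{Y_1^2(\Omega)}+\norm{u^-}_{Y_1^2(\Omega)}\le C(\norm f+\norm{\vec F}+\norm g)$, and \eqref{eq1608fri} follows since $\abs u=u^++u^-$ while $\nabla u^+$ and $\nabla u^-$ have disjoint supports.

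So everything reduces to part~(a). As in the proof of Proposition~\ref{EstimateForDelta=0}, I would first rescale by $r=\abs\Omega^{1/n}$: this preserves the Lipschitz character, turns the hypothesis into $\int_\Omega d\ge\delta_0$ (the exact assumption of Lemma~\ref{ModifiedPoincare}), and the estimate is scale invariant, so one may assume $\abs\Omega=1$. Suppose first that $(\vec c,d)$ satisfies \eqref{eq1123wed}; the case of \eqref{eq1122wed} is handled in the same way after interchanging the roles of $\vec b$ and $\vec c$ (with the corresponding variants of the auxiliary lemmas). Let $\set{u_i}_{i=1}^N$ be the splitting of $u^+$ from Lemma~\ref{Split} subordinate to $h=\abs{\vec b-\vec c}$ and a parameter $\varepsilon$ to be chosen. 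As the ``center'' for the $i$-th piece I would take
\[
a_i:=\frac{\int_\Omega(\vec c\cdot\nabla u_i+du_i)}{\int_\Omega d},
\]
which is well defined since $\int_\Omega d\ge\delta_0>0$ and is $\ge0$: by property~(d) of Lemma~\ref{Split} we have $u_i\ge0$, and \eqref{eq1123wed} extends by density to nonnegative $\phi\in Y_1^2(\Omega)$, so $\int_\Omega(\vec c\cdot\nabla u_i+du_i)\ge0$. Lemma~\ref{ModifiedPoincare} then provides a constant $C_0$, depending only on the admissible parameters, with $\norm{u_i-a_i}_{L^{\frac{2n}{n-2}}(\Omega)}\le C_0\norm{\nabla u_i}_{L^2(\Omega)}$, i.e.\ \eqref{eq:relation}; I would now fix $\varepsilon=\min\{\tfrac{\lambda}{8C_0},\tfrac\lambda8\}$.

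With this input, part~(a) of Lemma~\ref{Main} gives
\[
\int_\Omega\abs{\nabla u^+}^2\le C\bigl(a^2+\norm{f^+}_{L^{\frac{2n}{n+2}}(\Omega)}^2+\norm{g^+}_{L^{2-\frac2n}(\partial\Omega)}^2+\norm{\vec F}_{L^2(\Omega)}^2\bigr),
\]
where, by property~(f) of the splitting, $a=\sum_{i=1}^N a_i=(\int_\Omega d)^{-1}\int_\Omega(\vec c\cdot\nabla u^++du^+)$. To bound $a$ I would invoke Lemma~\ref{ChoiceOfConstants}, which (after accommodating the term $\vec F$; see below) yields $\int_\Omega(\vec c\cdot\nabla u^++du^+)\le\int_\Omega f^++\int_{\partial\Omega}g^+$; together with $\int_\Omega d\ge\delta_0$, the bound $\norm{f^+}_{L^1(\Omega)}\le\norm{f^+}_{L^{\frac{2n}{n+2}}(\Omega)}$ (valid since $\abs\Omega=1$) and $\abs{\partial\Omega}\le C$, this gives $a\le\tfrac{C}{\delta_0}\bigl(\norm{f^+}_{L^{\frac{2n}{n+2}}(\Omega)}+\norm{g^+}_{L^{2-\frac2n}(\partial\Omega)}\bigr)$. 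Feeding this back controls $\norm{\nabla u^+}_{L^2(\Omega)}$, and finally, using $u^+=\sum_i u_i$, the disjointness of the supports of the $\nabla u_i$, and $\norm{u_i}_{L^{\frac{2n}{n-2}}}\le\norm{u_i-a_i}_{L^{\frac{2n}{n-2}}}+a_i$, one obtains $\norm{u^+}_{L^{\frac{2n}{n-2}}(\Omega)}\le C_0\sqrt N\,\norm{\nabla u^+}_{L^2(\Omega)}+a$, completing \eqref{eq1235wed}.

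\emph{Main obstacle.} The genuinely delicate point is the control of $a$ in the presence of $\vec F$: Lemma~\ref{ChoiceOfConstants} is stated only for $\vec F=0$, and simply rerunning its truncation argument with the test function $u_\tau=\max(\min(u,\tau),0)$ produces, after dividing by $\tau$ and letting $\tau\to0$, an extra contribution of order $\tfrac1\tau\int_{\{0<u<\tau\}}\abs{\vec F}^2$, which for a general $\vec F\in L^2(\Omega)$ need not tend to $0$. Making this rigorous — either by a sharper test function that exploits that $u$ is a subsolution (so $u^+$ cannot be too flat across $\partial\{u>0\}$), or by splitting off the part of $u$ carrying the $\dv\vec F$ term and estimating it separately so that the remainder is a subsolution with $\vec F=0$ — is the crux. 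Once the bound $a\le\tfrac{C}{\delta_0}(\norm{f^+}+\norm{\vec F}+\norm{g^+})$ is secured, the remainder is a straightforward assembly of Lemmas~\ref{Split}, \ref{ModifiedPoincare} and \ref{Main}, the rescaling, and the $u^\pm$ device used for part~(b).
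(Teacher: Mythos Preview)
Your treatment of the case where $(\vec c,d)$ satisfies \eqref{eq1123wed} matches the paper's almost exactly, including the choice of $a_i$, the use of Lemmas~\ref{ModifiedPoincare} and~\ref{Main}, and the identification of the $\vec F$ obstacle. Your second suggested fix for the obstacle---solving an auxiliary problem for the $\dv\vec F$ part alone so that the remainder is a subsolution with $\vec F=0$---is precisely what the paper does: it takes $\tilde u$ from Proposition~\ref{SolvabilityDelta=0} solving $-\dv(\mathbf A\nabla\tilde u+\vec b\tilde u)=-\dv\vec F$ with the matching conormal data, applies the $\vec F=0$ estimate to $w=u-\tilde u$ (whose right-hand side picks up the harmless $L^{2n/(n+2)}$ terms $-\vec c\cdot\nabla\tilde u-d\tilde u$), and then recovers $\norm{\nabla u^+}_2$ by testing with $u^+$. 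Your derivation of part~(b) from part~(a) also agrees with the paper.

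The genuine gap is your one-line disposal of the case where $(\vec b,d)$ satisfies \eqref{eq1122wed}. The operator is \emph{not} symmetric in $\vec b$ and $\vec c$: $\vec b$ enters in divergence form and $\vec c$ in non-divergence form, so Lemma~\ref{ChoiceOfConstants} (whose proof uses the test function $u_\varepsilon$ and the structure of the weak formulation) inherently bounds $\int_\Omega(\vec c\cdot\nabla u^++du^+)$, and there is no straightforward variant yielding a bound on $\int_\Omega(\vec b\cdot\nabla u^++du^+)$. If instead you keep $a_i$ defined with $\vec c$ so that Lemma~\ref{ChoiceOfConstants} still controls $a$, you lose $a_i\ge 0$, which Lemma~\ref{Main} requires. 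The paper therefore does \emph{not} proceed by symmetry here: it first proves part~(b) by duality (solve the adjoint problem, for which the roles of $\vec b$ and $\vec c$ are exchanged, and apply the already-established estimate under \eqref{eq1123wed} to the adjoint solution), and then deduces part~(a) by comparing the subsolution $u$ with the actual solution $u_0$: by Lemma~\ref{subsol2}, $(u-u_0)^+$ is either $0$ a.e.\ or a positive constant, and the latter is ruled out since $\int_\Omega d>0$, whence $u^+\le|u_0|$ and the bound for $u_0$ finishes the job.
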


\begin{proof}
As in the proof of Proposition~\ref{SolvabilityDelta=0}, take $r=\abs{\Omega}^{\frac{1}{n}}$, let $\Omega_r=\frac{1}{r}\Omega$, and for $x\in\Omega_r$, set $u_r(x)=u(rx)$, etc.
Then, $u_r$ becomes a subsolution to the problem
\[
\left\{\begin{array}{c l}-\dv(\mathbf A_r \nabla u_r+ r\vec b_r u_r)+r\vec c_r\cdot \nabla u_r+r^2 d_r u_r = r^2 f_r- \dv r\vec F_r & \text{in }\;\Omega_r,\\
(\mathbf A_r \nabla u_r + r \vec b_r)\cdot \nu = r g_r+r\vec F_r \cdot \nu& \text{on }\;\partial\Omega_r\end{array}\right.
\]
Since $r=\abs{\Omega}^{\frac{1}{n}}$, we have $\abs{\Omega_r}=1$, the norms of the lower order coefficients are preserved, and
\[
\int_{\Omega_r} r^2 d_r(y)\,dy=\int_{\Omega_r}r^2 d(ry)\,dy=r^{2-n}\int_\Omega d \geq\delta_0.
\]
Therefore, as in the proof of Proposition~\ref{SolvabilityDelta=0}, we may and will assume that $\abs{\Omega}=1$.

First, we treat the case when $(\vec c,d)$ satisfies \eqref{eq1123wed}.
Also, let us momentarily assume that $\vec F= 0$.
We apply Lemma~\ref{Split} to $u^{+}$, with $h=\abs{\vec b-\vec c}$ and $\varepsilon=\frac{\lambda}{2C_0}$, where $C_0$ is as in Lemma~\ref{ModifiedPoincare}, to obtain the splitting $\set{u_i}_{i=1}^N$ of $u^{+}$. Note that $u_i \ge 0$.
Then, by setting
\[
a_i:=\frac{1}{\int_\Omega d}\int_\Omega (\vec c \cdot \nabla u_i+du_i) \ge 0,
\]
and applying Lemma~\ref{ModifiedPoincare}, we have
\[
\norm{u_i - a_i}_{\frac{2n}{n-2}} \le C_0 \norm{\nabla u_i}_2.
\]
Also, note that we have
\[
a=\sum_{i=1}^N a_i =\frac{1}{\int_\Omega d}\sum_{i=1}^N \int_\Omega (\vec c \cdot \nabla u_i+du_i)=\frac{1}{\int_\Omega d}\int_\Omega (\vec c \cdot \nabla u^{+}+du^{+}).
\]
Therefore, the estimate \eqref{eq1235wed} follows from Lemma~\ref{Main} and \eqref{eq:avgEst}.

Let us now treat the case when $\vec F$ is not identically zero.
By Proposition~\ref{SolvabilityDelta=0}, there exists a function $\tilde u\in Y_1^2(\Omega)$ with  $\int_\Omega \tilde u=0$ that solves
\begin{equation}				\label{eq1655fri}
\left\{\begin{array}{c l}-\dv(\mathbf A\nabla \tilde u+\vec b \tilde u)=-\dv \vec F& \text{in }\;\Omega,\\
(\mathbf A \nabla \tilde u+\vec b \tilde u)\cdot \nu=\vec F\cdot \nu & \text{on }\;\partial\Omega.
\end{array}\right.
\end{equation}
Then, $w=u-\tilde u$ becomes a subsolution to the problem
\begin{equation}				\label{eq1700fri}
\left\{\begin{array}{c l}-\dv(\mathbf A\nabla w+\vec bw)+\vec c\cdot \nabla w+dw = f-\vec c\cdot \nabla \tilde u-d\tilde u & \text{in }\;\Omega,\\
(\mathbf A \nabla w + \vec b w)\cdot \nu = g & \text{on }\;\partial\Omega.\end{array}\right.
\end{equation}
Since $0\leq u^{+} \leq(u-\tilde u)^{+}+ \tilde u^{+} \leq w^{+}+ \abs{\tilde u}$, the estimate when $\vec F= 0$ yields
\begin{align*}
\norm{u^{+}}_{L^{\frac{2n}{n-2}}(\Omega)} &\leq \norm{w^{+}}_{L^{\frac{2n}{n+2}}(\Omega)}+\norm{\tilde u}_{L^{\frac{2n}{n-2}}(\Omega)}\\
&\leq C\norm{(f-\vec c\cdot \nabla \tilde u-d\tilde u)^{+}}_{L^{\frac{2n}{n+2}}(\Omega)}+\norm{g^{+}}_{L^{2-\frac{2}{n}}(\partial\Omega)}+\norm{\tilde u}_{Y_1^2(\Omega)},
\end{align*}
Hence, H\"older's inequality combined with estimate \eqref{eq:SolveDelta=0} for $\norm{\tilde u}_{Y_1^2(\Omega)}$ shows that
\begin{equation}\label{eq:2*}
\norm{u^{+}}_{L^{\frac{2n}{n-2}}(\Omega) }\leq C \left(\norm{f^{+}}_{L^{\frac{2n}{n+2}}(\Omega)}+\norm{\vec F}_{L^2(\Omega)}+\norm{g^{+}}_{L^{2-\frac{2}{n}}(\partial\Omega)}\right).
\end{equation}
To bound the $L^2$ norm of $\nabla u^{+}$, use $u^{+}$ as a test function, apply the condition \eqref{eq1123wed}, and get
\[
\int_\Omega \mathbf A \nabla u^{+} \cdot \nabla u^{+} \leq \Abs{\int_\Omega (\vec b-\vec c)u^{+} \cdot \nabla u^{+} } +\int_\Omega fu^{+}+\int_\Omega \vec F\cdot \nabla u^{+}+\int_{\partial\Omega}gu^{+}.
\]
Then by using the ellipticity \eqref{ellipticity}, H\"older's inequality, the trace inequality, Cauchy's inequality, and \eqref{eq:2*}, we get the desired estimate \eqref{eq1235wed}.

Now, let us turn to the proof for part (b).
Existence and uniqueness of the solution $u$ to the problem \eqref{eq1540fri} is given in Proposition~\ref{ExistenceDelta>0}.
The estimate \eqref{eq1608fri} is obtained by applying Proposition~\ref{EstimateForDelta>0} to $u$ and $-u$.

Next, let us treat the case when $(\vec b,d)$ satisfies \eqref{eq1122wed}.
We will prove part (b) first by using a duality argument.
Again, we momentarily assume that $\vec F=0$ and consider the problem
\begin{equation}			\label{eq1638fri}
\left\{\begin{array}{c l}
-\dv(\mathbf A \nabla u+\vec bu)+\vec c \cdot \nabla u+du = f &\text{ in }\;\Omega,\\
(\mathbf A \nabla u + \vec b u)\cdot \nu = g & \text{ on }\;\partial\Omega.
\end{array}\right.
\end{equation}
By Proposition~\ref{ExistenceDelta>0}, there is a unique solution $u \in Y_1^2(\Omega)$ of the problem.
For an arbitrary $\tilde f \in L^{\frac{2n}{n+2}}(\Omega)$, let $v \in Y_1^2(\Omega)$ be the solution of the problem
\begin{equation}			\label{eq1639fri}
\left\{\begin{array}{c l}
-\dv(\mathbf A\tran \nabla v+\vec c v)+\vec b \cdot \nabla v+dv = \tilde f &\text{ in }\;\Omega,\\
(\mathbf A \nabla u + \vec b u)\cdot \nu = 0 & \text{ on }\;\partial\Omega.
\end{array}\right.
\end{equation}
Then by the estimate \eqref{eq1608fri} applied to $v$, we have
\begin{equation}			\label{eq1636fri}
\norm{v}_{Y_1^2(\Omega)}\le C \norm{\tilde f}_{L^{\frac{2n}{n+2}}(\Omega)}.
\end{equation}
Since $u$ and $v$ are solutions to \eqref{eq1638fri} and \eqref{eq1639fri}, respectively, we have
\[
\int_\Omega f v + \int_{\partial\Omega} gv = \int_\Omega \tilde f u.
\]
Therefore, by \eqref{eq1636fri} combined with H\"older's inequality and the trace inequality, we obtain
\[
\Abs{\int_\Omega \tilde f u} \le C\left( \norm{f}_{L^{\frac{2n}{n+2}}(\Omega)} + \norm{g}_{L^{2-\frac{2}{n}}(\partial\Omega)}\right) \norm{\tilde f}_{L^{\frac{2n}{n+2}}(\Omega)}.
\]
Since the above estimate holds for any $\tilde f \in L^{\frac{2n}{n+2}}(\Omega)$, the converse to H\"older's inequality yields that
\begin{equation*}
\norm{u}_{L^{\frac{2n}{n-2}}(\Omega)} \le C\left( \norm{f}_{L^{\frac{2n}{n+2}}(\Omega)} + \norm{g}_{L^{2-\frac{2}{n}}(\partial\Omega)}\right).
\end{equation*}
Now, we deal with the case when the $\vec F$ term is present.
Let $u \in Y_1^2(\Omega)$ be the solution to the problem \eqref{eq1540fri}, whose existence and uniqueness is again guranteed by Proposition~\ref{ExistenceDelta>0}.
Also, let $\tilde u$ be a solution of \eqref{eq1655fri} with $\int_\Omega \tilde u=0$, which exists by by Proposition~\ref{SolvabilityDelta=0}.
Then, $w=u-\tilde u$ is becomes a solution to the problem \eqref{eq1700fri}, and hence, similar to \eqref{eq:2*}, we have
\begin{equation}				\label{eq1708fri}
\norm{u}_{L^{\frac{2n}{n-2}}(\Omega) }\leq C \left(\norm{f}_{L^{\frac{2n}{n+2}}(\Omega)}+\norm{\vec F}_{L^2(\Omega)}+\norm{g}_{L^{2-\frac{2}{n}}(\partial\Omega)}\right).
\end{equation}
On the other hand, by testing $u$ to \eqref{eq1540fri} and applying the condition \eqref{eq1122wed}, we have
\[
\int_\Omega \mathbf A \nabla u \cdot \nabla u = \int_\Omega (\vec b-\vec c)u \cdot \nabla u +\int_\Omega fu+\int_\Omega \vec F\cdot \nabla u+\int_{\partial\Omega}gu.
\]
Then by using the ellipticity \eqref{ellipticity}, H\"older's inequality, the trace inequality, and Cauchy's inequality, and \eqref{eq1708fri}, we get the desired estimate \eqref{eq1235wed}.

Finally, let us prove part (a) under the condition \eqref{eq1122wed}.
Suppose $u \in Y_1^2(\Omega)$ is a subsolution of the problem \eqref{eq1540fri}.
Then may assume that $f, g\geq 0$.
Let $u_0 \in Y_1^2(\Omega)$ be the solution of the same problem, which we just investigated.
Then $u-u_0$ becomes a subsolution to the homogeneous problem \eqref{eq0929tue}.
By Lemma~\ref{subsol2}, we see that $(u-u_0)^{+}=0$ a.e. in $\Omega$; otherwise, $1$ becomes a subsolution of \eqref{eq0929tue}, which would imply $\int_\Omega d =0$.
Therefore, we have
\[
0 \le u^+ \le (u-u_0)^{+} + u_0^{+} \le u_0^{+} \le \abs{u_0},
\]
and the estimate for $u^+$ follows from that of $\abs{u_0}$.
This completes the proof.
\end{proof}

\begin{remark}
It is interesting to note that it is not clear how to deduce the previous estimate without assuming first that $\vec F=0$ and passing through the solution $\tilde u$ and Proposition~\ref{SolvabilityDelta=0}.
This follows from the fact that our proof of Lemma~\ref{ChoiceOfConstants} does not seem to work if we allow the $\vec F$ term to appear on the right hand side, without considering a special solution for the inhomogeneous terms only involving the $\vec F$ term.
\end{remark}

\begin{remark} 
The estimate in Proposition~\ref{EstimateForDelta>0} really depends on the quantity $\delta_0$.
Indeed, consider the following family functions $\set{u_s}$ for $0<s<1$:
\[
u_s(x)=\left\{\begin{array}{c l}\displaystyle (s^{1-n}-s)\,\abs{x} &\text{if }\; 0<\abs{x}<s,\\~\\\displaystyle \frac{n-1}{n-2}\,s^{2-n}-\frac{1}{2}s^2-\frac{1}{n-2}\,\abs{x}^{2-n}-\frac{1}{2}\abs{x}^2 & \text{if }\;s\leq \abs{x}<1. \end{array}\right.
\]
Then $u_s\in W_1^2(B_1)$ and and $u_s$ satisfies
\[
-\Delta u_s+d_su_s\leq n\quad\text{in }\;B_1,
\]
where
\[
d_s(x)= \frac{n-1}{\abs{x}^2}\,\chi_{(s/2,s)}(\abs{x}).
\]
Note that $\norm{d_s}_{n/2}\leq C$ and that as $s$ tends to $0$, we have $\int_{B_1}d_s \to 0$ while
\[
\int_{B_1} \abs{\nabla u_s^+}^2\geq\int_{B_s} \abs{\nabla u_s^+}^2 \to +\infty.
\]
\end{remark}

\section{Neumann Green's function}			\label{sec5}
\subsection{ Preliminary estimates}
We will construct Green's function using a duality argument.
For this, we first establish local and global pointwise estimates for solutions, in a special case.
We will use these estimates in the construction of Green's function, and we will  extend them in a more general setting later, in Section~\ref{sec6}.

\begin{proposition}			\label{prop1257}
Let $\Omega\subset\bR^n$ be a Lipschitz domain.
Let $B_r=B_r(q)$ for some $q\in\partial\Omega$ and $r<r_0$, where $r_0$ appears in Definition~\ref{LipDom}.
Let $\mathbf A=(a^{ij})$ satisfy the uniform ellipticity and boundedness condition \eqref{ellipticity}, $\vec b$, $\vec c \in L^n(\Omega)$, $d \in L^{n/2}(\Omega)$.
Assume that the pair $(\vec b, d)$ satisfies the condition \eqref{eq1122wed}.
Let $f\in L^{n/2,1}(\Omega)$ and $\vec F\in L^{n,1}(\Omega)$.
If $u\in W_1^2(\Omega)$ is a subsolution of
\[
\left\{\begin{array}{c l}-\dv(\mathbf A\nabla u+\vec b u)+\vec c\cdot \nabla u+du= f-\dv\vec F & \text{in }\;\Omega,\\
(\mathbf A \nabla u + \vec b u)\cdot \nu= \vec F\cdot\nu & \text{on }\;\partial\Omega, \end{array}\right.
\]
then there exists $C>0$, depending on $n$, $\lambda$, $\Lambda$, $\norm{\vec b-\vec c}_n$, and $M$, such that
\[
\sup_{B_r\cap\Omega}u^{+}\leq C \left(\fint_{B_{6(M+1)r}\cap\Omega}u^{+}+\norm{\vec F}_{L^{n,1}(B_{6(M+1)r}\cap\Omega)}+\norm{f^{+}}_{L^{n/2,1}(B_{6(M+1)r}\cap\Omega)}\right).
\]
\end{proposition}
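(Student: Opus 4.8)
The plan is to straighten and double the domain near $q$ by the reflection method (Lemmas~\ref{Extension}, \ref{divForExtension}, \ref{ReflectionNorm}), thereby reducing the boundary bound to an interior local boundedness estimate for a nonnegative subsolution, which is then obtained by a De~Giorgi--Moser iteration. Two features make the argument delicate: the splitting Lemma~\ref{Split} (applied to $h=\abs{\vec b-\vec c}$, exactly as in Lemma~\ref{Main}) must be used to keep the constant dependent on the drift only through $\norm{\vec b-\vec c}_n$, and the borderline Lorentz hypotheses $f\in L^{n/2,1}$, $\vec F\in L^{n,1}$ are precisely what let the iteration close into $L^\infty$ with a data-independent constant.

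\emph{Step 1 (reduction by reflection).} After a rotation and translation sending $q$ to the origin, Definition~\ref{LipDom} furnishes a Lipschitz graph $\psi$ with $\psi(0)=0$, $\norm{\nabla\psi}_\infty\le M$, so that $\Omega$ agrees with $\set{x_n>\psi(x')}$ on a ball of radius comparable (up to an $M$-dependent factor) to $r_0$; for $r$ small compared with $r_0/(M+1)$ I would take the special Lipschitz domain $\Omega_R^{+}(0;\psi)$ with $R$ a fixed multiple of $(M+1)r$ exceeding $6(M+1)r$, the range $r\sim r_0$ being handled by covering $B_r(q)\cap\Omega$ by boundedly many such patches and interior balls and chaining the estimates. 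Since $u$ satisfies the conormal condition with flux $\vec F\cdot\nu$ on $\partial_b\Omega_R^{+}\subset\partial\Omega$, Lemma~\ref{Extension} produces $\tilde u\in W_1^2(\Omega_R)$, a subsolution of $-\dv(\tilde{\mathbf A}\nabla\tilde u+\tilde{\vec b}\tilde u)+\tilde{\vec c}\cdot\nabla\tilde u+\tilde d\,\tilde u=\tilde f-\dv\tilde{\vec F}$ in $\Omega_R$, with $\tilde{\mathbf A}$ elliptic and bounded (constants altered by an $M$-dependent factor) and $\norm{\tilde{\vec b}-\tilde{\vec c}}_{L^n(\Omega_R)}\le C_M\norm{\vec b-\vec c}_{L^n(\Omega_R^{+})}$; restricting the test functions in \eqref{eq1122wed} to those supported in $\Omega_R$ verifies the hypothesis of Lemma~\ref{divForExtension}, so $\tilde d\ge\dv\tilde{\vec b}$ in $\Omega_R$. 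As $\vec\Psi$ has unit Jacobian and bounded derivative, the change of variables preserves the Lorentz norms of $f$ and $\vec F$ up to $M$-dependent constants. It therefore suffices to prove, for balls well inside $\Omega_R$,
\[
\sup_{B_r}\tilde u^{+}\le C\Bigl(\fint_{B_{2r}}\tilde u^{+}+\norm{\tilde{\vec F}}_{L^{n,1}(B_{2r})}+\norm{\tilde f^{+}}_{L^{n/2,1}(B_{2r})}\Bigr),
\]
since combining this with Lemma~\ref{ReflectionNorm} applied to $u^{+}$ at radius $2r$ (and to the transformed data as in the proof of that lemma) converts the $B_{2r}$-averages into $B_{6(M+1)r}\cap\Omega$-averages, the Moser doubling factor $2$ and the reflection factor $3(M+1)$ combining to $6(M+1)$.

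\emph{Step 2 (interior iteration).} For the displayed interior estimate I would run the De~Giorgi--Moser scheme. Fix concentric balls $B_\rho\subset B_\sigma$ and a cutoff $\eta$ supported in $B_\sigma$, and for levels $k\ge0$ test the equation with $\eta^2(\tilde u-k)^{+}$. Since $\tilde u\,(\tilde u-k)^{+}\ge0$ when $k\ge0$, the zeroth-order contribution of $\tilde{\vec b}$ and $\tilde d$ is discarded using $\tilde d\ge\dv\tilde{\vec b}$ exactly as in \eqref{eq:ineq}, leaving a Caccioppoli inequality on $A_k=\set{\tilde u>k}$ involving only $\tilde{\vec b}-\tilde{\vec c}$, $\tilde f^{+}$ and $\tilde{\vec F}$. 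To absorb the first-order term without the constant depending on $\tilde{\vec b}-\tilde{\vec c}$ beyond its norm, apply Lemma~\ref{Split} with $h=\abs{\tilde{\vec b}-\tilde{\vec c}}$ and a fixed $\varepsilon=\varepsilon(\lambda)$, so that on each of the $N\le1+(\norm{h}_n/\varepsilon)^n$ pieces the drift has $L^n$-norm at most $\varepsilon$; then, just as in the summation argument in the proof of Lemma~\ref{Main}, Sobolev--Poincar\'e (Lemma~\ref{Poincare}) and summing over the pieces give an energy bound with constants depending only on $n,\lambda,\Lambda,\norm{\tilde{\vec b}-\tilde{\vec c}}_n$. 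The forcing is controlled by H\"older's inequality in Lorentz spaces together with the Sobolev inequality (equivalently, by the bound $\norm{I_2|\tilde f|}_{L^\infty}\lesssim\norm{\tilde f}_{L^{n/2,1}}$ and its first-order analogue for $\tilde{\vec F}$): the second Lorentz index $1$ is exactly what provides, after integrating the level-set estimates, the gain needed to reach $L^\infty$ with the data appearing only additively. Iterating over $k_j=(1-2^{-j})\kappa$ and $\rho_j\downarrow\rho$ yields $\sup_{B_\rho}\tilde u^{+}\le C\bigl((\sigma-\rho)^{-\alpha}\norm{\tilde u^{+}}_{L^{2n/(n-2)}(B_\sigma)}+\norm{\tilde{\vec F}}_{L^{n,1}(B_\sigma)}+\norm{\tilde f^{+}}_{L^{n/2,1}(B_\sigma)}\bigr)$.

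\emph{Step 3 ($L^{2^*}\to L^1$) and main obstacle.} To replace the $L^{2n/(n-2)}$-norm by $\fint\tilde u^{+}$, interpolate $\norm{\tilde u^{+}}_{L^{2n/(n-2)}(B_\sigma)}\le(\sup_{B_\sigma}\tilde u^{+})^{\frac{n+2}{2n}}\norm{\tilde u^{+}}_{L^1(B_\sigma)}^{\frac{n-2}{2n}}$, apply Young's inequality, and absorb the $\sup$ term by the standard hole-filling iteration lemma over a geometric sequence of radii between $r$ and $2r$; this produces the interior estimate of Step~1 and hence the proposition. The essential difficulty is the scale-invariant handling of the lower-order terms inside the iteration: $\norm{\vec b-\vec c}_{L^n(E)}$ on a small set $E$ is not controlled by $\norm{\vec b-\vec c}_n$ alone, so the smallness of $\abs{A_k}$ cannot be exploited directly, and the splitting Lemma~\ref{Split} must be interleaved with the De~Giorgi iteration so that the number of pieces enters the final constant only through $\norm{\vec b-\vec c}_n$, with the borderline Lorentz bookkeeping carried consistently through this doubly-indexed (levels versus splitting) scheme. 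Some care is also needed in the geometric step, choosing $R$ so that the reflected domain and the Moser doubling fit inside the Lipschitz coordinate patch; the remaining ingredients are routine.
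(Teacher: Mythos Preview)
Your proposal is correct and follows the same overall route as the paper: reflect across the graph via Lemmas~\ref{Extension} and \ref{divForExtension}, apply an interior local-boundedness estimate to the extended subsolution $\tilde u$ on a ball $B_{2r}\subset\Omega_R$, and return to $\Omega$ through Lemma~\ref{ReflectionNorm}. The only substantive difference is that the paper does not carry out your Steps~2--3 but instead cites the ready-made interior estimate \cite[Proposition~3.4]{Sak21} (see also \cite{NU12}), whose proof is exactly the De~Giorgi--Moser iteration with the splitting Lemma~\ref{Split} that you sketch; so what you describe as the ``essential difficulty'' and the ``doubly-indexed scheme'' is already packaged in that reference. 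Your sketch of that black box is accurate in spirit---use $\tilde d\ge\dv\tilde{\vec b}$ to drop the $(\tilde{\vec b},\tilde d)$-terms as in \eqref{eq:ineq}, split $(\tilde u-k)^{+}$ against $h=\abs{\tilde{\vec b}-\tilde{\vec c}}$ at each level to absorb the drift with a constant depending only on $\norm{\tilde{\vec b}-\tilde{\vec c}}_n$, and exploit the second Lorentz index $1$ to close the iteration in $L^\infty$---so nothing is missing. One minor simplification over your Step~1: the paper centers the special Lipschitz domain at the chart point $q_i$ with $\abs{q-q_i}<r_0$ and takes the fixed size $\Omega_{3r_0}(q_i;\psi_i)$, which contains $B_{2r}(q)$ for every $r<r_0$; this avoids your case split ``$r$ small vs.\ $r\sim r_0$'' and the covering argument.
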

\begin{proof}
Since $u$ is a subsolution, we may assume that $f\geq 0$.
Since the balls $B_{r_0}(q)$ cover $\partial\Omega$, there exists $q_i\in\partial\Omega$ as in Definition~\ref{LipDom} such that $\abs{q-q_i}<r_0$.
Consider then the Lipschitz function $\psi_i$ in Definition~\ref{LipDom}, such that
\[
B_{10(M+1)r_0}(q_i)\cap\Omega=B_{10(M+1)r_0}(q_i)\cap\set{(x',x_n)\in\bR^n: x'\in\bR^{n-1},\,x_n>\psi_i(x')}.
\]
Note that
\[
B_{2r_0}(q)\subset B_{3r_0}(q_i)\subset\Omega_{3r_0}(q_i; \psi_i) \subset B_{10(M+1)r_0}(q_i),
\]
and, consider the extensions $\tilde{\mathbf A}$, $\tilde{\vec b}$, $\tilde{\vec c}$, $\tilde{d}$, $\tilde{\vec F}$, $\tilde{f}$ and $\tilde{u}$ in $\Omega_{3r_0}=\Omega_{3r_0}(q_i; \psi_i)$ as in Lemma \ref{Extension}.
By the same lemma, we see that $\tilde{u}\in W_1^2(\Omega_{3r_0})$ is a subsolution to the equation
\[
-\dv(\tilde{\mathbf A}\nabla\tilde{u}+\tilde{\vec b}\tilde{u})+\tilde{\vec c}\nabla\tilde{u}+\tilde{d}\tilde{u}= \tilde{f}-\dv \tilde{\vec F} \quad\text{in }\;\Omega_{3r_0},
\]
and thus $\tilde{u}^{+}$ is a subsolution to the same equation in $\Omega_{3r_0}$. See \cite[Theorem 3.5]{Stamp65}. 

Moreover, by Lemma \ref{divForExtension}, we have $\tilde{d}\geq\dv\tilde{\vec b}$ in $\Omega_{3r_0}$.
Then, since $B_{2r}= B_{2r}(q)\subset B_{2r_0}(q)\subset \Omega_{3r_0}$, by \cite[Proposition 3.4]{Sak21} (see also \cite{NU12}), we have
\[
\sup_{B_r}\tilde{u}^{+}\leq C\fint_{B_{2r}}\tilde{u}^{+}+C \norm{\tilde{\vec F}}_{L^{n,1}(B_{2r})}+C\norm{\tilde f}_{L^{n/2,1}(B_{2r})},
\]
where $C$ depends on $n$, $\lambda$, $\Lambda$, and $\norm{\tilde{\vec b}-\tilde{\vec c}}_n$. 
The proof is complete by using Lemma~\ref{ReflectionNorm}.
\end{proof}

\begin{corollary}				\label{lem11.15}
Let $\Omega$ be a bounded Lipschitz domain.
Let $\mathbf A$ satisfy the uniform ellipticity and boundedness condition \eqref{ellipticity}, $\vec b$, $\vec c \in L^n(\Omega)$, and $d \in L^{n/2}(\Omega)$, with $(\vec b, d)$ satisfying the condition \eqref{eq1122wed} and $\abs{\Omega}^{\frac{2}{n}-1}\int_\Omega d \geq \delta_0>0$.
Let $u \in W_1^2(\Omega)$ be a solution of
\[
\left\{\begin{array}{c l}
-\dv(\mathbf A \nabla u+\vec b u)+\vec c \cdot \nabla u+d u= f& \text{in }\;\Omega,\\
(\mathbf A \nabla u+\vec b u)\cdot \vec \nu= 0 & \text{on }\;\partial\Omega,
\end{array}\right.
\]
where $f$ is a function from the Lorentz space $L^{n/2,1}(\Omega)$. 
Then we have
\[
\norm{u}_{L^\infty(\Omega)} \le C\norm{f}_{L^{n/2,1}(\Omega)},
\]
where $C$ depends on $n$, $\lambda$, $\Lambda$, $\norm{\vec b}_{n}$, $\norm{\vec c}_{n}$, $\norm{d}_{n/2}$, $\delta_0$, and the Lipschitz character of $\Omega$. 
\end{corollary}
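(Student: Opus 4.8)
The plan is to combine the local maximum-principle-type bounds already available with the global $Y_1^2$ estimate. First I would reduce to the normalized case $\abs{\Omega}=1$: the desired inequality is scale invariant, since under $u_r(x)=u(rx)$ with $r=\abs{\Omega}^{1/n}$ and $\Omega_r=\frac1r\Omega$ one has $\norm{u_r}_{L^\infty(\Omega_r)}=\norm{u}_{L^\infty(\Omega)}$ and $\norm{r^2f_r}_{L^{n/2,1}(\Omega_r)}=\norm{f}_{L^{n/2,1}(\Omega)}$, the norms of the lower order coefficients are preserved, and crucially $\abs{\Omega_r}^{\frac2n-1}\int_{\Omega_r}r^2d_r=\abs{\Omega}^{\frac2n-1}\int_\Omega d\ge\delta_0$; so, exactly as in the proof of Proposition~\ref{EstimateForDelta>0}, I may assume $\abs{\Omega}=1$, in which case $r_0$ from Definition~\ref{LipDom} is bounded above and below by constants depending only on $n$ and the Lipschitz character of $\Omega$ (Remark~\ref{r_0Bounds}).

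Second, I would record a global integral bound. Applying part (b) of Proposition~\ref{EstimateForDelta>0} with $\vec F=0$ and $g=0$, and using the embedding $L^{n/2,1}(\Omega)\hookrightarrow L^{\frac{2n}{n+2}}(\Omega)$ (valid because $\abs{\Omega}=1$ and $\frac{2n}{n+2}<\frac n2$ for $n\ge3$), one gets
\[
\norm{u}_{L^1(\Omega)}\le\norm{u}_{L^{\frac{2n}{n-2}}(\Omega)}\le\norm{u}_{Y_1^2(\Omega)}\le C\norm{f}_{L^{\frac{2n}{n+2}}(\Omega)}\le C\norm{f}_{L^{n/2,1}(\Omega)}.
\]

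Third comes the passage to $L^\infty$ through a covering argument. Since $u$ is a solution it is in particular a subsolution, so $u^{+}$ enjoys local sup bounds on a fixed scale $r\sim r_0$: near the boundary these are exactly Proposition~\ref{prop1257} (with $\vec F=0$), while in the interior one invokes the Lorentz-space De Giorgi--Nash--Moser estimate \cite[Proposition~3.4]{Sak21} already used there, which for $B_{2r}(x)\subset\Omega$ gives $\sup_{B_r(x)}u^{+}\le C(\fint_{B_{2r}(x)}u^{+}+\norm{f^{+}}_{L^{n/2,1}(B_{2r}(x))})$. Covering $\overline\Omega$ by finitely many such balls (their number controlled by $n$ and the Lipschitz character once $\abs{\Omega}=1$), and noting that every averaging region $B\cap\Omega$ that appears has $\abs{B\cap\Omega}\gtrsim r_0^n\gtrsim1$ while $\norm{f^{+}}_{L^{n/2,1}(B\cap\Omega)}\le\norm{f}_{L^{n/2,1}(\Omega)}$, I would conclude
\[
\norm{u^{+}}_{L^\infty(\Omega)}\le C\bigl(\norm{u^{+}}_{L^1(\Omega)}+\norm{f}_{L^{n/2,1}(\Omega)}\bigr)\le C\norm{f}_{L^{n/2,1}(\Omega)}
\]
by the second step. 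Applying the same reasoning to $-u$, which solves the same Neumann problem with data $-f$ (the condition \eqref{eq1122wed} does not involve $u$), bounds $\norm{u^{-}}_{L^\infty(\Omega)}$ in the same way, and the two estimates together give the claim.

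I expect the only genuinely delicate points to be bookkeeping ones: verifying that the scale-invariant hypothesis $\abs{\Omega}^{\frac2n-1}\int_\Omega d\ge\delta_0$ is exactly what survives the normalization $\abs{\Omega}=1$, so that Proposition~\ref{EstimateForDelta>0} is applicable with the stated dependence on $\delta_0$; and checking that in the covering argument the number of balls, the radius $r\sim r_0$, and the lower bounds for $\abs{B\cap\Omega}$ depend only on $n$ and the Lipschitz character of $\Omega$, so that no constant degenerates. The analytic content is already contained in Propositions~\ref{EstimateForDelta>0} and \ref{prop1257} (together with the interior Lorentz-space bound), and no new estimate should be required.
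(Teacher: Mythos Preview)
Your argument is correct and follows the same overall architecture as the paper's proof: normalize to $\abs{\Omega}=1$, combine the local Lorentz-space sup bound near the boundary with the global $Y_1^2$ estimate from Proposition~\ref{EstimateForDelta>0}, and control the averages via H\"older and the embedding $L^{n/2,1}(\Omega)\hookrightarrow L^{\frac{2n}{n+2}}(\Omega)$.

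The one substantive difference is how the interior is handled. You close the argument by a full covering of $\overline{\Omega}$, treating interior points with the interior Lorentz estimate \cite[Proposition~3.4]{Sak21} and boundary points with Proposition~\ref{prop1257}, applied to $u$ and $-u$. The paper instead only establishes the boundary-layer estimate
\[
\norm{u}_{L^\infty(\Omega\cap B_{r_0}(q))}\le C\norm{f}_{L^{n/2,1}(\Omega)}\qquad(q\in\partial\Omega),
\]
and then invokes the maximum principle (\cite[Proposition~3.4]{Sak21b}) to propagate this bound to all of $\Omega$, avoiding any interior covering. Your route is slightly more hands-on but self-contained; the paper's is shorter because the maximum principle absorbs the interior analysis in one stroke. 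Either way, no new analytic ingredient is needed.
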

\begin{proof}
The estimate is scale-invariant, so we may assume that $\abs{\Omega}=1$.
Let $r_0>0$ be as in Definition~\ref{LipDom}.
By the same reflection argument we used in the proof of Proposition~\ref{prop1257}, we derive from \cite[Proposition 4.6]{Sak21b} that
\begin{equation}			\label{eq1256mon}
\norm{u}_{L^\infty(\Omega \cap B_{r_0}(q))} \le C \left( \fint_{\Omega \cap B_{2r_0}(q)} \abs{u}+ \norm{f}_{L^{n/2,1}(\Omega \cap B_{2r_0}(q))} \right),
\end{equation}
for any $q \in \partial\Omega$.
Also, by Proposition~\ref{EstimateForDelta>0}, we have the global estimate
\begin{equation}			\label{eq2040sun}
\norm{u}_{L^{2n/(n-2)}(\Omega)} \le C \norm{f}_{L^{2n/(n+2)}(\Omega)},
\end{equation}
where $C$ depends on $n$, $\lambda$, $\norm{\vec b}_{L^n(\Omega)}$, $\norm{\vec c}_{L^n(\Omega)}$, $\norm{d}_{L^{n/2}(\Omega)}$, and $\delta_0$.
By H\"older's inequality and \eqref{eq2040sun}, we have
\[
\fint_{\Omega \cap B_{2r_0}(q)} \abs{u} \le C \abs{\Omega \cap B_{2r_0}(q)}^{\frac{2-n}{2n}} \norm{f}_{L^{2n/(n+2)}(\Omega)} \le C \norm{f}_{L^{2n/(n+2)}(\Omega)},
\]
where we have used Remark~\ref{r_0Bounds}.
Also, by properties of Lorentz quasi-norms (see \cite[\S~1.4.2]{Grafakos}) and, we have (recall $\abs{\Omega}=1$)
\begin{equation}			\label{eq2226sat}
\norm{f}_{L^{2n/(n+2)}(\Omega)} \le C \norm{f}_{L^{2n/(n+2),1}(\Omega)} \le C \norm{f}_{L^{n/2,1}(\Omega)}.
\end{equation}
By combining the above two estimates, we derive from \eqref{eq1256mon} that
\[
\norm{u}_{L^\infty(\Omega \cap B_{r_0}(q))} \le C\norm{f}_{L^{n/2,1}(\Omega)}.
\]
Then, the result follows from the maximum principle; see \cite[Proposition 3.4]{Sak21b}.
\end{proof}

\begin{corollary}				\label{lem11.14}
Let $\Omega$ be a bounded Lipschitz domain with $\abs{\Omega}=1$.
Let $\mathbf A$ satisfy the uniform ellipticity and boundedness condition \eqref{ellipticity} and $\vec b \in L^n(\Omega)$.
Let  $f \in L^{n/2,1}(\Omega)$ satisfying $\int_\Omega f \widehat{u}=0$, where $\widehat{u}$ is as in Proposition~\ref{ExistenceDelta=0}.
Let $u \in W_1^2(\Omega)$ be a solution of
\[
\left\{\begin{array}{c l}
-\dv(\mathbf A \nabla u)+\vec b \cdot \nabla u= f& \text{in }\;\Omega,\\
\mathbf A \nabla u\cdot \vec \nu= 0 & \text{on }\;\partial\Omega,
\end{array}\right.
\]
Then we have
\[
\norm{u}_{L^\infty(\Omega)} \le C\norm{f}_{L^{n/2,1}(\Omega)},
\]
where $C$ depends on $n$, $\lambda$, $\Lambda$, $\norm{\vec b}_{n}$, and the Lipschitz character of $\Omega$. 
\end{corollary}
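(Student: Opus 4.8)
The plan is to mirror the proof of Corollary~\ref{lem11.15}, replacing the $\int_\Omega d>0$ solvability results by the corresponding theory for the case $\int_\Omega d=0$. Here the operator is $-\dv(\mathbf A\nabla u)+\vec b\cdot\nabla u$, which in the notation used throughout corresponds to $\vec b=0$, $\vec c=\vec b$, $d=0$; in particular the pair $(\vec 0,0)$ trivially satisfies \eqref{eq1122wed}, and $\int_\Omega d=0$, so by Proposition~\ref{kernelChar} the homogeneous Neumann problem has a one-dimensional solution space, spanned by the constants. Consequently the solution in the statement is the one normalized by $\int_\Omega u=0$; its existence under the stated compatibility condition $\int_\Omega f\widehat u=0$, together with the global bound
\[
\norm{u}_{Y_1^2(\Omega)}\le C\norm{f}_{L^{\frac{2n}{n+2}}(\Omega)},
\]
is provided by Proposition~\ref{prop2132} (equivalently Proposition~\ref{ExistenceDelta=0Adj}), applied with $\vec F=0$, $g=0$, and with $\mathbf A$ replaced by $\mathbf A\tran$ (which then concerns precisely the present operator and satisfies \eqref{ellipticity} with the same $\lambda$, $\Lambda$). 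Since $\abs{\Omega}=1$, the Lorentz embeddings $L^{n/2,1}(\Omega)\hookrightarrow L^{\frac{2n}{n+2},1}(\Omega)\hookrightarrow L^{\frac{2n}{n+2}}(\Omega)$ (see \cite[\S 1.4.2]{Grafakos}) give $\norm{f}_{L^{\frac{2n}{n+2}}(\Omega)}\le C\norm{f}_{L^{n/2,1}(\Omega)}$, hence $\norm{u}_{L^{\frac{2n}{n-2}}(\Omega)}\le C\norm{f}_{L^{n/2,1}(\Omega)}$.

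Next I would invoke the boundary sup-estimate that is already in hand. Both $u$ and $-u$ are subsolutions of the stated Neumann problem, with right-hand sides $f$ and $-f$ respectively and vanishing conormal data, so Proposition~\ref{prop1257} (whose hypothesis \eqref{eq1122wed} holds trivially since the $\vec bu$- and $du$-terms are absent) applies to each: for every $q\in\partial\Omega$ and $r<r_0$,
\[
\sup_{B_r(q)\cap\Omega}u^{+}\le C\Bigl(\fint_{B_{6(M+1)r}(q)\cap\Omega}u^{+}+\norm{f^{+}}_{L^{n/2,1}(\Omega)}\Bigr),
\]
and symmetrically for $(-u)^{+}$, with $C$ depending only on $n$, $\lambda$, $\Lambda$, $\norm{\vec b}_n$, $M$. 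Taking $r=r_0/(3(M+1))$ so that $B_{6(M+1)r}(q)=B_{2r_0}(q)$ and adding the two estimates yields $\norm{u}_{L^\infty(B_r(q)\cap\Omega)}\le C\bigl(\fint_{B_{2r_0}(q)\cap\Omega}\abs{u}+\norm{f}_{L^{n/2,1}(\Omega)}\bigr)$. By Hölder's inequality and the lower volume bound $\abs{B_{2r_0}(q)\cap\Omega}\gtrsim r_0^n\gtrsim 1$ (Lipschitz regularity together with Remark~\ref{r_0Bounds}), the average is bounded by $C\norm{u}_{L^{\frac{2n}{n-2}}(\Omega)}\le C\norm{f}_{L^{n/2,1}(\Omega)}$ by the first paragraph, so $\norm{u}_{L^\infty(B_r(q)\cap\Omega)}\le C\norm{f}_{L^{n/2,1}(\Omega)}$ uniformly in $q\in\partial\Omega$.

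Finally, to pass to a bound on all of $\Omega$, I would cover the interior region $\{x\in\Omega:\dist(x,\partial\Omega)\ge r\}$ by finitely many balls $B_\rho(x_0)\subset B_{2\rho}(x_0)\Subset\Omega$ and apply the interior local boundedness estimate with Lorentz data, as in Proposition~\ref{prop1257} (cf.\ \cite[Proposition 3.4]{Sak21}), once more absorbing $\fint_{B_{2\rho}(x_0)}\abs{u}\le C\norm{u}_{L^{\frac{2n}{n-2}}(\Omega)}\le C\norm{f}_{L^{n/2,1}(\Omega)}$; alternatively one may invoke the maximum principle \cite[Proposition 3.4]{Sak21b}, exactly as at the end of the proof of Corollary~\ref{lem11.15}. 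Combining with the boundary estimate gives $\norm{u}_{L^\infty(\Omega)}\le C\norm{f}_{L^{n/2,1}(\Omega)}$ with $C=C(n,\lambda,\Lambda,\norm{\vec b}_n,M)$, as claimed. I do not expect a genuinely new obstacle: the whole argument is the $\int_\Omega d=0$ analogue of Corollary~\ref{lem11.15}, and the only point requiring care is that, the homogeneous problem now having a one-dimensional kernel, the additive constant in $u$ must be pinned down — the normalization $\int_\Omega u=0$ delivered by Proposition~\ref{prop2132} being exactly the one that makes the global $L^{\frac{2n}{n-2}}$-bound, and hence the reduction above, valid.
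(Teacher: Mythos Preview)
Your proposal is correct and follows essentially the same approach as the paper: the paper's proof is a one-line remark that the argument for Corollary~\ref{lem11.15} carries over verbatim once the global estimate \eqref{eq2040sun} is supplied by Proposition~\ref{prop2132} instead of Proposition~\ref{EstimateForDelta>0}, which is exactly what you do. Your observation that the statement implicitly concerns the solution normalized by $\int_\Omega u=0$ (since the homogeneous kernel consists of constants) is correct and is indeed how the corollary is applied later in the paper.
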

\begin{proof}
The proof for Corollary~\ref{lem11.15} also works here noting that we still have \eqref{eq2040sun} by Proposition~\ref{prop2132}.
\end{proof}

We also obtain the following Caccioppoli type estimate.
	
\begin{corollary}\label{Caccioppoli}
Let $\Omega\subset \bR^n$ be a Lipschitz domain with $\abs{\Omega}=1$.
Let $\mathbf A=(a^{ij})$ satisfy the uniform ellipticity and boundedness condition \eqref{ellipticity}, $\vec b$, $\vec c \in L^n(\Omega)$, $d \in L^{n/2}(\Omega)$.
Assume that either $(\vec b, d)$ satisfies \eqref{eq1122wed} or $(\vec c,d)$ satisfies \eqref{eq1123wed}. If $u\in W_1^2(\Omega)$ is a solution to the problem
\begin{equation*}
\left\{\begin{array}{c l}
-\dv(\mathbf A \nabla u+\vec bu)+\vec c \cdot \nabla u+du = f-\dv \vec F &\text{ in }\;\Omega,\\
(\mathbf A \nabla u + \vec b u)\cdot \nu = \vec F \cdot \nu & \text{ on }\;\partial\Omega,
\end{array}\right.
\end{equation*}
then for any ball $B_{2r}$, with $r\leq\diam(\Omega)$, we have
\begin{equation*}
\int_{\Omega\cap B_r} \Abs{\nabla u}^2\le \frac{C}{r^2}\int_{\Omega\cap B_{2r}} \abs{u}^2+ C\norm{\vec F}_{L^2(\Omega\cap B_{2r})}^2+C\norm{f}_{L^{\frac{2n}{n+2}}(\Omega\cap B_{2r})}^2,
\end{equation*}
where the constant $C$ depends on $n$, $\lambda$, $\norm{\vec b-\vec c}_n$, and the Lipschitz character of $\Omega$.
\end{corollary}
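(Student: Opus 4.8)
The plan is to prove the Caccioppoli estimate by a standard cutoff-function argument, adapted to the Neumann setting where boundary integrals are absorbed via the vanishing Neumann data and the trace inequality. First I would fix a ball $B_{2r}$ and choose a Lipschitz cutoff $\eta \in C_c^\infty(\bR^n)$ with $0 \le \eta \le 1$, $\eta \equiv 1$ on $B_r$, $\supp \eta \subset B_{2r}$, and $\abs{\nabla \eta} \le C/r$. Since the Neumann condition is homogeneous in $g$ (only the $\vec F \cdot \nu$ piece is present on the boundary), the test function $\phi = \eta^2 u$ is admissible in the weak formulation \eqref{eq2133sat} with $\Gamma = \partial\Omega$: note $\phi$ need not vanish near $\partial\Omega$, which is exactly why the Neumann condition (rather than the Dirichlet one) is the natural fit. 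Plugging in $\phi = \eta^2 u$ gives
\[
\int_\Omega \mathbf A \nabla u \cdot \nabla(\eta^2 u) + \vec b u \cdot \nabla(\eta^2 u) + \vec c \eta^2 u \cdot \nabla u + d \eta^2 u^2 = \int_\Omega f \eta^2 u + \vec F \cdot \nabla(\eta^2 u) + \int_{\partial\Omega} (\text{no } g \text{ term}).
\]
Wait—the boundary term vanishes since $g \equiv 0$ here, so the right side is just the interior integral; that is the key simplification.

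Next I would expand $\nabla(\eta^2 u) = \eta^2 \nabla u + 2\eta u \nabla \eta$ and isolate the good term $\int_\Omega \eta^2 \mathbf A \nabla u \cdot \nabla u \ge \lambda \int_\Omega \eta^2 \abs{\nabla u}^2$ by ellipticity \eqref{ellipticity}. All remaining terms are to be estimated by H\"older, Young's inequality, and the Sobolev and trace inequalities so that the $\nabla u$ contributions are absorbed into the left side with a small constant, while the $u$ contributions pile up as $\frac{C}{r^2}\int_{\Omega \cap B_{2r}} \abs{u}^2$ (using $\abs{\nabla \eta}^2 \le C/r^2$) and the data contributions become $C\norm{\vec F}_{L^2(\Omega \cap B_{2r})}^2 + C\norm{f}_{L^{\frac{2n}{n+2}}(\Omega \cap B_{2r})}^2$. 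The lower-order terms involving $\vec b$, $\vec c$, $d$ are where scale invariance of the constant matters: here one uses the splitting machinery of Lemma~\ref{Split} exactly as in the proof of Lemma~\ref{Main}. Concretely, decompose $\eta u$ (or rather handle the cross terms $\int \eta^2 (\vec b - \vec c) u \cdot \nabla u$ and $\int \eta^2 d u^2$) via a splitting subordinate to $h = \abs{\vec b - \vec c}$ with small parameter $\varepsilon \le \frac{\lambda}{8}$, so that on each piece $\norm{\vec b - \vec c}_{L^n} \le \varepsilon$ and H\"older with the Sobolev embedding $\norm{\eta u}_{L^{2n/(n-2)}} \le C(\norm{\nabla(\eta u)}_{L^2})$ lets one absorb $\varepsilon \norm{\nabla(\eta u)}_{L^2}^2 \le \frac{\lambda}{8}\norm{\nabla(\eta u)}_{L^2}^2$; the $d\eta^2 u^2$ term is handled similarly using $d \in L^{n/2}$ and $\norm{d}_{L^{n/2}(\Omega_i)}$ small (or just H\"older placing $d$ in $L^{n/2}$, $u^2$ in $L^{n/(n-2)}$, once the coefficient is small). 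The condition \eqref{eq1122wed} or \eqref{eq1123wed} enters precisely as in \eqref{eq:ineq} to convert the term $\int \vec b u \cdot \nabla(\eta^2 u) + d \eta^2 u^2$ (or the $\vec c$ analogue) into something controlled by $-\int \vec b \eta^2 u \cdot \nabla u$, keeping track of the sign; this is what prevents the lower-order terms from overwhelming the estimate.

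The main obstacle I anticipate is bookkeeping the interplay between the cutoff $\eta$ and the splitting: after splitting $u^+$ (or $u$) into $u_i$, the natural test function in \eqref{eq2133sat} would be $\eta^2 u_i$ rather than $\eta^2 u$, and one must verify that the supports of $\nabla u_i$ being disjoint still yields a summable estimate — i.e. reproduce the induction \eqref{eq:x_i}--\eqref{eq1303mon} but now with $x_i = \norm{\eta \nabla u_i}_{L^2}$ and with the right-hand side carrying the extra $\frac{1}{r}\norm{u \nabla \eta}_{L^2}$ terms. This is essentially a localized version of Lemma~\ref{Main}; the cleanest route is probably to invoke Lemma~\ref{Main} (or its proof) with $\vec F$ replaced by $\vec F + (\text{terms coming from } \nabla \eta)$ and $f$ adjusted accordingly, treating the extra terms as new data. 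Once this localization is in place, summing over $i$ and using $\eta \equiv 1$ on $B_r$ gives $\int_{\Omega \cap B_r} \abs{\nabla u}^2 \le \int_\Omega \eta^2 \abs{\nabla u}^2$, and the stated estimate follows; the constant depends only on $n$, $\lambda$, $\norm{\vec b - \vec c}_n$, and the Lipschitz character of $\Omega$, as claimed, since the number of pieces $N$ in the splitting is bounded in terms of $\norm{\vec b - \vec c}_n/\varepsilon$ and $\varepsilon$ is a fixed function of $\lambda$.
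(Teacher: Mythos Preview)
Your plan is viable but takes a different route from the paper. The paper does not redo any cutoff-and-splitting computation here: it simply invokes the interior Caccioppoli estimates already established in \cite[Theorems~3.1 and~3.2]{Mou19} and splits into two regimes, $r<r_0$ versus $r\ge r_0$, where $r_0$ is the boundary scale from Definition~\ref{LipDom}. For small balls meeting $\partial\Omega$ the reflection machinery (Lemmas~\ref{Extension} and~\ref{divForExtension}) converts the Neumann solution into an interior solution on the doubled domain $\Omega_r$, to which the cited interior Caccioppoli applies verbatim; for $r\ge r_0$, Remark~\ref{r_0Bounds} gives $r_0\ge c_1>0$ depending only on $n$ and the Lipschitz character, so $r^{-2}$ is harmless and the estimate reduces to a global one. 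Your direct argument---testing with $\eta^2 u_i$ in the Neumann weak formulation (which is admissible precisely because $\phi$ need not vanish on $\partial\Omega$) and rerunning the induction of Lemma~\ref{Main} locally---is self-contained and avoids reflection altogether, which is a genuine advantage.

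One point to tighten: you cannot handle the $d\,\eta^2 u^2$ term by ``$\norm{d}_{L^{n/2}(\Omega_i)}$ small'', since the splitting is subordinate to $h=\abs{\vec b-\vec c}$, not to $d$; doing so would introduce an unwanted dependence on $\norm{d}_{n/2}$. That term has to be absorbed \emph{entirely} through the positivity condition, exactly as in \eqref{eq:ineq}: apply \eqref{eq1122wed} (or \eqref{eq1123wed}) to $\phi=\eta^2 u u_i\ge 0$ to obtain $\int_\Omega \vec b u\cdot\nabla(\eta^2 u_i)+d\eta^2 u u_i\ge -\int_\Omega \vec b\,\eta^2 u_i\cdot\nabla u-\int_\Omega \vec b\, u u_i\cdot\nabla(\eta^2)$, after which only $\vec b-\vec c$ survives on the left and the extra $\nabla(\eta^2)$ term lands harmlessly in the $r^{-2}\int\abs{u}^2$ bucket. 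You do say this in the following sentence, but the two descriptions are inconsistent; keep the positivity route and drop the $L^{n/2}$-smallness claim.
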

\begin{proof}
The proof follows by the Caccioppoli estimate (see \cite[Theorems 3.1 and 3.2]{Mou19}), distinguishing between the cases $r<r_0$ and $r>r_0$, where $r_0$ is the constant in Definition~\ref{LipDom}, and using Remark~\ref{r_0Bounds}.
	
		
\end{proof}

\subsection{The case when$\boldsymbol{\int_\Omega d>0}$}
To construct the Neumann Green's function, we proceed similar to \cite{KS19, HK07}.
For $y \in \Omega$ and $\varepsilon>0$, set
\[
\varphi_y^\varepsilon:=\frac{1}{\abs{\Omega\cap B_\varepsilon(y)}} \chi_{\Omega\cap B_\varepsilon(y)}.
\]
By Proposition~\ref{ExistenceDelta>0}, there exists a unique solution $u_\varepsilon \in W_1^2(\Omega)$ of the problem
\begin{equation}			\label{eq1540sun}
\left\{\begin{array}{c l}
-\dv(\mathbf A \nabla u_\varepsilon+\vec b u_\varepsilon)+\vec c \cdot \nabla u_\varepsilon+d u_\varepsilon=\varphi_y^\varepsilon & \text{in }\;\Omega,\\
(\mathbf A \nabla u_\varepsilon +\vec b u_\varepsilon)\cdot \vec \nu= 0 & \text{on }\;\partial\Omega.
\end{array}\right.
\end{equation}
We set $G_\varepsilon(\cdot,y):= u_\varepsilon$.
Then, by Proposition~\ref{EstimateForDelta>0}, we have the estimate
\begin{equation*}			
\norm{G_\varepsilon(\cdot, y)}_{L^{2n/(n-2)}(\Omega)} + \norm{\nabla G_\varepsilon(\cdot, y)}_{L^2(\Omega)} \le C\, \abs{\Omega\cap B_\epsilon(y)}^{\frac{2-n}{2n}}.
\end{equation*}

For a function $f \in L^{n/2,1}(\Omega)$, let us consider the problem
\[
\left\{\begin{array}{c l}
-\dv(\mathbf A\tran\nabla v+\vec c v)+\vec b \cdot \nabla v+d v=f & \text{in }\;\Omega,\\
(\mathbf A\tran \nabla v  +\vec c v) \cdot \vec \nu= 0 & \text{on }\;\partial\Omega.
\end{array}\right.
\]
Assume that $(\vec c, d)$ satisfies the condition \eqref{eq1123wed}.
Then, by  Corollary~\ref{lem11.15}, we have
\begin{equation}			\label{eq1544sun}
\norm{v}_{L^\infty(\Omega)} \le C \norm{f}_{L^{n/2,1}(\Omega)}.
\end{equation}
On the other hand, by using $v$ as a test function in \eqref{eq1540sun}, we find in light of \eqref{eq2133sat} that
\begin{equation}			\label{eq2109mon}
\int_\Omega \varphi_y^\varepsilon v 
=\int_\Omega G_\varepsilon(\cdot, y) f,
\end{equation}
and thus, by \eqref{eq1544sun} and the definition of $\varphi_y^\varepsilon$, we find that
\[
\Abs{\int_{\Omega \cap B_r(y)}  G_\varepsilon(\cdot,y) f\,} \le C \norm{f}_{L^{n/2,1}(\Omega)}.
\]
Therefore, by using the fact that $(L^{n/2,1}(\Omega))^*=L^{\frac{n}{n-2},\infty}(\Omega)$ (see \cite[\S~1.4.3] {Grafakos}), we have
\begin{equation}			\label{eq1549sun}
\norm{G_\varepsilon(\cdot, y)}_{L^{\frac{n}{n-2},\infty}(\Omega)} \le C.
\end{equation}
The Banach-Alaoglu theorem gives that there is a sequence $\set{\varepsilon_j}$ converging to zero and a function $G(\cdot, y) \in L^{\frac{n}{n-2},\infty}(\Omega)$ so that  $G_{\varepsilon_j}(\cdot,y)$ converges to $G(\cdot, y)$ in the weak-* topology of $L^{\frac{n}{n-2},\infty}(\Omega)$ and $G(\cdot, y)$ satisfies the same estimate \eqref{eq1549sun}.
By the Caccioppoli inequality (see \cite[Lemma 3.14]{KS19}), we have
\[
\int_{\Omega \setminus B_r(y)} \abs{\nabla_x G_\varepsilon(x,y)}^2 \,dx \le C(r),\quad \forall \varepsilon < r/2.
\]
This estimate will hold for the limit and thus we see that $\nabla G(\cdot, y) \in L^2(\Omega \setminus B_r(y))$ for all $r>0$.
Then, by using Corollary~\ref{Caccioppoli} and proceeding as in \cite[Proposition 3.22]{KS19}, we find that $\nabla G(\cdot, y) \in L^{\frac{n}{n-1},\infty}(\Omega)$.
Although it appears that the estimate for $\norm{\nabla G(\cdot, y)}_{L^{\frac{n}{n-1},\infty}}$ in \cite{KS19} depends on some quantity that is different from $\norm{\vec b-\vec c}_n$, the estimates indeed depend on $\norm{\vec b-\vec c}_n$ via Corollary~\ref{Caccioppoli}.

This function $G(x,y)$ is the Green's function for the problem
\begin{equation}			\label{eq1442sun}
\left\{\begin{array}{c l}
-\dv(\mathbf A\tran\nabla v+\vec c v)+\vec b \cdot \nabla v+d v=f - \dv \vec F & \text{in }\;\Omega,\\
(\mathbf A\tran \nabla v  +\vec c v) \cdot \vec \nu= g + \vec F \cdot \nu & \text{on }\;\partial\Omega.
\end{array}\right.
\end{equation}
Indeed, let $v \in W_1^2(\Omega)$ is the weak solution of the problem \eqref{eq1442sun} for Lipschitz data $f$, $\vec F$, and $g$.
Then, similar to \eqref{eq2109mon}, we have
\[
\int_\Omega \varphi_y^\varepsilon v 
=\int_\Omega G_\varepsilon(\cdot, y) f +\int_\Omega \nabla G_{\varepsilon}(\cdot,y) \cdot \vec F + \int_{\partial\Omega} G_\varepsilon(\cdot, y) g \,dS.
\]
We note that Lemma~\ref{intpl_lemma} implies that $G(\cdot, y) \in L^{\frac{n-1}{n-2},\infty}(\partial\Omega)$.
Therefore, by taking the limit $\varepsilon\to 0$ in the above, we obtain
\begin{equation}			\label{eq1539sun}
v(y)=\int_\Omega G(x,y) f(x)\,dx +\int_\Omega \nabla_x G(x,y) \cdot \vec F(x)\,dx+ \int_{\partial\Omega} G(x, y) g(x) \,dS(x),
\end{equation}
where the equality should be understood as almost everywhere sense in $y \in \Omega$.
Note that \eqref{eq1539sun} remains valid for data $f \in L^{n/2,1}(\Omega)$, $\vec F \in L^{n,1}(\Omega)$, and $g \in L^{n-1,1}(\partial\Omega)$.

\begin{definition}
We shall say that $G(x,y)$ is the Green's function for the problem \eqref{eq1442sun} if  whenever $v \in W_1^2(\Omega)$ is the weak solution of the problem \eqref{eq1442sun} for Lipschitz data $f$, $\vec F$, and $g$, then the identity \eqref{eq1539sun} holds for a.e. $y \in \Omega$.
\end{definition}
We have proved the following theorem.
\begin{theorem}			\label{thm_green1}
Let $\Omega$ be a bounded Lipschitz domain.
Let $\mathbf A$ satisfy the uniform ellipticity and boundedness condition \eqref{ellipticity}, $\vec b$, $\vec c \in L^n(\Omega)$, and $d \in L^{n/2}(\Omega)$, with $(\vec c, d)$ satisfying the condition \eqref{eq1123wed} and $\abs{\Omega}^{\frac{2}{n}-1}\int_\Omega d \geq \delta_0>0$.
Then, there exists the Green's function $G(x,y)$ for the problem \eqref{eq1442sun} and the Green's function satisfies the following:
\begin{gather*}
\nabla G(\cdot, y) \in L^2(\Omega \setminus B_r(y))\quad\text{for }\,r>0,\\
\norm{G(\cdot, y)}_{L^{\frac{n}{n-2},\infty}(\Omega)} +\norm{\nabla G(\cdot, y)}_{L^{\frac{n}{n-1},\infty}(\Omega)} + \norm{G(\cdot, y)}_{L^{\frac{n-1}{n-2},\infty}(\partial\Omega)} \le C,
\end{gather*}
where $C$ depends on $n$, $\lambda$, $\Lambda$, $\norm{\vec b}_n$, $\norm{\vec c}_n$, $\norm{d}_{n/2}$, $\delta_0$, and the Lipschitz character of $\Omega$.
Moreover, the representation formula \eqref{eq1539sun} holds if $v \in W_1^2(\Omega)$ is the weak solution of the problem \eqref{eq1442sun} with $f \in L^{n/2,1}(\Omega)$, $\vec F \in L^{n,1}(\Omega)$, and $g \in L^{n-1,1}(\partial\Omega)$.
\end{theorem}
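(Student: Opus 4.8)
The plan is to realize $G(\cdot,y)$ as a weak-$*$ limit of the solutions $G_\varepsilon(\cdot,y)$ of the approximate problems \eqref{eq1540sun} driven by the mollified source $\varphi_y^\varepsilon=\abs{\Omega\cap B_\varepsilon(y)}^{-1}\chi_{\Omega\cap B_\varepsilon(y)}$; these exist and are unique by Proposition~\ref{ExistenceDelta>0}, since the hypothesis $\abs{\Omega}^{\frac{2}{n}-1}\int_\Omega d\ge\delta_0>0$ places us in the case $\int_\Omega d>0$. All three norm bounds in the statement will be extracted by duality against the adjoint problem, for which the $L^\infty$-estimate of Corollary~\ref{lem11.15} is available precisely because $(\vec c,d)$ satisfies \eqref{eq1123wed}.

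First I would fix $f\in L^{n/2,1}(\Omega)$, let $v$ solve $-\dv(\mathbf A\tran\nabla v+\vec c v)+\vec b\cdot\nabla v+dv=f$ with homogeneous conormal data, and use Corollary~\ref{lem11.15} to get $\norm{v}_{L^\infty(\Omega)}\le C\norm{f}_{L^{n/2,1}(\Omega)}$. Testing \eqref{eq1540sun} with $v$ and the $v$-equation with $G_\varepsilon(\cdot,y)$ gives the pairing identity $\int_\Omega G_\varepsilon(\cdot,y)f=\int_\Omega\varphi_y^\varepsilon v=\fint_{\Omega\cap B_\varepsilon(y)}v$, whence $\abs{\int_\Omega G_\varepsilon(\cdot,y)f}\le C\norm{f}_{L^{n/2,1}(\Omega)}$; since $(L^{n/2,1}(\Omega))^*=L^{\frac{n}{n-2},\infty}(\Omega)$ this yields $\norm{G_\varepsilon(\cdot,y)}_{L^{\frac{n}{n-2},\infty}(\Omega)}\le C$ uniformly in $\varepsilon$. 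Banach--Alaoglu then produces a sequence $\varepsilon_j\to 0$ and a function $G(\cdot,y)\in L^{\frac{n}{n-2},\infty}(\Omega)$ inheriting this bound, with $G_{\varepsilon_j}(\cdot,y)\rightharpoonup G(\cdot,y)$ weak-$*$. The interior regularity $\nabla G(\cdot,y)\in L^2(\Omega\setminus B_r(y))$ follows from the Caccioppoli inequality away from $y$ (\cite[Lemma~3.14]{KS19}), which bounds $\int_{\Omega\setminus B_r(y)}\abs{\nabla_x G_\varepsilon(x,y)}^2$ uniformly for $\varepsilon<r/2$ and passes to the limit. To upgrade this to $\nabla G(\cdot,y)\in L^{\frac{n}{n-1},\infty}(\Omega)$ I would combine the decay of $G(\cdot,y)$ near $y$ with the scale-invariant Caccioppoli estimate of Corollary~\ref{Caccioppoli} over a dyadic family of annuli centered at $y$ and sum the resulting distribution-function bounds, exactly as in \cite[Proposition~3.22]{KS19}; the key book-keeping point is that there every constant depends on the lower-order coefficients only through $\norm{\vec b-\vec c}_n$ (via Corollary~\ref{Caccioppoli}) and on $\Omega$ only through its Lipschitz character. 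The boundary bound is then immediate: $G(\cdot,y)\in Y_1^{\frac{n}{n-1},\infty}(\Omega)$, so Lemma~\ref{intpl_lemma} applied with $p=\frac{n}{n-1}$, for which $\frac{p(n-1)}{n-p}=\frac{n-1}{n-2}$, gives $G(\cdot,y)\in L^{\frac{n-1}{n-2},\infty}(\partial\Omega)$ with a constant of the stated form.

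For the representation formula, given Lipschitz data $f$, $\vec F$, $g$, I would let $v\in W_1^2(\Omega)$ be the weak solution of \eqref{eq1442sun} and test \eqref{eq1540sun} with $v$, obtaining $\int_\Omega\varphi_y^\varepsilon v=\int_\Omega G_\varepsilon(\cdot,y)f+\int_\Omega\nabla G_\varepsilon(\cdot,y)\cdot\vec F+\int_{\partial\Omega}G_\varepsilon(\cdot,y)g\,dS$. Letting $\varepsilon=\varepsilon_j\to 0$, the left side tends to $v(y)$ for a.e.\ $y$ by Lebesgue differentiation (with $v$ continuous by Corollary~\ref{lem11.15}); on the right, $\int_\Omega G_{\varepsilon_j}(\cdot,y)f\to\int_\Omega G(\cdot,y)f$ by the weak-$*$ convergence paired against $f\in L^{n/2,1}(\Omega)$, $\int_\Omega\nabla G_{\varepsilon_j}(\cdot,y)\cdot\vec F\to\int_\Omega\nabla G(\cdot,y)\cdot\vec F$ by the analogous weak-$*$ convergence of gradients in $L^{\frac{n}{n-1},\infty}(\Omega)$ against $\vec F\in L^{n,1}(\Omega)$ (splitting off a small ball around $y$, where the uniform gradient bound controls the tail, and using weak $L^2$ convergence on the complement), and the boundary integral converges because $\dist(y,\partial\Omega)>0$ forces $G_{\varepsilon_j}(\cdot,y)\to G(\cdot,y)$ in $W_1^2$ near $\partial\Omega$, hence in $L^2(\partial\Omega)$. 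This proves \eqref{eq1539sun} for Lipschitz data; a density argument using the established Lorentz bounds on $G(\cdot,y)$ together with H\"older's inequality in Lorentz spaces ($L^{n/2,1}$ against $L^{\frac{n}{n-2},\infty}$ on $\Omega$, $L^{n,1}$ against $L^{\frac{n}{n-1},\infty}$ on $\Omega$, and $L^{n-1,1}$ against $L^{\frac{n-1}{n-2},\infty}$ on $\partial\Omega$) extends it to $f\in L^{n/2,1}(\Omega)$, $\vec F\in L^{n,1}(\Omega)$, $g\in L^{n-1,1}(\partial\Omega)$, which is exactly what the definition of the Green's function requires.

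I expect the main obstacle to be the gradient bound $\nabla G(\cdot,y)\in L^{\frac{n}{n-1},\infty}(\Omega)$: unlike the $L^{\frac{n}{n-2},\infty}$-bound, which drops out cleanly from duality and the $L^\infty$-estimate for adjoint solutions, it requires a genuinely quantitative annular argument, and the scale-invariance demanded by the paper forces one to check that the estimate of Corollary~\ref{Caccioppoli} feeds through this argument so that the final constant depends on the coefficients only through $\norm{\vec b-\vec c}_n$ and on $\Omega$ only through its Lipschitz character.
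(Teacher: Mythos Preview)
Your proposal is correct and follows essentially the same approach as the paper: construct $G_\varepsilon(\cdot,y)$ via Proposition~\ref{ExistenceDelta>0}, obtain the $L^{\frac{n}{n-2},\infty}$ bound by duality against adjoint solutions using Corollary~\ref{lem11.15}, pass to a weak-$*$ limit, get the gradient bound via Caccioppoli and the dyadic annulus argument of \cite[Proposition~3.22]{KS19}, apply Lemma~\ref{intpl_lemma} for the boundary trace, and verify the representation formula by testing and passing to the limit. You have also correctly identified the one point requiring care, namely that the constants in the gradient estimate depend on the lower-order coefficients only through $\norm{\vec b-\vec c}_n$ via Corollary~\ref{Caccioppoli}.
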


By replacing $\mathbf A$ by $\mathbf A\tran$ and exchanging the role of $\vec b$ and $\vec c$ in the above theorem, we obtain the following corollary.
\begin{corollary}		\label{thm_green2}
Let $\Omega$ be a bounded Lipschitz domain.
Let $\mathbf A$ satisfy the uniform ellipticity and boundedness condition \eqref{ellipticity}, $\vec b$, $\vec c \in L^n(\Omega)$, and $d \in L^{n/2}(\Omega)$, with $(\vec b, d)$ satisfying the condition \eqref{eq1122wed} and $\abs{\Omega}^{\frac{2}{n}-1}\int_\Omega d \geq \delta_0>0$.
Then, there exists the Green's function $G^*(x,y)$ for the problem \begin{equation}			\label{eq1726sun}
\left\{\begin{array}{c l}
-\dv(\mathbf A \nabla v+\vec b v)+\vec c \cdot \nabla v+d v=f - \dv \vec F & \text{in }\;\Omega,\\
(\mathbf A \nabla v  +\vec c v) \cdot \vec \nu= g + \vec F \cdot \nu & \text{on }\;\partial\Omega,
\end{array}\right.
\end{equation}
which satisfies the following:
\begin{gather*}
\nabla G^*(\cdot, y) \in L^2(\Omega \setminus B_r(y))\quad\text{for }\,r>0,\\\norm{G^*(\cdot, y)}_{L^{\frac{n}{n-2},\infty}(\Omega)} +\norm{\nabla G^*(\cdot, y)}_{L^{\frac{n}{n-1},\infty}(\Omega)} + \norm{G^*(\cdot, y)}_{L^{\frac{n-1}{n-2},\infty}(\partial\Omega)} \le C,
\end{gather*}
where $C$ depends on $n$, $\lambda$, $\Lambda$, $\norm{\vec b}_n$, $\norm{\vec c}_n$, $\norm{d}_{n/2}$, $\delta_0$, and the Lipschitz character of $\Omega$.
Moreover, if $v \in W_1^2(\Omega)$ is the weak solution of the problem \eqref{eq1726sun} with $f \in L^{n/2,1}(\Omega)$, $\vec F \in L^{n,1}(\Omega)$, and $g \in L^{n-1,1}(\partial\Omega)$, then we have
\begin{equation}			\label{eq2334sun}
v(y)=\int_\Omega G^*(x,y) f(x)\,dx+ \int_\Omega \nabla_x G^*(x,y) \cdot \vec F(x)\,dx+\int_{\partial\Omega} G^*(x,y) g(x) \,dS(x).
\end{equation}
\end{corollary}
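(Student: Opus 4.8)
The plan is to deduce this corollary directly from Theorem~\ref{thm_green1} by applying it to the coefficient quadruple $(\mathbf A\tran,\vec c,\vec b,d)$ in place of $(\mathbf A,\vec b,\vec c,d)$. First I would verify that this substitution carries the hypotheses of the present statement to those of Theorem~\ref{thm_green1}. Since $\mathbf A\tran(x)\xi\cdot\xi=\mathbf A(x)\xi\cdot\xi$ and $\norm{\mathbf A\tran}_{L^\infty(\Omega)}=\norm{\mathbf A}_{L^\infty(\Omega)}$, the matrix $\mathbf A\tran$ satisfies \eqref{ellipticity} with the same $\lambda$ and $\Lambda$. Exchanging $\vec b$ and $\vec c$ turns the assumption that $(\vec b,d)$ satisfies \eqref{eq1122wed} into the assumption that the new first-order coefficient playing the role of $\vec c$, namely the original $\vec b$, together with $d$ satisfies \eqref{eq1123wed}; the coefficient $d$ is unchanged, so $\abs{\Omega}^{\frac{2}{n}-1}\int_\Omega d\ge\delta_0$ is preserved. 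Hence all the hypotheses of Theorem~\ref{thm_green1} hold for the quadruple $(\mathbf A\tran,\vec c,\vec b,d)$.

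Next I would observe that the problem \eqref{eq1442sun} written for the quadruple $(\mathbf A\tran,\vec c,\vec b,d)$ is exactly the problem \eqref{eq1726sun}: replacing $\mathbf A\tran$ by $(\mathbf A\tran)\tran=\mathbf A$ and swapping the two first-order coefficients transforms $-\dv(\mathbf A\tran\nabla v+\vec c v)+\vec b\cdot\nabla v+dv$ into $-\dv(\mathbf A\nabla v+\vec b v)+\vec c\cdot\nabla v+dv$, together with the corresponding conormal boundary operator. Therefore the function produced by Theorem~\ref{thm_green1} for this quadruple is a Green's function $G^*(x,y)$ for \eqref{eq1726sun} in the sense of the definition preceding Theorem~\ref{thm_green1}, and the representation formula that theorem provides is precisely \eqref{eq2334sun}, valid for $f\in L^{n/2,1}(\Omega)$, $\vec F\in L^{n,1}(\Omega)$, and $g\in L^{n-1,1}(\partial\Omega)$.

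Finally, the quantitative conclusions of Theorem~\ref{thm_green1} --- that $\nabla G^*(\cdot,y)\in L^2(\Omega\setminus B_r(y))$ for every $r>0$ and the Lorentz estimates for $G^*(\cdot,y)$ in $\Omega$, for $\nabla G^*(\cdot,y)$ in $\Omega$, and for $G^*(\cdot,y)$ on $\partial\Omega$ --- carry over without change, because the dependence of the constant in Theorem~\ref{thm_green1} on $n$, $\lambda$, $\Lambda$, $\norm{\vec b}_n$, $\norm{\vec c}_n$, $\norm{d}_{n/2}$, $\delta_0$, and the Lipschitz character of $\Omega$ is symmetric in $\vec b$ and $\vec c$. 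There is no genuine obstacle in this argument; the only point needing care is the bookkeeping of which of the two sign conditions \eqref{eq1122wed}, \eqref{eq1123wed} is in force after the substitution, and checking that the divergence-form structure and the conormal derivative of \eqref{eq1726sun} correctly match those of \eqref{eq1442sun} under $\mathbf A\mapsto\mathbf A\tran$.
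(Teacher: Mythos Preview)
Your proposal is correct and matches the paper's own argument exactly: the paper states just before the corollary that it follows ``by replacing $\mathbf A$ by $\mathbf A\tran$ and exchanging the role of $\vec b$ and $\vec c$ in the above theorem,'' which is precisely the substitution you carry out and justify.
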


The above two results are analogues of \cite[Theorem~7.2]{KS19}.
If $(\vec b, d)$ satisfies \eqref{eq1122wed} and $(\vec c,d)$ satisfies \eqref{eq1123wed} simultaneously, then similar to \cite[Theorem~7.9]{KS19}, we can derive the pointwise bound for the Green's function.
To see this, observe that for any $r<\dist(y,\partial\Omega)$, the function $u=G(\cdot, y)$ satisfies
\[
\left\{\begin{array}{c l}
-\dv(\mathbf A \nabla u+\vec b u)+\vec c \cdot \nabla u+d u=0 & \text{in }\;\Omega\setminus B_r(y),\\
(\mathbf A \nabla u +\vec b u)\cdot \vec \nu= 0 & \text{on }\;\partial\Omega.
\end{array}\right.
\]
Suppose that $\abs{\Omega}=1$ for now.
For $x \neq y$, set $r= \frac14 \abs{x-y}$ and take $r_0$ from Definition~\ref{LipDom}.
We distinguish two cases: $r<r_0$ or $r\geq r_0$.
In the case when $r<r_0$ and $B_{2r}(x) \subset \Omega$, we use the interior estimate to get
\begin{equation}			\label{eq0958mon}
	\abs{G(x,y)} \le \frac{C}{r^n} \int_{B_{2r}(x)} \abs{G(\cdot, y)} \le \frac{C}{r^n} r^2 \norm{G(\cdot, y)}_{L^{\frac{n}{n-2},\infty}(\Omega)} \le C \abs{x-y}^{2-n}.
\end{equation}
In the case when $r<r_0$ and $B_{2r}(x) \not\subset \Omega$, we use an analogue of Proposition~\ref{prop1257} instead of the interior estimate to get the same bound \eqref{eq0958mon}.

In the case when $r \ge r_0$, we further consider two cases:  $\dist(x,\partial\Omega)<r_0$ or $\dist(x,\partial\Omega) \ge r_0$.
If $\dist(x,\partial\Omega)<r_0$, we use an analogue of Proposition~\ref{prop1257}, and otherwise, we use the interior estimate, respectively, to obtain
\[
\abs{G(x,y)}\leq Cr_0^{2-n}.
\]
Then, we have
\[
c_0 \le r_0 \le r<\diam(\Omega) \le c_1,
\]
where $c_0$ and $c_1$ are positive numbers that depend only on $n$ and the Lipschitz character of $\Omega$ (see Remark~\ref{r_0Bounds}), hence \eqref{eq0958mon} still holds in this case.

We note that in Corollary~\ref{thm_green2}, the construction of Green's function $G^*(x,y)$ yields the following identity for $y \neq x$:
\[
G^*(y,x)=G(x,y).
\]
Moreover, by construction of Green's function, if $G_{\Omega}$ is Green's function for $\Omega$, $x,y\in\Omega$ and $\Omega_r=\frac{1}{r}\Omega$, then we have
\begin{equation}\label{eq:GreenScaling}
G_{\Omega}(x,y)=r^{2-n}G_{\Omega_r}(x/r,y/r).
\end{equation}
Hence, we have the following theorem.
\begin{theorem}			
Let $\Omega$ be a bounded Lipschitz domain.
Let $\mathbf A$ satisfy the condition \eqref{ellipticity}, $\vec b$, $\vec c \in L^n(\Omega)$, $d \in L^{n/2}(\Omega)$, and $\abs{\Omega}^{\frac{2}{n}-1}\int_\Omega d \geq \delta_0>0$.
Assume that $(\vec b, d)$ satisfies \eqref{eq1122wed} and $(\vec c,d)$ satisfies \eqref{eq1123wed} simultaneously.
Then the conclusions of Theorem~\ref{thm_green1} and Corollary~\ref{thm_green2} hold and  we have
\[
G(x,y)=G^*(y,x),
\]
where the equality should be understood in the almost everywhere sense for $(x,y) \in \Omega\times \Omega$.
Moreover, we have the pointwise bound
\[
\abs{G(x,y)} \le C \abs{x-y}^{2-n}\quad\text{for }x\neq y,
\]
where $C$ depends on $n$, $\lambda$, $\Lambda$, $\norm{\vec b}_n$, $\norm{\vec c}_n$, $\norm{d}_{n/2}$, $\delta_0$, and the Lipschitz character of $\Omega$.
\end{theorem}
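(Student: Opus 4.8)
The plan is to collect the three assertions from results already established, adding only a scaling reduction and a case analysis for the pointwise bound.

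\textbf{Existence and the norm estimates.} Since $(\vec b,d)$ satisfies \eqref{eq1122wed} and $(\vec c,d)$ satisfies \eqref{eq1123wed} simultaneously, the hypotheses of Theorem~\ref{thm_green1} and of Corollary~\ref{thm_green2} both hold verbatim. Hence the Green's function $G(x,y)$ for \eqref{eq1442sun} and the Green's function $G^*(x,y)$ for \eqref{eq1726sun} both exist, $\nabla G(\cdot,y),\nabla G^*(\cdot,y)\in L^2(\Omega\setminus B_r(y))$ for every $r>0$, and both satisfy the stated scale invariant bounds in $L^{\frac{n}{n-2},\infty}(\Omega)$, $L^{\frac{n}{n-1},\infty}(\Omega)$, and $L^{\frac{n-1}{n-2},\infty}(\partial\Omega)$, with constant depending only on the listed quantities. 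This gives the first assertion. The identity $G(x,y)=G^*(y,x)$ is already built into the construction of $G^*$, as noted just before the statement: for $\varepsilon,\varepsilon'>0$ small let $u_\varepsilon=G_\varepsilon(\cdot,y)$ solve \eqref{eq1540sun} and let $v_{\varepsilon'}=G^*_{\varepsilon'}(\cdot,x)$ be the corresponding approximate Green's function for the adjoint problem (right-hand side $\varphi_x^{\varepsilon'}$); testing one equation with the other and using that the two bilinear forms are mutually adjoint gives
\[
\fint_{\Omega\cap B_\varepsilon(y)} G^*_{\varepsilon'}(\cdot,x)=\int_\Omega \varphi_y^\varepsilon\, v_{\varepsilon'}=\int_\Omega u_\varepsilon\, \varphi_x^{\varepsilon'}=\fint_{\Omega\cap B_{\varepsilon'}(x)} G_\varepsilon(\cdot,y).
\]
Letting $\varepsilon'\to0$ using the weak-$*$ convergence of $G^*_{\varepsilon'}(\cdot,x)$ in $L^{\frac{n}{n-2},\infty}(\Omega)$ together with the interior continuity of $G_\varepsilon(\cdot,y)$ near $x\neq y$ (from the Caccioppoli bound away from the pole and interior regularity), and then letting $\varepsilon\to0$ in the same way, yields $G^*(y,x)=G(x,y)$ at Lebesgue points, hence for a.e. $(x,y)\in\Omega\times\Omega$.

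\textbf{The pointwise bound.} By the scaling identity \eqref{eq:GreenScaling}, the inequality $\abs{G(x,y)}\le C\abs{x-y}^{2-n}$ is scale invariant, so we may assume $\abs{\Omega}=1$; let $r_0$ be as in Definition~\ref{LipDom}, which by Remark~\ref{r_0Bounds}, together with $\diam\Omega$, is bounded above and below by constants $c_0,c_1$ depending only on $n$ and the Lipschitz character. Fix $x\neq y$, set $r=\tfrac14\abs{x-y}$, and note that $u=G(\cdot,y)$ is a weak solution of the homogeneous version of \eqref{eq1442sun} in $\Omega\setminus B_r(y)$. Suppose first $r<r_0$. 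If $B_{2r}(x)\subset\Omega$, then $B_{2r}(x)\cap B_r(y)=\emptyset$ and the interior local boundedness estimate (as used in the proof of Proposition~\ref{prop1257}) gives
\[
\abs{G(x,y)}\le \frac{C}{r^n}\int_{B_{2r}(x)}\abs{G(\cdot,y)}\le \frac{C}{r^n}\,r^2\,\norm{G(\cdot,y)}_{L^{\frac{n}{n-2},\infty}(\Omega)}\le C\abs{x-y}^{2-n},
\]
the middle step being H\"older's inequality in Lorentz spaces on $B_{2r}(x)$; if instead $B_{2r}(x)\not\subset\Omega$, the same bound follows from the boundary analogue of Proposition~\ref{prop1257} (which applies since $(\vec c,d)$ satisfies \eqref{eq1123wed}, i.e.\ precisely condition \eqref{eq1122wed} for the operator in \eqref{eq1442sun}), with $\vec F=0$, $f=0$, and a ball centered at a boundary point within distance $2r$ of $x$. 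If $r\ge r_0$, then $c_0\le r_0\le r<\diam\Omega\le c_1$; applying the interior estimate when $\dist(x,\partial\Omega)\ge r_0$ and the boundary analogue of Proposition~\ref{prop1257} when $\dist(x,\partial\Omega)<r_0$, on a ball of radius $\sim r_0$ (which is disjoint from $B_r(y)$ since $\abs{x-y}=4r\ge 4r_0$), yields $\abs{G(x,y)}\le C r_0^{2-n}\le C\abs{x-y}^{2-n}$. This completes the proof.

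\textbf{Main obstacle.} The genuinely substantive input is the boundary local boundedness estimate, namely checking that the reflection argument of Proposition~\ref{prop1257} transfers to the operator of \eqref{eq1442sun} with constants depending only on the coefficient norms and the Lipschitz character (and tolerating the constant factors $6(M+1)$ versus $r_0$ in the radii, which is routine). The interior estimates and the scaling reduction are standard, and the duality proof of the symmetry is standard as well; its only delicate point is the passage to the limit $\varepsilon\to0$ at Lebesgue points, which uses the weak-$*$ bound \eqref{eq1549sun} and the interior Caccioppoli estimate away from the pole.
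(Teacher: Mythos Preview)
Your proof is correct and follows essentially the same route as the paper: reduce to $\abs{\Omega}=1$ via \eqref{eq:GreenScaling}, then for fixed $x\neq y$ set $r=\tfrac14\abs{x-y}$ and split into the cases $r<r_0$ versus $r\ge r_0$, using interior local boundedness when $B_{2r}(x)\subset\Omega$ (resp.\ $\dist(x,\partial\Omega)\ge r_0$) and the reflection-based boundary estimate of Proposition~\ref{prop1257} otherwise, together with the weak-$L^{\frac{n}{n-2}}$ bound on $G(\cdot,y)$ from Theorem~\ref{thm_green1}.

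One small correction, harmless under the present hypotheses: you write that $u=G(\cdot,y)$ solves the homogeneous version of \eqref{eq1442sun} and justify the boundary estimate via \eqref{eq1123wed}. In fact, $G(\cdot,y)$ is constructed as the limit of solutions of \eqref{eq1540sun}, and hence satisfies the \emph{original} equation $-\dv(\mathbf A\nabla u+\vec b u)+\vec c\cdot\nabla u+du=0$ away from the pole (this is exactly what the paper records just before the statement). Proposition~\ref{prop1257} therefore applies directly because $(\vec b,d)$ satisfies \eqref{eq1122wed}. Since both \eqref{eq1122wed} and \eqref{eq1123wed} are assumed simultaneously, your variant of the justification still yields a valid argument, but the equation you attribute to $G(\cdot,y)$ is the wrong one.
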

\begin{proof}
Using \eqref{eq:GreenScaling}, we see that the estimate is scale invariant, so we may assume that $\abs{\Omega}=1$. Then, the proof follows from the previous discussion.	
\end{proof}
\subsection{The case when$\boldsymbol{\int_\Omega d=0}$}
Now we consider the case when $\int_\Omega d=0$.
Instead of \eqref{eq1540sun}, we consider the problem
\begin{equation}			\label{eq2010sun}
\left\{\begin{array}{c l}
-\dv(\mathbf A \nabla u_\varepsilon+(\vec b -\vec c) u_\varepsilon)=\varphi_\varepsilon - \frac{1}{\abs{\Omega}} & \text{in}\,\,\,\Omega,\\
(\mathbf A \nabla u_\varepsilon -(\vec b-\vec c)u_\varepsilon)\cdot \vec \nu = 0 & \text{on}\,\,\,\partial\Omega.
\end{array}\right.
\end{equation}
Since $\int_\Omega \left( \varphi_\varepsilon -\frac{1}{\abs{\Omega}} \right)=0$, by Proposition~\ref{ExistenceDelta=0}, there exist a unique solution  to \eqref{eq2010sun} satisfying $\int_\Omega u_\varepsilon=0$.
As before, we set $G_\varepsilon(\cdot, y)=u_\varepsilon$.

For a function $f \in L^{n/2,1}(\Omega)$, we set
\[
\overline{f}= \frac{\int_\Omega f \widehat{u}}{\int_\Omega \widehat{u}}
\]
so that $\int_\Omega (f-\overline{f})\,\widehat{u}=0$.
By Proposition~\ref{ExistenceDelta=0Adj}, there exists a solution $v$ of the problem
\[
\left\{\begin{array}{c l}
-\dv(\mathbf A\tran\nabla v)+(\vec b-\vec c) \cdot \nabla v=f-\overline{f} & \text{in}\,\,\,\Omega,\\
\mathbf A\tran \nabla v \cdot \vec \nu = 0 & \text{on}\,\,\,\partial\Omega,
\end{array}\right.
\]
satisfying  satisfying $\int_\Omega v=0$ .
Then, we have
\begin{equation}		\label{eq2207sat}
\int_\Omega \varphi_\varepsilon v =\int_\Omega \left( \varphi_\varepsilon -\frac{1}{\abs{\Omega}} \right) v =\int_\Omega G_\varepsilon(\cdot, y) (f-\overline{f})=\int_\Omega G_\varepsilon(\cdot, y) f,
\end{equation}
which agrees with \eqref{eq2109mon}.
On the other hand, by Corollary~\ref{lem11.14}, we find
\[
\norm{v}_{L^\infty(\Omega)} \le C \norm{f-\overline{f}}_{L^{n/2,1}(\Omega)}.
\]
Therefore, by \eqref{eq2207sat} and the definition of $\varphi_\varepsilon$, we find that
\begin{equation}			\label{eq2232sat}
\Abs{\int_\Omega G_\varepsilon(\cdot, y) f} \le C \norm{f-\overline{f}}_{L^{n/2,1}(\Omega)}.
\end{equation}
In order to estimate $\abs{\overline{f}}$, first recall that $\norm{\widehat{u}}_{L^{\frac{2n}{n-2}}(\Omega)}=1$ and use \eqref{eq2226sat} to derive
\[
\Abs{\int_\Omega f \widehat{u}\,} \le \norm{f}_{L^{\frac{2n}{n+2}}(\Omega)} \le C \abs{\Omega}^{\frac{n-2}{2n}}\,\norm{f}_{L^{n/2,1}(\Omega)}.
\]
Next, we use Proposition~\ref{EstimateForDelta=0} and the fact $\widehat{u}>0$ to obtain
\[
1=\norm{\widehat{u}}_{L^{\frac{2n}{n-2}}(\Omega)} \le \norm{\widehat{u}}_{Y_1^2(\Omega)} \le C \abs{\Omega}^{-\frac{n+2}{2n}} \int_\Omega \widehat{u}.
\]
By combining the previous two inequalities, we obtain
\[
\abs{\overline{f}} \le C \abs{\Omega}^{-\frac{2}{n}} \norm{f}_{L^{n/2,1}(\Omega)}.
\]
Therefore, by the inequalities for Lorentz norms (see \cite{Grafakos})
\[
\norm{f-\overline{f}}_{L^{n/2,1}(\Omega)} \le C \norm{f}_{L^{n/2,1}(\Omega)} + C \norm{\overline{f}}_{L^{n/2,1}(\Omega)}  \le C \norm{f}_{L^{n/2,1}(\Omega)}.
\]
This combined with \eqref{eq2232sat} and duals of Lorentz spaces, we have
\[
\norm{G_\varepsilon(\cdot, y)}_{L^{\frac{n}{n-2},\infty}(\Omega)} \le C,
\]
which coincides with \eqref{eq1549sun}.
Then by replicating the same arguments, we obtain the following results.

\begin{theorem}			\label{thm_green1b}
Let $\Omega$ be a bounded Lipschitz domain.
Let $\mathbf A$ satisfy the uniform ellipticity and boundedness condition \eqref{ellipticity}, $\vec b$, $\vec c \in L^n(\Omega)$, and $d \in L^{n/2}(\Omega)$, with $(\vec c, d)$ satisfying the condition \eqref{eq1123wed} and $\int_\Omega d =0$.
Then, there exists the Green's function $G(x,y)$ for the problem \eqref{eq1442sun} and the Green's function satisfies the following:
\begin{gather*}
\nabla G(\cdot, y) \in L^2(\Omega \setminus B_r(y))\quad\text{for }\,r>0,\\
\norm{G(\cdot, y)}_{L^{\frac{n}{n-2},\infty}(\Omega)} +\norm{\nabla G(\cdot, y)}_{L^{\frac{n}{n-1},\infty}(\Omega)} + \norm{G(\cdot, y)}_{L^{\frac{n-1}{n-2},\infty}(\partial\Omega)} \le C,
\end{gather*}
where $C$ depends on $n$, $\lambda$, $\Lambda$, $\norm{\vec b-\vec c}_n$, $\norm{d}_{n/2}$, and the Lipschitz character of $\Omega$.
Moreover, the representation formula \eqref{eq1539sun} holds if $v \in W_1^2(\Omega)$ is the weak solution of the problem \eqref{eq1442sun} with $f \in L^{n/2,1}(\Omega)$, $\vec F \in L^{n,1}(\Omega)$, and $g \in L^{n-1,1}(\partial\Omega)$.
\end{theorem}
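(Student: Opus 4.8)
The plan is to run the construction of Theorem~\ref{thm_green1} essentially verbatim, the only structural change being that the approximate problem \eqref{eq1540sun}---which is no longer uniquely solvable when $\int_\Omega d=0$, since the homogeneous problem then carries a one-dimensional kernel---must be replaced by the mean-corrected problem \eqref{eq2010sun}. What legitimizes this is the reduction already recorded in the proof of Proposition~\ref{kernelChar}: under \eqref{eq1123wed} and $\int_\Omega d=0$ one has $\int_\Omega(\vec c\cdot\nabla\psi+d\psi)=0$ for every $\psi\in C_c^\infty(\bR^n)$, so problem \eqref{eq1442sun} collapses to the reduced, first-order-only problem whose solvability and regularity are governed by Propositions~\ref{ExistenceDelta=0}, \ref{ExistenceDelta=0Adj}, \ref{SolvabilityDelta=0}, \ref{prop2132} and Corollary~\ref{lem11.14}, all with constants controlled by $\norm{\vec b-\vec c}_n$. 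First I would invoke Proposition~\ref{ExistenceDelta=0} to produce, for each $\varepsilon>0$, a unique $G_\varepsilon(\cdot,y)=u_\varepsilon\in W_1^2(\Omega)$ solving \eqref{eq2010sun} with $\int_\Omega u_\varepsilon=0$, together with the energy bound $\norm{G_\varepsilon(\cdot,y)}_{L^{2n/(n-2)}(\Omega)}+\norm{\nabla G_\varepsilon(\cdot,y)}_{L^2(\Omega)}\le C\,\abs{\Omega\cap B_\varepsilon(y)}^{\frac{2-n}{2n}}$ from Proposition~\ref{SolvabilityDelta=0}.

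The core step is the uniform weak bound. For $f\in L^{n/2,1}(\Omega)$ I would set $\overline{f}=(\int_\Omega f\widehat{u})/(\int_\Omega\widehat{u})$, solve the reduced adjoint problem with right-hand side $f-\overline{f}$ (which is admissible by Proposition~\ref{ExistenceDelta=0Adj}, applied to the reduced operator, since it is $\widehat{u}$-orthogonal) and mean zero, and apply Corollary~\ref{lem11.14} to get $\norm{v}_{L^\infty(\Omega)}\le C\norm{f-\overline{f}}_{L^{n/2,1}(\Omega)}$. Testing \eqref{eq2010sun} against $v$, the corrections $\abs{\Omega}^{-1}$ and $\overline{f}$ drop out (using $\int_\Omega v=0$ and $\int_\Omega G_\varepsilon(\cdot,y)=0$), yielding exactly the identity \eqref{eq2207sat}, i.e.\ $\int_\Omega\varphi_\varepsilon v=\int_\Omega G_\varepsilon(\cdot,y)f$, which matches \eqref{eq2109mon}. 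Then I would bound $\abs{\overline{f}}$ in a scale-invariant way---using $\norm{\widehat{u}}_{L^{2n/(n-2)}(\Omega)}=1$, the bound $\norm{\widehat{u}}_{Y_1^2(\Omega)}\le C$ from Proposition~\ref{EstimateForDelta=0}, and the Lorentz embedding $\norm{\cdot}_{L^{2n/(n+2)}(\Omega)}\le C\norm{\cdot}_{L^{n/2,1}(\Omega)}$ valid once $\abs{\Omega}=1$ (cf.\ \eqref{eq2226sat})---to obtain $\norm{f-\overline{f}}_{L^{n/2,1}(\Omega)}\le C\norm{f}_{L^{n/2,1}(\Omega)}$, hence $\abs{\int_\Omega G_\varepsilon(\cdot,y)f}\le C\norm{f}_{L^{n/2,1}(\Omega)}$, and by the duality $(L^{n/2,1}(\Omega))^\ast=L^{\frac{n}{n-2},\infty}(\Omega)$ the uniform estimate $\norm{G_\varepsilon(\cdot,y)}_{L^{\frac{n}{n-2},\infty}(\Omega)}\le C$, matching \eqref{eq1549sun}.

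With the uniform bound in hand, the remainder mirrors Theorem~\ref{thm_green1}: a Banach--Alaoglu argument extracts a weak-$\ast$ limit $G(\cdot,y)$ along a subsequence $\varepsilon_j\to0$ which inherits the $L^{\frac{n}{n-2},\infty}$ bound; the Caccioppoli inequality away from $y$ gives $\nabla G(\cdot,y)\in L^2(\Omega\setminus B_r(y))$ for every $r>0$; Corollary~\ref{Caccioppoli} together with the Lorentz-space argument of \cite[Proposition~3.22]{KS19} upgrades this to $\nabla G(\cdot,y)\in L^{\frac{n}{n-1},\infty}(\Omega)$; and Lemma~\ref{intpl_lemma} then gives the boundary trace bound $G(\cdot,y)\in L^{\frac{n-1}{n-2},\infty}(\partial\Omega)$. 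For the representation formula I would test \eqref{eq2010sun} against the weak solution $v$ of \eqref{eq1442sun} for Lipschitz data $f,\vec F,g$, let $\varepsilon\to0$, and obtain \eqref{eq1539sun}; a density argument then extends it to $f\in L^{n/2,1}(\Omega)$, $\vec F\in L^{n,1}(\Omega)$, $g\in L^{n-1,1}(\partial\Omega)$.

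The main obstacle I anticipate is not any single estimate but the bookkeeping around the two projections: one must check that inserting $-\abs{\Omega}^{-1}$ into the source and passing from $f$ to $f-\overline{f}$ reproduces \emph{exactly} the duality identity of the $\int_\Omega d>0$ case, and---this is the point securing scale invariance---that $\abs{\overline{f}}$ is controlled purely in terms of $n$ and the Lipschitz character of $\Omega$; the $Y_1^2$ bound on the normalized kernel function $\widehat{u}$ is precisely what delivers this, and it is also what explains why the constant in the conclusion depends on $\norm{\vec b-\vec c}_n$ and $\norm{d}_{n/2}$ rather than on the individual norms. Once that verification is complete, the argument is a faithful repetition of the one already given for Theorem~\ref{thm_green1}.
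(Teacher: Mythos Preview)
Your proposal is correct and follows essentially the same path as the paper: you replace \eqref{eq1540sun} by the mean-corrected problem \eqref{eq2010sun}, set $\overline{f}=(\int_\Omega f\widehat{u})/(\int_\Omega\widehat{u})$, solve the reduced adjoint problem with right-hand side $f-\overline{f}$ and apply Corollary~\ref{lem11.14}, derive the duality identity \eqref{eq2207sat}, bound $\abs{\overline{f}}$ via the $Y_1^2$ estimate on $\widehat{u}$, and then repeat the Banach--Alaoglu / Caccioppoli / trace arguments of Theorem~\ref{thm_green1} verbatim. The only point you might make more explicit is that the representation formula \eqref{eq1539sun} picks out the \emph{mean-zero} solution of \eqref{eq1442sun} (the extra $-\abs{\Omega}^{-1}\int_\Omega v$ term in the duality pairing vanishes precisely for this normalization), but the paper itself leaves this implicit.
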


\begin{corollary}		\label{thm_green2b}
Let $\Omega$ be a bounded Lipschitz domain.
Let $\mathbf A$ satisfy the uniform ellipticity and boundedness condition \eqref{ellipticity}, $\vec b$, $\vec c \in L^n(\Omega)$, and $d \in L^{n/2}(\Omega)$, with $(\vec b, d)$ satisfying the condition \eqref{eq1122wed} and $\int_\Omega d =0$.
Then, there exists the Green's function $G^*(x,y)$ for the problem \eqref{eq1726sun} and it satisfies the following:
\begin{gather*}
\nabla G^*(\cdot, y) \in L^2(\Omega \setminus B_r(y))\quad\text{for }\,r>0,\\\norm{G^*(\cdot, y)}_{L^{\frac{n}{n-2},\infty}(\Omega)} +\norm{\nabla G^*(\cdot, y)}_{L^{\frac{n}{n-1},\infty}(\Omega)} + \norm{G^*(\cdot, y)}_{L^{\frac{n-1}{n-2},\infty}(\partial\Omega)} \le C,
\end{gather*}
where $C$ depends on $n$, $\lambda$, $\Lambda$, $\norm{\vec b-\vec c}_n$, $\norm{d}_{n/2}$, and the Lipschitz character of $\Omega$.
Moreover, if $v \in W_1^2(\Omega)$ is the weak solution of the problem \eqref{eq1726sun} with $f \in L^{n/2,1}(\Omega)$, $\vec F \in L^{n,1}(\Omega)$, and $g \in L^{n-1,1}(\partial\Omega)$,  then the formula \eqref{eq2334sun} holds.
\end{corollary}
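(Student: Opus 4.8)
The plan is to deduce Corollary~\ref{thm_green2b} from Theorem~\ref{thm_green1b} by the same duality substitution that produced Corollary~\ref{thm_green2} from Theorem~\ref{thm_green1}. Concretely, I would apply Theorem~\ref{thm_green1b} with $\mathbf A$ replaced by $\mathbf A\tran$ and with the roles of $\vec b$ and $\vec c$ interchanged, keeping $d$, $f$, $\vec F$, $g$ unchanged. The first step is to check that the hypotheses transfer. The matrix $\mathbf A\tran$ satisfies \eqref{ellipticity} with the same $\lambda$, $\Lambda$, since $\mathbf A\tran\xi\cdot\xi=\mathbf A\xi\cdot\xi$ and $\norm{\mathbf A\tran}_{L^\infty(\Omega)}=\norm{\mathbf A}_{L^\infty(\Omega)}$. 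The hypothesis of Theorem~\ref{thm_green1b} that ``$(\vec c,d)$ satisfies \eqref{eq1123wed}'' becomes, after the interchange, the statement that $(\vec b,d)$ satisfies \eqref{eq1122wed}, which is exactly the assumption of the corollary. Also $\int_\Omega d=0$ is untouched, and $\vec b-\vec c$ is replaced by $\vec c-\vec b$, so $\norm{\vec c-\vec b}_n=\norm{\vec b-\vec c}_n$; thus the constants keep the same dependence on $n$, $\lambda$, $\Lambda$, $\norm{\vec b-\vec c}_n$, $\norm{d}_{n/2}$, and the Lipschitz character of $\Omega$.

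The second step is to identify the resulting problem. Writing down problem \eqref{eq1442sun} for the coefficients $(\mathbf A\tran,\vec b,\vec c,d)\mapsto(\mathbf A,\vec c,\vec b,d)$, so that the transpose of $\mathbf A\tran$ returns to $\mathbf A$, yields exactly problem \eqref{eq1726sun}. Theorem~\ref{thm_green1b} then furnishes a function, which we denote $G^*(x,y)$, that is the Green's function for \eqref{eq1726sun} in the sense of the definition preceding Theorem~\ref{thm_green1}, together with all the asserted properties: $\nabla G^*(\cdot,y)\in L^2(\Omega\setminus B_r(y))$ for $r>0$, the bounds in $L^{\frac{n}{n-2},\infty}(\Omega)$, $L^{\frac{n}{n-1},\infty}(\Omega)$ and $L^{\frac{n-1}{n-2},\infty}(\partial\Omega)$, and the representation formula \eqref{eq2334sun}, which is precisely \eqref{eq1539sun} transcribed for the present operator and its data $f\in L^{n/2,1}(\Omega)$, $\vec F\in L^{n,1}(\Omega)$, $g\in L^{n-1,1}(\partial\Omega)$.

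There is no substantial obstacle here; the ``hard part'' is purely bookkeeping. The point that requires a little care is to note that every ingredient entering the proof of Theorem~\ref{thm_green1b} -- the splitting lemmas, the $Y_1^2$-estimates of Section~\ref{sec4}, the trace inequality Lemma~\ref{intpl_lemma}, the $L^\infty$ bounds of Corollaries~\ref{lem11.15} and \ref{lem11.14}, the Caccioppoli estimate Corollary~\ref{Caccioppoli}, and the reflection/extension Lemmas~\ref{Extension}--\ref{ReflectionNorm} -- is already stated in a form that applies verbatim to the formal adjoint operator (indeed, Theorem~\ref{thm_green1b} itself was phrased for the adjoint-type problem \eqref{eq1442sun}). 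Consequently no new estimate has to be proved, and one only has to track the interchange $\vec b\leftrightarrow\vec c$ consistently through the construction. Recording $G^*$ and its properties then completes the proof.
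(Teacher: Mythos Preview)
Your proposal is correct and follows exactly the paper's approach: the paper obtains Corollary~\ref{thm_green2b} from Theorem~\ref{thm_green1b} by the same substitution $\mathbf A\mapsto\mathbf A\tran$, $\vec b\leftrightarrow\vec c$ that produced Corollary~\ref{thm_green2} from Theorem~\ref{thm_green1} (indeed, both results are simply stated under the preamble ``by replicating the same arguments, we obtain the following results''). Your bookkeeping---that \eqref{eq1123wed} for the swapped coefficients becomes \eqref{eq1122wed}, that \eqref{eq1442sun} becomes \eqref{eq1726sun}, and that $\norm{\vec b-\vec c}_n$ is preserved---is accurate.
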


\begin{theorem}			
Let $\Omega$ be a bounded Lipschitz domain.
Let $\mathbf A$ satisfy the condition \eqref{ellipticity}, $\vec b$, $\vec c \in L^n(\Omega)$, $d \in L^{n/2}(\Omega)$, and $\int_\Omega d =0$.
Assume that $(\vec b, d)$ satisfies \eqref{eq1122wed} and $(\vec c,d)$ satisfies \eqref{eq1123wed} simultaneously.
Then the conclusions of Theorem~\ref{thm_green1b} and Corollary~\ref{thm_green2b} hold and  we have
\[
G(x,y)=G^*(y,x),
\]
where the equality should be understood in the almost everywhere sense for $(x,y) \in \Omega\times \Omega$.
Moreover, we have the pointwise bound
\[
\abs{G(x,y)} \le C \abs{x-y}^{2-n}\quad\text{for }x\neq y,
\]
where $C$ depends on $n$, $\lambda$, $\Lambda$, $\norm{\vec b-\vec c}_n$, $\norm{d}_{n/2}$, and the Lipschitz character of $\Omega$.
\end{theorem}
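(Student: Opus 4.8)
The plan is to transcribe the argument already given for the analogous theorem in the case $\int_\Omega d>0$, replacing each ingredient by the $\int_\Omega d=0$ version assembled in the present subsection. The first assertion is immediate: the hypotheses here contain those of Theorem~\ref{thm_green1b} (namely $(\vec c,d)$ satisfies \eqref{eq1123wed} and $\int_\Omega d=0$) and those of Corollary~\ref{thm_green2b} (namely $(\vec b,d)$ satisfies \eqref{eq1122wed} and $\int_\Omega d=0$), so both results apply and produce the Green functions $G$ and $G^*$ for \eqref{eq1442sun} and \eqref{eq1726sun} together with all the stated $L^{\frac n{n-2},\infty}(\Omega)$, $L^{\frac n{n-1},\infty}(\Omega)$ and $L^{\frac{n-1}{n-2},\infty}(\partial\Omega)$ bounds.

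For the symmetry $G(x,y)=G^*(y,x)$ I would argue, exactly as in the case $\int_\Omega d>0$, that it is a byproduct of the construction. Let $G_\varepsilon(\cdot,y)$ be the approximant from \eqref{eq2010sun} with $\varphi_\varepsilon$ centered at $y$, and let $G_\delta^*(\cdot,x)$ be the approximant obtained by the same construction applied to \eqref{eq1726sun} with $\varphi_\delta$ centered at $x$; both are normalized to have integral zero. Pairing the two problems through a test function as in \eqref{eq2207sat} (and its adjoint analogue), the constant corrections $-\tfrac1{\abs{\Omega}}$ and $\overline{(\,\cdot\,)}$ drop out against these mean-zero functions, and one arrives at
\[
\fint_{\Omega\cap B_\varepsilon(y)} G_\delta^*(\cdot,x)=\int_\Omega \varphi_\varepsilon\,G_\delta^*(\cdot,x)=\int_\Omega G_\varepsilon(\cdot,y)\,\varphi_\delta=\fint_{\Omega\cap B_\delta(x)} G_\varepsilon(\cdot,y).
\]
Letting $\varepsilon\to0$ and then $\delta\to0$ along the weak-$*$ subsequences furnished by the Banach--Alaoglu extraction, and using Lebesgue differentiation together with the local $L^\infty$ bounds of this subsection away from the diagonal, yields $G^*(y,x)=G(x,y)$ for a.e.\ $(x,y)\in\Omega\times\Omega$. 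The only feature peculiar to $\int_\Omega d=0$ is precisely the cancellation of those constant corrections, which is forced by the mean-zero normalization of both approximants.

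For the pointwise bound I would copy the discussion following Corollary~\ref{thm_green2}. By the scaling identity \eqref{eq:GreenScaling} the estimate is scale invariant, so assume $\abs{\Omega}=1$. Fix $x\neq y$, put $r=\tfrac14\abs{x-y}$, and let $r_0$ be as in Definition~\ref{LipDom}. Away from $y$, the function $u=G(\cdot,y)$ solves a homogeneous equation with zero conormal data on $\partial\Omega$; the relevant structural condition for the operator governing $u$ is, after interchanging the roles of $\vec b$ and $\vec c$, that $(\vec c,d)$ satisfies \eqref{eq1123wed}, which holds by hypothesis. When $r<r_0$ one estimates $\abs{G(x,y)}$ by an average of $\abs{G(\cdot,y)}$ on a ball of radius $2r$ about $x$ --- using the interior local boundedness estimate if $B_{2r}(x)\subset\Omega$, and otherwise an analogue of Proposition~\ref{prop1257}, obtained by the reflection of Lemma~\ref{Extension} together with Lemma~\ref{divForExtension} --- so that
\[
\abs{G(x,y)}\le\frac{C}{r^n}\int_{B_{2r}(x)\cap\Omega}\abs{G(\cdot,y)}\le\frac{C}{r^n}\,r^2\,\norm{G(\cdot,y)}_{L^{\frac n{n-2},\infty}(\Omega)}\le C\abs{x-y}^{2-n}.
\]
When $r\ge r_0$ the same two estimates give $\abs{G(x,y)}\le Cr_0^{2-n}$, and since Remark~\ref{r_0Bounds} furnishes $c_0\le r_0\le r<\diam\Omega\le c_1$ with $c_0,c_1$ depending only on $n$ and the Lipschitz character, the bound $\abs{G(x,y)}\le C\abs{x-y}^{2-n}$ persists; undoing the scaling completes the proof.

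I do not expect a genuine obstacle, since every one of the three pieces --- existence and estimates from Theorem~\ref{thm_green1b} and Corollary~\ref{thm_green2b}, symmetry via the approximate pairing, and the pointwise bound via local boundedness plus the $L^{\frac n{n-2},\infty}$ bound plus scaling --- transfers unchanged from the $\int_\Omega d>0$ case. The step requiring the most care is the symmetry identity, where one must check that the mean-zero normalizations and the $-\tfrac1{\abs{\Omega}}$ and $\overline{(\,\cdot\,)}$ corrections are compatible with \eqref{eq2207sat} and that the two weak-$*$ limits may be safely interchanged; this is exactly the situation treated, in the Dirichlet setting, in \cite[Theorem~7.9]{KS19}.
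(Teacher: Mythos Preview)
Your approach is correct and mirrors the paper's, which simply invokes the scaling identity \eqref{eq:GreenScaling} and says ``the proof follows from the previous discussion''; you have fleshed out the symmetry pairing more explicitly than the paper does, and your observation that the mean-zero normalizations of $G_\varepsilon(\cdot,y)$ and $G^*_\delta(\cdot,x)$ kill the $-1/\abs{\Omega}$ corrections is exactly the point.

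Two small corrections. First, in the $\int_\Omega d=0$ case the approximant \eqref{eq2010sun} carries the extra $-1/\abs{\Omega}$ on the right-hand side, so the limit $G(\cdot,y)$ does \emph{not} satisfy a homogeneous equation away from $y$; rather $-\dv(\mathbf A\nabla G(\cdot,y)+(\vec b-\vec c)G(\cdot,y))=-1/\abs{\Omega}$ there. This does not damage the pointwise bound, since Proposition~\ref{prop1257} allows an $f$ term and, when $\abs{\Omega}=1$, one has $\norm{-1/\abs{\Omega}}_{L^{n/2,1}(B_{cr}\cap\Omega)}\le Cr^2\le C\abs{x-y}^{2-n}$, which is absorbed. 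Second, the structural hypothesis needed to invoke Proposition~\ref{prop1257} for $G(\cdot,y)$ is that $(\vec b,d)$ satisfies \eqref{eq1122wed} --- not an interchanged $(\vec c,d)$ condition --- since $G(\cdot,y)$ is governed by the original operator $L$ (equivalently, under $\int_\Omega d=0$ and both \eqref{eq1122wed}, \eqref{eq1123wed}, the reduced operator with drift $\vec b-\vec c$ and zero potential, for which the required condition holds with equality).
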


\section{Scale invariant boundedness estimates}	\label{sec6}

As an application of Green's function, we extend our boundedness results in the beginning of Section~\ref{sec5} to include Neumann data $g$.
Note that subsolutions to the problem
\begin{equation*}
\left\{\begin{array}{c l}
-\dv(\mathbf A \nabla u+\vec b u)+\vec c \cdot \nabla u+d v=f-\dv \vec F & \text{in }\;\Omega,\\
(\mathbf A \nabla u  +\vec b u) \cdot \vec \nu= g+\vec F\cdot\nu & \text{on }\;\partial\Omega,
\end{array}\right.
\end{equation*}
for $g \neq 0$ cannot be reduced to subsolutions of a Dirichlet problem in a larger domain, so the use of Green's function is necessary in the arguments that follow.

The first estimate is the analogue of Proposition~\ref{prop1257}, in the general setting we consider in this article.

\begin{proposition}
Let $\Omega\subset\bR^n$ be a Lipschitz domain.
Let $\mathbf A=(a^{ij})$ satisfy the uniform ellipticity and boundedness condition \eqref{ellipticity}, $\vec b$, $\vec c \in L^n(\Omega)$, $d \in L^{n/2}(\Omega)$.
Assume that the pair $(\vec b, d)$ satisfies the condition \eqref{eq1122wed}.
Suppose $f\in L^{n/2,1}(\Omega)$, $\vec F\in L^{n,1}(\Omega)$, $g\in L^{n-1,1}(\partial\Omega)$, and $u\in W_1^2(\Omega)$ is a subsolution of
\[
\left\{\begin{array}{c l}-\dv(\mathbf A\nabla u+\vec b u)+\vec c\cdot \nabla u+du= f-\dv \vec F, & \text{in }\;\Omega,\\
(\mathbf A \nabla u + \vec b u)\cdot \nu= g+ \vec F \cdot \nu & \text{on }\;\partial\Omega. \end{array}\right.
\]
Let $B_r=B_r(q)$ for some $q\in\partial\Omega$ and $r<r_0$, where $r_0$ appears in Definition~\ref{LipDom}, and denote $\Omega_r=\Omega \cap B_r$ and $\Gamma_r=\partial\Omega\cap B_r.$
Then, we have
\[
\sup_{\Omega_r}u^{+}
\leq C \left (\fint_{\Omega_{6(M+1)r}}u^{+}+\norm{\vec F}_{L^{n,1}(\Omega_{6(M+1)r})}
+\norm{f^{+}}_{L^{n/2,1}(\Omega_{6(M+1)r})}+\norm{g^{+}}_{L^{n-1,1}(
\Gamma_{6(M+1)r})}\right),
\]
where $C$ is a constant depending on $n$, $\lambda$, $\Lambda$, $\norm{\vec b}_n$, $\norm{\vec c}_n$, $\norm{d}_{n/2}$, and $M$.
\end{proposition}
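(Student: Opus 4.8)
The plan is to run the reflection argument of Proposition~\ref{prop1257}, the only new feature being that a nonzero Neumann datum $g$ produces, after reflection, a boundary \emph{measure}, which I would absorb by means of the Dirichlet Green's function of a small ball. Since $u$ is a subsolution one may replace $f$, $g$ by $f^{+}$, $g^{+}$, and, it being enough to estimate $u^{+}$, assume $u\ge 0$, $f\ge 0$, $g\ge 0$ (recall $u^{+}$ is again a subsolution, by \cite[Theorem 3.5]{Stamp65}). Exactly as in the proof of Proposition~\ref{prop1257}, reflect across the portion of $\partial\Omega$ near $q$: there are $q_i\in\partial\Omega$, $\psi_i$ as in Definition~\ref{LipDom} with $\abs{q-q_i}<r_0$, and, writing $D$ for the corresponding doubled domain, $\Omega^{+}$ for its upper half and $\partial_b\Omega^{+}\subset\partial\Omega$ for the graph part, one forms the reflected coefficients and data $\tilde{\mathbf A},\tilde{\vec b},\tilde{\vec c},\tilde d,\tilde{\vec F},\tilde f$ and $\tilde u\ge 0$ on $D$ as in Lemma~\ref{Extension}, and writes $\tilde L v:=-\dv(\tilde{\mathbf A}\nabla v+\tilde{\vec b}v)+\tilde{\vec c}\cdot\nabla v+\tilde d v$. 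Repeating the computation of Lemma~\ref{Extension} but now \emph{retaining} the surface term $\int_{\partial_b\Omega^{+}}g\phi$, one finds that $\tilde u$ is a subsolution of
\[
\tilde L\tilde u=\tilde f-\dv\tilde{\vec F}+\mu\quad\text{in }D,
\]
where $\mu\ge 0$ is a measure carried by $\partial_b\Omega^{+}$ with $d\mu\le C(M)\,g\,d\cH^{n-1}$ there; moreover $\tilde d\ge\dv\tilde{\vec b}$ in $D$ by Lemma~\ref{divForExtension}.

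Next I would let $B^{*}$ be a ball centered at $q$, of radius $\sim_M r$, with $B^{*}\subset D$ (as in the proof of Proposition~\ref{prop1257}). Since $g$ restricted to $\partial_b\Omega^{+}\cap B^{*}$ lies in $L^{n-1,1}\hookrightarrow L^{2}$ (finite measure, $n-1\ge 2$), the functional $\phi\mapsto\int g\phi\,d\cH^{n-1}$ belongs to $H^{-1}(B^{*})$ (trace inequality on the interior Lipschitz hypersurface $\partial_b\Omega^{+}\cap B^{*}$), so the Dirichlet problem $\tilde Lw=\mu$ in $B^{*}$, $w=0$ on $\partial B^{*}$, has a solution $w\in W_1^2(B^{*})$ — well posed because $\tilde d\ge\dv\tilde{\vec b}$ — with $w\ge 0$ by the maximum principle. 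By \cite{KS19} the operator $\tilde L$ has a Dirichlet Green's function $\Gamma$ on $B^{*}$ with $0\le\Gamma(x,y)\le C\abs{x-y}^{2-n}$, $C=C(n,\lambda,\Lambda,\norm{\tilde{\vec b}-\tilde{\vec c}}_{L^n(B^{*})})$ (scale invariant for a ball, and $\norm{\tilde{\vec b}-\tilde{\vec c}}_{L^n(B^{*})}\le C(M)\norm{\vec b-\vec c}_{L^n(\Omega)}$), and $w(y)=\int_{B^{*}}\Gamma(x,y)\,d\mu(x)$. Hence, for every $y\in B^{*}$,
\[
0\le w(y)\le C(M)\int_{\partial_b\Omega^{+}\cap B^{*}}\abs{x-y}^{2-n}g(x)\,d\cH^{n-1}(x)\le C(M)\,\norm{g}_{L^{n-1,1}(\partial_b\Omega^{+}\cap B^{*})},
\]
the last step being H\"older's inequality in Lorentz duality on the $(n-1)$-dimensional Lipschitz graph $\partial_b\Omega^{+}$, together with $\norm{\abs{\,\cdot-y}^{2-n}}_{L^{\frac{n-1}{n-2},\infty}(\partial_b\Omega^{+}\cap B^{*},\,\cH^{n-1})}\le C(M)$ uniformly in $y$ (a ball of radius $s$ meets $\partial_b\Omega^{+}$ in $\cH^{n-1}$-measure at most $C(M)s^{n-1}$). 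This is the step that makes $L^{n-1,1}$ the correct space for $g$.

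With that in hand, $v:=\tilde u-w\in W_1^2(B^{*})$ is a subsolution of $\tilde Lv=\tilde f-\dv\tilde{\vec F}$ in $B^{*}$ (the measure cancels, test functions being supported in $B^{*}$), so the interior estimate \cite[Proposition 3.4]{Sak21} used in Proposition~\ref{prop1257} applies to $v$. Since $w\ge 0$, $\tilde u\ge 0$, one has $\tilde u=v+w\le v^{+}+w$ and $v^{+}\le\tilde u$; combining this with the estimate for $\sup v^{+}$, with $\sup_{B^{*}}w\le C(M)\norm{g}_{L^{n-1,1}(\partial_b\Omega^{+}\cap B^{*})}$ from above, with Lemma~\ref{ReflectionNorm} to pass from $\fint v^{+}\le\fint\tilde u$ to $\fint_{B_{6(M+1)r}\cap\Omega}u$, with the change-of-variables bounds $\norm{\tilde{\vec F}}_{L^{n,1}(B^{*})}\le C(M)\norm{\vec F}_{L^{n,1}(B_{6(M+1)r}\cap\Omega)}$ and likewise for $\tilde f$ (using $\abs{\det D\vec\Psi}=1$, $\abs{D\vec\Psi}\le C(M)$ and $\vec\Psi(B^{*}\cap\Omega^{-})\subset B_{6(M+1)r}(q)\cap\Omega^{+}$), and with $\partial_b\Omega^{+}\cap B^{*}\subset\Gamma_{6(M+1)r}$, $f=f^{+}$, $g=g^{+}$ — exactly as in Proposition~\ref{prop1257} — yields the asserted inequality for $r\lesssim_M r_0$; the remaining range is handled by a covering argument together with the interior estimate and Remark~\ref{r_0Bounds}.

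The main difficulty is the second step: one needs the pointwise Green's bound $\abs{\Gamma(x,y)}\le C\abs{x-y}^{2-n}$ for $\tilde L$ on the ball $B^{*}$ with constant depending on the drift only through $\norm{\vec b-\vec c}_n$ (so that, a fortiori, the final constant depends on the coefficients only through the quantities in the statement) — this is precisely why the global Neumann Green's function of $\Omega$, whose bounds in Corollaries~\ref{thm_green2} and \ref{thm_green2b} carry dependence on $\delta_0$ and on the full Lipschitz character, cannot be used here — and one needs the uniform-in-$y$ weak-$L^{\frac{n-1}{n-2}}$ bound for $\abs{\,\cdot-y}^{2-n}$ on the $(n-1)$-dimensional graph, which singles out $L^{n-1,1}(\Gamma_{6(M+1)r})$ as the sharp space for $g$. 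Everything else is the reflection bookkeeping already present in Proposition~\ref{prop1257}.
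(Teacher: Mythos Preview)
Your overall strategy—absorb the Neumann datum $g$ into an auxiliary function with a controlled $L^\infty$ bound, then reduce to Proposition~\ref{prop1257}—is the same as the paper's. The implementations differ: the paper stays inside $\Omega$, solving a \emph{Neumann} problem for the reduced operator $-\dv(\mathbf A\nabla\,\cdot\,)+\vec c\cdot\nabla$ on a special Lipschitz subdomain $U=\Omega^{+}_{2r}\subset\Omega$, and bounds the auxiliary solution $v$ via the Neumann Green's function $G_U$ of Theorem~\ref{thm_green1b} (for which the required sign condition~\eqref{eq1123wed} is trivially satisfied, since the auxiliary operator has $\vec b_{\mathrm{aux}}=0$, $d_{\mathrm{aux}}=0$, so the relevant pair is $(0,0)$); then $w=u-v$ is fed into Proposition~\ref{prop1257}. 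You instead reflect first, encode $g$ as a surface measure $\mu$ in the doubled domain, and absorb $\mu$ by the \emph{Dirichlet} solution $w$ of $\tilde L w=\mu$ in a ball $B^{*}$. Your route has the pleasant feature that, since you keep the full operator $\tilde L$, the subtraction $\tilde u-w$ produces no extra lower-order source terms, whereas the paper's subtraction generates the terms $\vec b v$ and $dv$ that must then be carried through the proof of Proposition~\ref{prop1257}.

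There is, however, a gap in your second step. You invoke the two-sided pointwise bound $0\le\Gamma(x,y)\le C\abs{x-y}^{2-n}$ for the Dirichlet Green's function of $\tilde L$ on $B^{*}$, citing \cite{KS19}. That pointwise bound requires \emph{both} interior sign conditions $\tilde d\ge\dv\tilde{\vec b}$ and $\tilde d\ge\dv\tilde{\vec c}$; after reflection only the first is guaranteed (Lemma~\ref{divForExtension}), while nothing in the hypotheses of the proposition constrains $(\vec c,d)$, so the second need not hold. (Your stated dependence of $C$ on $\norm{\tilde{\vec b}-\tilde{\vec c}}_n$ alone is also stronger than what \cite{KS19} gives.) Under the single available condition $\tilde d\ge\dv\tilde{\vec b}$, \cite{KS19} still yields the Lorentz bounds
\[
\norm{\Gamma(\cdot,y)}_{L^{\frac{n}{n-2},\infty}(B^{*})}+\norm{\nabla\Gamma(\cdot,y)}_{L^{\frac{n}{n-1},\infty}(B^{*})}\le C
\]
for the Green's function you actually need (that of $\tilde L^{*}$, since $w(y)=\int\Gamma(x,y)\,d\mu(x)$ with $\tilde Lw=\mu$), and these—combined with a trace estimate onto the interior Lipschitz graph $\partial_b\Omega^{+}\cap B^{*}$, as in Lemma~\ref{intpl_lemma} applied to the half of $B^{*}$ lying above the graph—do give $\norm{\Gamma(\cdot,y)}_{L^{\frac{n-1}{n-2},\infty}(\partial_b\Omega^{+}\cap B^{*})}\le C$ uniformly in $y$, which is precisely what your Lorentz duality step requires. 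So your approach can be repaired by replacing the pointwise bound with this Lorentz-plus-trace argument; the paper sidesteps the whole issue by choosing an auxiliary operator for which both sign conditions are vacuous.
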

\begin{proof}
We may assume that $f,g\geq 0$.
Since $q\in\partial\Omega$, consider $q_i\in\partial\Omega$ and the Lipschitz function $\psi_i:\bR^{n-1}\to\bR$ from Definition~\ref{LipDom} such that $\abs{q-q_i}<r_0$.
After rotating and translating, we may assume that the domain $U=\Omega^{+}_{2r}(q_i; \psi_i)$ is a special Lipschitz domain and it is a subset of $\Omega$.
Let $v\in W_1^2(U)$ be the solution of the problem
\[
\left\{\begin{array}{c l}-\dv(\mathbf A\nabla v)+\vec c\cdot \nabla v= 0, & \text{in }\;U,\\
\mathbf A \nabla v \cdot \nu= \hat{g} & \text{on }\;\partial U, \end{array}\right.
\]
where $\hat{g}=g\chi_{\partial U \cap B_{2r}}+\overline{g}\chi_{\partial U \setminus B_{2r}}$, and the constant $\overline{g}$ is chosen to satisfy
\[
\overline{g} \,\Abs{\partial U \setminus B_{2r}}+\int_{\partial U \cap B_{2r}}g=0.
\]
The existence and uniqueness of the solution $v$ is guaranteed by Proposition~\ref{SolvabilityDelta=0}.
Then, if $G_U(x,y)$ is the Green's function for the same problem in $U$, it follows from Theorem~\ref{thm_green1b} that for $y\in \Omega$, we have
\begin{equation}		\label{eq:vpointwise}
\begin{aligned}
\abs{v(y)}&=\Abs{\int_{\partial U} G_U(x,y)\hat{g}(x)\,dS(x)}\\
&\leq \norm{G_U(\cdot,y)}_{L^{\frac{n-1}{n-2},\infty}(\partial U^{+}_{2r})} \left( \norm{g}_{L^{n-1,1}(\partial U^{+}_{2r}\cap B_{2r})}+\norm{\overline{g}}_{L^{n-1,1}(\partial U^{+}_{2r}\setminus B_{2r})}\right)\\
&\leq C\norm{g}_{L^{n-1,1}(\partial\Omega\cap B_{2r})},
\end{aligned}
\end{equation}
where $C$ depends on $n$, $\lambda$, $\Lambda$, $\norm{\vec c}_n$, and $M$ (because $U$ is a Lipschitz domain with Lipschitz character $(M,N)$ and $N$ depends only on $n$ and $M$).
If we set $w=u-v$, then $w$ becomes a subsolution of
\[
\left\{\begin{array}{c l}-\dv(\mathbf A\nabla w+\vec b w)+\vec c\cdot \nabla w+dw= (f-dv)-\dv(\vec F-\vec b v), & \text{in }\;U,\\
(\mathbf A \nabla w + \vec b w)\cdot \nu= (\vec F-\vec b v) \cdot \nu & \text{on }\;\partial U\cap B_{2r},\\ 
(\mathbf A \nabla w + \vec b w)\cdot \nu= g-\overline{g}+(\vec F-\vec b v) \cdot \nu & \text{on }\;\partial U \setminus B_{2r}
\end{array}\right.
\]
We then follow the steps of the proof of Proposition~\ref{prop1257} and use \eqref{eq:vpointwise} to complete the proof.
\end{proof}

Using the maximum principle argument in the proof of Corollary~\ref{lem11.15}, we have shown the following.

\begin{proposition}\label{pointwiseSubsol}
Let $\Omega\subset\bR^n$ be a Lipschitz domain.
Let $\mathbf A=(a^{ij})$ satisfy the uniform ellipticity and boundedness condition \eqref{ellipticity}, $\vec b$, $\vec c \in L^n(\Omega)$, $d \in L^{n/2}(\Omega)$.
Assume that the pair $(\vec b, d)$ satisfies the condition \eqref{eq1122wed}.
Suppose $f\in L^{n/2,1}(\Omega),\vec F\in L^{n,1}(\Omega)$, $g\in L^{n-1,1}(\partial\Omega)$, and $u\in W_1^2(\Omega)$ is a subsolution of
\[
\left\{\begin{array}{c l}-\dv(\mathbf A\nabla u+\vec b u)+\vec c\cdot \nabla u+du= f-\dv \vec F, & \text{in }\;\Omega,\\
(\mathbf A \nabla u + \vec b u)\cdot \nu= g+ \vec F \cdot \nu & \text{on }\;\partial\Omega. \end{array}\right.
\]
Then there exists $C>0$, depending on $n$, $\lambda$, $\Lambda$, $\norm{\vec b}_n$, $\norm{\vec c}_n$, $\norm{d}_{n/2}$ and the Lipschitz character of $\Omega$, such that
\[
\sup_{\Omega} u^+\leq C \left(\fint_{\Omega}u^{+}+\norm{\vec F}_{L^{n,1}(\Omega)}+\norm{f^{+}}_{L^{n/2,1}(\Omega)}+\norm{g^{+}}_{L^{n-1,1}(\partial\Omega)}\right).
\]
\end{proposition}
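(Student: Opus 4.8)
The plan is to deduce this global bound from the two local estimates already established — the boundary estimate stated just above and the interior local boundedness estimate for subsolutions — glued together by an elementary covering argument, together with the normalization $\abs{\Omega}=1$; this is essentially the same strategy as the maximum-principle argument used for Corollary~\ref{lem11.15}.

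First I would reduce to $\abs{\Omega}=1$. Under the dilation $x\mapsto rx$ with $r=\abs{\Omega}^{1/n}$, the rescaled subsolution solves the corresponding Neumann problem with inhomogeneous data $r^2f_r$, $r\vec F_r$, $rg_r$, and the Lorentz exponents $n/2$, $n$, $n-1$ are precisely those for which $\norm{r^2f_r}_{L^{n/2,1}(\Omega_r)}=\norm{f}_{L^{n/2,1}(\Omega)}$, $\norm{r\vec F_r}_{L^{n,1}(\Omega_r)}=\norm{\vec F}_{L^{n,1}(\Omega)}$ and $\norm{rg_r}_{L^{n-1,1}(\partial\Omega_r)}=\norm{g}_{L^{n-1,1}(\partial\Omega)}$, while $\sup u^{+}$ and $\fint u^{+}$ are unchanged and the Lipschitz character is preserved. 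Replacing $f$ and $g$ by $f^{+}$ and $g^{+}$ only enlarges the right-hand side and keeps $u$ a subsolution, so I may also assume $f,g\ge 0$.

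Next, with $r_0$ as in Definition~\ref{LipDom}, I would set $\rho=\tfrac{r_0}{12(M+1)}$, so that $6(M+1)\rho=\tfrac{r_0}{2}<r_0$, and apply the preceding proposition on each ball $B_\rho(q)$, $q\in\partial\Omega$: this bounds $\sup_{\Omega\cap B_\rho(q)}u^{+}$ by $C$ times $\fint_{\Omega\cap B_{r_0/2}(q)}u^{+}$ plus the three data terms. For a Lipschitz domain and a boundary point $q$ one has $\abs{\Omega\cap B_{r_0/2}(q)}\ge c\,r_0^{\,n}$ with $c$ depending only on $n$ and $M$, and $r_0$ is bounded below by a constant depending only on $n$ and the Lipschitz character (Remark~\ref{r_0Bounds}); combined with $\abs{\Omega}=1$ this gives $\fint_{\Omega\cap B_{r_0/2}(q)}u^{+}\le C\int_\Omega u^{+}=C\fint_\Omega u^{+}$. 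Since the balls $B_\rho(q)$, $q\in\partial\Omega$, cover the strip $\set{x\in\Omega:\dist(x,\partial\Omega)<\rho}$, this controls $u^{+}$ there by the right-hand side of the assertion. For $x$ in the complementary set $\set{x\in\Omega:\dist(x,\partial\Omega)\ge\rho}$ the ball $B_\rho(x)$ lies in $\Omega$, and testing \eqref{eq1122wed} against functions supported in $B_\rho(x)$ shows $d\ge\dv\vec b$ there, so the interior local boundedness estimate for subsolutions with $L^{n/2,1}$ and $L^{n,1}$ data — the same estimate \cite[Proposition~3.4]{Sak21} used in the proof of Proposition~\ref{prop1257}, now applied directly without a reflection — bounds $u^{+}(x)$ by $C\bigl(\fint_{B_\rho(x)}u^{+}+\norm{\vec F}_{L^{n,1}(\Omega)}+\norm{f^{+}}_{L^{n/2,1}(\Omega)}\bigr)$, which is $\le C\bigl(\fint_\Omega u^{+}+\norm{\vec F}_{L^{n,1}(\Omega)}+\norm{f^{+}}_{L^{n/2,1}(\Omega)}\bigr)$ because $\abs{\Omega}=1$ and $\rho$ is comparable to a universal constant. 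Combining the bounds on the boundary strip and on the interior yields the claim, and undoing the scaling completes it.

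The main difficulty is organizational rather than analytic: $\rho$ must be chosen small enough in terms of $M$ for the boundary estimate to apply, yet comparable to a constant depending only on $n$ and the Lipschitz character, so that the factors $\rho^{-n}$ and the lower bound for $\abs{\Omega\cap B_{r_0/2}(q)}$ are absorbed into the final constant — which is exactly what Remark~\ref{r_0Bounds} and the reduction to $\abs{\Omega}=1$ make possible. Once that is arranged, the statement is a direct consequence of the two local estimates already proved, which is why the text can assert it has been shown.
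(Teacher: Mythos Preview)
Your proposal is correct and follows essentially the same approach as the paper: normalize to $\abs{\Omega}=1$, apply the preceding local boundary estimate on balls centered at $\partial\Omega$, and handle the remaining interior region. The only cosmetic difference is that for the interior region the paper invokes a packaged maximum principle (\cite[Proposition~3.4]{Sak21b}) while you use the interior local boundedness estimate \cite[Proposition~3.4]{Sak21} directly together with a covering; since the maximum principle in that reference is itself built on such local estimates, the two routes are equivalent here.
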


Finally, we have the following scale invariant pointwise estimates for solutions.

\begin{proposition}
Let $\Omega\subset\bR^n$ be a Lipschitz domain.
Let $\mathbf A=(a^{ij})$ satisfy the uniform ellipticity and boundedness condition \eqref{ellipticity}, $\vec b$, $\vec c \in L^n$, $d \in L^{n/2}$.
Assume that the pair $(\vec b, d)$ satisfies the condition \eqref{eq1122wed}.
Assume that  $\abs{\Omega}^{\frac{2}{n}-1}\int_\Omega d \geq \delta_0>0$.
Let $f\in L^{n/2,1}(\Omega)$, $\vec F\in L^{n,1}(\Omega)$, $g\in L^{n-1,1}(\partial\Omega)$.
If $u\in W_1^2(\Omega)$ is the solution to the problem
\[
\left\{\begin{array}{c l}
-\dv(\mathbf A \nabla u+\vec bu)+\vec c \cdot \nabla u+du = f-\dv \vec F &\text{ in }\;\Omega,\\
(\mathbf A \nabla u + \vec b u)\cdot \nu = g+ \vec F \cdot \nu & \text{ on }\;\partial\Omega,
\end{array}\right.
\]
then there exists $C>0$, depending on $n$, $\lambda$, $\norm{\vec b}_n$, $\norm{\vec c}_n$, $\norm{d}_{n/2}$, $\delta_0$, and the Lipschitz character of $\Omega$, such that
\[
\sup_{\Omega} \, \abs{u} \leq C\left(\norm{\vec F}_{L^{n,1}(\Omega)}+\norm{f}_{L^{n/2,1}(\Omega)}+\norm{g}_{L^{n-1,1}(\partial\Omega)}\right).
\]
\end{proposition}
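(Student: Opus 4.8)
The plan is to reduce to the normalization $\abs{\Omega}=1$ by dilation, split $u=u^{+}-u^{-}$, apply the subsolution bound of Proposition~\ref{pointwiseSubsol} to both $u$ and $-u$, and then control the averages $\fint_\Omega u^{\pm}$ appearing on the right-hand side by means of the $Y_1^2$-estimate of Proposition~\ref{EstimateForDelta>0} together with elementary Lorentz-space embeddings on a set of finite measure. I would begin by checking that the asserted inequality is scale invariant: under the dilation $u_r(x)=u(rx)$ with $r=\abs{\Omega}^{1/n}$ (exactly as in the proof of Proposition~\ref{EstimateForDelta>0}), the rescaled domain $\Omega_r=\frac1r\Omega$ has $\abs{\Omega_r}=1$ and the same Lipschitz character, $\sup_\Omega\abs u=\sup_{\Omega_r}\abs{u_r}$, the norms of the lower order coefficients are preserved with $\abs{\Omega_r}^{2/n-1}\int_{\Omega_r}r^2 d_r=r^{2-n}\int_\Omega d\ge\delta_0$, and the scale-invariant Lorentz norms satisfy $\norm{r^2 f_r}_{L^{n/2,1}(\Omega_r)}=\norm{f}_{L^{n/2,1}(\Omega)}$, $\norm{r\vec F_r}_{L^{n,1}(\Omega_r)}=\norm{\vec F}_{L^{n,1}(\Omega)}$, and $\norm{r g_r}_{L^{n-1,1}(\partial\Omega_r)}=\norm{g}_{L^{n-1,1}(\partial\Omega)}$. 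Hence we may and do assume $\abs{\Omega}=1$.

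Next, I would observe that both $u$ and $-u$ are solutions, hence subsolutions, of a Neumann problem of the type in Proposition~\ref{pointwiseSubsol} with $(\vec b,d)$ still satisfying \eqref{eq1122wed}; for $-u$ the data are $-f$, $-\vec F$, $-g$. Applying Proposition~\ref{pointwiseSubsol} to $u$ and to $-u$, adding the two resulting inequalities, and using $\abs{f^{\pm}}\le\abs f$, $\abs{g^{\pm}}\le\abs g$ together with monotonicity of the Lorentz quasi-norms, I obtain
\[
\sup_\Omega\abs u\le C\left(\fint_\Omega\abs u+\norm{\vec F}_{L^{n,1}(\Omega)}+\norm{f}_{L^{n/2,1}(\Omega)}+\norm{g}_{L^{n-1,1}(\partial\Omega)}\right).
\]
It then remains to bound $\fint_\Omega\abs u=\int_\Omega\abs u$. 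By H\"older's inequality and $\abs\Omega=1$ one has $\int_\Omega\abs u\le\norm{u}_{L^{\frac{2n}{n-2}}(\Omega)}\le\norm{u}_{Y_1^2(\Omega)}$, and Proposition~\ref{EstimateForDelta>0}(b) — which applies since $(\vec b,d)$ satisfies \eqref{eq1122wed} and $\abs{\Omega}^{2/n-1}\int_\Omega d\ge\delta_0$ — gives
\[
\norm{u}_{Y_1^2(\Omega)}\le C\left(\norm{f}_{L^{\frac{2n}{n+2}}(\Omega)}+\norm{\vec F}_{L^{2}(\Omega)}+\norm{g}_{L^{2-\frac{2}{n}}(\partial\Omega)}\right).
\]

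Finally, since $\abs\Omega=1$ and $\abs{\partial\Omega}$ is controlled by the Lipschitz character (e.g.\ by applying Lemma~\ref{trace_lemma} to the constant function $1$), and since $n\ge3$ makes the exponents satisfy $\tfrac{2n}{n+2}\le\tfrac n2$, $2\le n$, and $2-\tfrac2n\le n-1$, I would invoke the embeddings $L^{n/2,1}(\Omega)\hookrightarrow L^{\frac{2n}{n+2}}(\Omega)$, $L^{n,1}(\Omega)\hookrightarrow L^{2}(\Omega)$, and $L^{n-1,1}(\partial\Omega)\hookrightarrow L^{2-\frac2n}(\partial\Omega)$ (obtained from $L^{p,1}\hookrightarrow L^{p}$ followed by $L^{p}\hookrightarrow L^{q}$ for $q\le p$ on a finite-measure set), with constants depending only on $n$ and the Lipschitz character, to convert the last display into the scale-invariant Lorentz norms. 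Combining the three displays and undoing the dilation yields the claim. The argument involves no serious difficulty; the only points requiring care are the bookkeeping in the scaling reduction — verifying that each scale-invariant Lorentz norm of the dilated data coincides with that of the original data — and applying the Lorentz embeddings on $\Omega$ and on $\partial\Omega$ with the correct exponents.
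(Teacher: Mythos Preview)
Your argument is correct and follows exactly the approach indicated in the paper's one-line proof: apply Proposition~\ref{pointwiseSubsol} to $u$ and to $-u$, then absorb the resulting $\fint_\Omega\abs{u}$ term via Proposition~\ref{EstimateForDelta>0}. The scaling reduction and the Lorentz embeddings you spell out are the natural way to make the constants depend only on the stated quantities, and your bookkeeping there is accurate.
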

\begin{proof}
The proof follows applying Proposition~\ref{pointwiseSubsol} to $u$ and $-u$, and combining with Proposition~\ref{EstimateForDelta>0}.
\end{proof}

\begin{proposition}
Let $\Omega\subset \bR^n$ be a bounded Lipschitz domain.
Assume that $\mathbf A=(a^{ij})$ satisfies the uniform ellipticity and boundedness condition \eqref{ellipticity}, and $\vec c \in L^n(\Omega)$.
Let $f\in L^{n/2,1}(\Omega)$, $\vec F\in L^{n,1}(\Omega)$, $g\in L^{n-1,1}(\partial\Omega)$, and assume that the compatibility condition  \eqref{eq:Comp2} holds.
Then the solution $u\in W_1^2(\Omega)$ to the problem
\[
\left\{\begin{array}{c l}-\dv(\mathbf A\nabla u)+\vec c \cdot \nabla u=f-\dv \vec F& \text{in }\;\Omega,\\ \mathbf A \nabla u\cdot \nu=g+\vec F\cdot \nu & \text{on}\,\,\,\partial\Omega,\end{array}\right.
\]
with $\int_\Omega u=0$, satisfies the estimate
\[
\sup_{\Omega} \, \abs{u} \leq C \left(\norm{\vec F}_{L^{n,1}(\Omega)}+\norm{f}_{L^{n/2,1}(\Omega)}+\norm{g}_{L^{n-1,1}(\partial\Omega)} \right).
\]
where $C$ depends on $n$, $\lambda$, $\Lambda$, $\norm{\vec c}_n$, and the Lipschitz character of $\Omega$.
\end{proposition}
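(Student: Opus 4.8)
The plan is to obtain the pointwise bound from the Neumann Green's function and the associated representation formula, exactly as in the preceding boundedness results of this section. The key observation is that our Neumann problem $-\dv(\mathbf A\nabla u)+\vec c\cdot\nabla u=f-\dv\vec F$ in $\Omega$, $\mathbf A\nabla u\cdot\nu=g+\vec F\cdot\nu$ on $\partial\Omega$, is the special case of problem \eqref{eq1442sun} in which the divergence-form first-order coefficient and the zeroth-order coefficient both vanish, the remaining first-order coefficient is $\vec c$, and the matrix is $\mathbf A\tran$ (so that its transpose, which enters \eqref{eq1442sun}, is our $\mathbf A$). With these choices the structural hypotheses of Theorem~\ref{thm_green1b} hold trivially: the relevant pair is $(\vec 0,0)$, which satisfies \eqref{eq1123wed}, and $\int_\Omega 0=0$. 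Existence of the solution $u$ normalized by $\int_\Omega u=0$ is given by Proposition~\ref{prop2132} precisely under the compatibility condition \eqref{eq:Comp2} that we have assumed, and this is the normalization under which the Green's function is constructed when $\int_\Omega d=0$.

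Granting this, Theorem~\ref{thm_green1b} yields a Green's function $G(x,y)$ with
\[
\norm{G(\cdot,y)}_{L^{\frac{n}{n-2},\infty}(\Omega)}+\norm{\nabla G(\cdot,y)}_{L^{\frac{n}{n-1},\infty}(\Omega)}+\norm{G(\cdot,y)}_{L^{\frac{n-1}{n-2},\infty}(\partial\Omega)}\le C,
\]
where, since the divergence-form coefficient and $d$ are both zero here and $\norm{\vec b-\vec c}_n$ in Theorem~\ref{thm_green1b} reduces to $\norm{\vec c}_n$, the constant $C$ depends only on $n$, $\lambda$, $\Lambda$, $\norm{\vec c}_n$, and the Lipschitz character of $\Omega$; and the representation formula
\[
u(y)=\int_\Omega G(x,y)f(x)\,dx+\int_\Omega\nabla_xG(x,y)\cdot\vec F(x)\,dx+\int_{\partial\Omega}G(x,y)g(x)\,dS(x)
\]
holds for a.e.\ $y\in\Omega$, since our data satisfy $f\in L^{n/2,1}(\Omega)$, $\vec F\in L^{n,1}(\Omega)$, and $g\in L^{n-1,1}(\partial\Omega)$.

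It remains to estimate the three integrals by H\"older's inequality in Lorentz spaces, using the duality $(L^{p,1})^*=L^{p',\infty}$. The exponents are conjugate in each term: $\tfrac{n-2}{n}+\tfrac{2}{n}=1$ for the $f$-integral (so $L^{\frac{n}{n-2},\infty}$ against $L^{n/2,1}$), $\tfrac{n-1}{n}+\tfrac{1}{n}=1$ for the $\vec F$-integral, and $\tfrac{n-2}{n-1}+\tfrac{1}{n-1}=1$ for the boundary integral. Combining these with the $y$-uniform bounds on $G$ gives
\[
\abs{u(y)}\le C\left(\norm{f}_{L^{n/2,1}(\Omega)}+\norm{\vec F}_{L^{n,1}(\Omega)}+\norm{g}_{L^{n-1,1}(\partial\Omega)}\right)
\]
for a.e.\ $y\in\Omega$, hence the asserted bound on $\sup_\Omega\abs{u}$, understood as the essential supremum. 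I expect no genuine obstacle here, since all of the substantive work sits inside Theorem~\ref{thm_green1b}; the only point requiring attention is the bookkeeping that identifies our problem, data, and inherited constant with the stated special case.
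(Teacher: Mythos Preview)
Your proposal is correct and follows the same route as the paper's proof, which simply cites Proposition~\ref{prop2132} for existence and Theorem~\ref{thm_green1b} for the pointwise bound. You have correctly identified the problem as the instance of \eqref{eq1442sun} with $(\mathbf A,\vec b,\vec c,d)$ replaced by $(\mathbf A\tran,\vec c,\vec 0,0)$, so that the hypothesis of Theorem~\ref{thm_green1b} is trivially met and the constant reduces to one depending on $\norm{\vec c}_n$; the paper leaves these bookkeeping details and the Lorentz--H\"older step implicit, but your expansion of them is accurate.
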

\begin{proof}
Existence and uniqueness follows from Proposition~\ref{prop2132}. The pointwise estimate follows from Theorem~\ref{thm_green1b}.
\end{proof}

\section{Appendix}
\subsection{On the assumption \eqref{eq1122wed}}

It turns out that both our assumptions $d\geq\dv \vec b$ and $\vec b\cdot \nu\geq 0$ are necessary for the theorems we have treated in this article.
In particular, the absence of either of the two can lead to solutions that are unbounded close to the boundary, or spaces of solutions to the homogeneous Neumann problem with dimension strictly greater than $1$.

The assumption $d\geq\dv \vec b$ has its roots in the treatment of the Dirichlet problem (see \cite[Section 8.1]{GT01}) and it is connected to the positivity of the eigenvalues of the equation $-\Delta u=\lambda u$.
Indeed, if $\Omega=(0,\pi)^3 \subset \bR^3$, the problem
\[
\left\{\begin{array}{c l}-\Delta u-u=0 & \text{in }\;\Omega,\\
\partial u/\partial n=0 & \text{on }\;\partial\Omega,\end{array}\right.
\]
has at least three linearly independent solutions, namely, $\cos x$, $\cos y$, and $\cos z$.
Hence, an analogue of Proposition~\ref{kernel} is not possible.

Also, in the case when $n \ge 3$ and
\[
\Omega=\set{x=(x',x_n)\in \bR^n : \abs{x'}^2+x_n^2<e^{-2},\, x_n>0},
\]
then the functions
\[
u(x)=\ln \abs{x},\quad \vec b(x)=-\frac{x}{\abs{x}^2 \ln\abs{x}}
\]
satisfy $u \in W_1^2(\Omega)$, $\vec b\in L^n(\Omega)$, and $\dv \vec b>0$ in $\Omega$; see \cite[Section 7.2]{KS19}.
A direct computation shows that $\nabla u+\vec b u=0$, and thus $u$ is a solution to the Neumann problem
\[
\left\{\begin{array}{c l}-\Delta u-\dv(\vec bu)=0& \text{in }\;\Omega,\\
(\nabla u +b\vec u)\cdot \nu=0& \text{on }\;\partial\Omega,\end{array}\right.
\]
However, $u$ is not bounded near $0$.

Even under the assumption $d\geq\dv \vec b$, the further assumption $\vec b\cdot \nu\geq 0$ is still necessary for Propositions~\ref{kernel} and for boundedness.
This is demonstrated in the following examples.

\begin{example}
Consider the Lipschitz domain
\[
\Omega=\Set{x=(x',x_n)\in \bR^{n}: \abs{x'}^2+x_n^2<e^{-2}, \, x_n>\abs{x'}}
\]
and set
\[
u(x)=u(x_n)=-\ln x_n,\quad\vec b(x)=\vec b(x_n)=-\frac{\vec e_n}{x_n \ln x_n}.
\]
Since $x_n \simeq \abs{x}$ in $\Omega$, we have $u\in W_1^2(\Omega)$ and $\vec b\in L^n(\Omega)$, as long as $n\geq 3$.
Moreover, we have $\dv \vec b \leq 0$ and since $\nabla u+\vec b u=0$, $u$ is a solution to the Neumann problem
\[
\left\{\begin{array}{c l}-\Delta u-\dv(\vec bu)=0& \text{in }\;\Omega,\\
(\nabla u +b\vec u)\cdot \nu=0& \text{on }\;\partial\Omega.\end{array}\right.
\]
However, $u$ is not bounded near  $0\in\partial\Omega$.
Observe that $\vec b\cdot \nu=\frac{1}{\sqrt{2}y\ln y}<0$ near $0$.
\end{example}

To construct kernels with dimensions greater than $1$, we consider a setting where we can apply the separations of variables method, after we construct a one-dimensional counterexample.
\begin{example}
Consider $B(x)=x^2-\delta x$, where $\delta>0$ to be determined later, and set $b(x)=B'(x)=2x-\delta$.
Then, we have $b(-1)=-2-\delta<0$, and by setting
\begin{equation}\label{eq:uDfn}
u(x)=\frac{e^{B(-1)-B(x)}}{\beta(-1)}+e^{-B(x)}\int_{-1}^x e^{B(t)}\,dt,
\end{equation}
we get
\[
u'(x)=-\frac{b(x) e^{B(-1)-B(x)}}{b(-1)}-b(x)e^{-B(x)}\int_{-1}^x e^{B(t)}\,dt+1=-b(x)u(x)+1.
\]
Note that $u'(-1)=0$ and also that
\[
u'(1)=-\frac{b(1)e^{B(-1)-B(1)}}{b(-1)}-b(1)e^{-B(1)}\int_{-1}^1e^{B(t)}\,dt+1.
\]
Then, we find that $u'(1)=0$ if and only if $f(\delta)=1$, where
\[
f(x):=(2-x)e^{x-1}\left(-\frac{e^{x+1}}{x+2}+\int_{-1}^1e^{t^2-x t}\,dt\right).
\]
Since $\int_0^1 e^{t^2}\,dt > e/2$, we have $f(0)>1$, while $f(2)=0$.
It is clear that $f(x)$ is continuous for $0 \le x \le 2$, and thus there exists $\delta\in(0,2)$ such that $f(\delta)=1$.
Therefore, with this choice of $\delta$, the function $u$ in \eqref{eq:uDfn} satisfies  $u'(1)=0$ and thus it solves the one-dimensional Neumann problem
\[
-(u'+b u)'=0\;\text{ in }\;(0,1),\quad u'(-1)=u'(1)=0.
\]
Now, let us define
\[
\vec b(x,y,z)=(-b(x),-b(y),b(z)).
\]
Note that $\dv \vec b=-2$.
If we set $v(x,y,z)=u(x)$ in $\Omega=(-1,1)^3\subset\bR^3$, then we have
\[
-\Delta v +\vec b\cdot \nabla v-2v=-u''(x)-b(x)u'(x)-2u(x)=-u''(x)-(b(x)u(x))'=0
\]
and $\partial v/\partial \nu=0$ on $\partial\Omega$.
Similarly, if we set $w(x,y,z)=u(y)$, we have
\[
-\Delta w +\vec b\cdot \nabla w-2w=0\;\text{ in }\;\Omega,\quad \partial w/\partial \nu=0\;\text{ on }\;\partial \Omega.
\]
So, the solution space for the Neumann problem
\[
\left\{\begin{array}{c l}-\Delta u+\vec b\cdot \nabla u-2u=0 & \text{in }\;\Omega,\\ \partial u/\partial \nu=0 & \text{on }\;\partial\Omega,\end{array}\right.
\]
has dimension greater than $1$.
Hence, by the Fredholm alternative, the space for $W_1^2(\Omega)$ solutions to the Neumann problem
\[
\left\{\begin{array}{c l}-\Delta u-\dv(\vec bu)-2u=0 & \text{in }\;\Omega,\\
(\nabla u+\vec bu)\cdot\nu=0 & \text{on }\;\partial\Omega,\end{array}\right.
\]
has dimension greater than $1$.
So an analogue of Proposition~\ref{kernel} does not hold, if we only assume that $d\geq\dv \vec b$.
\end{example}

\subsection{A case when subsolutions are solutions}

It turns out that the definition of subsolutions is in fact strong enough to force subsolutions to be solutions, at least in some specific cases. This is the context of the following proposition.

\begin{proposition}\label{SubImplies Sol1}
Let $\Omega$ be a bounded Lipschitz domain.
Let $\mathbf A$ satisfy the uniform ellipticity and boundedness condition \eqref{ellipticity} and $\vec b \in L^n(\Omega)$.
Also, let $f\in L^{\frac{2n}{n+2}}(\Omega)$, $\vec F\in L^2(\Omega)$ and $g\in L^{2-\frac{2}{n}}(\Omega)$ satisfy the compatibility condition \eqref{eq:Comp1}.
If $u\in Y_1^2(\Omega)$ is a subsolution to the Neumann problem
\begin{equation}			\label{eq1311sun}
\left\{\begin{array}{c l}-\dv(\mathbf A\nabla u+\vec bu)=f-\dv \vec F & \text{in }\;\Omega,
\\ (\mathbf A \nabla u+\vec b u)\cdot \nu= g+\vec F\cdot\nu & \text{on }\;\partial\Omega,\end{array}\right.
\end{equation}
then $u$ is, in fact, a solution to the same problem \eqref{eq1311sun}.
\end{proposition}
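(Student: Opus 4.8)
The plan is to argue directly from the definitions. A subsolution of \eqref{eq1311sun} satisfies the inequality \eqref{eq2134sat} (with the terms involving $\vec c$ and $d$ absent) for every nonnegative $\phi\in C_c^\infty(\bR^n)$, whereas $u$ being a solution means that \eqref{eq2133sat} holds with equality for every, possibly sign-changing, $\phi\in C_c^\infty(\bR^n)$. So it suffices to produce the reverse inequality for all signed test functions, and the device to do this is to test against a function of the form ``large constant minus $\psi$'', exploiting the compatibility condition \eqref{eq:Comp1}.

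First I would fix an arbitrary $\psi\in C_c^\infty(\bR^n)$ and choose a cutoff $\eta\in C_c^\infty(\bR^n)$ with $0\le\eta\le 1$ and $\eta\equiv 1$ on a neighborhood of $\overline\Omega\cup\supp\psi$. Setting $c:=\norm{\psi}_{L^\infty(\bR^n)}$, the function $\phi:=c\eta-\psi$ belongs to $C_c^\infty(\bR^n)$ and is nonnegative everywhere: where $\eta=1$ (which includes all of $\supp\psi$) it equals $c-\psi\ge 0$, and elsewhere $\psi$ vanishes so it equals $c\eta\ge 0$. Hence $\phi$ is an admissible test function in the subsolution inequality. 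Since $\eta\equiv 1$ near $\overline\Omega$, we have $\nabla\eta\equiv 0$ on $\Omega$ and $\eta\equiv 1$ on $\partial\Omega$; substituting $\phi=c\eta-\psi$ into \eqref{eq2134sat} and using \eqref{eq:Comp1} to cancel the term $c\bigl(\int_\Omega f+\int_{\partial\Omega}g\bigr)$, the inequality collapses to
\[
\int_\Omega \mathbf A\nabla u\cdot\nabla\psi+\vec b u\cdot\nabla\psi\ \ge\ \int_\Omega f\psi+\vec F\cdot\nabla\psi+\int_{\partial\Omega}g\psi .
\]

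The crucial point is that this reversed inequality holds for \emph{every} $\psi\in C_c^\infty(\bR^n)$, with no sign restriction, because nothing in the construction of $\phi$ required $\psi\ge 0$. Replacing $\psi$ by $-\psi$ therefore yields the opposite inequality as well, so \eqref{eq2133sat} holds with equality for all $\phi\in C_c^\infty(\bR^n)$; that is, $u$ is a solution of \eqref{eq1311sun}. There is no serious obstacle here: the only points requiring care are choosing $\eta$ to be identically $1$ on both $\overline\Omega$ (so that the gradient term from $c\eta$ disappears over $\Omega$ and the boundary factor is $1$) and on $\supp\psi$ (so that $c\eta-\psi\ge0$), and then keeping track of the cancellation of the constant multiple of $\int_\Omega f+\int_{\partial\Omega}g$ via the compatibility condition.
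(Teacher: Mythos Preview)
Your argument is correct and considerably more direct than the paper's. The paper proceeds by first invoking the existence of a genuine solution $v$ (Proposition~\ref{ExistenceDelta=0}, which in turn rests on \cite{DV09}), subtracting off $v$ and a suitable multiple of the kernel element $\widehat u$ so that the remainder $\tilde u$ has mean zero, and then applying the dichotomy of Lemma~\ref{subsol} to conclude $\tilde u\equiv 0$. Your proof bypasses all of this machinery: by testing against $c\eta-\psi$ and using the compatibility condition \eqref{eq:Comp1} to kill the constant, you obtain the reverse inequality for every signed $\psi$ directly from the definition of subsolution. This is more elementary (no existence theory, no $\widehat u$, no sign-dichotomy lemma) and in fact reveals that the result is essentially a formal consequence of the compatibility condition together with the fact that constants are admissible test functions for the Neumann problem. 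The paper's route, while longer, has the mild advantage of showing explicitly that the subsolution lies in the affine space $v+\bR\widehat u$, but for the bare statement at hand your approach is cleaner.
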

\begin{proof}
Consider the solution $v$ to the problem \eqref{eq1311sun}, which exists by Proposition~\ref{ExistenceDelta=0}.
Let $\widehat{u}\in Y_1^2(\Omega)$ be as in Proposition~\ref{ExistenceDelta=0} and set $\tilde u=u-v-c\widehat{u}$, where $c\in\bR$ is chosen so that $\int_{\Omega} \tilde u=0$.
Then $u_0$ is a subsolution to the Neumann problem
\[
\left\{\begin{array}{c l}-\dv(\mathbf A\nabla \tilde  u+\vec b \tilde u)=0 & \text{in }\;\Omega,
\\ (\mathbf A \nabla \tilde u+\vec b \tilde u)\cdot \nu= 0 & \text{on }\;\partial\Omega.\end{array}\right.
\]
Then, by Lemma~\ref{subsol}, either $\tilde u>0$ almost everywhere or $\tilde u \leq 0$ in $\Omega$.
Combined with the fact that $\int_{\Omega} \tilde u=0$, this implies that $\tilde u=0$ in $\Omega$.
Therefore, we see that $u=v+c\widehat{u}$, which is a solution to the problem \eqref{eq1311sun}.
\end{proof}

The same is true for the adjoint equation as well.
\begin{proposition}
Let $\Omega$ be a bounded Lipschitz domain.
Assume that $\mathbf A$ satisfies the uniform ellipticity and boundedness condition \eqref{ellipticity} and $\vec c \in L^n(\Omega)$.
Also, let $f\in L^{\frac{2n}{n+2}}(\Omega)$, $\vec F\in L^2(\Omega)$ and $g\in L^{2-\frac{2}{n}}(\Omega)$ satisfy the compatibility condition \eqref{eq:Comp2}.
If $u\in Y_1^2(\Omega)$ is a subsolution to the Neumann problem
\[
\left\{\begin{array}{c l}-\dv(\mathbf A\nabla u)+\vec c \cdot \nabla u= f-\dv \vec F & \text{in }\;\Omega\\
\partial u/\partial \nu = g+\vec F\cdot \nu & \text{on }\;\partial\Omega,\end{array}\right.
\]
then $u$ is, in fact, a solution to the same problem.
\end{proposition}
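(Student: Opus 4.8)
The plan is to follow the proof of Proposition~\ref{SubImplies Sol1} almost verbatim, with the existence result and the positivity dichotomy replaced by their analogues for the operator $-\dv(\mathbf A\nabla u)+\vec c\cdot\nabla u$. First I would produce a genuine solution of the problem: applying Proposition~\ref{ExistenceDelta=0Adj} with $\mathbf A\tran$ in place of $\mathbf A$ and $\vec c$ in place of $\vec b$ (so that its divergence-form coefficient matrix becomes our $\mathbf A$, which still satisfies \eqref{ellipticity} with the same constants), and using that $f$, $\vec F$, $g$ satisfy the compatibility condition \eqref{eq:Comp2}, I obtain a unique $v\in Y_1^2(\Omega)$ with $\int_\Omega v=0$ solving the given Neumann problem.

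Next I would set $\tilde u:=u-v-c$ with $c:=\fint_\Omega u$, so that $\tilde u\in Y_1^2(\Omega)$ and $\int_\Omega\tilde u=0$ (using $\int_\Omega v=0$). Since any constant is annihilated by the operator $w\mapsto-\dv(\mathbf A\nabla w)+\vec c\cdot\nabla w$ and has vanishing conormal derivative, $v+c$ is again a solution of the given inhomogeneous problem; as $u$ is a subsolution of that problem, $\tilde u=u-(v+c)$ is a subsolution of the homogeneous Neumann problem $-\dv(\mathbf A\nabla w)+\vec c\cdot\nabla w=0$ in $\Omega$, $\mathbf A\nabla w\cdot\nu=0$ on $\partial\Omega$.

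I would then apply Lemma~\ref{subsol2} with the trivial choice $\vec b=0$, $d=0$, which vacuously satisfies \eqref{eq1122wed}: it yields that either $\tilde u\le 0$ a.e.\ in $\Omega$, or $\tilde u$ equals a positive constant a.e. The latter is incompatible with $\int_\Omega\tilde u=0$, so $\tilde u\le 0$ a.e., and then $\int_\Omega\tilde u=0$ forces $\tilde u=0$ a.e. Hence $u=v+c$, which is a solution of the given problem.

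The argument is a direct transcription of Proposition~\ref{SubImplies Sol1}, so I do not expect a genuine obstacle; the only care needed is bookkeeping. One must track the roles of $\mathbf A$ versus $\mathbf A\tran$ and of $\vec b$ versus $\vec c$ when quoting Proposition~\ref{ExistenceDelta=0Adj}, and keep in mind that the function $\widehat u$ entering the compatibility condition \eqref{eq:Comp2} is the positive generator of the kernel of the $L^2$-adjoint $-\dv(\mathbf A\tran\nabla\widehat u+\vec c\widehat u)=0$ with homogeneous Neumann data. No new estimate is required.
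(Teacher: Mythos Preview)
Your proposal is correct and follows essentially the same route as the paper's proof, which simply says to mimic Proposition~\ref{SubImplies Sol1} using Proposition~\ref{ExistenceDelta=0Adj} and Lemma~\ref{subsol2}. Your explicit observation that the kernel of the homogeneous adjoint problem consists of constants (so that one subtracts $c=\fint_\Omega u$ rather than a multiple of a general $\widehat u$) is exactly the minor adaptation the paper leaves implicit.
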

\begin{proof}
The proof is similar to the proof of Proposition~\ref{SubImplies Sol1}, where we use Proposition~\ref{ExistenceDelta=0Adj} and Lemma~\ref{subsol2}.
\end{proof}



\begin{thebibliography}{m}


\bibitem{BS88}
Bennett, Colin; Sharpley, Robert.
\textit{Interpolation of operators}.
Pure and Applied Mathematics, \textbf{129}. 
Academic Press, Inc., Boston, MA, 1988.

\bibitem{BM76}
Bottaro, Gianfranco; Marina, Maria Erminia.
\textit{Problema di Dirichlet per equazioni ellittiche di tipo variazionale su insiemi non limitati}. (Italian)
Boll. Un. Mat. Ital. (4) \textbf{8} (1973), 46--56.


\bibitem{CK13a}
Choi, Jongkeun; Kim, Seick.
\textit{Green's function for second order parabolic systems with Neumann boundary condition}.
J. Differential Equations \textbf{254} (2013), no. 7, 2834--2860.

\bibitem{CK13b}
Choi, Jongkeun; Kim, Seick.
\textit{A Neumann functions for second order elliptic systems with measurable coefficients}.
Trans. Amer. Math. Soc. \textbf{365} (2013), no. 12, 6283--6307.

\bibitem{DS79}
DeVore, R.; Scherer, K.
\textit{Interpolation of linear operators on Sobolev spaces}.
Ann. of Math. (2) \textbf{109} (1979), no. 3, 583--599.


\bibitem{DiB10}
DiBenedetto, E.
\textit{Partial differential equations}.
Second edition. Birkh\"auser Boston, Inc., Boston, MA, 2010.

\bibitem{DV09}
Droniou, J\'er\^ome; V\`azquez, Juan-Luis.
\textit{Noncoercive convection-diffusion elliptic problems with Neumann boundary conditions}.
Calc. Var. Partial Differential Equations \textbf{34} (2009), no. 4, 413--434.


\bibitem{EG15}
Evans, Lawrence C.; Gariepy, Ronald F.
\textit{Measure theory and fine properties of functions}.
Revised edition. Textbooks in Mathematics. CRC Press, Boca Raton, FL, 2015.


\bibitem{GT01}
Gilbarg, David; Trudinger, Neil S.
\textit{Elliptic partial differential equations of second order}.
Reprint of the 1998 edition.
Classics in Mathematics. Springer-Verlag, Berlin, 2001.

\bibitem{Grafakos}
Grafakos, Loukas.
\textit{Classical Fourier analysis}.
Third edition.
Graduate Texts in Mathematics, \textbf{249}. Springer, New York, 2014

\bibitem{HK07}
Hofmann, Steve; Kim, Seick.
\textit{The Green function estimates for strongly elliptic systems of second order}.
Manuscripta Math. \textbf{124} (2007), no. 2, 139--172.

\bibitem{KS11}
Kenig, Carlos; Shen, Zhongwei.
\textit{Layer potential methods for elliptic homogenization problems}.
Comm. Pure Appl. Math. \textbf{64} (2011), no. 1, 1--44.


\bibitem{KS19}
Kim, Seick; Sakellaris, Georgios.
\textit{Green's function for second order elliptic equations with singular lower order coefficients}.
Comm. Partial Differential Equations \textbf{44} (2019), no. 3, 228--270.

 \bibitem{LU68}
Ladyzhenskaya, O. A.; Ural'tseva, N. N. 
\textit{Linear and quasilinear elliptic equations}.
Translated from the Russian by Scripta Technica, Inc.
Academic Press, New York-London, 1968.

\bibitem{NU12}
Nazarov, A. I.; Ural?tseva, N. N.
\textit{The Harnack inequality and related properties of solutions of elliptic and parabolic equations with divergence-free lower-order coefficients}. (Russian) Algebra i Analiz \textbf{23} (2011), no. 1, 136--168; translation in St. Petersburg Math. J. \textbf{23} (2012), no. 1, 93--115 


\bibitem{Mou19}
Mourgoglou, Mihalis.
\textit{Regularity theory and Green's function for elliptic equations with lower order terms in unbounded domains}.
arXiv:1904.04722 [math.AP]

\bibitem{Sak19}
Sakellaris, Georgios.
\textit{Boundary value problems in Lipschitz domains for equations with lower order coefficients}.
 Trans. Amer. Math. Soc. \textbf{372} (2019), no. 8, 5947--5989.

\bibitem{Sak21}
Sakellaris, Georgios.
\textit{On scale-invariant bounds for the Green's function for second-order elliptic equations with lower-order coefficients and applications}.
Anal. PDE \textbf{14} (2021), no. 1, 251--299.

\bibitem{Sak21b}
Sakellaris, Georgios.
\textit{Scale invariant regularity estimates for second order elliptic equations with lower order coefficients in optimal spaces}.
J. Math. Pures Appl. (9) \textbf{156} (2021), 179--214.

\bibitem{Stamp65}
Stampacchia, Guido.
\textit{Le probl\`eme de Dirichlet pour les \'equations elliptiques du second ordre \`a coefficients discontinus}. (French)
Ann. Inst. Fourier (Grenoble) \textbf{15} (1965), fasc. 1, 189--258. 

\bibitem{Ste70}
Stein, Elias M.
\textit{ Singular integrals and differentiability properties of functions}.
Princeton Mathematical Series, No. 30.
Princeton University Press, Princeton, N.J. 1970.


\end{thebibliography}
\end{document}